\newtheorem{theorem}{Theorem}[section]
\theoremstyle{plain}
\newtheorem{corollary}[theorem]{Corollary}
\newtheorem{definition}[theorem]{Definition}
\newtheorem{example}{Example}
\newtheorem{lemma}[theorem]{Lemma}
\newtheorem{proposition}[theorem]{Proposition}
\newtheorem{remark}[theorem]{Remark}
\numberwithin{equation}{section}
\begin{document}
\title[$T$-Weyl calculus]{$T$-Weyl calculus}
\author{Gruia Arsu}
\address{Institute of Mathematics of The Romanian Academy\\
P.O. Box 1-174\\
RO-70700\\
Bucharest \\
Romania}
\email{Gruia.Arsu@imar.ro, agmilro@yahoo.com, agmilro@hotmail.com}
\subjclass[2010]{Primary 35S05, 43Axx, 46-XX, 47-XX; Secondary 42B15, 42B35.}

\begin{abstract}
Let $\left( W,\sigma \right) $ be a symplectic vector space and let $%
T:W\rightarrow W$ be a linear map that satisfies a certain condition of
non-degeneracy. We define the Schur multiplier $\omega _{\sigma ,T}$ on $W$.
To this multiplier we associate a $\omega _{\sigma ,T}$-representation and
and we build the $T$-Weyl calculus, $\mathrm{Op}_{\sigma ,T}$, whose
properties are are systematically studied further.
\end{abstract}

\maketitle
\tableofcontents

\section{Introduction}

In two classical papers \cite{Cordes} and \cite{Kato}, H.O. Cordes and T.
Kato develop an elegant method to deal with pseudo-differential operators.
In \cite{Cordes}, Cordes shows, among others, that if a symbol $a\left(
x,\xi \right) $\ defined on $\mathbb{R}^{n}\times \mathbb{R}^{n}$ has
bounded derivatives $D_{x}^{\alpha }D_{\xi }^{\beta }a$\ for $\left\vert
\alpha \right\vert ,\left\vert \beta \right\vert \leq $\ $\left[ n/2\right]
+1$, then the associated pseudo-differential operator\ $A=a\left( X,D\right) 
$ is $L^{2}$-bounded.

This method can be extended and can be used to prove trace-class properties
of pseudo-differential operators. For example, by using this method it can
be proved that $A\in \mathcal{B}_{p}\left( L^{2}\left( \mathbb{R}^{n}\right)
\right) $ if $D_{x}^{\alpha }D_{\xi }^{\beta }a$ is in $L^{p}\left( \mathbb{R%
}^{n}\times \mathbb{R}^{n}\right) $ for $\left\vert \alpha \right\vert
,\left\vert \beta \right\vert \leq $\ $\left[ n/2\right] +1$ and $1\leq
p<\infty $, where $\mathcal{B}_{p}\left( L^{2}\left( \mathbb{R}^{n}\right)
\right) $ denote the Schatten ideal of compact operators whose singular
values lie in $l^{p}$. It is remarkable that this method can be used for $%
\left( \theta ,\tau \right) $-quantization, in particular, for both the Weyl
quantization and Kohn-Nirenberg quantization.

\textsc{The }$\left( \theta ,\tau \right) $-\textsc{quantization}

In this section we shall briefly describe the $\left( \theta ,\tau \right) $%
-quantization. The definitions, the results and the notions introduced will
serve both as models and as examples for the $T$-Weyl calculus developed in
the following sections.

Let $V$ be an $n$ dimensional vector space over $\mathbb{R}$ and $V^{\ast }$
its dual. We denote by $\mathrm{d}x$ a Lebesgue measure in $V$ and by $%
\mathrm{d}p$ is the dual one in $V^{\ast }$ such that Fourier's inversion
formula holds with the usual constant. Replacing $\mathrm{d}x$ by $c\mathrm{d%
}x$ one must change $\mathrm{d}p$ to $c^{-1}\mathrm{d}p$ so $\mathrm{d}x%
\mathrm{d}p$ is invariantly defined.

Let $\theta ,\tau \in \mathrm{End}_{\mathbb{R}}\left( V\right) $. For $a\in 
\mathcal{S}\left( V\times V^{\ast }\right) $ and $v\in \mathcal{S}\left(
V\right) $ we define 
\begin{eqnarray*}
\mathrm{Op}_{\theta ,\tau }\left( a\right) v\left( x\right) &=&a_{\theta
,\tau }\left( X,D\right) v\left( x\right) \\
&=&\left( 2\pi \right) ^{-n}\iint \mathrm{e}^{\mathrm{i}\left\langle
x-y,p\right\rangle }a\left( \theta x+\tau y,p\right) v\left( y\right) 
\mathrm{d}y\mathrm{d}p.
\end{eqnarray*}%
If $u,v\in \mathcal{S}\left( V\right) $, then%
\begin{align*}
\left\langle \mathrm{Op}_{\theta ,\tau }\left( a\right) v,u\right\rangle _{%
\mathcal{S}^{\prime },\mathcal{S}}& =\left( 2\pi \right) ^{-n}\iiint \mathrm{%
e}^{\mathrm{i}\left\langle x-y,p\right\rangle }a\left( \theta x+\tau
y,p\right) u\left( x\right) v\left( y\right) \mathrm{d}x\mathrm{d}y\mathrm{d}%
p \\
& =\left\langle \left[ \left( \left( 1\otimes \mathcal{F}_{V}^{-1}\right)
a\right) \circ \mathrm{c}_{\theta ,\tau }\right] ,u\otimes v\right\rangle _{%
\mathcal{S}^{\prime },\mathcal{S}},
\end{align*}%
\begin{equation*}
\mathrm{c}_{\theta ,\tau }:V\times V\rightarrow V\times V,\quad \mathrm{c}%
_{\theta ,\tau }\left( x,y\right) =\left( \theta x+\tau y,x-y\right) .
\end{equation*}%
If $\theta +\tau :V\rightarrow V$ is a linear isomorphism, it is possible to
define $\mathrm{Op}_{\theta ,\tau }\left( a\right) $ as an operator in $%
\mathcal{B}\left( \mathcal{S}\left( V\right) ,\mathcal{S}^{\prime }\left(
V\right) \right) $ for any $a\in \mathcal{S}^{\prime }\left( V\times V^{\ast
}\right) $%
\begin{equation*}
\left\langle \mathrm{Op}_{\theta ,\tau }\left( a\right) v,u\right\rangle _{%
\mathcal{S}^{\prime }\left( V\right) ,\mathcal{S}\left( V\right)
}=\left\langle \mathcal{K}_{\mathrm{Op}_{\theta ,\tau }\left( a\right)
},u\otimes v\right\rangle _{\mathcal{S}^{\prime }\left( V\times V\right) ,%
\mathcal{S}\left( V\times V\right) },
\end{equation*}%
\begin{equation*}
\mathcal{K}_{\mathrm{Op}_{\theta ,\tau }\left( a\right) }=\left( \left(
1\otimes \mathcal{F}_{V}^{-1}\right) a\right) \circ \mathrm{c}_{\theta ,\tau
}.
\end{equation*}%
Note that $\det \mathrm{c}_{\theta ,\tau }=\left( -1\right) ^{n}\det \left(
\theta +\tau \right) $ and that $\theta +\tau :V\rightarrow V$ is a linear
isomorphism iff $\mathrm{c}_{\theta ,\tau }:V\times V\rightarrow V\times V$
is a linear isomorphism.

\begin{definition}
$(\mathrm{a})$ We denote by $\Omega \left( V\right) $ the set%
\begin{equation*}
\Omega \left( V\right) =\left\{ \left( \theta ,\tau \right) :\theta ,\tau
\in \mathrm{End}_{\mathbb{R}}\left( V\right) ,\text{ }\theta +\tau \text{
isomorphism}\right\} .
\end{equation*}%
which is symmetric i.e. $\left( \theta ,\tau \right) \in \Omega \left(
V\right) $ $\Leftrightarrow $ $\left( \tau ,\theta \right) \in \Omega \left(
V\right) $.

$(\mathrm{b})$ For $\left( \theta ,\tau \right) \in \Omega \left( V\right) $%
, the map 
\begin{equation*}
\mathrm{Op}_{\theta ,\tau }:\mathcal{S}^{\prime }\left( V\times V^{\ast
}\right) \rightarrow \mathcal{B}(\mathcal{S}\left( V\right) ,\mathcal{S}%
^{\prime }\left( V\right) ),a\rightarrow \mathrm{Op}_{\theta ,\tau
}(a)\equiv a_{\theta ,\tau }\left( X,D\right) ,
\end{equation*}%
is called $\left( \theta ,\tau \right) $-quantization. The distribution $%
a\in \mathcal{S}^{\prime }\left( V\times V^{\ast }\right) $ is called $%
\left( \theta ,\tau \right) $-symbol of $\mathrm{Op}_{\theta ,\tau }(a)$.

$(\mathrm{c})$ For $\tau \in \mathrm{End}_{\mathbb{R}}\left( V\right) $, the
map $\mathrm{Op}_{\tau }=\mathrm{Op}_{1-\tau ,\tau }$ is called the $\tau $%
-quantization. In particular, for $\tau =\frac{1}{2}$ we have the Weyl
quantization $\mathrm{Op}=\mathrm{Op}_{\frac{1}{2}}$ and for $\tau =0$ we
have the Kohn-Nirenberg quantization.
\end{definition}

Since the equation in $a\in \mathcal{S}^{\prime }\left( V\times V^{\ast
}\right) $, $\left( \left( \mathrm{id}\otimes \mathcal{F}_{X}^{-1}\right)
a\right) \circ \mathrm{c}_{\theta ,\tau }=\mathcal{K}$, has a unique
solution for each $\mathcal{K}\in \mathcal{S}^{\prime }\left( V\times
V\right) $, a consequence of the kernel theorem is the fact that the map 
\begin{equation*}
\mathrm{Op}_{\theta ,\tau }:\mathcal{S}^{\prime }\left( V\times V^{\ast
}\right) \rightarrow \mathcal{B}(\mathcal{S}\left( V\right) ,\mathcal{S}%
^{\prime }\left( V\right) ),a\rightarrow \mathrm{Op}_{\theta ,\tau
}(a)\equiv a_{\theta ,\tau }\left( X,D\right)
\end{equation*}%
is linear, continuous and bijective. Hence for each $A\in \mathcal{B}(%
\mathcal{S},\mathcal{S}^{\prime })$ there is a distribution $a\in \mathcal{S}%
^{\prime }\left( V\times V^{\ast }\right) $ such that $A=\mathrm{Op}_{\theta
,\tau }(a)$. This distribution is called $\left( \theta ,\tau \right) $%
-symbol of $A$.

\pagebreak

\textsc{The projective representation}

We shall use the isomorphism%
\begin{equation*}
V\times V^{\ast }\ni \left( y,p\right) \rightarrow L_{\left( y,p\right) }\in
\left( V\times V^{\ast }\right) ^{\ast },
\end{equation*}%
\begin{equation*}
L_{\left( y,p\right) }=\left\langle \cdot ,p\right\rangle _{V,V^{\ast
}}\otimes 1-1\otimes \left\langle y,\cdot \right\rangle _{V,V^{\ast
}}=\sigma \left( \left( y,p\right) ;\left( \cdot ,\cdot \right) \right) ,
\end{equation*}%
where $\sigma :\left( V\times V^{\ast }\right) \times \left( V\times V^{\ast
}\right) \rightarrow \mathbb{R}$ is the canonical symplectic form 
\begin{equation*}
\sigma \left( \left( x,k\right) ;\left( y,p\right) \right) =\left\langle
y,k\right\rangle _{V,V^{\ast }}-\left\langle x,p\right\rangle _{V,V^{\ast }}.
\end{equation*}%
For each $\left( \theta ,\tau \right) \in \Omega \left( V\right) $ we
consider the family $\left\{ \mathcal{W}_{\theta ,\tau }\left( z,\zeta
\right) \right\} _{\left( z,\zeta \right) \in V\times V^{\ast }}$, 
\begin{equation*}
\mathcal{W}_{\theta ,\tau }\left( y,p\right) =\left( \mathrm{e}^{\mathrm{i}%
L_{\left( y,p\right) }}\right) _{\theta ,\tau }\left( X,D\right) \in 
\mathcal{B}\left( \mathcal{S},\mathcal{S}^{\prime }\right) .
\end{equation*}%
We have 
\begin{eqnarray*}
\mathcal{W}_{\theta ,\tau }\left( y,p\right) &=&\mathrm{e}^{\mathrm{i}%
\left\langle \theta X,p\right\rangle _{V,V^{\ast }}}\mathrm{e}^{-\mathrm{i}%
\left\langle y,D\right\rangle _{V,V^{\ast }}}\mathrm{e}^{\mathrm{i}%
\left\langle \tau X,p\right\rangle _{V,V^{\ast }}} \\
&=&\mathrm{e}^{-\mathrm{i}\left\langle \tau y,p\right\rangle _{V,V^{\ast }}}%
\mathrm{e}^{\mathrm{i}\left\langle \left( \theta +\tau \right)
X,p\right\rangle _{V,V^{\ast }}}\mathrm{e}^{-\mathrm{i}\left\langle
y,D\right\rangle _{V,V^{\ast }}} \\
&=&\mathrm{e}^{\mathrm{i}\left\langle \theta y,p\right\rangle _{V,V^{\ast }}}%
\mathrm{e}^{-\mathrm{i}\left\langle y,D\right\rangle _{V,V^{\ast }}}\mathrm{e%
}^{\mathrm{i}\left\langle \left( \theta +\tau \right) X,p\right\rangle
_{V,V^{\ast }}}.
\end{eqnarray*}%
By using the canonical commutation relations we obtain that the map 
\begin{equation*}
\mathcal{W}_{\theta ,\tau }:V\times V^{\ast }\rightarrow \mathcal{U}\left(
L^{2}\left( V\right) \right)
\end{equation*}%
satisfies 
\begin{equation*}
\mathcal{W}_{\theta ,\tau }\left( x,k\right) \mathcal{W}_{\theta ,\tau
}\left( y,p\right) =\mathrm{e}^{\mathrm{i}\sigma \left( \left( x,k\right)
;\left( \tau y,\theta ^{\ast }p\right) \right) }\mathcal{W}_{\theta ,\tau
}\left( x+y,k+p\right) ,
\end{equation*}%
for every $\left( x,\xi \right) $, $\left( y,\eta \right) \in V\times
V^{\ast }$.

Let $\omega _{\theta ,\tau }$ be the function 
\begin{equation*}
\omega _{\theta ,\tau }:\left( V\times V^{\ast }\right) \times \left(
V\times V^{\ast }\right) \rightarrow \mathbb{T},
\end{equation*}%
\begin{equation*}
\omega _{\theta ,\tau }\left( \left( x,k\right) ;\left( y,p\right) \right) =%
\mathrm{e}^{\mathrm{i}\sigma \left( \left( x,k\right) ;\left( \tau
y,{}\theta ^{\ast }p\right) \right) }.
\end{equation*}%
It follows that $\omega _{\theta ,\tau }$ satisfies the cocycle equation 
\begin{multline*}
\omega _{\theta ,\tau }\left( \left( x,k\right) ;\left( y,p\right) \right)
\omega _{\theta ,\tau }\left( \left( x,k\right) +\left( y,p\right) ;\left(
z,q\right) \right) \\
=\omega _{\theta ,\tau }\left( \left( x,k\right) ;\left( y,p\right) +\left(
z,q\right) \right) \omega _{\theta ,\tau }\left( \left( y,p\right) ;\left(
z,q\right) \right) ,
\end{multline*}%
\begin{equation*}
\omega _{\theta ,\tau }\left( \left( x,k\right) ;\left( 0,0\right) \right)
=\omega _{\theta ,\tau }\left( \left( 0,0\right) ;\left( x,k\right) \right)
=1,
\end{equation*}%
hence $\omega _{\theta ,\tau }$ is a 2-cocycle or Schur multiplier.

Moreover, the Schur multiplier $\omega _{\theta ,\tau }$ is non-degenerate,
that is 
\begin{eqnarray*}
\omega _{\theta ,\tau }\left( \left( x,k\right) ;\left( y,p\right) \right)
&=&\omega _{\theta ,\tau }\left( \left( y,p\right) ;\left( x,k\right)
\right) ,\ \forall \left( y,p\right) \in V\times V^{\ast }\  \\
&\Rightarrow &\left( x,k\right) \ =0.
\end{eqnarray*}

\begin{proposition}
Let $\left( \theta ,\tau \right) \in \Omega \left( V\right) $. Then

$\left( \mathrm{a}\right) $ $\omega _{\theta ,\tau }$ is a non-degenerate $2$%
-cocycle or Schur multiplier.

$\left( \mathrm{b}\right) $ The couple $\left( L^{2}\left( V\right) ,%
\mathcal{W}_{\theta ,\tau }\right) $ is a projective representation of $%
V\times V^{\ast }$ with $\omega _{\theta ,\tau }$ the associated multiplier.

$\left( \mathrm{c}\right) $ The projective representation $\left(
L^{2}\left( V\right) ,\mathcal{W}_{\theta ,\tau }\right) $ is irreducible.

$\left( \mathrm{d}\right) $ For $u,v\in \mathcal{S}\left( V\right) $, the
function 
\begin{equation*}
w_{\theta ,\tau ,u,v}:V\times V^{\ast }\rightarrow \mathbb{R},\quad
w_{\theta ,\tau ,u,v}\left( y,p\right) =\left\langle \mathcal{W}_{\theta
,\tau }\left( y,p\right) v,u\right\rangle _{\mathcal{S}^{\prime }\left(
V\right) ,\mathcal{S}\left( V\right) },
\end{equation*}%
is in $\mathcal{S}\left( V\times V^{\ast }\right) $.
\end{proposition}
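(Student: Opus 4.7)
I would treat the four parts in order, letting each one feed the next.

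For (a), the cocycle identity is an immediate consequence of the bilinearity of $\sigma$: expanding
\[
\sigma((x,k);(\tau y,\theta^{\ast }p))+\sigma((x,k)+(y,p);(\tau z,\theta^{\ast }q))
\]
and comparing with the right-hand side of the cocycle equation reduces to the identity $\sigma((x,k);(\tau y,\theta^{\ast }p))=\sigma((x,k);(\tau y,\theta^{\ast }p))$ after cancellation. For non-degeneracy, I would compute
\[
\sigma((x,k);(\tau y,\theta^{\ast }p))-\sigma((y,p);(\tau x,\theta^{\ast }k))=\langle(\theta+\tau)y,k\rangle-\langle(\theta+\tau)x,p\rangle.
\]
Since this real-bilinear expression must be in $2\pi\mathbb{Z}$ for all $(y,p)$, it vanishes identically; setting $p=0$ and $y=0$ in turn and using that $\theta+\tau$ is an isomorphism forces $k=0$ and $x=0$.

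For (b), I would check unitarity of $\mathcal{W}_{\theta,\tau}(y,p)$ directly from the factorization
$\mathcal{W}_{\theta,\tau}(y,p)=\mathrm{e}^{\mathrm{i}\langle\theta X,p\rangle}\mathrm{e}^{-\mathrm{i}\langle y,D\rangle}\mathrm{e}^{\mathrm{i}\langle\tau X,p\rangle}$ as a composition of a multiplication by a unimodular function, a translation, and another multiplication by a unimodular function. The identity $\mathcal{W}_{\theta,\tau}(0,0)=I$ and strong continuity are then routine. The multiplier law itself was already verified in the text from the canonical commutation relations, and (a) shows that $\omega_{\theta,\tau}$ is the associated Schur multiplier.

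For (c), my strategy is to exploit the bijectivity of the quantization map rather than argue Stone--von Neumann-style from scratch. Since $\mathcal{W}_{\theta,\tau}(y,p)=\mathrm{Op}_{\theta,\tau}(\mathrm{e}^{\mathrm{i}L_{(y,p)}})$, a Fourier inversion gives, for every $a\in\mathcal{S}(V\times V^{\ast})$,
\[
\mathrm{Op}_{\theta,\tau}(a)=(2\pi)^{-2n}\iint\widehat{a}(y,p)\,\mathcal{W}_{\theta,\tau}(y,p)\,\mathrm{d}y\,\mathrm{d}p
\]
in the weak sense on $\mathcal{S}$. Hence any bounded operator $T\in\mathcal{B}(L^{2}(V))$ that commutes with every $\mathcal{W}_{\theta,\tau}(y,p)$ commutes with $\mathrm{Op}_{\theta,\tau}(a)$ for every $a\in\mathcal{S}$. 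Because $\mathrm{Op}_{\theta,\tau}:\mathcal{S}'(V\times V^{\ast})\to\mathcal{B}(\mathcal{S},\mathcal{S}')$ is bijective, the image of $\mathcal{S}$ contains all rank-one operators $u\otimes\overline{v}$ with $u,v\in\mathcal{S}(V)$. Commuting with all such rank-one operators forces $T$ to be a scalar. This step---identifying the rank-one operators inside $\mathrm{Op}_{\theta,\tau}(\mathcal{S})$---is the main obstacle, and I would handle it by preimaging a Schwartz kernel $K(x,y)=u(x)\overline{v(y)}$ through the map $a\mapsto((1\otimes\mathcal{F}_{V}^{-1})a)\circ\mathrm{c}_{\theta,\tau}$, which is bijective on $\mathcal{S}$ since $\mathrm{c}_{\theta,\tau}$ is a linear isomorphism.

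For (d), I would substitute the explicit formula $(\mathcal{W}_{\theta,\tau}(y,p)v)(x)=\mathrm{e}^{\mathrm{i}\langle(\theta+\tau)x-\tau y,p\rangle}v(x-y)$ into the matrix coefficient to obtain
\[
w_{\theta,\tau,u,v}(y,p)=\mathrm{e}^{-\mathrm{i}\langle\tau y,p\rangle}\int u(x)\,v(x-y)\,\mathrm{e}^{\mathrm{i}\langle(\theta+\tau)x,p\rangle}\,\mathrm{d}x.
\]
The function $(x,y)\mapsto u(x)v(x-y)$ lies in $\mathcal{S}(V\times V)$; its partial Fourier transform in $x$ is in $\mathcal{S}(V\times V^{\ast})$; composing with the isomorphism $(\theta+\tau)^{\ast}$ in the $p$-variable and multiplying by the smooth polynomially-bounded phase $\mathrm{e}^{-\mathrm{i}\langle\tau y,p\rangle}$ both preserve $\mathcal{S}(V\times V^{\ast})$, giving the conclusion.
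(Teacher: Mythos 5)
Your proof is correct. Note first that the paper states this proposition without a formal proof: parts (a) and (b) merely summarize the computation carried out just before it (the cocycle identity from bilinearity of $\sigma$, the multiplier law from the canonical commutation relations), and your arguments for these coincide with that discussion --- indeed you are slightly more careful on non-degeneracy, since passing from $\omega _{\theta ,\tau }\left( \left( x,k\right) ;\left( y,p\right) \right) =\omega _{\theta ,\tau }\left( \left( y,p\right) ;\left( x,k\right) \right) $ to the vanishing of the exponent requires exactly the linearity-plus-discreteness observation you make, a step the paper's general lemma in Section 3 elides. Where you genuinely diverge is in (c) and (d). For irreducibility the paper ultimately relies on the Stone--von Neumann/Segal machinery (Theorem 18.5.9 of H\"{o}rmander, invoked in Section 3 for the general $T$-Weyl setting), whereas you give an elementary commutant argument: an operator commuting with all $\mathcal{W}_{\theta ,\tau }\left( y,p\right) $ commutes with every $\mathrm{Op}_{\theta ,\tau }\left( a\right) $, $a\in \mathcal{S}\left( V\times V^{\ast }\right) $, hence with every rank-one operator with Schwartz kernel, hence is scalar; this is self-contained and non-circular, since the bijectivity of $\mathrm{Op}_{\theta ,\tau }$ comes from the kernel theorem and not from irreducibility. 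The one small point you should spell out there is that the weak integral formula $\left\langle \mathrm{Op}_{\theta ,\tau }\left( a\right) v,u\right\rangle =\left( 2\pi \right) ^{-n}\left\langle \mathcal{F}_{\sigma }a,w_{\theta ,\tau ,u,v}\right\rangle $ must be extended from $u,v\in \mathcal{S}\left( V\right) $ to $u,v\in L^{2}\left( V\right) $ (by dominated convergence, using $\mathcal{F}_{\sigma }a\in L^{1}$ and $\left\vert w_{\theta ,\tau ,u,v}\right\vert \leq \left\Vert u\right\Vert \left\Vert v\right\Vert $), because $Tv$ need not be a Schwartz function. For (d) the paper defers to Lemma 1.1 of \cite{Arsu}; your explicit computation --- the partial Fourier transform of $\left( x,y\right) \mapsto u\left( x\right) v\left( x-y\right) $, followed by the linear change of variables given by $\left( \theta +\tau \right) ^{\ast }$ and multiplication by the tempered phase $\mathrm{e}^{-\mathrm{i}\left\langle \tau y,p\right\rangle }$ --- is a complete and more concrete substitute.
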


For $a\in \mathcal{S}\left( V\times V^{\ast }\right) $ we define the
symplectic Fourier transform%
\begin{equation*}
\mathcal{F}_{\sigma }\left( a\right) \left( x,k\right) =\left( 2\pi \right)
^{-n}\iint \mathrm{e}^{-\mathrm{i}\sigma \left( \left( x,k\right) ;\left(
y,p\right) \right) }a\left( y,p\right) \mathrm{d}y\mathrm{d}p,\quad \left(
x,k\right) \in V\times V^{\ast }
\end{equation*}%
and we use the same notation $\mathcal{F}_{\sigma }\mathcal{\ }$for the
extension to $\mathcal{S}^{\prime }\left( V\times V^{\ast }\right) $ of this
Fourier transform. It follows that $\mathcal{F}_{\sigma }$ is involutive
(i.e. $\mathcal{F}_{\mathfrak{\sigma }}^{2}=\boldsymbol{1}$) and unitary on $%
L^{2}(V\times V^{\ast })$.

The family $\left\{ \mathcal{W}_{\theta ,\tau }\left( x,k\right) \right\}
_{\left( x,k\right) \in V\times V^{\ast }}$ completely characterizes $\left(
\theta ,\tau \right) $-Weyl calculus or $\left( \theta ,\tau \right) $%
-quantization. The general definition of $\mathrm{Op}_{\theta ,\tau }\left(
a\right) $ is deduced from this family and the symplectic Fourier
decomposition of $a$.

\begin{proposition}
Let $\left( \theta ,\tau \right) \in \Omega \left( V\right) $ and $a\in 
\mathcal{S}\left( V\times V^{\ast }\right) $. Then%
\begin{equation*}
\mathrm{Op}_{\theta ,\tau }\left( a\right) =\left( 2\pi \right) ^{-n}\iint 
\mathcal{F}_{\sigma }\left( a\right) \left( y,p\right) \mathcal{W}_{\theta
,\tau }\left( y,p\right) \mathrm{d}y\mathrm{d}p.
\end{equation*}%
The integral is taken in the weak sense, i.e. for $u,v\in \mathcal{S}\left(
V\right) $%
\begin{align*}
& \left\langle \left( \left( 2\pi \right) ^{-n}\iint \mathcal{F}_{\sigma
}\left( a\right) \left( y,p\right) \mathcal{W}_{\theta ,\tau }\left(
y,p\right) \mathrm{d}y\mathrm{d}p\right) v,u\right\rangle _{\mathcal{S}%
^{\prime }\left( V\right) ,\mathcal{S}\left( V\right) } \\
& =\left\langle \mathcal{F}_{\sigma }\left( a\right) ,\left\langle \mathcal{W%
}_{\theta ,\tau }\left( \cdot ,\cdot \right) v,u\right\rangle \right\rangle
_{\mathcal{S}^{\prime }\left( V\times V^{\ast }\right) ,\mathcal{S}\left(
V\times V^{\ast }\right) } \\
& =\left\langle \mathcal{F}_{\sigma }\left( a\right) ,w_{\theta ,\tau
,u,v}\right\rangle _{\mathcal{S}^{\prime }\left( V\times V^{\ast }\right) ,%
\mathcal{S}\left( V\times V^{\ast }\right) }
\end{align*}%
where $w_{\theta ,\tau ,u,v}\in \mathcal{S}\left( V\times V^{\ast }\right) $
is given by 
\begin{equation*}
w_{\theta ,\tau ,u,v}\left( y,p\right) =\left\langle \mathcal{W}_{\theta
,\tau }\left( y,p\right) v,u\right\rangle _{\mathcal{S}^{\prime }\left(
V\right) ,\mathcal{S}\left( V\right) }.
\end{equation*}
\end{proposition}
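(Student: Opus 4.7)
The strategy is to reduce the identity to the symplectic Fourier inversion on $\mathcal{S}(V\times V^{\ast})$. Since $\mathcal{F}_{\sigma}$ is involutive, we have pointwise
\[
a(z,q)=(2\pi)^{-n}\iint \mathcal{F}_{\sigma}(a)(y,p)\,e^{-\mathrm{i}\sigma((z,q);(y,p))}\,\mathrm{d}y\,\mathrm{d}p,
\]
and antisymmetry of $\sigma$ rewrites the exponential as $e^{\mathrm{i}\sigma((y,p);(z,q))}=e^{\mathrm{i}L_{(y,p)}(z,q)}$. Thus $a$ is exhibited as a superposition of the characters $e^{\mathrm{i}L_{(y,p)}}$ with density $(2\pi)^{-n}\mathcal{F}_{\sigma}(a)(y,p)$, and since by definition $\mathrm{Op}_{\theta,\tau}(e^{\mathrm{i}L_{(y,p)}})=\mathcal{W}_{\theta,\tau}(y,p)$, the claim is morally that $\mathrm{Op}_{\theta,\tau}$ may be passed through this superposition.

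To carry this out rigorously, I would fix $u,v\in\mathcal{S}(V)$ and start from the defining integral
\[
\langle \mathrm{Op}_{\theta,\tau}(a)v,u\rangle=(2\pi)^{-n}\iiint e^{\mathrm{i}\langle x-y,p\rangle}\,a(\theta x+\tau y,p)\,u(x)v(y)\,\mathrm{d}x\,\mathrm{d}y\,\mathrm{d}p.
\]
Substituting the inversion representation of $a$ produces a five-fold integral over $(x,y,p,y^{\prime},p^{\prime})$, whose integrand is a product of $u(x)v(y)$ and $\mathcal{F}_{\sigma}(a)(y^{\prime},p^{\prime})$ times a bounded oscillating exponential; since all the density factors are Schwartz, the integrand is absolutely integrable and Fubini applies. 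After interchanging the order, the inner triple integral in $(x,y,p)$ for each fixed $(y^{\prime},p^{\prime})$ is literally the pairing $\langle (e^{\mathrm{i}L_{(y^{\prime},p^{\prime})}})_{\theta,\tau}(X,D)v,u\rangle=\langle \mathcal{W}_{\theta,\tau}(y^{\prime},p^{\prime})v,u\rangle=w_{\theta,\tau,u,v}(y^{\prime},p^{\prime})$.

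The remaining outer integral is $(2\pi)^{-n}\iint \mathcal{F}_{\sigma}(a)(y^{\prime},p^{\prime})\,w_{\theta,\tau,u,v}(y^{\prime},p^{\prime})\,\mathrm{d}y^{\prime}\,\mathrm{d}p^{\prime}$, which equals $\langle \mathcal{F}_{\sigma}(a),w_{\theta,\tau,u,v}\rangle_{\mathcal{S}^{\prime},\mathcal{S}}$ and is well-defined because part $(\mathrm{d})$ of the preceding proposition guarantees $w_{\theta,\tau,u,v}\in\mathcal{S}(V\times V^{\ast})$. This is exactly the weak-integral pairing the proposition asserts, so the two sides agree on every pair $(u,v)$ and therefore as elements of $\mathcal{B}(\mathcal{S}(V),\mathcal{S}^{\prime}(V))$.

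\textbf{Main obstacle.} There is no serious analytic difficulty: once symplectic Fourier inversion is available and Fubini is justified by the Schwartz hypotheses, the argument is bookkeeping. The one point that requires care is the sign convention, namely ensuring that the $-\sigma(\cdot\,;\cdot)$ entering the definition of $\mathcal{F}_{\sigma}$ matches the $+L_{(y,p)}=\sigma((y,p);\cdot)$ in the exponential defining $\mathcal{W}_{\theta,\tau}$; this is exactly the antisymmetry of $\sigma$ and is the only place where orientation could go wrong.
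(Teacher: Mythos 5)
Your overall strategy is the same one the paper uses read in the opposite direction: exploit $\mathcal{F}_{\sigma}^{2}=1$ to write $a$ as a superposition of the characters $\mathrm{e}^{\mathrm{i}L_{(y,p)}}$, note that $\mathrm{Op}_{\theta,\tau}$ sends each character to $\mathcal{W}_{\theta,\tau}(y,p)$, and interchange integrals. The conclusion is correct, but the one step you wave at — ``the integrand is absolutely integrable and Fubini applies'' — is false as stated, and it is precisely the step the paper's own proof is organized to avoid.

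Look at the five-fold integrand you form after substituting the inversion formula: in absolute value it is a constant times $\left\vert \mathcal{F}_{\sigma}(a)(y',p')\right\vert\,\left\vert u(x)\right\vert\,\left\vert v(y)\right\vert$, with \emph{no} decay whatsoever in the variable $p$, because the only $p$-dependence left is the unimodular oscillating factor $\mathrm{e}^{\mathrm{i}\langle x-y-y',p\rangle}$. The integral of the absolute value over $p\in V^{\ast}$ diverges, so the hypothesis of Fubini's theorem fails and the interchange is not justified by the argument you give. The repair is standard but must be done in a specific order: for fixed $(y',p')$ integrate first in $(x,y)$ (absolutely convergent, producing a function that is Schwartz in $p$), then in $p$, and then control the dependence on $(y',p')$ to move the outer integral inside — equivalently, perform the $p$-integration as a partial inverse Fourier transform before any interchange. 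That is exactly what the paper does: it starts from the manifestly convergent pairing $(2\pi)^{-n}\langle\mathcal{F}_{\sigma}(a),w_{\theta,\tau,u,v}\rangle$, inserts the explicit formula $\left(\mathcal{W}_{\theta,\tau}(y,p)v\right)(x)=\mathrm{e}^{\mathrm{i}\langle\theta x+\tau(x-y),p\rangle}v(x-y)$, carries out the $\mathrm{d}p$ integration as $\mathcal{F}_{V}^{-1}\otimes 1$ applied to $(\mathcal{F}_{V}\otimes\mathcal{F}_{V}^{-1})a$, and recognizes the Schwartz kernel $\mathcal{K}_{\mathrm{Op}_{\theta,\tau}(a)}$ after the substitution $x-y\rightsquigarrow y$. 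Your sign-convention worry is fine; the real care point is the non-absolute convergence in $p$, which your write-up does not address.
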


\begin{proof}
It is enough to prove equality for $a\in \mathcal{S}\left( V\times V^{\ast
}\right) $. Let $u$, $v\in \mathcal{S}\left( V\right) $. Then 
\begin{multline*}
\left( 2\pi \right) ^{-n}\left\langle \mathcal{F}_{\sigma }\left( a\right)
,w_{\theta ,\tau ,u,v}\right\rangle =\left( 2\pi \right) ^{-n}\iint \mathcal{%
F}_{\sigma }\left( a\right) \left( y,p\right) w_{\theta ,\tau ,u,v}\left(
y,p\right) \mathrm{d}y\mathrm{d}p \\
=\left( 2\pi \right) ^{-n}\iint \mathcal{F}_{\sigma }\left( a\right) \left(
y,p\right) \int \left( \mathcal{W}_{\theta ,\tau }\left( y,p\right) v\right)
\left( x\right) u\left( x\right) \mathrm{d}x\mathrm{d}y\mathrm{d}p \\
=\int \left( \left( 2\pi \right) ^{-n}\iint \mathcal{F}_{\sigma }\left(
a\right) \left( y,p\right) \left( \mathcal{W}_{\theta ,\tau }\left(
y,p\right) v\right) \left( x\right) \mathrm{d}y\mathrm{d}p\right) u\left(
x\right) \mathrm{d}x.
\end{multline*}%
Since%
\begin{eqnarray*}
\left( \mathcal{W}_{\theta ,\tau }\left( y,p\right) v\right) \left( x\right)
&=&\mathrm{e}^{-\mathrm{i}\left\langle \tau y,p\right\rangle _{V,V^{\ast }}}%
\mathrm{e}^{\mathrm{i}\left\langle \left( \theta +\tau \right)
x,p\right\rangle _{V,V^{\ast }}}v\left( x-y\right)  \\
&=&\mathrm{e}^{\mathrm{i}\left\langle \theta x+\tau \left( x-y\right)
,p\right\rangle _{V,V^{\ast }}}v\left( x-y\right) ,
\end{eqnarray*}%
we get that%
\begin{multline*}
\left( 2\pi \right) ^{-n}\iint \mathcal{F}_{\sigma }\left( a\right) \left(
y,p\right) \left( \mathcal{W}_{\theta ,\tau }\left( y,p\right) v\right)
\left( x\right) \mathrm{d}y\mathrm{d}p \\
=\left( 2\pi \right) ^{-n}\iint \mathcal{F}_{\sigma }\left( a\right) \left(
y,p\right) \mathrm{e}^{\mathrm{i}\left\langle \theta x+\tau \left(
x-y\right) ,p\right\rangle _{V,V^{\ast }}}v\left( x-y\right) \mathrm{d}y%
\mathrm{d}p \\
=\left( 2\pi \right) ^{-n}\iint \mathrm{e}^{\mathrm{i}\left\langle \theta
x+\tau \left( x-y\right) ,p\right\rangle _{V,V^{\ast }}}v\left( x-y\right)
\left( \mathcal{F}_{V}\otimes \mathcal{F}_{V}^{-1}\right) a\left( p,y\right) 
\mathrm{d}y\mathrm{d}p \\
=\int v\left( x-y\right) \left( \mathcal{F}_{V}^{-1}\otimes 1\right) \left(
\left( \mathcal{F}_{V}\otimes \mathcal{F}_{V}^{-1}\right) a\right) \left(
\theta x+\tau \left( x-y\right) ,y\right) \mathrm{d}y \\
=\int v\left( x-y\right) \left( \left( 1\otimes \mathcal{F}_{V}^{-1}\right)
a\right) \left( \theta x+\tau \left( x-y\right) ,y\right) \mathrm{d}y \\
\underset{x-y\rightsquigarrow y}{=}\int \left( \left( 1\otimes \mathcal{F}%
_{V}^{-1}\right) a\right) \left( \theta x+\tau y,x-y\right) v\left( y\right) 
\mathrm{d}y \\
=\int \mathcal{K}_{\mathrm{Op}_{\theta ,\tau }\left( a\right) }\left(
x,y\right) v\left( y\right) \mathrm{d}y=\mathrm{Op}_{\theta ,\tau }\left(
a\right) v\left( x\right) =a_{\theta ,\tau }\left( X,D\right) v\left(
x\right) .
\end{multline*}%
Therefore 
\begin{equation*}
\left( 2\pi \right) ^{-n}\left\langle \mathcal{F}_{\sigma }\left( a\right)
,w_{\theta ,\tau ,u,v}\right\rangle =\int a_{\theta ,\tau }\left( X,D\right)
v\left( x\right) u\left( x\right) \mathrm{d}x=\left\langle a_{\theta ,\tau
}\left( X,D\right) v,u\right\rangle .
\end{equation*}
\end{proof}

The map $t\rightarrow \mathcal{W}_{\theta ,\tau }\left( ty,tp\right) $ is
not a group representation of $\mathbb{R}$. Instead, if we replace the
family $\left\{ \mathcal{W}_{\theta ,\tau }\left( y,p\right) \right\}
_{\left( y,p\right) \in V\times V^{\ast }}$ with the family $\left\{ 
\widetilde{\mathcal{W}}_{\theta ,\tau }\left( y,p\right) \right\} _{\left(
y,p\right) \in V\times V^{\ast }}$,%
\begin{eqnarray*}
\widetilde{\mathcal{W}}_{\theta ,\tau }\left( y,p\right) &=&\mathrm{e}^{%
\frac{\mathrm{i}}{2}\sigma \left( \left( y,p\right) ;\left( \tau y,\theta
^{\ast }p\right) \right) }\mathcal{W}_{\theta ,\tau }\left( y,p\right) \\
&=&\mathrm{e}^{\frac{\mathrm{i}}{2}\left\langle \left( \tau -\theta \right)
y,p\right\rangle }\mathcal{W}_{\theta ,\tau }\left( y,p\right) \\
&=&\mathrm{e}^{\frac{\mathrm{i}}{2}\left\langle \left( \tau +\theta \right)
y,p\right\rangle }\mathrm{e}^{-\mathrm{i}\left\langle y,D\right\rangle
_{V,V^{\ast }}}\mathrm{e}^{\mathrm{i}\left\langle \left( \theta +\tau
\right) X,p\right\rangle _{V,V^{\ast }}},\left( y,p\right) \in V\times
V^{\ast },
\end{eqnarray*}%
then the couple $(L^{2}\left( V\right) ,\widetilde{\mathcal{W}}_{\theta
,\tau })$ is a projective representation of $V\times V^{\ast }$ with $%
\widetilde{\omega }_{\theta ,\tau }$ the associated Schur multiplier. Here%
\begin{eqnarray*}
\widetilde{\omega }_{\theta ,\tau }\left( \left( x,k\right) ;\left(
y,p\right) \right) &=&\frac{\mathrm{e}^{\frac{\mathrm{i}}{2}\sigma \left(
\left( x,k\right) ;\left( \tau x,\theta ^{\ast }k\right) \right) }\mathrm{e}%
^{\frac{\mathrm{i}}{2}\sigma \left( \left( y,p\right) ;\left( \tau y,\theta
^{\ast }p\right) \right) }}{\mathrm{e}^{\frac{\mathrm{i}}{2}\sigma \left(
\left( x+y,k+p\right) ;\left( \tau \left( x+y\right) ,\theta ^{\ast }\left(
k+p\right) \right) \right) }}\mathrm{e}^{\mathrm{i}\sigma \left( \left(
x,k\right) ;\left( \tau y,{}\theta ^{\ast }p\right) \right) } \\
&=&\mathrm{e}^{\frac{\mathrm{i}}{2}\left( \sigma \left( \left( x,k\right)
;\left( \tau y,\theta ^{\ast }p\right) \right) -\sigma \left( \left(
y,p\right) ;\left( \tau x,\theta ^{\ast }k\right) \right) \right) } \\
&=&\mathrm{e}^{\frac{\mathrm{i}}{2}\sigma \left( \left( \left( \theta +\tau
\right) x,k\right) ;\left( \left( \theta +\tau \right) y,p\right) \right) }=%
\mathrm{e}^{\frac{\mathrm{i}}{2}\left( \left\langle \left( \theta +\tau
\right) y,k\right\rangle -\left\langle \left( \theta +\tau \right)
x,p\right\rangle \right) }.
\end{eqnarray*}

The functions $\omega _{\theta ,\tau }$ and $\widetilde{\omega }_{\theta
,\tau }$ are cohomologous and $\widetilde{\omega }_{\theta ,\tau }$ is
normalized, i.e. 
\begin{equation*}
\widetilde{\omega }_{\theta ,\tau }\left( \left( x,k\right) ;-\left(
x,k\right) \right) =1,\quad \left( x,k\right) \in V\times V^{\ast }.
\end{equation*}

The map $t\rightarrow \widetilde{\mathcal{W}}_{\theta ,\tau }\left(
ty,tp\right) $ is a group representation of $\mathbb{R}$ and for each $%
\left( y,p\right) \in V\times V^{\ast }$ there is a unique self-adjoint
operator $\phi _{\theta ,\tau }\left( y,p\right) $, $\left( \theta ,\tau
\right) $-field operator associated to $\left( y,p\right) $, such that 
\begin{equation*}
\widetilde{\mathcal{W}}_{\theta ,\tau }\left( ty,tp\right) =\mathrm{e}^{%
\mathrm{i}t\phi _{\theta ,\tau }\left( y,p\right) }
\end{equation*}%
for all real $t$. The map $\left( y,p\right) \rightarrow \phi _{\theta ,\tau
}\left( y,p\right) $ $\mathbb{R}$-linear. The $\left( \theta ,\tau \right) $%
-annihilation and $\left( \theta ,\tau \right) $-creation operators
associated to $\left( y,p\right) $ are defined by%
\begin{eqnarray*}
a_{\theta ,\tau }\left( y,p\right) &=&\frac{1}{2}\left( \phi _{\theta ,\tau
}\left( y,p\right) +\mathrm{i}\phi _{\theta ,\tau }\left( \mathrm{i}\left(
y,p\right) \right) \right) , \\
a_{\theta ,\tau }^{\ast }\left( y,p\right) &=&\frac{1}{2}\left( \phi
_{\theta ,\tau }\left( y,p\right) -\mathrm{i}\phi _{\theta ,\tau }\left( 
\mathrm{i}\left( y,p\right) \right) \right) ,
\end{eqnarray*}%
e.t.c..

We notice that the formula%
\begin{equation*}
\omega _{\theta ,\tau }\left( \left( x,k\right) ;\left( y,p\right) \right) =%
\mathrm{e}^{\mathrm{i}\sigma \left( \left( x,k\right) ;\left( \tau
y,{}\theta ^{\ast }p\right) \right) }.
\end{equation*}%
highlights the symplectic structure on $V\times V^{\ast }$ and a linear map $%
\tau \times \theta ^{\ast }:V\times V^{\ast }\rightarrow V\times V^{\ast }$
which satisfies a certain condition contained in the definition of the set $%
\Omega \left( V\right) $. This condition is equivalent to the condition that
the Schur factor $\omega _{\theta ,\tau }$ is non-degenerate. Note also that
the representation 
\begin{equation*}
\mathrm{Op}_{\theta ,\tau }\left( a\right) =\left( 2\pi \right) ^{-n}\iint 
\mathcal{F}_{\sigma }\left( a\right) \left( y,p\right) \mathcal{W}_{\theta
,\tau }\left( y,p\right) \mathrm{d}y\mathrm{d}p,
\end{equation*}%
indicates the contribution of the symplectic structure of $V\times V^{\ast }$
in the definition of the $\left( \theta ,\tau \right) $-quantization, the
operator $\mathrm{Op}_{\theta ,\tau }$.

The $\left( \theta ,\tau \right) $-calculus built up in this section can be
further generalized, and we shall do this in this paper. The paper is
organized as follows. In Section 2 we summarize the most important notations
and results from linear symplectic algebra. In Section 3 we define the $%
\omega _{\sigma ,T}$-representation and the associated Weyl system and
present some of their properties. In Section 4 we define the $T$-Weyl
calculus and we prove one of the important results of the paper, Theorem \ref%
{n4}, which is an important technical result that establishes the connection
between the $T$-Weyl calculus and the standard Weyl calculus. In Section 5
we study modulation spaces and Schatten-class properties of operators in the 
$T$-Weyl calculus. The results in this section on Schatten-class properties
of operators in the $T$-Weyl calculus, together with the results in Section
6 are used to prove an extension of Cordes' lemma in Section 7. Sections
7-10 are devoted to the Cordes-Kato method for $T$-Weyl calculus.

As can be seen, we started from a natural definition for a
pseudo-differential calculus, and we obtained a projective representation.
In this paper, we shall follow the path in the opposite direction, namely
using symplectic $2$-form $\sigma $ we shall associate to any linear map $T$
on $W$ a $2$-cocycle or Schur multiplier $\omega _{\sigma ,T}$. If the Schur
multiplier $\omega _{\sigma ,T}$ is non-degenerate, which may be expressed
by a non-degeneracy condition of $T$, then any two irreducible $\omega
_{\sigma ,T}$-representations are unitary equivalent. For an irreducible $%
\omega _{\sigma ,T}$-representation $\left( \mathcal{H},\mathcal{W}_{\sigma
,T},\omega _{\sigma ,T}\right) $ of $W$ there is a well defined linear,
continuous and bijective map, the $T$-Weyl calculus, 
\begin{equation*}
\mathrm{Op}_{\sigma ,T}:\mathcal{S}^{\ast }\left( W\right) \rightarrow 
\mathcal{B}\left( \mathcal{S},\mathcal{S}^{\ast }\right) ,\quad a\rightarrow 
\mathrm{Op}_{\sigma ,T}(a),
\end{equation*}%
where $\mathcal{S}$ is the dense linear subspace of $\mathcal{H}$ consisting
of the $\mathcal{C}^{\infty }$ vectors of the representation $\mathcal{W}%
_{\sigma ,T}$, $\mathcal{S}^{\ast }$ is the space of all continuous,
anti-linear mappings $\mathcal{S}\rightarrow \mathbb{C}$ and $\mathcal{S}%
^{\ast }\left( W\right) $ is the space of all continuous, anti-linear
mappings $\mathcal{S}\left( W\right) \rightarrow \mathbb{C}$.

\section{The framework}

Our notations are rather standard but we recall here some of them to avoid
any ambiguity. Let $\left( W,\sigma \right) $ be a symplectic vector space,
that is a real finite dimensional vector space $W$ equipped with a real
antisymmetric non-degenerate bilinear form $\sigma $. We denote by $\sigma
^{\flat }$ the isomorphism associated with the non-degenerate bilinear form $%
\sigma $,%
\begin{equation*}
\sigma ^{\flat }:W\rightarrow W^{\ast },\quad \sigma ^{\flat }\left( \xi
\right) =\sigma \left( \xi ,\cdot \right) ,\quad \xi \in W.
\end{equation*}

\textsc{Symplectic adjoint}

Suppose that $\left( W_{1},\sigma _{1}\right) $ and $\left( W_{2},\sigma
_{2}\right) $ are symplectic vector spaces and $T:W_{1}\rightarrow W_{2}$ is
a linear map. Define the symplectic adjoint $T^{\sigma }:W_{2}\rightarrow
W_{1}$ by%
\begin{equation*}
T^{\sigma }:W_{2}\overset{\sigma _{2}^{\flat }}{\longrightarrow }W_{2}^{\ast
}\overset{T^{\ast }}{\longrightarrow }W_{1}^{\ast }\overset{\left( \sigma
_{1}^{\flat }\right) ^{-1}}{\longrightarrow }W_{1},
\end{equation*}%
\begin{gather*}
\begin{array}{ccc}
W_{2}\quad & \overset{T^{\sigma }}{\longrightarrow } & \quad W_{1}%
\end{array}
\\
\begin{array}{ccc}
\sigma _{2}^{\flat }\downarrow \text{ }\quad & \text{ }\quad & \text{ }\quad
\downarrow \sigma _{1}^{\flat }%
\end{array}%
\text{ } \\
\begin{array}{ccc}
W_{2}^{\ast }\quad & \overset{T^{\ast }}{\longrightarrow } & \quad
W_{1}^{\ast }%
\end{array}%
\end{gather*}%
where $T^{\ast }$ is the the usual adjoint,%
\begin{equation*}
T^{\ast }:W_{2}^{\ast }\rightarrow W_{1}^{\ast },\quad T^{\ast }\lambda
_{2}=\lambda _{2}\circ T.
\end{equation*}

\begin{lemma}
The linear map $T^{\sigma }:W_{2}\longrightarrow W_{1}$ satisfies 
\begin{equation}
\sigma _{1}\left( T^{\sigma }\xi _{1},\xi _{2}\right) =\sigma _{2}\left( \xi
_{1},T\xi _{2}\right) ,\quad \xi _{1}\in W_{1},\xi _{2}\in W_{2}.  \label{n1}
\end{equation}
\end{lemma}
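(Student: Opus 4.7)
The plan is simply to unravel the definition $T^{\sigma} = (\sigma_1^{\flat})^{-1} \circ T^{\ast} \circ \sigma_2^{\flat}$ and apply $\sigma_1^{\flat}$ to both sides of the claimed identity so that the inverse on the left disappears. With $\xi_1$ chosen in the domain $W_2$ of $T^{\sigma}$ and $\xi_2 \in W_1$, the left-hand side of \eqref{n1} rewrites, by the very definition of $\sigma_1^{\flat}$, as
\[
\sigma_1(T^{\sigma}\xi_1,\xi_2) = \bigl[\sigma_1^{\flat}(T^{\sigma}\xi_1)\bigr](\xi_2).
\]

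Next I would substitute the defining composition for $T^{\sigma}$: since $(\sigma_1^{\flat})^{-1}$ sits as the outermost factor, the composition $\sigma_1^{\flat}\circ T^{\sigma}$ collapses to $T^{\ast}\circ \sigma_2^{\flat}$, giving
\[
\sigma_1^{\flat}(T^{\sigma}\xi_1) = T^{\ast}\bigl(\sigma_2^{\flat}(\xi_1)\bigr).
\]
Using the definition $T^{\ast}\lambda_2 = \lambda_2 \circ T$ and evaluating the resulting functional at $\xi_2 \in W_1$, this becomes $\sigma_2^{\flat}(\xi_1)(T\xi_2) = \sigma_2(\xi_1, T\xi_2)$, which is precisely the right-hand side of \eqref{n1}.

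No step presents a genuine obstacle; the whole argument is a one-line diagram chase through the commutative square that accompanies the definition of $T^{\sigma}$. The only care needed is the bookkeeping of directions, ensuring that $T^{\ast}$ correctly pulls the functional $\sigma_2^{\flat}(\xi_1)\in W_2^{\ast}$ back to an element of $W_1^{\ast}$ via precomposition with $T:W_1\to W_2$, after which the non-degeneracy of $\sigma_1$ makes the cancellation of $\sigma_1^{\flat}$ with its inverse unambiguous.
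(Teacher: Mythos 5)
Your proof is correct and follows exactly the same route as the paper's: evaluate $\sigma_1^{\flat}(T^{\sigma}\xi_1)$ at $\xi_2$, collapse $\sigma_1^{\flat}\circ T^{\sigma}$ to $T^{\ast}\circ\sigma_2^{\flat}$ via the defining composition, and apply the definition of $T^{\ast}$ as precomposition with $T$. You even correct the membership bookkeeping (the functional $\xi_1$ must lie in $W_2$, the domain of $T^{\sigma}$, and $\xi_2$ in $W_1$), which the paper's statement has reversed.
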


\begin{proof}
Let $\xi _{1}\in W_{1},\xi _{2}\in W_{2}$. Then 
\begin{eqnarray*}
\sigma _{1}\left( T^{\sigma }\xi _{1},\xi _{2}\right) &=&\sigma _{1}^{\flat
}\left( T^{\sigma }\xi _{1}\right) \left( \xi _{2}\right) =\sigma
_{1}^{\flat }T^{\sigma }\left( \xi _{1}\right) \left( \xi _{2}\right)
=T^{\ast }\sigma _{2}^{\flat }\left( \xi _{1}\right) \left( \xi _{2}\right)
\\
&=&\sigma _{2}^{\flat }\left( \xi _{1}\right) \left( T\xi _{2}\right)
=\sigma _{2}\left( \xi _{1},T\xi _{2}\right) .
\end{eqnarray*}
\end{proof}

\begin{remark}
The property $\left( \ref{n1}\right) $ characterizes the symplectic adjoint.
\end{remark}

\begin{definition}
$\left( \mathrm{a}\right) $ A symplectic isomorphism or a symplectomorphism $%
\phi $ between symplectic vector spaces $\left( W_{1},\sigma _{1}\right) $
and $\left( W_{2},\sigma _{2}\right) $ is a linear isomorphism $\phi
:W_{1}\rightarrow W_{2}$ such that $\phi ^{\ast }\sigma _{2}=\sigma _{1}$.
By definition, $\phi ^{\ast }\sigma _{2}\left( \xi ,\eta \right) =\sigma
_{2}\left( \phi \xi ,\phi \eta \right) $, $\xi $, $\eta \in W_{1}$. If a
symplectomorphism exists, $\left( W_{1},\sigma _{1}\right) $ and $\left(
W_{2},\sigma _{2}\right) $ are said to be symplectomorphic.

$\left( \mathrm{b}\right) $ For a symplectic vector space $\left( W,\sigma
\right) $ we denote by 
\begin{equation*}
\mathrm{Sp}\left( W,\sigma \right) :=\left\{ \phi \in \mathrm{GL}\left(
W\right) :\phi ^{\ast }\sigma =\sigma \right\}
\end{equation*}%
the group of linear symplectomorphisms of $\left( W,\sigma \right) $.
\end{definition}

\begin{remark}
We have $\phi :W_{1}\rightarrow W_{2}$ is a symplectic isomorphism if and
only if $\phi $ is a linear isomorphism and $\phi ^{\sigma }\circ \phi =%
\mathrm{Id}_{W_{1}}$.
\end{remark}

\begin{lemma}
Let $\left( W,\sigma \right) $ be a symplectic vector space and $%
S:W\rightarrow W$ a linear isomorphism. Then $S^{\sigma }=S$ if and only if
there is a linear isomorphism $\phi _{{S}}:W\rightarrow W$ such that $S=\phi
_{{S}}^{\sigma }\circ \phi _{{S}}$.
\end{lemma}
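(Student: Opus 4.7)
The plan is to handle ($\Leftarrow$) by a short calculation with the symplectic adjoint, and ($\Rightarrow$) by using $S$ to build an auxiliary symplectic form on $W$ and then invoking the linear Darboux theorem.

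The preliminary step is to record two identities that follow directly from (\ref{n1}): for any linear maps $A, B : W \to W$ one has $(AB)^{\sigma} = B^{\sigma}A^{\sigma}$ and $(A^{\sigma})^{\sigma} = A$. Both are obtained by expanding $\sigma((AB)^{\sigma}\xi, \eta)$ and $\sigma((A^{\sigma})^{\sigma}\xi, \eta)$ via (\ref{n1}) and using antisymmetry of $\sigma$. The implication ($\Leftarrow$) is then immediate: if $S = \phi_S^{\sigma}\phi_S$, then $S^{\sigma} = (\phi_S^{\sigma}\phi_S)^{\sigma} = \phi_S^{\sigma}(\phi_S^{\sigma})^{\sigma} = \phi_S^{\sigma}\phi_S = S$.

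For ($\Rightarrow$), I would assume $S$ is a linear isomorphism with $S^{\sigma} = S$ and introduce the bilinear form $\sigma_S(\xi, \eta) := \sigma(\xi, S\eta)$ on $W$. Using (\ref{n1}) and $S^{\sigma} = S$ one rewrites $\sigma_S(\xi, \eta) = \sigma(S\xi, \eta)$; antisymmetry of $\sigma$ then gives $\sigma_S(\eta, \xi) = -\sigma_S(\xi, \eta)$, so $\sigma_S$ is antisymmetric. Non-degeneracy of $\sigma_S$ follows from non-degeneracy of $\sigma$ together with invertibility of $S$: if $\sigma(S\xi, \cdot) \equiv 0$ then $S\xi = 0$ and hence $\xi = 0$. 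Thus $(W, \sigma_S)$ is a symplectic vector space of the same dimension as $(W, \sigma)$.

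By the linear Darboux theorem, any two symplectic forms on a finite-dimensional real vector space are linearly equivalent, so there exists a linear isomorphism $\phi_S : W \to W$ with $\phi_S^{*}\sigma = \sigma_S$, i.e.\ $\sigma(\phi_S \xi, \phi_S \eta) = \sigma(S\xi, \eta)$ for all $\xi, \eta \in W$. Rewriting the left-hand side via (\ref{n1}) as $\sigma(\phi_S^{\sigma}\phi_S \xi, \eta)$ and invoking non-degeneracy of $\sigma$ then forces $\phi_S^{\sigma}\phi_S = S$, as desired. The main (and only non-routine) ingredient is the classification of symplectic forms in fixed dimension; everything else is bookkeeping with (\ref{n1}).
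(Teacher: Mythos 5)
Your proposal is correct and follows essentially the same route as the paper: both directions hinge on the auxiliary form $\sigma_{S}(\xi,\eta)=\sigma(\xi,S\eta)$ being symplectic precisely when $S=S^{\sigma}$ is an isomorphism, and on the existence of a linear symplectomorphism $\phi_{S}$ with $\phi_{S}^{\ast}\sigma=\sigma_{S}$ (the paper phrases this via symplectic bases, you via the linear Darboux theorem, which is the same fact). The only difference is cosmetic: you make the ``obvious'' converse explicit through the identities $(AB)^{\sigma}=B^{\sigma}A^{\sigma}$ and $(A^{\sigma})^{\sigma}=A$, which is a welcome bit of added detail.
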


\begin{proof}
The map 
\begin{equation*}
\sigma _{S}:W\times W\rightarrow \mathbb{R},\quad \sigma _{S}\left( \xi
,\eta \right) =\sigma \left( \xi ,S\eta \right) ,\quad \xi ,\eta \in W,
\end{equation*}%
is bilinear. Let us note that $\sigma _{S}$ is antisymmetric if and only if $%
S^{\sigma }=S$ and $\sigma _{S}$ is non-degenerate if and only if $S$ is an
isomorphism.

If $S:W\rightarrow W$ is a linear isomorphism and $S^{\sigma }=S$, then the $%
2$-form $\sigma _{S}$ is symplectic. Let $\phi _{{S}}:W\rightarrow W$ a
linear isomorphism that takes a symplectic basis with respect to $\sigma
_{S} $ to a symplectic basis with respect to $\sigma $. Then 
\begin{equation*}
\sigma \left( \xi ,S\eta \right) =\sigma _{S}\left( \xi ,\eta \right)
=\sigma \left( \phi _{{S}}\xi ,\phi _{{S}}\eta \right) =\sigma \left( \xi
,\phi _{{S}}^{\sigma }\circ \phi _{{S}}\eta \right) ,\quad \xi ,\eta \in W,
\end{equation*}%
\begin{equation*}
\Rightarrow \sigma \left( \xi ,S\eta \right) =\sigma \left( \xi ,\phi _{{S}%
}^{\sigma }\circ \phi _{{S}}\eta \right) ,\quad \xi ,\eta \in W,
\end{equation*}%
hence $S=\phi _{{S}}^{\sigma }\circ \phi _{{S}}$. The converse is obvious.
\end{proof}

\begin{corollary}
Let $\left( W,\sigma \right) $ be a symplectic vector space and $%
S:W\rightarrow W$ a linear isomorphism. If $S^{\sigma }=S$, then $\det S>0$.
\end{corollary}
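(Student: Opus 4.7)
The plan is to combine the factorization from the preceding lemma with the elementary observation that taking the symplectic adjoint preserves the determinant.

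First I would invoke the lemma that precedes the corollary: since $S:W\to W$ is a linear isomorphism satisfying $S^\sigma = S$, there exists a linear isomorphism $\phi_S:W\to W$ with $S = \phi_S^\sigma \circ \phi_S$. Hence $\det S = \det(\phi_S^\sigma)\cdot \det(\phi_S)$.

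Next I would verify the general identity $\det T^\sigma = \det T$ for every $T\in \mathrm{End}_{\mathbb R}(W)$. This follows directly from the defining diagram of the symplectic adjoint: when $W_1=W_2=W$ and $\sigma_1=\sigma_2=\sigma$, we have $T^\sigma = (\sigma^\flat)^{-1}\circ T^\ast\circ \sigma^\flat$, so $T^\sigma$ is conjugate to $T^\ast$ via the isomorphism $\sigma^\flat:W\to W^\ast$. Taking determinants gives $\det T^\sigma = \det T^\ast = \det T$.

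Applying this to $T=\phi_S$ yields $\det(\phi_S^\sigma) = \det(\phi_S)$, so $\det S = (\det \phi_S)^2$. Since $\phi_S$ is a linear isomorphism, $\det \phi_S \neq 0$, and therefore $\det S = (\det \phi_S)^2 > 0$. The main (and only) obstacle is the determinant identity for the symplectic adjoint, which however is immediate from the commutative diagram, so the corollary is essentially a one-line consequence of the preceding lemma.
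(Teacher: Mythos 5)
Your proof is correct and follows exactly the route the paper intends: the corollary is stated immediately after the factorization lemma $S=\phi_S^\sigma\circ\phi_S$ precisely so that $\det S=(\det\phi_S)^2>0$, and your verification that $\det T^\sigma=\det T$ (via conjugacy of $T^\sigma$ with $T^\ast$ through $\sigma^\flat$) is the one small step the paper leaves implicit. Nothing is missing.
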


\begin{definition}
Let $\left( W,\sigma \right) $ be a and $X\subset W$ be a linear subspace.
The symplectic complement of $X$ is the subspace%
\begin{equation*}
X^{\sigma }=\left\{ v\in W:\sigma (v,w)=0\text{ for all }w\in X\right\}
\end{equation*}%
A subspace $X\subset W$ is called isotropic if $X\subset X^{\sigma }$ and
involutive if $X^{\sigma }\subset X$. If both are valid, i.e. $X=X^{\sigma }$%
, then $X$ is lagrangian. An isotropic subspace $X\subset W$ is lagrangian
if and only if $2\dim X=\dim W$.
\end{definition}

\begin{remark}
$\left( \mathrm{a}\right) $ Let $X$ be an $n$ dimensional vector space over $%
\mathbb{R}$ and $X^{\ast }$ its dual. Denote $x,y,...$ the elements af $X$
and $k,p,...$ those of $X^{\ast }$. Let $\left\langle \cdot ,\cdot
\right\rangle :X\times X^{\ast }\rightarrow \mathbb{R}$ be the duality form,
which is a non-degenerate bilinear form. The symplectic space is defined by $%
W=T^{\ast }(X)=X\times X^{\ast }$ the symplectic form being $\sigma \left(
\left( x,k\right) ,\left( y,p\right) \right) =\left\langle y,k\right\rangle
-\left\langle x,p\right\rangle $. Observe that $X$ and $X^{\ast }$ are
lagrangian subspaces of $W$.

$\left( \mathrm{b}\right) $ Let us mention that there is a kind of converse
to this construction. Let $\left( X,X^{\ast }\right) $ be a couple of
lagrangian subspaces of $W$ such that $X\cap X^{\ast }=0$ or, equivalently, $%
X+X^{\ast }=W$. If for $x\in X$ and $p\in X^{\ast }$ we define $\left\langle
x,p\right\rangle =\sigma \left( p,x\right) $, then we get a non-degenerate
bilinear form on $X\times X^{\ast }$ which allows us to identify $X^{\ast }$
with the dual of $X$. A couple $\left( X,X^{\ast }\right) $ of subspaces of $%
W$ with the preceding properties is called a holonomic decomposition of $W$.
Observe that if $\xi =x+k$ and $\eta =y+p$ are their decomposition s in $W$,
then 
\begin{equation*}
\sigma \left( \xi ,\eta \right) =\left\langle y,k\right\rangle -\left\langle
x,p\right\rangle .
\end{equation*}
\end{remark}

\textsc{The symplectic Fourier transform}

A symplectic vector space $\left( W,\sigma \right) $ is always orientable
since the $2$-form $\sigma $ is non-degenerate if and only if its $n$-fold
exterior power is non-zero, i.e. 
\begin{equation*}
\sigma ^{n}=\underset{n}{\underbrace{\sigma \wedge ...\wedge \sigma }}\neq 0,
\end{equation*}%
where $\dim W=2n$. We will call the exterior power $\sigma ^{n}$ the
symplectic volume form. When $\left( W,\sigma \right) $ is the standard
symplectic space $\left( \mathbb{R}_{z}^{2n},\sigma _{n}\right) $, 
\begin{equation*}
\sigma _{n}\left( z,z^{\prime }\right) =\sum \left( p_{j}x_{j}^{\prime
}-p_{j}^{\prime }x_{j}\right) ,
\end{equation*}%
$z=\left( x_{1},...,x_{n};p_{1},...,p_{n}\right) $ and $z^{\prime }=\left(
x_{1}^{\prime },...,x_{n}^{\prime };p_{1}^{\prime },...,p_{n}^{\prime
}\right) $, then the usual volume form on $\mathbb{R}_{z}^{2n}$,%
\begin{equation*}
\text{\textrm{Vol}}_{2n}=\mathrm{d}p_{1}\wedge ...\wedge \mathrm{d}%
p_{n}\wedge \mathrm{d}x_{1}\wedge ...\wedge \mathrm{d}x_{n},
\end{equation*}%
is related to the symplectic volume form by 
\begin{equation*}
\text{\textrm{Vol}}_{2n}=(-1)^{\frac{n\left( n-1\right) }{2}}\frac{1}{n!}%
\sigma ^{n}=(-1)^{\left[ \frac{n}{2}\right] }\frac{1}{n!}\sigma ^{n}.
\end{equation*}%
The form $\frac{1}{n!}\sigma ^{n}$ is called the Liouville volume of $\left(
W,\sigma \right) $.

We define the Fourier measure $\mathrm{d}^{\sigma }\xi $ as the unique Haar
measure on $\left( W,\sigma \right) $ such that the symplectic Fourier
transform or $\sigma $-Fourier transform, 
\begin{equation*}
\left( \mathcal{F}_{\sigma }a\right) \left( \xi \right) =\int_{W}\mathrm{e}%
^{-i\sigma \left( \xi ,\eta \right) }a\left( \eta \right) \mathrm{d}^{\sigma
}\eta ,\quad a\in \mathcal{S}\left( W\right) ,
\end{equation*}%
is involutive (i.e.$\mathcal{F}_{\sigma }^{2}=1$) and unitary on $%
L^{2}\left( W\right) $. We use the same notation $\mathcal{F}_{\sigma }%
\mathcal{\ }$for the extension to $\mathcal{S}^{\prime }\left( W\right) $ of
this $\sigma $-Fourier transform. Let us note that 
\begin{equation*}
\mathrm{d}^{\sigma }\xi =\left( 2\pi \right) ^{-\frac{\dim W}{2}}\left[
\left( \frac{\dim W}{2}\right) !\right] ^{-1}\underset{\frac{\dim W}{2}}{%
\left\vert \underbrace{\sigma \wedge ...\wedge \sigma }\right\vert },
\end{equation*}%
where $\underset{\frac{\dim W}{2}}{\left\vert \underbrace{\sigma \wedge
...\wedge \sigma }\right\vert }$ is the $1$- density given by the symplectic
volume form $\sigma ^{\frac{\dim W}{2}}$.

\begin{lemma}
Let $\left( W_{1},\sigma _{1}\right) $ and $\left( W_{2},\sigma _{2}\right) $
be two symplectic spaces of same dimension $2n$. If $\phi $ is a symplectic
isomorphism $\left( W_{1},\sigma _{1}\right) \longrightarrow \left(
W_{2},\sigma _{2}\right) $, then 
\begin{equation*}
\phi ^{\ast }\underset{n}{\left\vert \underbrace{\sigma _{2}\wedge ...\wedge
\sigma _{2}}\right\vert }=\underset{n}{\left\vert \underbrace{\sigma
_{1}\wedge ...\wedge \sigma _{1}}\right\vert },\quad \phi ^{\ast }\left( 
\mathrm{d}^{\sigma _{2}}\xi _{2}\right) =\mathrm{d}^{\sigma _{1}}\xi _{1}
\end{equation*}%
and 
\begin{equation*}
\mathcal{F}_{\sigma _{1}}\circ \phi ^{\ast }=\phi ^{\ast }\circ \mathcal{F}%
_{\sigma _{2}}
\end{equation*}%
\begin{equation*}
\Leftrightarrow \mathcal{F}_{\sigma _{2}}=\left( \phi ^{\sigma }\right)
^{\ast }\circ \mathcal{F}_{\sigma _{1}}\circ \phi ^{\ast }.
\end{equation*}
\end{lemma}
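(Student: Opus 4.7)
The lemma breaks into three essentially independent statements, each following quickly from the defining property $\phi^{\ast}\sigma_{2}=\sigma_{1}$ combined with the remarks and formulas already established.

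First, I would handle the density identity. Since pullback commutes with the wedge product and is linear, $\phi^{\ast}(\sigma_{2}\wedge\cdots\wedge\sigma_{2})=(\phi^{\ast}\sigma_{2})\wedge\cdots\wedge(\phi^{\ast}\sigma_{2})=\sigma_{1}\wedge\cdots\wedge\sigma_{1}$; taking absolute values turns this top-degree form equality into the asserted $1$-density equality. The Fourier measure identity $\phi^{\ast}(\mathrm{d}^{\sigma_{2}}\xi_{2})=\mathrm{d}^{\sigma_{1}}\xi_{1}$ then follows immediately from the formula
\begin{equation*}
\mathrm{d}^{\sigma_{i}}\xi=(2\pi)^{-n}(n!)^{-1}\bigl|\underbrace{\sigma_{i}\wedge\cdots\wedge\sigma_{i}}_{n}\bigr|
\end{equation*}
recorded just before the lemma, since the normalising constants are common to both sides.

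Second, for the intertwining $\mathcal{F}_{\sigma_{1}}\circ\phi^{\ast}=\phi^{\ast}\circ\mathcal{F}_{\sigma_{2}}$, it suffices to establish the identity on $\mathcal{S}(W_{2})$ and extend by duality/continuity. For $b\in\mathcal{S}(W_{2})$ I would write
\begin{equation*}
\bigl(\mathcal{F}_{\sigma_{1}}(\phi^{\ast}b)\bigr)(\xi_{1})=\int_{W_{1}}\mathrm{e}^{-\mathrm{i}\sigma_{1}(\xi_{1},\eta_{1})}\,b(\phi\eta_{1})\,\mathrm{d}^{\sigma_{1}}\eta_{1},
\end{equation*}
substitute $\eta_{2}=\phi\eta_{1}$ using the measure identity from the first step, and rewrite the phase via $\sigma_{1}(\xi_{1},\eta_{1})=(\phi^{\ast}\sigma_{2})(\xi_{1},\eta_{1})=\sigma_{2}(\phi\xi_{1},\phi\eta_{1})$. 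The integral then becomes exactly $(\mathcal{F}_{\sigma_{2}}b)(\phi\xi_{1})=(\phi^{\ast}\mathcal{F}_{\sigma_{2}}b)(\xi_{1})$, as required.

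Third, the equivalent form is purely formal: a symplectic isomorphism satisfies $\phi^{\sigma}\circ\phi=\mathrm{Id}$ by the earlier remark, hence $\phi^{-1}=\phi^{\sigma}$, and so $(\phi^{\ast})^{-1}=(\phi^{-1})^{\ast}=(\phi^{\sigma})^{\ast}$; composing the intertwining identity on the left with this inverse yields $\mathcal{F}_{\sigma_{2}}=(\phi^{\sigma})^{\ast}\circ\mathcal{F}_{\sigma_{1}}\circ\phi^{\ast}$. I do not foresee any real obstacle; the only care needed is to ensure the change of variables in step two is performed consistently with the convention that $\phi^{\ast}$ denotes pullback of both functions and top-degree forms, but that is exactly what the density identity from step one provides.
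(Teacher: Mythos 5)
Your proof is correct and follows essentially the same route as the paper: the first two identities from $\phi^{\ast}\sigma_{2}=\sigma_{1}$ together with naturality of the wedge product, the intertwining relation by the same change-of-variables computation on $\mathcal{S}(W_{2})$, and the equivalent form from $\phi^{\sigma}\circ\phi=\mathrm{Id}$. You merely spell out the first step in more detail than the paper, which simply calls it a direct consequence of $\phi$ being symplectic.
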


\begin{proof}
The first two equalities are direct consequences of the fact that $\phi $ is
a symplectic isomorphism ($\phi ^{\ast }\sigma _{2}=\sigma _{1}$). As for
the third equality, it is enough to to prove it equality for $b$ in $%
\mathcal{S}\left( W\right) $. Let $b\in \mathcal{S}\left( W\right) $. Then
we have 
\begin{eqnarray*}
\mathcal{F}_{\sigma _{1}}\left( b\circ \phi \right) \left( \xi _{1}\right)
&=&\int_{W_{1}}\mathrm{e}^{-\mathrm{i}\sigma _{1}\left( \xi _{1},\eta
_{1}\right) }b\left( \phi \left( \eta _{1}\right) \right) \mathrm{d}^{\sigma
_{1}}\eta _{1} \\
&=&\int_{W_{1}}\mathrm{e}^{-\mathrm{i}\sigma _{2}\left( \phi \left( \xi
_{1}\right) ,\phi \left( \eta _{1}\right) \right) }b\left( \phi \left( \eta
_{1}\right) \right) \mathrm{d}^{\sigma _{1}}\eta _{1} \\
&=&\int_{W_{1}}\phi ^{\ast }\left( \mathrm{e}^{-\mathrm{i}\sigma _{2}\left(
\phi \left( \xi _{1}\right) ,\cdot \right) }b\left( \cdot \right) \mathrm{d}%
^{\sigma _{2}}\cdot \right) \\
&=&\int_{W_{2}}\mathrm{e}^{-\mathrm{i}\sigma _{2}\left( \phi \left( \xi
_{1}\right) ,\eta _{2}\right) }b\left( \eta _{2}\right) \mathrm{d}^{\sigma
_{2}}\eta _{2} \\
&=&\mathcal{F}_{\sigma _{2}}\left( b\right) \left( \phi \left( \xi
_{1}\right) \right) =\phi ^{\ast }\circ \mathcal{F}_{\sigma _{2}}\left(
b\right) \left( \xi _{1}\right) ,\quad \xi _{1}\in W_{1}.
\end{eqnarray*}%
The equivalence is a consequence of identity $\phi ^{\sigma }\circ \phi =%
\mathrm{id}_{W_{1}}$.
\end{proof}

Let $S:W\rightarrow W$ is a linear isomorphism such that $S^{\sigma }=S$ and 
$\phi _{{S}}$ a symplectic isomorphism, $\phi _{{S}}:\left( W,\sigma
_{S}\right) \longrightarrow \left( W,\sigma \right) $, such that $S=\phi _{{S%
}}^{\sigma }\circ \phi _{{S}}$. Then,%
\begin{equation*}
\mathrm{d}^{\sigma _{S}}\eta =\phi _{{S}}^{\ast }\left( \mathrm{d}^{\sigma
}\eta \right) =\left\vert \det \phi _{{S}}\right\vert \mathrm{d}^{\sigma
}\eta =\left( \det S\right) ^{\frac{1}{2}}\mathrm{d}^{\sigma }\eta ,
\end{equation*}%
and for $b\in \mathcal{S}\left( W\right) $ we have%
\begin{eqnarray*}
\mathcal{F}_{\sigma }\left( b\right) \left( \xi \right) &=&\int_{W}\mathrm{e}%
^{-\mathrm{i}\sigma \left( \xi ,\eta \right) }b\left( \eta \right) \mathrm{d}%
^{\sigma }\eta =\int_{W}\mathrm{e}^{-\mathrm{i}\sigma _{S}\left( \xi
,S^{-1}\eta \right) }b\left( \eta \right) \mathrm{d}^{\sigma }\eta \\
&=&\det S\int_{W}\mathrm{e}^{-\mathrm{i}\sigma _{S}\left( \xi ,\zeta \right)
}b\left( S\zeta \right) \mathrm{d}^{\sigma }\zeta =\left( \det S\right) ^{%
\frac{1}{2}}\int_{W}\mathrm{e}^{-\mathrm{i}\sigma _{S}\left( \xi ,\zeta
\right) }b\left( S\zeta \right) \mathrm{d}^{\sigma _{S}}\zeta \\
&=&\left( \det S\right) ^{\frac{1}{2}}\mathcal{F}_{\sigma _{S}}\left( b\circ
S\right) \left( \xi \right) ,\quad \xi \in W.
\end{eqnarray*}

\begin{lemma}
\label{n3}Let $\left( W,\sigma \right) $ be a symplectic vector space and $%
S:W\rightarrow W$ a linear isomorphism such that $S^{\sigma }=S$. If $\sigma
_{S}$ is the symplectic form 
\begin{equation*}
\sigma _{S}:W\times W\rightarrow \mathbb{R},\quad \sigma _{S}\left( \xi
,\eta \right) =\sigma \left( \xi ,S\eta \right) ,\quad \xi ,\eta \in W,
\end{equation*}%
then 
\begin{equation*}
\mathcal{F}_{\sigma }=\left( \det S\right) ^{\frac{1}{2}}\mathcal{F}_{\sigma
_{S}}\circ S^{\ast }\quad \text{on }\mathcal{S}^{\prime }\left( W\right) .
\end{equation*}
\end{lemma}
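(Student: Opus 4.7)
The plan is to first verify the identity on the dense subspace $\mathcal{S}(W)\subset \mathcal{S}'(W)$ by a direct computation, and then extend it to all of $\mathcal{S}'(W)$ by continuity/duality. The computation on $\mathcal{S}(W)$ essentially mirrors the one performed in the display immediately preceding the lemma statement; my job is just to organize it cleanly.

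For the first step, fix $b \in \mathcal{S}(W)$ and $\xi \in W$. Starting from the definition of $\mathcal{F}_\sigma(b)(\xi)$, I rewrite the phase using $\sigma(\xi,\eta) = \sigma(\xi, S\cdot S^{-1}\eta) = \sigma_S(\xi, S^{-1}\eta)$. Then I change variables $\eta = S\zeta$, which produces the Jacobian factor $|\det S|$; by the corollary applied to $S$ (since $S^\sigma = S$ and $S$ is invertible), $\det S > 0$, so $|\det S| = \det S$. Finally, I convert the measure $\mathrm{d}^\sigma \zeta$ into $\mathrm{d}^{\sigma_S}\zeta$ via the relation $\mathrm{d}^{\sigma_S}\zeta = (\det S)^{1/2}\,\mathrm{d}^\sigma\zeta$ established just before the lemma (this in turn comes from applying the preceding lemma to the symplectic isomorphism $\phi_S : (W,\sigma_S) \to (W,\sigma)$ and using $|\det \phi_S| = (\det S)^{1/2}$, which follows from $S = \phi_S^\sigma \circ \phi_S$). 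Collecting the factors $\det S \cdot (\det S)^{-1/2} = (\det S)^{1/2}$, the integrand becomes $e^{-\mathrm{i}\sigma_S(\xi,\zeta)} b(S\zeta)$, which is precisely $\mathcal{F}_{\sigma_S}(b \circ S)(\xi) = \mathcal{F}_{\sigma_S}(S^* b)(\xi)$.

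For the extension to $\mathcal{S}'(W)$, I invoke the fact that both $\mathcal{F}_\sigma$ and $\mathcal{F}_{\sigma_S}$ are continuous automorphisms of $\mathcal{S}'(W)$, that $S^* : \mathcal{S}'(W) \to \mathcal{S}'(W)$ is continuous (as the pullback by a linear isomorphism), and that $\mathcal{S}(W)$ is sequentially dense in $\mathcal{S}'(W)$. Since both sides of the claimed identity are continuous linear maps $\mathcal{S}'(W) \to \mathcal{S}'(W)$ that agree on $\mathcal{S}(W)$, they agree on $\mathcal{S}'(W)$.

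There is no real obstacle here: the whole argument is bookkeeping of factors. The only point that deserves attention is the combination of the Jacobian $\det S$ (from the substitution $\eta = S\zeta$) with the measure ratio $(\det S)^{-1/2}$ (from converting $\mathrm{d}^\sigma$ to $\mathrm{d}^{\sigma_S}$) to produce the half-power $(\det S)^{1/2}$ appearing in the statement — and the positivity of $\det S$, which allows this square root to be taken unambiguously as a positive real number, is exactly what the corollary preceding the lemma guarantees.
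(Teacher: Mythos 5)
Your proof is correct and follows essentially the same route as the paper: the identity is verified for $b\in\mathcal{S}\left( W\right) $ by rewriting the phase as $\sigma _{S}\left( \xi ,S^{-1}\eta \right) $, substituting $\eta =S\zeta $ (with $\det S>0$ from the corollary on maps satisfying $S^{\sigma }=S$), and converting $\mathrm{d}^{\sigma }\zeta $ to $\mathrm{d}^{\sigma _{S}}\zeta $ via the factor $\left( \det S\right) ^{\frac{1}{2}}$, exactly as in the displayed computation preceding the lemma. The only addition is that you make the density/continuity extension to $\mathcal{S}^{\prime }\left( W\right) $ explicit, which the paper leaves implicit.
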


Since $\mathrm{d}^{\sigma }\eta $ is a multiple of the Lebesgue measure, the
change of variables formula implies that if $A:W\longrightarrow W$ is a
linear isomorphism, then 
\begin{equation*}
A^{\ast }\circ \mathcal{F}_{\sigma }=\left\vert \det A\right\vert ^{-1}%
\mathcal{F}_{\sigma }\circ \left[ \left( A^{\sigma }\right) ^{-1}\right]
^{\ast }.
\end{equation*}%
Let $b\in \mathcal{S}\left( W\right) $. Then 
\begin{eqnarray*}
A^{\ast }\circ \mathcal{F}_{\sigma }\left( b\right) \left( \xi \right) &=&%
\mathcal{F}_{\sigma }\left( b\right) \left( A\xi \right) =\int_{W}\mathrm{e}%
^{-\mathrm{i}\sigma \left( A\xi ,\eta \right) }b\left( \eta \right) \mathrm{d%
}^{\sigma }\eta \\
&=&\int_{W}\mathrm{e}^{-\mathrm{i}\sigma \left( \xi ,A^{\sigma }\eta \right)
}b\circ \left( A^{\sigma }\right) ^{-1}\left( A^{\sigma }\eta \right) 
\mathrm{d}^{\sigma }\eta \\
&=&\left\vert \det A\right\vert ^{-1}\int_{W}\mathrm{e}^{-\mathrm{i}\sigma
\left( \xi ,\zeta \right) }b\circ \left( A^{\sigma }\right) ^{-1}\left(
\zeta \right) \mathrm{d}^{\sigma }\zeta \\
&=&\left\vert \det A\right\vert ^{-1}\mathcal{F}_{\sigma }\left( b\circ
\left( A^{\sigma }\right) ^{-1}\right) \left( \xi \right) .
\end{eqnarray*}

In particular, if $S=S^{\sigma }:W\rightarrow W$ is a linear isomorphism,
then%
\begin{equation*}
S^{\ast }\circ \mathcal{F}_{\sigma }=\left\vert \det S\right\vert ^{-1}%
\mathcal{F}_{\sigma }\circ \left( S^{-1}\right) ^{\ast }.
\end{equation*}%
For $\lambda \in \mathcal{C}_{\mathrm{pol}}^{\infty }\left( W\right) $, we
define the operator 
\begin{equation*}
\lambda \left( D_{\sigma }\right) =\mathcal{F}_{\sigma }\circ \mathrm{M}%
_{\lambda \left( \cdot \right) }\circ \mathcal{F}_{\sigma },
\end{equation*}%
where $\mathrm{M}_{\lambda \left( \cdot \right) }$ denotes the
multiplication operator by the function $\lambda \left( \cdot \right) $. If $%
S=S^{\sigma }:W\rightarrow W$ is a linear isomorphism, then%
\begin{equation*}
S^{\ast }\circ \lambda \left( D_{\sigma }\right) =\left( \lambda \circ
S^{-1}\right) \left( D_{\sigma }\right) \circ S^{\ast }.
\end{equation*}%
Indeed, for $b\in \mathcal{S}\left( W\right) $ we have%
\begin{eqnarray*}
S^{\ast }\circ \lambda \left( D_{\sigma }\right) &=&S^{\ast }\circ \mathcal{F%
}_{\sigma }\circ \mathrm{M}_{\lambda \left( \cdot \right) }\circ \mathcal{F}%
_{\sigma }=\left( \det S\right) ^{-1}\mathcal{F}_{\sigma }\circ \left(
S^{-1}\right) ^{\ast }\circ \mathrm{M}_{\lambda \left( \cdot \right) }\circ 
\mathcal{F}_{\sigma } \\
&=&\left( \det S\right) ^{-1}\mathcal{F}_{\sigma }\circ \mathrm{M}_{\lambda
\circ S^{-1}\left( \cdot \right) }\circ \left( S^{-1}\right) ^{\ast }\circ 
\mathcal{F}_{\sigma } \\
&=&\left( \det S\right) ^{-1}\left( \det S\right) \mathcal{F}_{\sigma }\circ 
\mathrm{M}_{\lambda \circ S^{-1}\left( \cdot \right) }\circ \mathcal{F}%
_{\sigma }\circ S^{\ast } \\
&=&\left( \lambda \circ S^{-1}\right) \left( D_{\sigma }\right) \circ
S^{\ast }.
\end{eqnarray*}

\begin{lemma}
\label{n11}Let $\left( W,\sigma \right) $ be a symplectic vector space

$\left( \mathrm{a}\right) $ If $A:W\rightarrow W$ is a linear isomorphism,
then%
\begin{equation*}
A^{\ast }\circ \mathcal{F}_{\sigma }=\left\vert \det A\right\vert ^{-1}%
\mathcal{F}_{\sigma }\circ \left[ \left( A^{\sigma }\right) ^{-1}\right]
^{\ast }.
\end{equation*}

$\left( \mathrm{b}\right) $ If $S=S^{\sigma }:W\rightarrow W$ is a linear
isomorphism and $\lambda \in \mathcal{C}_{\mathrm{pol}}^{\infty }\left(
W\right) $, then 
\begin{equation*}
S^{\ast }\circ \mathcal{F}_{\sigma }=\left( \det S\right) ^{-1}\mathcal{F}%
_{\sigma }\circ \left( S^{-1}\right) ^{\ast },
\end{equation*}%
and 
\begin{equation*}
S^{\ast }\circ \lambda \left( D_{\sigma }\right) =\left( \lambda \circ
S^{-1}\right) \left( D_{\sigma }\right) \circ S^{\ast },
\end{equation*}%
where 
\begin{equation*}
\lambda \left( D_{\sigma }\right) =\mathcal{F}_{\sigma }\circ \mathrm{M}%
_{\lambda \left( \cdot \right) }\circ \mathcal{F}_{\sigma }:\mathcal{S}%
^{\prime }\left( W\right) \longrightarrow \mathcal{S}^{\prime }\left(
W\right) .
\end{equation*}

$\left( \mathrm{c}\right) $ If $S=S^{\sigma }:W\rightarrow W$ is a linear
isomorphism and $\lambda \in \mathcal{C}_{\mathrm{pol}}^{\infty }\left(
W\right) $, then%
\begin{equation*}
\lambda \left( D_{\sigma }\right) =\lambda \circ S\left( D_{\sigma
_{S}}\right)
\end{equation*}
\end{lemma}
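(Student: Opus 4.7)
The plan is to unfold both sides via the defining formula
\[
\lambda(D_\sigma)=\mathcal{F}_\sigma\circ\mathrm{M}_\lambda\circ\mathcal{F}_\sigma,\qquad
(\lambda\circ S)(D_{\sigma_S})=\mathcal{F}_{\sigma_S}\circ\mathrm{M}_{\lambda\circ S}\circ\mathcal{F}_{\sigma_S},
\]
and then use Lemma \ref{n3} together with Lemma \ref{n11}(a), (b) to push everything through to a single pair of factors $\mathcal{F}_\sigma$ surrounding $\mathrm{M}_\lambda$.

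First, I would record the elementary pullback identity $\mathrm{M}_{\lambda\circ S}=S^\ast\circ\mathrm{M}_\lambda\circ(S^{-1})^\ast$, which follows directly from $S^\ast(\mathrm{M}_\lambda f)=(\lambda f)\circ S=(\lambda\circ S)\,S^\ast f$. Substituting this into the right-hand side gives
\[
(\lambda\circ S)(D_{\sigma_S})=\bigl(\mathcal{F}_{\sigma_S}\circ S^\ast\bigr)\circ\mathrm{M}_\lambda\circ\bigl((S^{-1})^\ast\circ\mathcal{F}_{\sigma_S}\bigr),
\]
so the task reduces to identifying the two bracketed composites with scalar multiples of $\mathcal{F}_\sigma$.

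For the left bracket, Lemma \ref{n3} says $\mathcal{F}_\sigma=(\det S)^{1/2}\mathcal{F}_{\sigma_S}\circ S^\ast$, which I simply rearrange to $\mathcal{F}_{\sigma_S}\circ S^\ast=(\det S)^{-1/2}\mathcal{F}_\sigma$. For the right bracket, I would first invert the same relation to get $\mathcal{F}_{\sigma_S}=(\det S)^{-1/2}\mathcal{F}_\sigma\circ(S^{-1})^\ast$, and then compose with $(S^{-1})^\ast$ on the left; this reduces matters to computing $(S^{-1})^\ast\circ\mathcal{F}_\sigma\circ(S^{-1})^\ast$. Here I invoke Lemma \ref{n11}(a) with $A=S^{-1}$, noting that $(S^{-1})^\sigma=(S^\sigma)^{-1}=S^{-1}$ and that $\det S>0$ by the Corollary, to obtain $(S^{-1})^\ast\circ\mathcal{F}_\sigma=(\det S)\,\mathcal{F}_\sigma\circ S^\ast$, and then $S^\ast\circ(S^{-1})^\ast=\mathrm{id}$ collapses the remaining composition so that $(S^{-1})^\ast\circ\mathcal{F}_\sigma\circ(S^{-1})^\ast=(\det S)\,\mathcal{F}_\sigma$. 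Hence $(S^{-1})^\ast\circ\mathcal{F}_{\sigma_S}=(\det S)^{1/2}\mathcal{F}_\sigma$.

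Plugging both identifications back in, the $(\det S)^{\pm 1/2}$ factors cancel and I obtain $(\lambda\circ S)(D_{\sigma_S})=\mathcal{F}_\sigma\circ\mathrm{M}_\lambda\circ\mathcal{F}_\sigma=\lambda(D_\sigma)$. The only real bookkeeping point — and the place where the argument could go wrong — is consistently handling the $(\det S)^{1/2}$ from Lemma \ref{n3} against the $\det S$ coming from Lemma \ref{n11}(a) applied to $S^{-1}$; the positivity statement $\det S>0$ from the Corollary is what allows me to write $(\det S)^{1/2}$ unambiguously and to omit absolute values.
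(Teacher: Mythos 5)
Your argument is correct and is essentially the paper's own proof of part (c) run in the opposite direction: the paper starts from $\lambda(D_\sigma)=\mathcal{F}_\sigma\circ\mathrm{M}_\lambda\circ\mathcal{F}_\sigma$ and applies Lemma \ref{n3} twice, the commutation $S^\ast\circ\mathrm{M}_\lambda=\mathrm{M}_{\lambda\circ S}\circ S^\ast$, and part (b), whereas you start from $(\lambda\circ S)(D_{\sigma_S})$ and use the same three ingredients (with part (a) applied to $A=S^{-1}$, which is equivalent to part (b)). The determinant bookkeeping, including the use of $\det S>0$, matches the paper's computation, and like the paper you treat (a) and (b) as already established and only prove (c).
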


\begin{proof}
$\left( \mathrm{c}\right) $ We use $\left( \mathrm{b}\right) $ and Lemma \ref%
{n3} twice:%
\begin{eqnarray*}
\lambda \left( D_{\sigma }\right) &=&\mathcal{F}_{\sigma }\circ \mathrm{M}%
_{\lambda \left( \cdot \right) }\circ \mathcal{F}_{\sigma }=\left( \det
S\right) ^{\frac{1}{2}}\mathcal{F}_{\sigma _{S}}\circ S^{\ast }\circ \mathrm{%
M}_{\lambda \left( \cdot \right) }\circ \mathcal{F}_{\sigma } \\
&=&\left( \det S\right) ^{\frac{1}{2}}\mathcal{F}_{\sigma _{S}}\circ \mathrm{%
M}_{\lambda \circ S\left( \cdot \right) }\circ S^{\ast }\circ \mathcal{F}%
_{\sigma } \\
&=&\left( \det S\right) ^{-\frac{1}{2}}\mathcal{F}_{\sigma _{S}}\circ 
\mathrm{M}_{\lambda \circ S\left( \cdot \right) }\circ \mathcal{F}_{\sigma
}\circ \left( S^{-1}\right) ^{\ast } \\
&=&\left( \det S\right) ^{-\frac{1}{2}}\left( \det S\right) ^{\frac{1}{2}}%
\mathcal{F}_{\sigma _{S}}\circ \mathrm{M}_{\lambda \circ S\left( \cdot
\right) }\circ \mathcal{F}_{\sigma _{S}}\circ S^{\ast }\circ \left(
S^{-1}\right) ^{\ast } \\
&=&\mathcal{F}_{\sigma _{S}}\circ \mathrm{M}_{\lambda \circ S\left( \cdot
\right) }\circ \mathcal{F}_{\sigma _{S}}=\lambda \circ S\left( D_{\sigma
_{S}}\right) .
\end{eqnarray*}
\end{proof}

\begin{remark}
$\left( \mathrm{a}\right) $ If $a\in \mathcal{S}^{\ast }\left( W\right) $
and $\xi \in W$, then $\tau _{\xi }a$ denote the translate by $\xi $ of the
distribution $a$, i.e. $\left( \tau _{\xi }a\right) \left( \cdot \right)
=a\left( \cdot -\xi \right) $. The family $\left\{ \tau _{\xi }\right\}
_{\xi \in W}$ is the unitary representation in $L^{2}\left( W\right) $ of
the additive group $W$ by translations. The family $\left\{ \tau _{\xi
}\right\} _{\xi \in W}$ also defines a representation of the additive group $%
W$ by topological automorphisms of $\mathcal{S}^{\ast }\left( W\right) $
which leave $\mathcal{S}\left( W\right) $ invariant. Since from the
definition of the symplectic Fourier transform 
\begin{equation*}
\tau _{\xi }=\mathrm{e}^{-i\sigma \left( D_{\sigma },\xi \right) },\quad \xi
\in W,
\end{equation*}%
it follows that for $\lambda \in \mathcal{C}_{\mathrm{pol}}^{\infty }\left(
W\right) $ and $\xi \in W$%
\begin{equation*}
\tau _{\xi }\circ \lambda \left( D_{\sigma }\right) =\lambda \left(
D_{\sigma }\right) \circ \tau _{\xi }.
\end{equation*}

$\left( \mathrm{b}\right) $ If $S=S^{\sigma }:W\rightarrow W$ is a linear
isomorphism and $\xi \in W$, then 
\begin{equation*}
S^{\ast }\circ \tau _{\xi }=\tau _{S^{-1}\xi }\circ S^{\ast }.
\end{equation*}%
Indeed, using equality $\tau _{\xi }=\mathrm{e}^{-i\sigma \left( D_{\sigma
},\xi \right) }$ one sees that 
\begin{eqnarray*}
S^{\ast }\circ \tau _{\xi } &=&S^{\ast }\circ \mathrm{e}^{-i\sigma \left(
D_{\sigma },\xi \right) }=\left( \mathrm{e}^{-i\sigma \left( \cdot ,\xi
\right) }\circ S^{-1}\right) \left( D_{\sigma }\right) \circ S^{\ast } \\
&=&\left( \mathrm{e}^{-i\sigma \left( S^{-1}\cdot ,\xi \right) }\circ
\right) \left( D_{\sigma }\right) \circ S^{\ast }=\left( \mathrm{e}%
^{-i\sigma \left( \cdot ,S^{-1}\xi \right) }\circ \right) \left( D_{\sigma
}\right) \circ S^{\ast } \\
&=&\tau _{S^{-1}\xi }\circ S^{\ast }.
\end{eqnarray*}
\end{remark}

\begin{corollary}
\label{n12}If $S=S^{\sigma }:W\rightarrow W$ is a linear isomorphism, $%
\lambda \in \mathcal{C}_{\mathrm{pol}}^{\infty }\left( W\right) $ and $\xi
\in W$, then 
\begin{equation*}
\tau _{S^{-1}\xi }\circ S^{\ast }\circ \lambda \left( D_{\sigma }\right)
=S^{\ast }\circ \lambda \left( D_{\sigma }\right) \circ \tau _{\xi }.
\end{equation*}
\end{corollary}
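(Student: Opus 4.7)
The plan is to chain together the two identities already recorded in the remark immediately preceding the corollary together with the transformation rule from Lemma \ref{n11}. Specifically, I will start from the right-hand side and rewrite it in two steps, never needing to unpack the definition of $\lambda(D_\sigma)$ directly.

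First, I would commute $\lambda(D_\sigma)$ past $\tau_\xi$: by part (a) of the remark, the translations $\tau_\xi$ commute with every operator of the form $\mu(D_\sigma)$ (both being functions of the symplectic momentum, or equivalently because $\tau_\xi = \mathrm{e}^{-\mathrm{i}\sigma(D_\sigma,\xi)}$ itself lies in the same commutative functional calculus). Hence
\begin{equation*}
S^{\ast}\circ\lambda(D_\sigma)\circ\tau_\xi
= S^{\ast}\circ\tau_\xi\circ\lambda(D_\sigma).
\end{equation*}

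Second, I would apply part (b) of the remark to commute $S^{\ast}$ past $\tau_\xi$, which yields $S^{\ast}\circ\tau_\xi = \tau_{S^{-1}\xi}\circ S^{\ast}$, so that
\begin{equation*}
S^{\ast}\circ\tau_\xi\circ\lambda(D_\sigma)
= \tau_{S^{-1}\xi}\circ S^{\ast}\circ\lambda(D_\sigma),
\end{equation*}
which is exactly the left-hand side of the claim. No obstacle is really expected here; the whole content of the corollary is the observation that the two commutation rules from the remark, together with Lemma \ref{n11}, can be applied in sequence. If one prefers a symmetric presentation, the same chain can equivalently be read from the left: apply Lemma \ref{n11}(b) to rewrite $S^{\ast}\circ\lambda(D_\sigma) = (\lambda\circ S^{-1})(D_\sigma)\circ S^{\ast}$ on both sides and then use the two commutation identities once each, but the two-step computation above seems the cleanest.
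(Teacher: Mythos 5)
Your proof is correct and is exactly the intended argument: the paper states this as an immediate corollary of the preceding remark, whose parts (a) and (b) you chain together in the natural order ($\lambda(D_\sigma)$ commutes with $\tau_\xi$, then $S^{\ast}\circ\tau_\xi=\tau_{S^{-1}\xi}\circ S^{\ast}$). The reference to Lemma \ref{n11} is not actually needed for the two-step chain, but that is harmless.
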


In many situations we need to consider additional structures such as the
inner product or complex structures. We shall ask that these structures to
be compatible with symplectic structure.

Recall that a complex structure on a vector space $V$ is an automorphism $%
J:V\rightarrow V$ such that $J^{2}=-\mathrm{Id}$. We denote the space of
linear complex structures on $V$ by $\mathcal{J}\left( V\right) $.

A complex structure $J$ on a symplectic vector space $\left( W,\sigma
\right) $ is called $\sigma $-compatible if%
\begin{equation*}
(J^{\ast }\sigma )(v,w)=\sigma (Jv,Jw)=\sigma (v,w),
\end{equation*}%
for all $v,w\in W$ and 
\begin{equation*}
\sigma \left( v,Jv\right) >0
\end{equation*}%
for all nonzero $v\in W$. This is equivalent to 
\begin{equation*}
g:W\times W\rightarrow \mathbb{R},\quad g\left( v,w\right) =\sigma \left(
v,Jw\right) ,\text{ }v,w\in W,
\end{equation*}%
is a positive definite inner product. We denote by $\mathcal{J}\left(
W,\sigma \right) $ the space of $\sigma $-compatible complex structures on $%
\left( W,\sigma \right) $.

An inner product $g$ on a symplectic vector space $\left( W,\sigma \right) $
is called $\sigma $-compatible if there is a complex structure $J$ on $W$
such that 
\begin{equation*}
g(u,v)=\sigma \left( v,Jw\right)
\end{equation*}
for all $v,w\in W$. We denote by $\mathcal{G}\left( W\right) $ the space of
inner products on $W$, and by $\mathcal{G}\left( W,\sigma \right) $ the
space of $\sigma $-compatible inner products on $\left( W,\sigma \right) $.

\begin{remark}
$\left( \mathrm{a}\right) $ The compatibility condition $g(u,v)=\sigma
\left( v,Jw\right) $ defines a smooth diffeomorphism%
\begin{equation*}
\mathcal{J}\left( W,\sigma \right) \rightarrow \mathcal{G}\left( W,\sigma
\right) .
\end{equation*}%
In fact, the same formula $g(u,v)=\sigma \left( v,Jw\right) $ defines a
linear isomorphism $J\rightarrow g$ from the ambient vector space of linear
maps $V\rightarrow V$ to the ambient vector space of bilinear forms $V\times
V\rightarrow \mathbb{R}$. The bijection $\mathcal{J}\left( W,\sigma \right)
\rightarrow \mathcal{G}\left( W,\sigma \right) $ is the restriction of this
linear isomorphism, so it is a diffeomorphism.

$\left( \mathrm{b}\right) $ There is a canonical retraction $\mathcal{G}%
\left( W\right) \rightarrow \mathcal{G}\left( W,\sigma \right) $ (see for
instance Proposition 2.5.6 in \cite{McDuff}), so we can associate to any
inner product in a canonical manner a $\sigma $-compatible one.
\end{remark}

\section{$\protect\omega _{\protect\sigma ,T}$-representation and the
associated Weyl system}

\begin{lemma}
Let $T:W\rightarrow W$ be a linear map, and let $\omega _{\sigma ,T}$ be the
function 
\begin{equation*}
\omega _{\sigma ,T}:W\times W\longrightarrow \mathbb{T},
\end{equation*}%
\begin{equation*}
\omega _{\sigma ,T}\left( \xi ,\eta \right) =\mathrm{e}^{\mathrm{i}\sigma
\left( \xi ,T\eta \right) },\quad \xi ,\eta \in W.
\end{equation*}%
\newline
Then $\omega _{\sigma ,T}$ is a 2-cocycle or Schur multiplier. Moreover, the
Schur multiplier $\omega _{\sigma ,T}$ is non-degenerate, that is 
\begin{eqnarray*}
\omega _{\sigma ,{T}}\left( \xi ,\eta \right) &=&\omega _{\sigma ,T}\left(
\eta ,\xi \right) ,\quad \forall \eta \in W \\
&\Rightarrow &\xi \ =0,
\end{eqnarray*}%
if and only if $T+T^{\sigma }$ is an isomorphism.
\end{lemma}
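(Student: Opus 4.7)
The plan has two halves: the cocycle identity, which is a routine bilinear computation, and the non-degeneracy equivalence, which is the actual content.

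For the cocycle property, I would just expand both sides of
\[
\omega_{\sigma,T}(\xi,\eta)\,\omega_{\sigma,T}(\xi+\eta,\zeta)=\omega_{\sigma,T}(\xi,\eta+\zeta)\,\omega_{\sigma,T}(\eta,\zeta)
\]
using $\omega_{\sigma,T}(\xi,\eta)=\mathrm{e}^{\mathrm{i}\sigma(\xi,T\eta)}$ and the bilinearity of $\sigma$; both sides collapse to $\mathrm{e}^{\mathrm{i}[\sigma(\xi,T\eta)+\sigma(\xi,T\zeta)+\sigma(\eta,T\zeta)]}$. Normalization $\omega_{\sigma,T}(\xi,0)=\omega_{\sigma,T}(0,\eta)=1$ is immediate from $T0=0$ and bilinearity. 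So $\omega_{\sigma,T}$ is a normalized Schur multiplier.

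For the non-degeneracy, the key reduction is to translate the symmetry condition $\omega_{\sigma,T}(\xi,\eta)=\omega_{\sigma,T}(\eta,\xi)$ into a linear statement. This means $\mathrm{e}^{\mathrm{i}\sigma(\xi,T\eta)}=\mathrm{e}^{\mathrm{i}\sigma(\eta,T\xi)}$, so $\sigma(\xi,T\eta)-\sigma(\eta,T\xi)\in 2\pi\mathbb{Z}$ for every $\eta\in W$. Using the defining property of the symplectic adjoint (Lemma with identity \eqref{n1}) together with the antisymmetry of $\sigma$, I rewrite
\[
\sigma(\eta,T\xi)=\sigma(T^{\sigma}\eta,\xi)=-\sigma(\xi,T^{\sigma}\eta),
\]
so the symmetry condition becomes $\sigma(\xi,(T+T^{\sigma})\eta)\in 2\pi\mathbb{Z}$ for all $\eta$. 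Since $\eta\mapsto\sigma(\xi,(T+T^{\sigma})\eta)$ is $\mathbb{R}$-linear with image in the discrete set $2\pi\mathbb{Z}$, it must vanish identically. Hence the symmetry condition on $\xi$ is equivalent to
\[
\sigma\bigl(\xi,(T+T^{\sigma})\eta\bigr)=0\quad\text{for all }\eta\in W,
\]
i.e.\ $\xi$ lies in the symplectic orthogonal of $\operatorname{Im}(T+T^{\sigma})$.

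Now I finish using non-degeneracy of $\sigma$. If $T+T^{\sigma}$ is an isomorphism, then $\operatorname{Im}(T+T^{\sigma})=W$, so $\sigma(\xi,w)=0$ for all $w\in W$ forces $\xi=0$; hence $\omega_{\sigma,T}$ is non-degenerate. Conversely, if $T+T^{\sigma}$ fails to be an isomorphism, then $X:=\operatorname{Im}(T+T^{\sigma})$ is a proper subspace; the standard symplectic complement formula $\dim X+\dim X^{\sigma}=\dim W$ gives $X^{\sigma}\neq 0$, so one can pick a nonzero $\xi\in X^{\sigma}$ witnessing the failure of non-degeneracy. The only mildly delicate point to state carefully is the linear/discrete dichotomy used to upgrade ``$\in 2\pi\mathbb{Z}$ for all $\eta$'' to ``$=0$ for all $\eta$''; everything else is routine once the symplectic adjoint is substituted.
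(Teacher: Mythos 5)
Your proof is correct and follows essentially the same route as the paper: expand the cocycle identity by bilinearity of $\sigma$, then use the symplectic adjoint to rewrite the symmetry condition as the vanishing of $\sigma$ against $T+T^{\sigma}$. Two minor differences are worth noting: the paper moves the operator onto $\xi$ and concludes $\xi\in\ker\left(T+T^{\sigma}\right)$, so non-degeneracy is immediately equivalent to injectivity and hence invertibility in finite dimensions, whereas you keep it on $\eta$ and land in $\left(\operatorname{Im}\left(T+T^{\sigma}\right)\right)^{\sigma}$, which requires the dimension formula for symplectic complements to finish; and you are in fact more careful than the paper on one point, namely that equality of the exponentials only yields $\sigma\left(\xi,\left(T+T^{\sigma}\right)\eta\right)\in 2\pi\mathbb{Z}$ for all $\eta$, which must be upgraded to identical vanishing via linearity in $\eta$ --- the paper's proof asserts this equivalence without comment.
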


\begin{proof}
We have to show that $\omega _{\sigma ,T}$ satisfies the cocycle equation 
\begin{equation*}
\omega _{\sigma ,T}\left( \xi ,\eta \right) \omega _{{\sigma ,T}}\left( \xi
+\eta ,\zeta \right) =\omega _{\sigma ,T}\left( \xi ,\eta +\zeta \right)
\omega _{\sigma ,T}\left( \eta ,\zeta \right) ,\quad \xi ,\eta ,\zeta \in W.
\end{equation*}%
By definition this equality is equivalent to 
\begin{eqnarray*}
\sigma \left( \xi ,T\eta \right) +\sigma \left( \xi +\eta ,T\zeta \right)
&=&\sigma \left( \xi ,T\left( \eta +\zeta \right) \right) +\sigma \left(
\eta ,T\zeta \right) \\
&=&\sigma \left( \xi ,T\eta \right) +\sigma \left( \xi ,T\zeta \right)
+\sigma \left( \eta ,T\zeta \right) ,\quad \xi ,\eta ,\zeta \in W.
\end{eqnarray*}%
Obviously we have $\omega _{\sigma ,T}\left( \xi ,0\right) =\omega _{\sigma
,T}\left( 0,\xi \right) =1$, $\xi \in W$. Hence $\omega _{\sigma ,T}$ is a
2-cocycle or Schur multiplier.

Let $\xi \in W$. Then 
\begin{equation*}
\omega _{\sigma ,T}\left( \xi ,\eta \right) =\omega _{\sigma ,T}\left( \eta
,\xi \right) ,\quad \forall \eta \in W
\end{equation*}%
\begin{equation*}
\Leftrightarrow \sigma \left( \xi ,T\eta \right) =\sigma \left( \eta ,T\xi
\right) \quad \forall \eta \in W
\end{equation*}%
\begin{equation*}
\Leftrightarrow \sigma \left( \left( T+T^{\sigma }\right) \xi ,\eta \right)
=0\quad \forall \eta \in W
\end{equation*}%
\begin{equation*}
\Leftrightarrow \left( T+T^{\sigma }\right) \xi =0\Leftrightarrow \xi \in
\ker \left( T+T^{\sigma }\right) .
\end{equation*}%
So we deduce that 
\begin{equation*}
\left( \forall \eta \in W\right) \left( \omega _{\sigma ,T}\left( \xi ,\eta
\right) =\omega _{\sigma ,T}\left( \eta ,\zeta \right) \right)
\Leftrightarrow \left( \xi \in \ker \left( T+T^{\sigma }\right) \right) ,
\end{equation*}%
and this clearly implies that $\omega _{\sigma ,T}$ is non-degenerate if and
only if $T+T^{\sigma }$ is an isomorphism.
\end{proof}

\begin{remark}
$\left( \mathrm{a}\right) $ We know that for any continuous multiplier $%
\omega $ on $W$, there is a projective representation $\left\{ \mathcal{W}%
\left( \xi \right) \right\} _{\xi \in W}\equiv \left( \mathcal{H},\mathcal{W}%
,\omega \right) $ whose multiplier is $\omega $, that is a strongly
continuous map 
\begin{equation*}
\mathcal{W}:W\rightarrow \mathcal{U}\left( \mathcal{H}\right) )
\end{equation*}%
which satisfies 
\begin{equation*}
\mathcal{W}\left( \xi \right) \mathcal{W}\left( \eta \right) =\omega \left(
\xi ,\eta \right) \mathcal{W}\left( \xi +\eta \right) ,\quad \xi ,\eta \in W.
\end{equation*}%
$\mathcal{W}$ is called a $\omega $-representation (or, less precisely, a
multiplier or ray, or cocycle representation).

$\left( \mathrm{b}\right) $ For instance, $\left( L^{2}\left( W\right) ,%
\mathcal{R}_{\omega },\omega \right) $ is a projective representation of $W$
with $\omega $ the associated multiplier, where 
\begin{equation*}
\mathcal{R}_{\omega }:W\rightarrow \mathcal{U}\left( L^{2}\left( W\right)
\right) ,\quad \mathcal{R}_{\omega }\left( \xi \right) f=\omega \left( \cdot
,\xi \right) f\left( \cdot +\xi \right) ,\quad \xi \in W.
\end{equation*}%
This representation is called the regular $\omega $-representation of $W$.
\end{remark}

Let $\left\{ \mathcal{W}_{\sigma ,T}\left( \xi \right) \right\} _{\xi \in
W}\equiv \left( \mathcal{H},\mathcal{W}_{\sigma ,T},\omega _{\sigma
,T}\right) $ be a $\omega _{\sigma ,T}$-representation of $W$. For fixed $%
\xi $ in $W$, the map 
\begin{equation*}
\mathbb{R}\ni t\rightarrow \mathcal{W}_{\sigma ,T}\left( t\xi \right) \in 
\mathcal{B}(\mathcal{H})
\end{equation*}%
sayisfies 
\begin{equation*}
\mathcal{W}_{\sigma ,T}\left( t\xi \right) \mathcal{W}_{\sigma ,T}\left(
s\xi \right) =\omega _{\sigma ,T}\left( t\xi ,s\xi \right) \mathcal{W}%
_{\sigma ,T}\left( \left( t+s\right) \xi \right) ,\quad s,t\in \mathbb{R},
\end{equation*}%
\begin{equation*}
\mathcal{W}_{\sigma ,T}\left( t\xi \right) \mathcal{W}_{\sigma ,T}\left(
s\xi \right) =\mathrm{e}^{\mathrm{i}ts\sigma \left( \xi ,T\xi \right) }%
\mathcal{W}_{\sigma ,T}\left( \left( t+s\right) \xi \right) ,\quad s,t\in 
\mathbb{R}.
\end{equation*}%
Here $t\rightarrow \mathcal{W}_{\sigma ,T}\left( t\xi \right) $ it is not in
general a group representation of $\mathbb{R}$. Instead, by using equality 
\begin{equation*}
ts=\frac{1}{2}\left[ \left( t+s\right) ^{2}-t^{2}-s^{2}\right] ,
\end{equation*}%
we find that the map $t\rightarrow \widetilde{\mathcal{W}}_{\sigma ,T}\left(
t\xi \right) $ is a group representation of $\mathbb{R}$, where 
\begin{equation*}
\left\{ \widetilde{\mathcal{W}}_{\sigma ,T}\left( \xi \right) \right\} _{\xi
\in W}\equiv \left( \mathcal{H},\widetilde{\mathcal{W}}_{\sigma ,T},%
\widetilde{\omega }_{\sigma ,T}\right)
\end{equation*}%
is the $\widetilde{\omega }_{\sigma ,T}$-representation of $W$ given by 
\begin{equation*}
\widetilde{\mathcal{W}}_{\sigma ,T}\left( \xi \right) =\mathrm{e}^{\frac{%
\mathrm{i}}{2}\sigma \left( \xi ,T\xi \right) }\mathcal{W}_{\sigma ,T}\left(
\xi \right) ,\quad \xi \in W,
\end{equation*}%
and%
\begin{eqnarray*}
\widetilde{\omega }_{\sigma ,T}\left( \xi ,\eta \right) &=&\frac{\mathrm{e}^{%
\frac{\mathrm{i}}{2}\sigma \left( \xi ,T\xi \right) }\mathrm{e}^{\frac{%
\mathrm{i}}{2}\sigma \left( \eta ,T\eta \right) }}{\mathrm{e}^{\frac{\mathrm{%
i}}{2}\sigma \left( \xi +\eta ,T\left( \xi +\eta \right) \right) }}\omega
_{\sigma ,T}\left( \xi ,\eta \right) \\
&=&\mathrm{e}^{-\frac{\mathrm{i}}{2}\sigma \left( \xi ,T\eta \right) -\frac{%
\mathrm{i}}{2}\sigma \left( \eta ,T\xi \right) }\mathrm{e}^{\mathrm{i}\sigma
\left( \xi ,T\eta \right) } \\
&=&\mathrm{e}^{\frac{\mathrm{i}}{2}\left( \sigma \left( \xi ,T\eta \right)
-\sigma \left( \eta ,T\xi \right) \right) } \\
&=&\mathrm{e}^{\frac{\mathrm{i}}{2}\sigma \left( \xi ,\left( T+T^{\sigma
}\right) \eta \right) }=\mathrm{e}^{\mathrm{i}\sigma \left( \xi ,\frac{1}{2}%
\left( T+T^{\sigma }\right) \eta \right) } \\
&=&\omega _{{\sigma ,}\frac{{1}}{{2}}\left( {T+T}^{{\sigma }}\right) }\left(
\xi ,\eta \right) ,\quad \xi ,\eta \in W.
\end{eqnarray*}

We recall that the set of all possible multipliers on $W$ can be given an
abelian group structure by defning the product of two multipliers as their
pointwise product. The resulting group we denote by $\mathrm{Z}^{2}\left( W;%
\mathbb{T}\right) $. The set of all multipliers satisfying%
\begin{equation*}
\alpha \left( \xi ,\eta \right) =\frac{\mu \left( \xi +\eta \right) }{\mu
\left( \xi \right) \mu \left( \eta \right) },\quad \xi ,\eta \in G,
\end{equation*}%
for an arbitrary function $\mu :G\rightarrow \mathbb{T}$ such that $\mu
\left( 0\right) =1$, forms an invariant subgroup $\mathrm{B}^{2}\left( W;%
\mathbb{T}\right) $ of $\mathrm{Z}^{2}\left( W;\mathbb{T}\right) $. Thus we
may form the quotient group 
\begin{equation*}
\mathrm{H}^{2}\left( W;\mathbb{T}\right) =\frac{\mathrm{Z}^{2}\left( W;%
\mathbb{T}\right) }{\mathrm{B}^{2}\left( W;\mathbb{T}\right) }.
\end{equation*}%
Two multipliers $\omega _{1}$ and are $\omega _{2}$ equivalent (or
cohomologous) if $\frac{\omega _{1}}{\omega _{2}}\in \mathrm{B}^{2}\left( W;%
\mathbb{T}\right) $.

The functions $\omega _{\sigma ,T}$ and $\widetilde{\omega }_{\sigma ,T}$
are cohomologous and $\widetilde{\omega }_{\sigma ,T}$ is normalized, i.e. 
\begin{equation*}
\widetilde{\omega }_{\sigma ,T}\left( \xi ,-\xi \right) =1,\quad \xi \in W.
\end{equation*}

The map $t\rightarrow \widetilde{\mathcal{W}}_{\sigma ,T}\left( t\xi \right) 
$ is a group representation of $\mathbb{R}$ and for each $\xi \in W$ there
is a unique self-adjoint operator $\phi _{{T}}\left( \xi \right) $, $T $%
-field operator associated to $\xi $, such that 
\begin{equation*}
\widetilde{\mathcal{W}}_{\sigma ,T}\left( t\xi \right) =\mathrm{e}^{\mathrm{i%
}t\phi _{{T}}\left( \xi \right) }
\end{equation*}%
for all real $t$.

Since 
\begin{equation*}
\widetilde{\mathcal{W}}_{\sigma ,T}\left( \xi \right) =\mathrm{e}^{\frac{%
\mathrm{i}}{2}\sigma \left( \xi ,T\xi \right) }\mathcal{W}_{\sigma ,T}\left(
\xi \right) ,\quad \xi \in W,
\end{equation*}%
and%
\begin{equation*}
\mathcal{W}_{\sigma ,T}\left( \xi \right) =\mathrm{e}^{-\frac{\mathrm{i}}{2}%
\sigma \left( \xi ,T\xi \right) }\widetilde{\mathcal{W}}_{\sigma ,T}\left(
\xi \right) =\mathrm{e}^{\frac{\mathrm{i}}{2}\sigma \left( T\xi ,\xi \right)
}\widetilde{\mathcal{W}}_{\sigma ,T}\left( \xi \right) =\mathrm{e}^{\frac{%
\mathrm{i}}{2}\sigma \left( \xi ,T^{\sigma }\xi \right) }\widetilde{\mathcal{%
W}}_{\sigma ,T}\left( \xi \right) \quad \xi \in W,
\end{equation*}%
we get 
\begin{equation*}
\left\{ \mathcal{W}_{\sigma ,T}\left( \xi \right) :\xi \in W\right\}
^{\prime }=\left\{ \widetilde{\mathcal{W}}_{\sigma ,T}\left( \xi \right)
:\xi \in W\right\} ^{\prime }.
\end{equation*}%
where $S^{\prime }$ is the commutant of the subset $S\subset \mathcal{B}%
\left( \mathcal{H}\right) $. Thus, we have partially proved the following
result.

\begin{lemma}
$\left( \mathrm{a}\right) $ The functions $\omega _{\sigma ,T}$ and $%
\widetilde{\omega }_{\sigma ,T}$ are cohomologous and $\widetilde{\omega }%
_{\sigma ,T}$ is normalized.

$\left( \mathrm{b}\right) $ The map $t\rightarrow \widetilde{\mathcal{W}}%
_{\sigma ,T}\left( t\xi \right) $ is a group representation of $\mathbb{R}$
and for each $\xi \in W$ there is a unique self-adjoint operator $\phi _{{T}%
}\left( \xi \right) $, $T$-field operator associated to $\xi $, such that 
\begin{equation*}
\widetilde{\mathcal{W}}_{\sigma ,T}\left( t\xi \right) =\mathrm{e}^{\mathrm{i%
}t\phi _{{T}}\left( \xi \right) }
\end{equation*}%
for all real $t$.

$\left( \mathrm{c}\right) $ The map 
\begin{eqnarray*}
\sigma _{T+T^{{\sigma }}} &:&W\times W\rightarrow \mathbb{R}, \\
\sigma _{T+T^{{\sigma }}}\left( \xi ,\eta \right) &=&\sigma \left( \xi
,T\eta \right) -\sigma \left( \eta ,T\xi \right) \\
&=&\sigma \left( \xi ,\left( T+T^{\sigma }\right) \eta \right) ,\quad \xi
,\eta \in W,
\end{eqnarray*}%
is a bilinear antisymmetric $2$-form. $\sigma _{T+T^{{\sigma }}}$ is
symplectic $($i.e. $\sigma _{T+T^{{\sigma }}}$ is non-degenerate$)$ if and
only if $T+T^{\sigma }$ is an isomorphism $(\Leftrightarrow $ $\omega
_{\sigma ,T}$ is a non-degenerate Schur multiplier$)$.

$\left( \mathrm{d}\right) $ If $T+T^{\sigma }$ is an isomorphism $%
(\Leftrightarrow $ $\omega _{\sigma ,T}$ is a non-degenerate Schur multiplier%
$)$, then $\sigma _{T+T^{{\sigma }}}$ is a symplectic form and in this case 
\begin{equation*}
\left\{ \widetilde{\mathcal{W}}_{\sigma ,T}\left( \xi \right) \right\} _{\xi
\in W}\equiv \left( \mathcal{H},\widetilde{\mathcal{W}}_{\sigma ,T},%
\widetilde{\omega }_{\sigma ,T}\right)
\end{equation*}%
is a Weyl system for the symplectic space $\left( W,\sigma _{T+T^{{\sigma }%
}}\right) $, i.e.,%
\begin{equation*}
\widetilde{\mathcal{W}}_{\sigma ,T}\left( \xi \right) \widetilde{\mathcal{W}}%
_{\sigma ,T}\left( \eta \right) =\mathrm{e}^{\frac{\mathrm{i}}{2}\sigma
_{T+T^{{\sigma }}}\left( \xi ,\eta \right) }\widetilde{\mathcal{W}}_{\sigma
,T}\left( \xi +\eta \right) ,\quad \xi ,\eta \in W.
\end{equation*}%
Also, in this case, the map $\xi \rightarrow \phi _{T}\left( \xi \right) $
is $\mathbb{R}$-linear.

$\left( \mathrm{e}\right) $ The projective representations $\left( \mathcal{H%
},\mathcal{W}_{\sigma ,T},\omega _{\sigma ,T}\right) $ and $\left( \mathcal{H%
},\widetilde{\mathcal{W}}_{\sigma ,T},\widetilde{\omega }_{\sigma ,T}\right) 
$ satisfy 
\begin{equation*}
\widetilde{\mathcal{W}}_{\sigma ,T}\left( \xi \right) =\mathrm{e}^{\frac{%
\mathrm{i}}{2}\sigma \left( \xi ,T\xi \right) }\mathcal{W}_{\sigma ,T}\left(
\xi \right) ,\quad \xi \in W,
\end{equation*}%
\begin{equation*}
\mathcal{W}_{\sigma ,T}\left( \xi \right) =\mathrm{e}^{-\frac{\mathrm{i}}{2}%
\sigma \left( \xi ,T\xi \right) }\widetilde{\mathcal{W}}_{\sigma ,T}\left(
\xi \right) =\mathrm{e}^{\frac{\mathrm{i}}{2}\sigma \left( T\xi ,\xi \right)
}\widetilde{\mathcal{W}}_{\sigma ,T}\left( \xi \right) =\mathrm{e}^{\frac{%
\mathrm{i}}{2}\sigma \left( \xi ,T^{\sigma }\xi \right) }\widetilde{\mathcal{%
W}}_{\sigma ,T}\left( \xi \right) \quad \xi \in W.
\end{equation*}%
In particular, 
\begin{equation*}
\left\{ \mathcal{W}_{\sigma ,T}\left( \xi \right) :\xi \in W\right\}
^{\prime }=\left\{ \widetilde{\mathcal{W}}_{\sigma ,T}\left( \xi \right)
:\xi \in W\right\} ^{\prime }.
\end{equation*}
\end{lemma}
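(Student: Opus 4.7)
The plan is to organize the computations already displayed in the text preceding the statement into the five labeled items; parts (a), (c), (e) are direct verifications, (b) is Stone's theorem applied to a one-parameter unitary group, and (d) carries the only genuine content beyond those displays.

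For (a), taking $\mu(\xi) = \mathrm{e}^{(\mathrm{i}/2)\sigma(\xi, T\xi)}$, I check that $\widetilde{\omega}_{\sigma,T}(\xi,\eta)/\omega_{\sigma,T}(\xi,\eta) = \mu(\xi+\eta)/(\mu(\xi)\mu(\eta))$, which is precisely the first equality in the displayed computation of $\widetilde{\omega}_{\sigma,T}$; this places the quotient in $\mathrm{B}^{2}(W;\mathbb{T})$. Normalization $\widetilde{\omega}_{\sigma,T}(\xi,-\xi) = 1$ is then immediate from the closed form $\widetilde{\omega}_{\sigma,T}(\xi,\eta) = \exp\bigl(\tfrac{\mathrm{i}}{2}\sigma(\xi,(T+T^{\sigma})\eta)\bigr)$ and antisymmetry of $\sigma$.

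For (b), combining the $\omega_{\sigma,T}$-cocycle relation with the definition $\widetilde{\mathcal{W}}_{\sigma,T}(\xi) = \mathrm{e}^{(\mathrm{i}/2)\sigma(\xi,T\xi)}\mathcal{W}_{\sigma,T}(\xi)$ and the polarization identity $2ts = (t+s)^{2} - t^{2} - s^{2}$ produces the group law $\widetilde{\mathcal{W}}_{\sigma,T}(t\xi)\widetilde{\mathcal{W}}_{\sigma,T}(s\xi) = \widetilde{\mathcal{W}}_{\sigma,T}((t+s)\xi)$; strong continuity is inherited from $\mathcal{W}_{\sigma,T}$, so Stone's theorem produces the unique self-adjoint generator $\phi_{T}(\xi)$. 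Part (c) is formal: bilinearity and antisymmetry follow from the defining property $\sigma(T^{\sigma}\xi,\eta) = \sigma(\xi,T\eta)$ of the symplectic adjoint, and non-degeneracy of $\sigma_{T+T^{\sigma}}$ translates into invertibility of $T+T^{\sigma}$ via non-degeneracy of $\sigma$ itself. Part (e) is also immediate: the conversion formulas restate the definition (using $\sigma(T\xi,\xi) = \sigma(\xi,T^{\sigma}\xi)$), and the commutant equality holds because scalar multiples do not affect the commutant.

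The Weyl identity in (d) is obtained by substituting the definition of $\widetilde{\mathcal{W}}_{\sigma,T}$ into the $\omega_{\sigma,T}$-cocycle relation and invoking the identity $\widetilde{\omega}_{\sigma,T}(\xi,\eta) = \exp\bigl(\tfrac{\mathrm{i}}{2}\sigma_{T+T^{\sigma}}(\xi,\eta)\bigr)$ already established. The main obstacle is the $\mathbb{R}$-linearity of $\xi \mapsto \phi_{T}(\xi)$. Homogeneity $\phi_{T}(t\xi) = t\phi_{T}(\xi)$ is read off by comparing two strongly continuous one-parameter unitary groups that agree at $s=1$. Additivity $\phi_{T}(\xi+\eta) = \overline{\phi_{T}(\xi) + \phi_{T}(\eta)}$ on the common dense domain of $\mathcal{C}^{\infty}$-vectors of the representation requires a Trotter-type argument: splitting $\widetilde{\mathcal{W}}_{\sigma,T}(t(\xi+\eta)/n)^{n}$ into a product of $\widetilde{\mathcal{W}}_{\sigma,T}(t\xi/n)^{n}$ and $\widetilde{\mathcal{W}}_{\sigma,T}(t\eta/n)^{n}$ via the Weyl relation produces phase factors of order $1/n$ that vanish in the limit, so Trotter's product formula delivers the identity. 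This is the only step that requires more than the direct computations already recorded in the text.
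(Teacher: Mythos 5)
Your proposal is correct and takes essentially the same route as the paper, whose proof is a one-line remark that everything except the $\mathbb{R}$-linearity of $\xi\mapsto\phi_{T}(\xi)$ is already contained in the displayed computations preceding the lemma, the linearity being cited as a consequence of the Weyl-system property --- exactly the homogeneity-plus-Trotter argument you supply for part (d). One small wording correction: for homogeneity the two one-parameter groups $s\mapsto\widetilde{\mathcal{W}}_{\sigma,T}(st\xi)$ and $s\mapsto\mathrm{e}^{\mathrm{i}st\phi_{T}(\xi)}$ coincide for \emph{all} $s$ (not merely at $s=1$, which by itself would not determine the generator), and it is this identity of groups that forces equality of the Stone generators.
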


\begin{proof}
As noted before, almost all the statements have been proven. The rest is a
simple interpretation of the definitions except the $\mathbb{R}$-limearity
of the map $\xi \rightarrow \phi _{T}\left( \xi \right) $ which is a
consequence of the fact that $\left( \mathcal{H},\widetilde{\mathcal{W}}%
_{\sigma ,T},\widetilde{\omega }_{\sigma ,T}\right) $ is a Weyl system.
\end{proof}

From now on, we shall always assume that $T+T^{\sigma }$ is an isomorphism.

\begin{corollary}
$\left( \mathcal{H},\mathcal{W}_{\sigma ,T},\omega _{\sigma ,T}\right) $ is
an irreducible $\omega _{\sigma ,T}$-representation if and only if $\left( 
\mathcal{H},\widetilde{\mathcal{W}}_{\sigma ,T},\widetilde{\omega }_{\sigma
,T},\left( W,\sigma _{T+T^{{\sigma }}}\right) \right) $ is an irreducible
projective representation.
\end{corollary}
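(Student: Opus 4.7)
The plan is to reduce the equivalence to the equality of commutants already established in part (e) of the preceding lemma. Recall that a projective representation $(\mathcal{H}, \pi, \omega)$ is irreducible precisely when $\{\pi(\xi) : \xi \in W\}' = \mathbb{C}\cdot I_{\mathcal{H}}$; this is a standard fact which depends only on the family of operators, not on the specific multiplier they satisfy (since a scalar phase does not affect which operators commute with them).

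First I would invoke part (e) of the lemma above, which gives
\[
\widetilde{\mathcal{W}}_{\sigma,T}(\xi) = \mathrm{e}^{\frac{\mathrm{i}}{2}\sigma(\xi,T\xi)}\,\mathcal{W}_{\sigma,T}(\xi),\quad \xi \in W,
\]
so that each $\widetilde{\mathcal{W}}_{\sigma,T}(\xi)$ differs from $\mathcal{W}_{\sigma,T}(\xi)$ only by a unimodular scalar factor. Consequently an operator $A \in \mathcal{B}(\mathcal{H})$ satisfies $A\mathcal{W}_{\sigma,T}(\xi) = \mathcal{W}_{\sigma,T}(\xi) A$ for every $\xi \in W$ if and only if it satisfies $A\widetilde{\mathcal{W}}_{\sigma,T}(\xi) = \widetilde{\mathcal{W}}_{\sigma,T}(\xi) A$ for every $\xi \in W$. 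This is exactly the identity of commutants
\[
\{\mathcal{W}_{\sigma,T}(\xi) : \xi \in W\}' = \{\widetilde{\mathcal{W}}_{\sigma,T}(\xi) : \xi \in W\}',
\]
already recorded at the end of the lemma.

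Next, I would apply the standard characterization of irreducibility for a (projective) representation on a Hilbert space: $(\mathcal{H}, \mathcal{W}_{\sigma,T}, \omega_{\sigma,T})$ is irreducible if and only if its commutant is $\mathbb{C}\cdot I_{\mathcal{H}}$, and similarly for $(\mathcal{H}, \widetilde{\mathcal{W}}_{\sigma,T}, \widetilde{\omega}_{\sigma,T})$. Since the two commutants coincide, one of them reduces to the scalars exactly when the other does, which yields the desired equivalence. The main point to verify is therefore just that this standard Schur-type criterion applies in the projective setting, which is immediate: the proof that a commuting self-adjoint or unitary operator must be scalar uses only the fact that the operators $\pi(\xi)$ act on $\mathcal{H}$, not any particular cocycle identity they satisfy.

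I do not anticipate a substantial obstacle: the entire argument is essentially the observation that multiplying each operator of a family by a scalar phase changes neither the commutant nor the collection of invariant closed subspaces. The only care needed is to state the Schur-type criterion precisely in the projective context and to note that the passage from $\mathcal{W}_{\sigma,T}$ to $\widetilde{\mathcal{W}}_{\sigma,T}$ (and back) preserves invariant subspaces, since an invariant subspace for a set of operators is clearly invariant under any scalar multiples of them.
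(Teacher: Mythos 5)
Your proposal is correct and follows exactly the route the paper intends: the corollary is stated without proof precisely because it is an immediate consequence of the commutant identity $\left\{ \mathcal{W}_{\sigma ,T}\left( \xi \right) :\xi \in W\right\}^{\prime }=\left\{ \widetilde{\mathcal{W}}_{\sigma ,T}\left( \xi \right) :\xi \in W\right\}^{\prime }$ recorded in part (e) of the preceding lemma, combined with the standard Schur-type criterion that irreducibility of a projective representation is equivalent to its commutant reducing to the scalars. Your observation that a unimodular scalar factor changes neither commutants nor invariant subspaces is exactly the point.
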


\begin{corollary}
Any two irreducible $\omega _{\sigma ,T}$-representations are unitary
equivalent.
\end{corollary}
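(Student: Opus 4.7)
The plan is to reduce the statement to the classical Stone--von Neumann uniqueness theorem via the previous corollary. Let $\left(\mathcal{H}_{1},\mathcal{W}_{\sigma ,T}^{(1)},\omega _{\sigma ,T}\right)$ and $\left(\mathcal{H}_{2},\mathcal{W}_{\sigma ,T}^{(2)},\omega _{\sigma ,T}\right)$ be two irreducible $\omega _{\sigma ,T}$-representations of $W$. Define the associated normalized representations
\begin{equation*}
\widetilde{\mathcal{W}}_{\sigma ,T}^{(j)}\left( \xi \right) =\mathrm{e}^{\frac{\mathrm{i}}{2}\sigma \left( \xi ,T\xi \right) }\mathcal{W}_{\sigma ,T}^{(j)}\left( \xi \right) ,\quad \xi \in W,\ j=1,2.
\end{equation*}

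First I would invoke the previous corollary to conclude that each $\left(\mathcal{H}_{j},\widetilde{\mathcal{W}}_{\sigma ,T}^{(j)},\widetilde{\omega }_{\sigma ,T}\right)$ is an irreducible Weyl system for the symplectic space $(W,\sigma _{T+T^{\sigma }})$, which is symplectic by our standing assumption that $T+T^{\sigma }$ is an isomorphism. Next, I would apply the Stone--von Neumann theorem to the symplectic vector space $(W,\sigma _{T+T^{\sigma }})$: it asserts that any two irreducible Weyl systems over a finite-dimensional symplectic vector space are unitarily equivalent. This yields a unitary operator $U:\mathcal{H}_{1}\rightarrow \mathcal{H}_{2}$ such that
\begin{equation*}
U\,\widetilde{\mathcal{W}}_{\sigma ,T}^{(1)}\left( \xi \right) =\widetilde{\mathcal{W}}_{\sigma ,T}^{(2)}\left( \xi \right) U,\quad \xi \in W.
\end{equation*}

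Finally, using the relation $\mathcal{W}_{\sigma ,T}^{(j)}(\xi )=\mathrm{e}^{-\frac{\mathrm{i}}{2}\sigma (\xi ,T\xi )}\widetilde{\mathcal{W}}_{\sigma ,T}^{(j)}(\xi )$ and the fact that the scalar prefactor is independent of $j$, the same $U$ intertwines $\mathcal{W}_{\sigma ,T}^{(1)}$ and $\mathcal{W}_{\sigma ,T}^{(2)}$, which proves the unitary equivalence of the original $\omega _{\sigma ,T}$-representations. The only non-routine ingredient is the Stone--von Neumann theorem, which is invoked as a black box; everything else is bookkeeping with the cocycle-to-normalized reduction already carried out in the preceding lemma.
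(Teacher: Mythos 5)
Your proposal is correct and follows exactly the route the paper intends: the preceding corollary converts each irreducible $\omega _{\sigma ,T}$-representation into an irreducible Weyl system for $\left( W,\sigma _{T+T^{\sigma }}\right) $, the Stone--von Neumann uniqueness theorem supplies the intertwining unitary, and the common scalar factor $\mathrm{e}^{-\frac{\mathrm{i}}{2}\sigma \left( \xi ,T\xi \right) }$ lets the same unitary intertwine the original representations. Nothing is missing.
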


\begin{corollary}
Suppose that $\left( \mathcal{H},\mathcal{W},\omega _{\sigma ,T}\right) $ is
an irreducible $\omega _{\sigma ,T}$-representation and $S:W\rightarrow W$
is a linear map.

$\left( \mathrm{a}\right) $ If $S$ satisfies $S^{\sigma }\left( T+T^{\sigma
}\right) S=\left( T+T^{\sigma }\right) $, then there is a unitary
transformation $U$ in $\mathcal{H}$,uniquely determined apart from a
constant factor of modulus $1$, such that 
\begin{equation*}
\mathcal{W\circ }S\left( \xi \right) =\mu \left( \xi \right) U^{-1}\mathcal{W%
}\left( \xi \right) U,\quad \xi \in W,
\end{equation*}%
where 
\begin{equation*}
\mu \left( \xi \right) =\mathrm{e}^{\frac{\mathrm{i}}{2}\left[ \sigma \left(
\xi ,T\xi \right) -\sigma \left( S\xi ,TS\xi \right) \right] },\quad \xi \in
W.
\end{equation*}

$\left( \mathrm{b}\right) $ If $S$ satisfies $S^{\sigma }TS=T$, then there
is a unitary transformation $U$ in $\mathcal{H}$,uniquely determined apart
from a constant factor of modulus $1$, such that 
\begin{equation*}
\mathcal{W\circ }S\left( \xi \right) =U^{-1}\mathcal{W}\left( \xi \right)
U,\quad \xi \in W.
\end{equation*}
\end{corollary}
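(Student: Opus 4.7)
The plan is to reduce the statement to the preceding corollary, which asserts that any two irreducible $\omega_{\sigma,T}$-representations are unitarily equivalent. The pivotal observation is that in part (a) the hypothesis $S^\sigma(T+T^\sigma)S = T+T^\sigma$ is precisely the condition that $S$ preserve the symplectic form $\sigma_{T+T^\sigma}$; via the identity $\widetilde{\omega}_{\sigma,T}(\xi,\eta) = \mathrm{e}^{\frac{\mathrm{i}}{2}\sigma_{T+T^\sigma}(\xi,\eta)}$ established above, this is in turn equivalent to $\widetilde{\omega}_{\sigma,T}(S\xi, S\eta) = \widetilde{\omega}_{\sigma,T}(\xi, \eta)$ for all $\xi, \eta \in W$. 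This is what allows the problem to be transferred to the Weyl system $\widetilde{\mathcal{W}}_{\sigma,T}$.

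First I would verify that $S$ is a linear isomorphism: taking determinants in $S^\sigma(T+T^\sigma)S = T+T^\sigma$ and using $\det S^\sigma = \det S$ forces $(\det S)^2 = 1$ since $T+T^\sigma$ is invertible. Next, set $\widetilde{\mathcal{W}}' := \widetilde{\mathcal{W}}_{\sigma,T} \circ S$. The Weyl-system identity for $\widetilde{\mathcal{W}}_{\sigma,T}$ together with the $S$-invariance of $\sigma_{T+T^\sigma}$ gives
\[
\widetilde{\mathcal{W}}'(\xi)\,\widetilde{\mathcal{W}}'(\eta) = \widetilde{\omega}_{\sigma,T}(\xi, \eta)\,\widetilde{\mathcal{W}}'(\xi + \eta),
\]
so $\widetilde{\mathcal{W}}'$ is a $\widetilde{\omega}_{\sigma,T}$-representation on $\mathcal{H}$; irreducibility transfers at once since $S(W)=W$. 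The preceding corollary (applied in the $\widetilde{\omega}_{\sigma,T}$-picture, equivalent to the $\omega_{\sigma,T}$-one via the cohomologous relation) then yields a unitary $U \in \mathcal{B}(\mathcal{H})$, unique up to a phase by Schur's lemma, with $\widetilde{\mathcal{W}}_{\sigma,T}(S\xi) = U^{-1}\widetilde{\mathcal{W}}_{\sigma,T}(\xi)U$.

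Converting back via $\mathcal{W}(\eta) = \mathrm{e}^{-\frac{\mathrm{i}}{2}\sigma(\eta, T\eta)}\widetilde{\mathcal{W}}_{\sigma,T}(\eta)$ gives
\[
\mathcal{W}(S\xi) = \mathrm{e}^{-\frac{\mathrm{i}}{2}\sigma(S\xi, TS\xi)}\,\widetilde{\mathcal{W}}_{\sigma,T}(S\xi) = \mathrm{e}^{\frac{\mathrm{i}}{2}[\sigma(\xi,T\xi)-\sigma(S\xi, TS\xi)]}\,U^{-1}\mathcal{W}(\xi)U = \mu(\xi)\,U^{-1}\mathcal{W}(\xi)U,
\]
which is (a). For (b), taking symplectic adjoints in $S^\sigma T S = T$ yields $S^\sigma T^\sigma S = T^\sigma$, so by addition the hypothesis of (a) is satisfied; moreover $\sigma(S\xi, TS\xi) = \sigma(\xi, S^\sigma T S \xi) = \sigma(\xi, T\xi)$, hence $\mu \equiv 1$ and (b) follows from (a). The only non-routine step is the invocation of the preceding uniqueness corollary (a Stone--von Neumann-type result); everything else is direct bookkeeping with the two cohomologous cocycles $\omega_{\sigma,T}$ and $\widetilde{\omega}_{\sigma,T}$.
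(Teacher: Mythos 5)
Your proof is correct and takes essentially the same route as the paper's: you recognize the hypothesis as saying $S$ is a symplectic transformation of $\left( W,\sigma _{T+T^{\sigma }}\right) $, pass to the normalized Weyl system $\widetilde{\mathcal{W}}_{\sigma ,T}$, obtain the implementing unitary $U$ from uniqueness of irreducible Weyl systems, and convert back to $\mathcal{W}$ to produce the phase $\mu $. The only difference is cosmetic: the paper cites Segal's theorem (Theorem 18.5.9 in H\"{o}rmander) for the step you prove inline via the preceding unitary-equivalence corollary and Schur's lemma, and your deduction of $\left( \mathrm{b}\right) $ from $\left( \mathrm{a}\right) $ matches the paper's.
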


\begin{proof}
The hypothesis $S^{\sigma }\left( T+T^{\sigma }\right) S=\left( T+T^{\sigma
}\right) $ is equivalent to the fact that $S$ is a symplectic transformation
in $\left( W,\sigma _{T+T^{{\sigma }}}\right) $. Now $\left( \mathrm{a}%
\right) $ follows from Segal's theorem, Theorem 18.5.9 in \cite{Hormander3},
and the previous lemma, and $\left( \mathrm{b}\right) $ is a consequence of $%
\left( \mathrm{a}\right) $.
\end{proof}

\begin{example}
In $\left( \theta ,\tau \right) $-quantization we have $W=V\times V^{\ast }$
with $\sigma :\left( V\times V^{\ast }\right) \times \left( V\times V^{\ast
}\right) \rightarrow \mathbb{R}$ is the canonical symplectic form 
\begin{equation*}
\sigma :W\times W\rightarrow \mathbb{R},\quad \sigma \left( \left(
x,p\right) ;\left( x^{\prime },p^{\prime }\right) \right) =\left\langle
x^{\prime },p\right\rangle _{V,V^{\ast }}-\left\langle x,p^{\prime
}\right\rangle _{V,V^{\ast }}.
\end{equation*}%
In this case 
\begin{equation*}
T=\left( 
\begin{array}{cc}
\tau & 0 \\ 
0 & \theta ^{\ast }%
\end{array}%
\right) :W=V\times V^{\ast }\rightarrow W=V\times V^{\ast },
\end{equation*}%
and the condition $T+T^{\sigma }$ is an isomorphism $(\Leftrightarrow $ $%
T+T^{\sigma }$ is invertible$)$ is equivalent to $\tau +\theta $ is
invertible.

More generally, if%
\begin{equation*}
T=\left( 
\begin{array}{cc}
\tau & \alpha \\ 
\beta & \theta ^{\ast }%
\end{array}%
\right) :W=V\times V^{\ast }\rightarrow W=V\times V^{\ast },
\end{equation*}%
then 
\begin{equation*}
T^{\sigma }=\left( 
\begin{array}{cc}
\theta & -{}\alpha ^{\ast } \\ 
-{}\beta ^{\ast } & \tau ^{\ast }%
\end{array}%
\right) :W=V\times V^{\ast }\rightarrow W=V\times V^{\ast },
\end{equation*}%
and the condition $T+T^{\sigma }$ is iar invertible is equivalent to 
\begin{equation*}
\left( 
\begin{array}{cc}
\tau +\theta & \alpha -{}\alpha ^{\ast } \\ 
\beta -{}\beta ^{\ast } & \left( \tau +\theta \right) ^{\ast }%
\end{array}%
\right) \quad \text{is invertible.}
\end{equation*}
\end{example}

\section{$T$-Weyl calculus}

Let $\mathcal{S}$ be the dense linear subspace of $\mathcal{H}$ consisting
of the $\mathcal{C}^{\infty }$ vectors of the representation $\mathcal{W}%
_{\sigma ,T}$%
\begin{equation*}
\mathcal{S}=\mathcal{S}(\mathcal{H},\mathcal{W}_{\sigma ,T})=\left\{ \varphi
\in \mathcal{H}:W\ni \xi \rightarrow \mathcal{W}_{\sigma ,T}(\xi )\varphi
\in \mathcal{H}\text{\textit{\ is a }}\mathcal{C}^{\infty }\text{\textit{\
map}}\right\} .
\end{equation*}%
Since 
\begin{equation*}
\widetilde{\mathcal{W}}_{\sigma ,T}\left( \xi \right) =\mathrm{e}^{\frac{%
\mathrm{i}}{2}\sigma \left( \xi ,T\xi \right) }\mathcal{W}_{\sigma ,T}\left(
\xi \right) ,\quad \xi \in W,
\end{equation*}%
it follows that $\mathcal{S}$ is also the space of $\mathcal{C}^{\infty }$
vectors of the representation $\widetilde{\mathcal{W}}_{\sigma ,T}$%
\begin{equation*}
\mathcal{S}=\mathcal{S}(\mathcal{H},\widetilde{\mathcal{W}}_{\sigma
,T})=\left\{ \varphi \in \mathcal{H}:W\ni \xi \rightarrow \widetilde{%
\mathcal{W}}_{\sigma ,T}(\xi )\varphi \in \mathcal{H}\text{\textit{\ is a }}%
\mathcal{C}^{\infty }\text{\textit{\ map}}\right\} .
\end{equation*}%
The space $\mathcal{S}$ can be described in terms of the subspaces $D\left(
\phi _{{T}}\left( \xi \right) \right) $, where $\phi _{{T}}\left( \xi
\right) $ is $T$-field operator associated to $\xi $, 
\begin{eqnarray*}
\mathcal{S} &=&\bigcap_{k\in \mathbb{N}}\bigcap_{\xi _{1},...,\xi _{k}\in
W}D\left( \phi _{{T}}\left( \xi _{1}\right) ...\phi _{{T}}\left( \xi
_{k}\right) \right) \\
&=&\bigcap_{k\in \mathbb{N}}\bigcap_{\xi _{1},...,\xi _{k}\in \mathcal{B}%
}D\left( \phi _{{T}}\left( \xi _{1}\right) ...\phi _{{T}}\left( \xi
_{k}\right) \right) ,
\end{eqnarray*}%
where $\mathcal{B}$ is a (symplectic) basis. The topology in $\mathcal{S}$
defined by the family of seminorms $\left\{ \left\Vert \cdot \right\Vert
_{k,\xi _{1},...,\xi _{k}}\right\} _{k\in \mathbb{N},\xi _{1},...,\xi
_{k}\in W}$ 
\begin{equation*}
\left\Vert \varphi \right\Vert _{k,\xi _{1},...,\xi _{k}}=\left\Vert \phi _{{%
T}}\left( \xi _{1}\right) ...\phi _{{T}}\left( \xi _{1}\right) \varphi
\right\Vert _{\mathcal{H}},\quad \varphi \in \mathcal{S}
\end{equation*}%
makes $\mathcal{S}$ a Fr\'{e}chet space. We denote by $\mathcal{S}^{\ast }=%
\mathcal{S}\left( \mathcal{H},\mathcal{W}_{\sigma ,T}\right) ^{\ast }$ the
space of all continuous, antilinear (semilinear) mappings $\mathcal{S}%
\rightarrow \mathbb{\mathbb{C}}$ equipped with the weak topology $\sigma (%
\mathcal{S}^{\ast },\mathcal{S})$. Since $\mathcal{S}\hookrightarrow 
\mathcal{H}$ continuously and densely, and since $\mathcal{H}$ is always
identified with its adjoint $\mathcal{H}^{\ast }$, we obtain a scale of
dense inclusions%
\begin{equation*}
\mathcal{S}\hookrightarrow \mathcal{H}\hookrightarrow \mathcal{S}^{\ast }
\end{equation*}%
such that, if $\left\langle \cdot ,\cdot \right\rangle :\mathcal{S}\times 
\mathcal{S}^{\ast }\rightarrow \mathbb{\mathbb{C}}$ is the antiduality
between $\mathcal{S}$ and $\mathcal{S}^{\ast }$ (antilinear in the first and
linear in the second argument), then for $\varphi \in \mathcal{S}$ and $u\in 
\mathcal{H}$, if $u$ is considered as an element of $\mathcal{S}^{\ast }$,
the number $\left\langle \varphi ,u\right\rangle $ is just the scalar
product in $\mathcal{H}$. For this reason we do not distinguish between the
the scalar product in $\mathcal{H}$ and the antiduality between $\mathcal{S}$
and $\mathcal{S}^{\ast }$. See p. 83-85 in \cite{Arsu}.

\begin{lemma}
\label{n2}If $\varphi ,\psi \in \mathcal{S}$, then the map $W\ni \xi
\rightarrow \left\langle \psi ,\mathcal{W}_{\sigma ,T}\left( \xi \right)
\varphi \right\rangle _{\mathcal{S},\mathcal{S}^{\ast }}\in \mathbb{C}$
belongs to $\mathcal{S}\left( W\right) $. Moreover, for each continuous
seminorm $p$ on $\mathcal{S}\left( W\right) $ there are continuous seminorms 
$q$ and $q^{\prime }$ on $\mathcal{S}$ such that 
\begin{equation*}
p\left( \left\langle \psi ,\mathcal{W}_{\sigma ,T}\left( \cdot \right)
\varphi \right\rangle _{\mathcal{S},\mathcal{S}^{\ast }}\right) \leq q(\psi
)q^{\prime }(\varphi ).
\end{equation*}
\end{lemma}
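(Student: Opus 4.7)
The plan is to convert both polynomial growth in $\xi$ and differentiation in $\xi$ into insertions of field operators acting on the vectors $\psi$ and $\varphi$, after first reducing from $\mathcal{W}_{\sigma,T}$ to the associated Weyl system $\widetilde{\mathcal{W}}_{\sigma,T}$. Since $\mathcal{W}_{\sigma,T}(\xi) = e^{-(i/2)\sigma(\xi,T\xi)}\widetilde{\mathcal{W}}_{\sigma,T}(\xi)$ and multiplication by the unimodular polynomial-phase factor $e^{(i/2)\sigma(\xi,T\xi)}$ is a continuous endomorphism of $\mathcal{S}(W)$, it suffices to establish the statement for $\widetilde{f}(\xi) := \langle \psi, \widetilde{\mathcal{W}}_{\sigma,T}(\xi)\varphi\rangle$. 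I write $\sigma' := \sigma_{T+T^{\sigma}}$, which is symplectic by the standing hypothesis that $T + T^{\sigma}$ is an isomorphism.

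Two identities drive the proof. From the Weyl relation $\widetilde{\mathcal{W}}_{\sigma,T}(\xi)\widetilde{\mathcal{W}}_{\sigma,T}(\eta) = e^{(i/2)\sigma'(\xi,\eta)}\widetilde{\mathcal{W}}_{\sigma,T}(\xi+\eta)$ and $\widetilde{\mathcal{W}}_{\sigma,T}(t\eta) = e^{it\phi_{T}(\eta)}$, differentiation at $t=0$ yields the commutator $[\phi_{T}(\eta),\widetilde{\mathcal{W}}_{\sigma,T}(\xi)] = \sigma'(\eta,\xi)\widetilde{\mathcal{W}}_{\sigma,T}(\xi)$; taking matrix elements and using self-adjointness of $\phi_{T}(\eta)$ on $\mathcal{S}$ gives
\[
\sigma'(\eta,\xi)\widetilde{f}(\xi) = \langle \phi_{T}(\eta)\psi,\widetilde{\mathcal{W}}_{\sigma,T}(\xi)\varphi\rangle - \langle \psi,\widetilde{\mathcal{W}}_{\sigma,T}(\xi)\phi_{T}(\eta)\varphi\rangle.
\]
Next, factorizing $\widetilde{\mathcal{W}}_{\sigma,T}(\xi + t\eta) = e^{-(it/2)\sigma'(\xi,\eta)}\widetilde{\mathcal{W}}_{\sigma,T}(\xi)\widetilde{\mathcal{W}}_{\sigma,T}(t\eta)$ and differentiating at $t=0$ produces
\[
\partial_{\eta}\widetilde{f}(\xi) = -\tfrac{i}{2}\sigma'(\xi,\eta)\widetilde{f}(\xi) + i\langle \psi,\widetilde{\mathcal{W}}_{\sigma,T}(\xi)\phi_{T}(\eta)\varphi\rangle.
\]
Since $\sigma'$ is non-degenerate, every linear form on $W$ has the shape $\xi \mapsto \sigma'(\eta,\xi)$ for a unique $\eta$, so the two identities cover, respectively, multiplication by any coordinate function and differentiation along any direction.

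Induction on $|\alpha| + |\beta|$ then expresses $\xi^{\alpha}\partial^{\beta}\widetilde{f}(\xi)$ as a finite linear combination, with universal constants, of matrix coefficients $\langle \psi_{j},\widetilde{\mathcal{W}}_{\sigma,T}(\xi)\varphi_{j}\rangle$ in which each $\psi_{j}$, $\varphi_{j}$ is obtained by applying at most $|\alpha| + |\beta|$ field operators to $\psi$, respectively $\varphi$. Unitarity of $\widetilde{\mathcal{W}}_{\sigma,T}(\xi)$ bounds each such coefficient by $\|\psi_{j}\|_{\mathcal{H}}\|\varphi_{j}\|_{\mathcal{H}}$, and by the very definition of the Fréchet topology on $\mathcal{S}$, these iterated field-operator norms are continuous seminorms on $\mathcal{S}$. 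Summing over the finitely many indices factors the total bound as a seminorm $q(\psi)$ times a seminorm $q'(\varphi)$, which both proves $\widetilde{f} \in \mathcal{S}(W)$ and delivers the required estimate.

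The main obstacle is not analytic but combinatorial: the operations ``multiply by a linear form'' and ``differentiate'' do not commute, and each insertion of a field operator migrates between the $\psi$-slot and the $\varphi$-slot while dropping lower-order terms carrying their own linear-form prefactors. One must verify that these nested terms, organized by induction on $|\alpha| + |\beta|$, still collapse into expressions of the stated clean form with a bounded number of field-operator insertions, so that the seminorm estimate closes uniformly. Once it is noted that $\mathcal{S}$ is stable under each $\phi_{T}(\eta)$ and that $\varphi \mapsto \|\phi_{T}(\eta_{1})\cdots\phi_{T}(\eta_{k})\varphi\|_{\mathcal{H}}$ is a defining continuous seminorm on $\mathcal{S}$ --- both immediate from the definition of $\mathcal{S}$ as the space of $\mathcal{C}^{\infty}$-vectors --- the induction closes and no further ingredients are needed.
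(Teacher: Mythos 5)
Your argument is correct. Note that the paper does not actually prove Lemma \ref{n2} in-text: it refers to Lemma 1.1 of \cite{Arsu}, where the corresponding statement is established for Weyl systems by essentially the computation you carry out, namely using the commutation relations of $\widetilde{\mathcal{W}}_{\sigma ,T}\left( \xi \right) $ with the field operators to convert powers of $\xi $ and derivatives in $\xi $ into insertions of $\phi _{T}\left( \eta \right) $ on the two vectors, and then closing the estimate with unitarity and the defining seminorms of $\mathcal{S}$. The one step genuinely specific to the present setting --- the reduction from $\mathcal{W}_{\sigma ,T}$ to $\widetilde{\mathcal{W}}_{\sigma ,T}$ by factoring out the quadratic phase $\mathrm{e}^{-\frac{\mathrm{i}}{2}\sigma \left( \xi ,T\xi \right) }$, multiplication by which is a continuous automorphism of $\mathcal{S}\left( W\right) $ --- you handle correctly, and your use of the non-degeneracy of $\sigma _{T+T^{\sigma }}$ to realize every linear form on $W$ as $\sigma _{T+T^{\sigma }}\left( \eta ,\cdot \right) $ is exactly what makes your two identities sufficient to generate all of $\xi ^{\alpha }\partial ^{\beta }$. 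Two small points are worth making explicit in a write-up: smoothness of $\xi \rightarrow \left\langle \psi ,\widetilde{\mathcal{W}}_{\sigma ,T}\left( \xi \right) \varphi \right\rangle $ is immediate from the definition of $\mathcal{S}$ as the space of $\mathcal{C}^{\infty }$ vectors (so the inductive manipulations are legitimate from the start), and the final bound $\sum_{j}\left\Vert \psi _{j}\right\Vert \left\Vert \varphi _{j}\right\Vert \leq \bigl( \sum_{j}\left\Vert \psi _{j}\right\Vert \bigr) \bigl( \sum_{j}\left\Vert \varphi _{j}\right\Vert \bigr) $ is what lets you package the finite sum of products as a single product $q\left( \psi \right) q^{\prime }\left( \varphi \right) $ of continuous seminorms.
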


\begin{proof}
For a proof see Lemma 1.1 in \cite{Arsu}.
\end{proof}

We use the symplectic Fourier decomposition of $a$, 
\begin{equation*}
a=\int \mathrm{e}^{-\mathrm{i}\sigma \left( \cdot ,\xi \right) }\mathcal{F}%
_{\sigma }\left( a\right) \left( \xi \right) \mathrm{d}^{\sigma }\xi
\end{equation*}%
to introduce the following operator in $\mathcal{B}\left( \mathcal{\mathcal{H%
}}\right) $ 
\begin{equation*}
\mathrm{Op}_{\sigma ,T}\left( a\right) =\int \mathcal{F}_{\sigma }\left(
a\right) \left( \xi \right) \mathcal{W}_{\sigma ,T}\left( \xi \right) 
\mathrm{d}^{\sigma }\xi ,\quad a\in \mathcal{S}\left( W\right) .
\end{equation*}%
Lemma \ref{n2} allows us to use the same formula to define the $T$-Weyl
calculus.

\begin{definition}[$T$-Weyl calculus]
Let $\left( \mathcal{H},\mathcal{W}_{\sigma ,T},\omega _{\sigma ,T}\right) $
be a $\omega _{\sigma ,T}$-representation of $W$, $\mathcal{S}=\mathcal{S}(%
\mathcal{H},\mathcal{W}_{\sigma ,T})$ and $\mathcal{S}^{\ast }=\mathcal{S}%
\left( \mathcal{H},\mathcal{W}_{\sigma ,T}\right) ^{\ast }$. Then for each $%
\mu $, $a\in \mathcal{S}^{\ast }\left( W\right) $ we can define the
operators 
\begin{equation*}
\mathcal{W}_{\sigma ,T}(\mu ):\mathcal{S}\rightarrow \mathcal{S}^{\ast
},\quad \mathrm{Op}_{\sigma ,T}\left( a\right) :\mathcal{S}\rightarrow 
\mathcal{S}^{\ast },
\end{equation*}%
by 
\begin{equation*}
\mathcal{W}_{\sigma ,T}\left( \mu \right) =\int_{W}\mathcal{W}_{\sigma
,T}\left( \xi \right) \mu \left( \xi \right) \mathrm{d}^{\sigma }\xi ,\quad 
\mathrm{Op}_{\sigma ,T}\left( a\right) =\int_{W}\mathcal{W}_{\sigma
,T}\left( \xi \right) \mathcal{F}_{\sigma }\left( a\right) \left( \xi
\right) \mathrm{d}^{\sigma }\xi .
\end{equation*}%
The above integrals make sense if they are taken in the weak sense, i.e. for 
$\varphi $, $\psi \in \mathcal{S}$%
\begin{eqnarray*}
\left\langle \varphi ,\mathcal{W}_{\sigma ,T}\left( \mu \right) \psi
\right\rangle _{\mathcal{S},\mathcal{S}^{\ast }} &=&\left\langle \overline{%
\left\langle \varphi ,\mathcal{W}_{\sigma ,T}\left( \cdot \right) \psi
\right\rangle }_{\mathcal{S},\mathcal{S}^{\ast }},\mu \right\rangle _{%
\mathcal{S}\left( W\right) ,\mathcal{S}^{\ast }\left( W\right) }, \\
\left\langle \varphi ,\mathrm{Op}_{\sigma ,T}\left( a\right) \psi
\right\rangle _{\mathcal{S},\mathcal{S}^{\ast }} &=&\left\langle \overline{%
\left\langle \varphi ,\mathcal{W}_{\sigma ,T}\left( \cdot \right) \psi
\right\rangle }_{\mathcal{S},\mathcal{S}^{\ast }},\mathcal{F}_{\sigma
}\left( a\right) \right\rangle _{\mathcal{S}\left( W\right) ,\mathcal{S}%
^{\ast }\left( W\right) }.
\end{eqnarray*}%
Moreover, from Lemma \ref{n2} one obtains that 
\begin{eqnarray*}
\left\vert \left\langle \varphi ,\mathcal{W}_{\sigma ,T}\left( \mu \right)
\psi \right\rangle _{\mathcal{S},\mathcal{S}^{\ast }}\right\vert +\left\vert
\left\langle \varphi ,\mathrm{Op}_{\sigma ,T}\left( a\right) \psi
\right\rangle _{\mathcal{S},\mathcal{S}^{\ast }}\right\vert &\leq &p\left(
\left\langle \varphi ,\mathcal{W}_{\sigma ,T}\left( \cdot \right) \psi
\right\rangle _{\mathcal{S},\mathcal{S}^{\ast }}\right) \\
&\leq &q\left( \varphi \right) q^{\prime }\left( \psi \right) ,
\end{eqnarray*}%
where $p$ is a continuous seminorm on $\mathcal{S}(W)$ and $q$ and $%
q^{\prime }$ are continuous seminorms on $\mathcal{S}$.

If we consider on $\mathcal{S}^{\ast }\left( W\right) $ the weak$^{\ast }$
topology $\sigma (\mathcal{S}^{\ast }\left( W\right) ,\mathcal{S}\left(
W\right) )$ and on $\mathcal{B}(\mathcal{S},\mathcal{S}^{\ast })$, the
topology defined by the seminorms $\left\{ p_{\varphi ,\psi }\right\}
_{\varphi ,\psi \in \mathcal{S}}$,%
\begin{equation*}
p_{\varphi ,\psi }\left( A\right) =\left\vert \left\langle \varphi ,A\psi
\right\rangle \right\vert ,\quad A\in \mathcal{B}(\mathcal{S},\mathcal{S}%
^{\ast }),
\end{equation*}%
the mappings%
\begin{eqnarray*}
\mathcal{W}_{\sigma ,T} &:&\mathcal{S}^{\ast }\left( W\right) \rightarrow 
\mathcal{B}\left( \mathcal{S},\mathcal{S}^{\ast }\right) ,\quad \mu
\rightarrow \mathcal{W}_{\sigma ,T}(\mu ), \\
\mathrm{Op}_{\sigma ,T} &:&\mathcal{S}^{\ast }\left( W\right) \rightarrow 
\mathcal{B}\left( \mathcal{S},\mathcal{S}^{\ast }\right) ,\quad a\rightarrow 
\mathrm{Op}_{\sigma ,T}(a),
\end{eqnarray*}%
are well defined linear and continuous.
\end{definition}

On $W$ we have two symplectic structures, the first, the initial one, is
given by the $2$-form $\sigma $, and the second one obtained in the
normalization process of the factor $\omega _{\sigma ,T}$ ($\omega _{\sigma
,T}$ $\rightsquigarrow $ $\widetilde{\omega }_{\sigma ,T}$), namely the
structure given by the $2$-form form $\sigma _{T+T^{{\sigma }}}$.
Accordingly, we have two symplectic Fourier transformations $\mathcal{F}%
_{\sigma }$ and $\mathcal{F}_{\sigma _{T+T^{{\sigma }}}}$. Their connection
will be established below.

\begin{lemma}
If $b\in \mathcal{S}^{\prime }\left( W\right) $, then 
\begin{equation*}
\mathcal{F}_{\sigma }\left( b\right) =\left( \det \left( T+T^{\sigma
}\right) \right) ^{\frac{1}{2}}\mathcal{F}_{\sigma _{T+T^{{\sigma }}}}\left(
b\circ \left( T+T^{\sigma }\right) \right) ,
\end{equation*}%
or in operator form,%
\begin{equation*}
\mathcal{F}_{\sigma }=\left( \det \left( T+T^{\sigma }\right) \right) ^{%
\frac{1}{2}}\mathcal{F}_{\sigma _{T+T^{{\sigma }}}}\circ \left( T+T^{\sigma
}\right) ^{\ast }\quad \text{on }\mathcal{S}^{\prime }\left( W\right) .
\end{equation*}
\end{lemma}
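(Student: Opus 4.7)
The plan is to deduce this lemma as an immediate corollary of Lemma \ref{n3}, applied to the self-symplectic-adjoint isomorphism $S := T + T^\sigma$. So the work is essentially verification, not new computation.

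First I would check that $S = T + T^\sigma$ is eligible as input for Lemma \ref{n3}. Two things must hold: $S^\sigma = S$, and $S$ must be a linear isomorphism. For the first, the symplectic adjoint is an involution, so $(T^\sigma)^\sigma = T$, giving $(T+T^\sigma)^\sigma = T^\sigma + T = S$. For the second, $T + T^\sigma$ being an isomorphism is exactly the standing assumption declared just before the statement (equivalently, the non-degeneracy of the Schur multiplier $\omega_{\sigma,T}$). Moreover, by the corollary to the lemma factoring such $S$ as $\phi_S^\sigma \circ \phi_S$, we have $\det S > 0$, so the square root $(\det(T+T^\sigma))^{1/2}$ is unambiguously defined as a positive real number.

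Next I would identify the two symplectic forms. By definition of $\sigma_S$ in Lemma \ref{n3},
\begin{equation*}
\sigma_S(\xi,\eta) = \sigma(\xi, S\eta) = \sigma(\xi,(T+T^\sigma)\eta) = \sigma_{T+T^\sigma}(\xi,\eta),
\end{equation*}
so the symplectic form produced by Lemma \ref{n3} is precisely the form $\sigma_{T+T^\sigma}$ that already appears in our statement.

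Finally I would invoke Lemma \ref{n3} directly: it gives
\begin{equation*}
\mathcal{F}_\sigma = (\det S)^{1/2}\, \mathcal{F}_{\sigma_S} \circ S^{*} \quad \text{on } \mathcal{S}^{\prime}(W),
\end{equation*}
which upon substituting $S = T+T^\sigma$ and $\sigma_S = \sigma_{T+T^\sigma}$, and unfolding $S^*(b) = b \circ S$, is exactly the claimed identity in both its operator form and its pointwise form. There is no real obstacle here; the only thing to be careful about is that $(\det S)^{1/2}$ is interpreted as the positive square root (guaranteed by $\det S > 0$), matching the sign convention used in the derivation of Lemma \ref{n3}.
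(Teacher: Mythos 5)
Your proof is correct and is exactly the paper's argument: the paper's entire proof reads ``We take $S=T+T^{\sigma}$ in Lemma \ref{n3}.'' Your additional checks — that $(T+T^{\sigma})^{\sigma}=T+T^{\sigma}$ via the involutivity of the symplectic adjoint, that $T+T^{\sigma}$ is an isomorphism by the standing assumption, that $\det(T+T^{\sigma})>0$ so the square root is well defined, and that $\sigma_{S}=\sigma_{T+T^{\sigma}}$ — are all sound and merely make explicit what the paper leaves implicit.
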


\begin{proof}
We take $S=T+T^{\sigma }$ in Lemma \ref{n3}.
\end{proof}

We have two projective representations $\left( \mathcal{H},\mathcal{W}%
_{\sigma ,T},\omega _{\sigma ,T}\right) $ and $\left( \mathcal{H},\widetilde{%
\mathcal{W}}_{\sigma ,T},\widetilde{\omega }_{\sigma ,T}\right) $ with $%
\omega _{\sigma ,T}$ and $\widetilde{\omega }_{\sigma ,T}$ cohomologous
associated multipliers, two symplectic structures $\left( W,\sigma \right) $
and $\left( W,\sigma _{T+T^{{\sigma }}}\right) $ with two associated
symplectic Fourier transformations $\mathcal{F}_{\sigma }$ and $\mathcal{F}%
_{\sigma _{T+T^{{\sigma }}}}$. Accordingly, we have the $T$-Weyl calculus, $%
\mathrm{Op}_{\sigma ,T}$, corresponding to the $\omega _{\sigma ,T}$%
-representation $\left( \mathcal{H},\mathcal{W}_{\sigma ,T},\omega _{\sigma
,T}\right) $ and the standard Weyl calculus, $\mathrm{Op}^{\widetilde{w}}$,
corresponding to the Weyl system $\left( \mathcal{H},\widetilde{\mathcal{W}}%
_{\sigma ,T},\widetilde{\omega }_{\sigma ,T}\right) $ (for the symplectic
space $\left( W,\sigma _{T+T^{{\sigma }}}\right) $). The connection between
these two calculus will be established hereafter.

We write $\theta _{{\sigma ,T}}$ for the quadratic form on $W$ given by%
\begin{equation*}
\theta _{{\sigma ,T}}\left( \xi \right) =\sigma \left( \xi ,T\xi \right)
,\quad \xi \in W,
\end{equation*}%
with the associated symmetric bilinear form $\beta _{{\sigma ,T}}:W\times
W\longrightarrow \mathbb{R}$ defined by 
\begin{equation*}
\beta _{{\sigma ,T}}\left( \xi ,\eta \right) =\frac{1}{2}\left[ \sigma
\left( \xi ,T\eta \right) +\sigma \left( \eta ,T\xi \right) \right] =\frac{1%
}{2}\sigma \left( \xi ,\left( T-T^{\sigma }\right) \eta \right) ,\quad \xi
,\eta \in W.
\end{equation*}%
We write $\lambda _{{\sigma ,T}}$ for the function in $\mathcal{C}_{\mathrm{%
pol}}^{\infty }\left( W\right) $ given by 
\begin{equation*}
\lambda _{{\sigma ,T}}\left( \xi \right) =\mathrm{e}^{-\frac{\mathrm{i}}{2}%
\theta _{{\sigma ,T}}\left( \xi \right) }=\mathrm{e}^{-\frac{\mathrm{i}}{2}%
\sigma \left( \xi ,T\xi \right) },\quad \xi \in W.
\end{equation*}%
To this function, we associate the convolution operator $\lambda _{{\sigma ,T%
}}\left( D_{\sigma }\right) $ defined by using the symplectic Fourier
transformation, i.e. 
\begin{equation*}
\lambda _{{\sigma ,T}}\left( D_{\sigma }\right) =\mathcal{F}_{\sigma }\circ 
\mathrm{M}_{\lambda _{{\sigma ,T}}\left( \cdot \right) }\circ \mathcal{F}%
_{\sigma }:\mathcal{S}^{\prime }\left( W\right) \longrightarrow \mathcal{S}%
^{\prime }\left( W\right) ,
\end{equation*}%
where $\mathrm{M}_{\lambda _{{\sigma ,T}}\left( \cdot \right) }$ denotes the
multiplication operator by the function $\lambda _{{\sigma ,T}}\left( \cdot
\right) $. For this operator we shall also use the notation $\mathrm{e}^{-%
\frac{\mathrm{i}}{2}\theta _{{\sigma ,T}}\left( D_{\sigma }\right) }$, i.e.%
\begin{equation*}
\lambda _{{\sigma ,T}}\left( D_{\sigma }\right) =\mathrm{e}^{-\frac{\mathrm{i%
}}{2}\theta _{{\sigma ,T}}\left( D_{\sigma }\right) }.
\end{equation*}

For $a$ in $\mathcal{S}^{\prime }\left( W\right) $, we shall denote by $%
a_{\sigma ,T}^{w}$ the temperate distribution in $\mathcal{S}^{\prime
}\left( W\right) $ given by%
\begin{equation*}
a_{{\sigma ,T}}^{w}=\lambda _{{\sigma ,T}}\left( D_{\sigma }\right) \left(
a\right) \circ \left( T+T^{\sigma }\right) =\left( \lambda _{{\sigma ,T}%
}\circ \left( T+T^{\sigma }\right) ^{-1}\right) \left( D_{\sigma }\right)
\left( a\circ \left( T+T^{\sigma }\right) \right) .
\end{equation*}%
If $a\in \mathcal{S}\left( W\right) $, then we have%
\begin{multline*}
\mathrm{Op}_{\sigma ,T}\left( a\right) =\int \mathcal{F}_{\sigma }\left(
a\right) \left( \xi \right) \mathcal{W}_{\sigma ,T}\left( \xi \right) 
\mathrm{d}^{\sigma }\xi =\int \mathrm{e}^{-\frac{\mathrm{i}}{2}\sigma \left(
\xi ,T\xi \right) }\mathcal{F}_{\sigma }\left( a\right) \left( \xi \right) 
\widetilde{\mathcal{W}}_{\sigma ,T}\left( \xi \right) \mathrm{d}^{\sigma }\xi
\\
=\int \lambda _{{\sigma ,T}}\left( \xi \right) \mathcal{F}_{\sigma }\left(
a\right) \left( \xi \right) \widetilde{\mathcal{W}}_{\sigma ,T}\left( \xi
\right) \mathrm{d}^{\sigma }\xi =\int \mathcal{F}_{\sigma }\left( \lambda _{{%
\sigma ,T}}\left( D_{\sigma }\right) \left( a\right) \right) \left( \xi
\right) \widetilde{\mathcal{W}}_{\sigma ,T}\left( \xi \right) \mathrm{d}%
^{\sigma }\xi \\
=\int \left( \det \left( T+T^{\sigma }\right) \right) ^{\frac{1}{2}}\mathcal{%
F}_{\sigma _{T+T^{{\sigma }}}}\left( \lambda _{{\sigma ,T}}\left( D_{\sigma
}\right) \left( a\right) \circ \left( T+T^{\sigma }\right) \right) \left(
\xi \right) \widetilde{\mathcal{W}}_{\sigma ,T}\left( \xi \right) \mathrm{d}%
^{\sigma }\xi \\
=\int \left( \det \left( T+T^{\sigma }\right) \right) ^{\frac{1}{2}}\mathcal{%
F}_{\sigma _{T+T^{{\sigma }}}}\left( a_{{\sigma ,T}}^{w}\right) \left( \xi
\right) \widetilde{\mathcal{W}}_{\sigma ,T}\left( \xi \right) \mathrm{d}%
^{\sigma }\xi \\
=\int \mathcal{F}_{\sigma _{T+T^{{\sigma }}}}\left( a_{{\sigma ,T}%
}^{w}\right) \left( \xi \right) \widetilde{\mathcal{W}}_{\sigma ,T}\left(
\xi \right) \mathrm{d}^{\sigma _{T+T^{{\sigma }}}}\xi =\mathrm{Op}^{%
\widetilde{w}}\left( a_{{\sigma ,T}}^{w}\right) .
\end{multline*}%
Hence%
\begin{equation*}
\mathrm{Op}_{\sigma ,T}\left( a\right) =\mathrm{Op}^{\widetilde{w}}\left(
a_{\sigma ,T}^{w}\right) ,\quad a\in \mathcal{S}\left( W\right) .
\end{equation*}%
Here we used the equality 
\begin{equation*}
\mathrm{d}^{\sigma _{T+T^{{\sigma }}}}\xi =\left( \det \left( T+T^{\sigma
}\right) \right) ^{\frac{1}{2}}\mathrm{d}^{\sigma }\eta .
\end{equation*}%
By continuity and density arguments we find that 
\begin{equation*}
\mathrm{Op}_{\sigma ,T}\left( a\right) =\mathrm{Op}^{\widetilde{w}}\left(
a_{\sigma ,T}^{w}\right) ,\quad a\in \mathcal{S}^{\prime }\left( W\right) ,
\end{equation*}%
with%
\begin{align*}
a_{{\sigma ,T}}^{w}& =\lambda _{{\sigma ,T}}\left( D_{\sigma }\right) \left(
a\right) \circ \left( T+T^{\sigma }\right) =\mathrm{e}^{-\frac{\mathrm{i}}{2}%
\theta _{{\sigma ,T}}\left( D_{\sigma }\right) }\left( a\right) \circ \left(
T+T^{\sigma }\right) \\
& =\left( \lambda _{{\sigma ,T}}\circ \left( T+T^{\sigma }\right)
^{-1}\right) \left( D_{\sigma }\right) \left( a\circ \left( T+T^{\sigma
}\right) \right) \\
& =\lambda _{{\sigma ,T}}\left( D_{\sigma _{T+T^{{\sigma }}}}\right) \left(
a\circ \left( T+T^{\sigma }\right) \right) \quad \text{by Lemma \ref{n11}} \\
& =\mathrm{e}^{-\frac{\mathrm{i}}{2}\theta _{{\sigma ,T}}\left( D_{\sigma
_{T+T^{{\sigma }}}}\right) }\left( a\circ \left( T+T^{\sigma }\right)
\right) \text{.}
\end{align*}%
Thus we have proved one of the important results of the paper.

\begin{theorem}
\label{n4}Let $\left( \mathcal{H},\mathcal{W}_{\sigma ,T},\omega _{\sigma
,T}\right) $ be a $\omega _{\sigma ,T}$-representation of $W$, the $T$-Weyl
calculus, $\mathrm{Op}_{\sigma ,T}$, corresponding to this representation
and the standard Weyl calculus, $\mathrm{Op}^{\widetilde{w}}$, corresponding
to the associated Weyl system $\left( \mathcal{H},\widetilde{\mathcal{W}}%
_{\sigma ,T},\widetilde{\omega }_{\sigma ,T}\right) $ $($for the symplectic
space $\left( W,\sigma _{T+T^{{\sigma }}}\right) )$. Then for every $a\in 
\mathcal{S}^{\prime }\left( W\right) $ 
\begin{equation*}
\mathrm{Op}_{\sigma ,T}\left( a\right) =\mathrm{Op}^{\widetilde{w}}\left(
a_{\sigma ,T}^{w}\right) ,
\end{equation*}%
where%
\begin{align*}
a_{{\sigma ,T}}^{w}& =\left( T+T^{\sigma }\right) ^{\ast }\left( \lambda _{{%
\sigma ,T}}\left( D_{\sigma }\right) \left( a\right) \right) =\left(
T+T^{\sigma }\right) ^{\ast }\left( \mathrm{e}^{-\frac{\mathrm{i}}{2}\theta
_{{\sigma ,T}}\left( D_{\sigma }\right) }\left( a\right) \right) \\
& =\left( \lambda _{{\sigma ,T}}\circ \left( T+T^{\sigma }\right)
^{-1}\right) \left( D_{\sigma }\right) \left( \left( T+T^{\sigma }\right)
^{\ast }\left( a\right) \right) \\
& =\lambda _{{\sigma ,T}}\left( D_{\sigma _{T+T^{{\sigma }}}}\right) \left(
\left( T+T^{\sigma }\right) ^{\ast }\left( a\right) \right) \\
& =\mathrm{e}^{-\frac{\mathrm{i}}{2}\theta _{{\sigma ,T}}\left( D_{\sigma
_{T+T^{{\sigma }}}}\right) }\left( a\circ \left( T+T^{\sigma }\right)
\right) \in \mathcal{S}^{\prime }\left( W\right) .
\end{align*}
\end{theorem}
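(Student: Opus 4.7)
The plan is to first establish the identity on the Schwartz class $\mathcal{S}(W)$ by direct computation, and then extend to $\mathcal{S}'(W)$ by a continuity and density argument. The equivalent reformulations of $a_{\sigma,T}^w$ in the statement are then a matter of shuffling $\mathcal{F}_\sigma$, $(T+T^\sigma)^\ast$ and $\lambda_{\sigma,T}$ past each other, using the commutation rules packaged in Lemma \ref{n11}.

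For the core computation, I would start from the definition $\mathrm{Op}_{\sigma,T}(a)=\int \mathcal{F}_\sigma(a)(\xi)\,\mathcal{W}_{\sigma,T}(\xi)\,\mathrm{d}^\sigma\xi$ and substitute $\mathcal{W}_{\sigma,T}(\xi)=\lambda_{\sigma,T}(\xi)\,\widetilde{\mathcal{W}}_{\sigma,T}(\xi)$ from the cohomology relation established earlier. The factor $\lambda_{\sigma,T}(\xi)\mathcal{F}_\sigma(a)(\xi)$ is recognized as $\mathcal{F}_\sigma\!\left(\lambda_{\sigma,T}(D_\sigma)a\right)(\xi)$ by definition of the multiplier operator. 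Then I apply the preceding lemma ($\mathcal{F}_\sigma=(\det(T+T^\sigma))^{1/2}\,\mathcal{F}_{\sigma_{T+T^\sigma}}\circ (T+T^\sigma)^\ast$), which identifies the integrand with $(\det(T+T^\sigma))^{1/2}\,\mathcal{F}_{\sigma_{T+T^\sigma}}(a_{\sigma,T}^w)(\xi)$, and finally absorb the Jacobian using $\mathrm{d}^{\sigma_{T+T^\sigma}}\xi=(\det(T+T^\sigma))^{1/2}\,\mathrm{d}^\sigma\xi$. What remains is $\int \mathcal{F}_{\sigma_{T+T^\sigma}}(a_{\sigma,T}^w)(\xi)\,\widetilde{\mathcal{W}}_{\sigma,T}(\xi)\,\mathrm{d}^{\sigma_{T+T^\sigma}}\xi$, which is precisely $\mathrm{Op}^{\widetilde{w}}(a_{\sigma,T}^w)$ relative to the Weyl system $(\mathcal{H},\widetilde{\mathcal{W}}_{\sigma,T},\widetilde{\omega}_{\sigma,T})$ for $(W,\sigma_{T+T^\sigma})$.

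Having obtained the identity for $a\in \mathcal{S}(W)$, the extension to $a\in \mathcal{S}'(W)$ is obtained by observing that both sides are continuous in $a$ for the topologies already stipulated in the $T$-Weyl calculus definition: the map $a\mapsto a_{\sigma,T}^w$ is continuous $\mathcal{S}'(W)\to\mathcal{S}'(W)$ as a composition of $(T+T^\sigma)^\ast$ with a constant-coefficient Fourier multiplier $\lambda_{\sigma,T}(D_\sigma)$ with symbol in $\mathcal{C}^\infty_{\mathrm{pol}}$, and both $\mathrm{Op}_{\sigma,T}$ and $\mathrm{Op}^{\widetilde{w}}$ are continuous from $\mathcal{S}'(W)$ into $\mathcal{B}(\mathcal{S},\mathcal{S}^\ast)$ with the weak topologies. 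Since $\mathcal{S}(W)$ is sequentially dense in $\mathcal{S}'(W)$, the identity extends.

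The four alternative expressions for $a_{\sigma,T}^w$ are then verified directly: the first two are interchanges of $\lambda_{\sigma,T}(D_\sigma)$ with $(T+T^\sigma)^\ast$, which is precisely the content of part (b) of Lemma \ref{n11} applied to $S=T+T^\sigma$ (noting $S^\sigma=S$, as $(T+T^\sigma)^\sigma=T^\sigma+T$); the third uses part (c) of the same lemma to rewrite the Fourier multiplier with respect to $\sigma_{T+T^\sigma}$ instead of $\sigma$; the fourth is just the notational convention $\lambda_{\sigma,T}=\mathrm{e}^{-\frac{\mathrm{i}}{2}\theta_{\sigma,T}}$. The only mildly delicate point, and the step most prone to error, is keeping track of the determinant factors and of whether we are composing with $T+T^\sigma$ or its inverse in each rewriting; this bookkeeping is essentially dictated by the lemma and the change-of-variables formula already established for the symplectic Fourier transform.
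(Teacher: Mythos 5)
Your proposal is correct and follows essentially the same route as the paper: substitute $\mathcal{W}_{\sigma,T}(\xi)=\lambda_{\sigma,T}(\xi)\widetilde{\mathcal{W}}_{\sigma,T}(\xi)$, recognize $\lambda_{\sigma,T}\cdot\mathcal{F}_{\sigma}(a)=\mathcal{F}_{\sigma}(\lambda_{\sigma,T}(D_{\sigma})a)$ via $\mathcal{F}_{\sigma}^{2}=1$, convert to $\mathcal{F}_{\sigma_{T+T^{\sigma}}}$ by Lemma \ref{n3} with $S=T+T^{\sigma}$, absorb the determinant into $\mathrm{d}^{\sigma_{T+T^{\sigma}}}\xi$, and extend by continuity and density, with the alternative formulas coming from Lemma \ref{n11}. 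No gaps.
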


The equation in $a\in \mathcal{S}^{\prime }\left( W\right) $, 
\begin{equation*}
a_{{\sigma ,T}}^{w}=\lambda _{{\sigma ,T}}\left( D_{\sigma }\right) \left(
a\right) \circ \left( T+T^{\sigma }\right) =\mathrm{e}^{-\frac{\mathrm{i}}{2}%
\theta _{{\sigma ,T}}\left( D_{\sigma }\right) }\left( a\right) \circ \left(
T+T^{\sigma }\right) ,
\end{equation*}%
has a unique solution in $\mathcal{S}^{\prime }\left( W\right) $,%
\begin{equation*}
a=\overline{\lambda }_{{\sigma ,T}}\left( D_{\sigma }\right) \left( a_{{%
\sigma ,T}}^{w}\circ \left( T+T^{\sigma }\right) ^{-1}\right) =\mathrm{e}^{%
\frac{\mathrm{i}}{2}\theta _{{\sigma ,T}}\left( D_{\sigma }\right) }\left(
a_{{\sigma ,T}}^{w}\circ \left( T+T^{\sigma }\right) ^{-1}\right) ,
\end{equation*}%
where $\overline{\lambda }_{{\sigma ,T}}\left( \cdot \right) =1/\lambda _{{%
\sigma ,T}}\left( \cdot \right) =\mathrm{e}^{\frac{\mathrm{i}}{2}\theta _{{%
\sigma ,T}}\left( \cdot \right) }$. It follows that the correspondence at
the symbol level $\mathcal{S}^{\prime }\left( W\right) \ni a\rightarrow a_{{%
\sigma ,T}}^{w}\in \mathcal{S}^{\prime }\left( W\right) $ is bijective and
obviously continuous.

Next, we assume that the $\omega _{\sigma ,T}$-representation $\left( 
\mathcal{H},\mathcal{W}_{\sigma ,T},\omega _{\sigma ,T}\right) $ is
irreducible, which is equivalent to $\left( \mathcal{H},\widetilde{\mathcal{W%
}}_{\sigma ,T},\widetilde{\omega }_{\sigma ,T}\right) $ is an irreducible
Weyl system. In this case, $\left( \mathcal{H},\widetilde{\mathcal{W}}%
_{\sigma ,T},\widetilde{\omega }_{\sigma ,T}\right) $ is unitary equivalent
to the Schr\"{o}dinger representation. It follows that the map%
\begin{equation*}
\mathrm{Op}^{\widetilde{w}}:\mathcal{S}^{\ast }\left( W\right) \rightarrow 
\mathcal{B}\left( \mathcal{S},\mathcal{S}^{\ast }\right) ,\quad a\rightarrow 
\mathrm{Op}^{\widetilde{w}}(a),
\end{equation*}%
it is linear, continuous and bijective, so the same is true for the map $%
\mathrm{Op}_{\sigma ,T}$.

\begin{proposition}
Let $\left( \mathcal{H},\mathcal{W}_{\sigma ,T},\omega _{\sigma ,T}\right) $
be an irreducible $\omega _{\sigma ,T}$-representation of $W$. Then the map 
\begin{equation*}
\mathrm{Op}_{\sigma ,T}:\mathcal{S}^{\ast }\left( W\right) \rightarrow 
\mathcal{B}\left( \mathcal{S},\mathcal{S}^{\ast }\right) ,\quad a\rightarrow 
\mathrm{Op}_{\sigma ,T}(a),
\end{equation*}%
it is linear, continuous and bijective.
\end{proposition}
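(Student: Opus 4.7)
The plan is to read off the proposition as a direct corollary of Theorem~\ref{n4}, by factoring
\begin{equation*}
\mathrm{Op}_{\sigma ,T}=\mathrm{Op}^{\widetilde{w}}\circ \Phi ,\qquad \Phi (a)=a_{\sigma ,T}^{w},
\end{equation*}
and showing separately that each factor is a linear, continuous bijection of the relevant spaces. Linearity of $\mathrm{Op}_{\sigma ,T}$ is immediate from its definition as a weak integral, and continuity has already been noted following the definition; so the real content is bijectivity.

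First I would handle the symbol map $\Phi :\mathcal{S}^{\ast }(W)\rightarrow \mathcal{S}^{\ast }(W)$. From Theorem~\ref{n4} it is the composition
\begin{equation*}
\Phi =(T+T^{\sigma })^{\ast }\circ \lambda _{\sigma ,T}(D_{\sigma }),
\end{equation*}
where $\lambda _{\sigma ,T}\in \mathcal{C}_{\mathrm{pol}}^{\infty }(W)$ has modulus $1$, so $\lambda _{\sigma ,T}(D_{\sigma })=\mathcal{F}_{\sigma }\circ \mathrm{M}_{\lambda _{\sigma ,T}}\circ \mathcal{F}_{\sigma }$ is a topological automorphism of $\mathcal{S}^{\ast }(W)$ (with inverse $\overline{\lambda }_{\sigma ,T}(D_{\sigma })$), and pullback by the linear isomorphism $T+T^{\sigma }$ is likewise a topological automorphism of $\mathcal{S}^{\ast }(W)$. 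Thus $\Phi $ is a linear homeomorphism, with inverse $a\mapsto \overline{\lambda }_{\sigma ,T}(D_{\sigma })(a\circ (T+T^{\sigma })^{-1})$, as indeed already displayed in the paragraph preceding the proposition.

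Second I would handle the standard Weyl calculus $\mathrm{Op}^{\widetilde{w}}:\mathcal{S}^{\ast }(W)\rightarrow \mathcal{B}(\mathcal{S},\mathcal{S}^{\ast })$ attached to the Weyl system $(\mathcal{H},\widetilde{\mathcal{W}}_{\sigma ,T},\widetilde{\omega }_{\sigma ,T})$ on $(W,\sigma _{T+T^{\sigma }})$. By the Corollary following the cohomology discussion, irreducibility of the $\omega _{\sigma ,T}$-representation is equivalent to irreducibility of this Weyl system; so the Stone--von Neumann theorem provides a unitary equivalence with the Schr\"{o}dinger representation, under which $\mathrm{Op}^{\widetilde{w}}$ becomes the ordinary Weyl quantization. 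For the latter, the Schwartz kernel theorem guarantees that the symbol map is a linear, continuous bijection from $\mathcal{S}^{\ast }(W)$ onto $\mathcal{B}(\mathcal{S},\mathcal{S}^{\ast })$. Conjugating back, the same is true of $\mathrm{Op}^{\widetilde{w}}$.

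Composing, $\mathrm{Op}_{\sigma ,T}=\mathrm{Op}^{\widetilde{w}}\circ \Phi $ is a composition of two linear, continuous bijections, hence itself a linear, continuous bijection. The main obstacle, if any, is not conceptual but bookkeeping: one must be sure that the continuity of $\lambda _{\sigma ,T}(D_{\sigma })$ and of pullback by $T+T^{\sigma }$ hold in the antidual $\mathcal{S}^{\ast }(W)$ (and not only on the tempered distributions $\mathcal{S}^{\prime }(W)$ appearing in Theorem~\ref{n4}), but this is immediate since $\mathcal{S}^{\ast }(W)$ is just the antilinear variant of $\mathcal{S}^{\prime }(W)$ and the operations in question commute with complex conjugation up to the harmless factor $\overline{\lambda }_{\sigma ,T}$.
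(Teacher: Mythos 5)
Your proposal is correct and follows essentially the same route as the paper: the author also establishes that the symbol correspondence $a\mapsto a_{\sigma ,T}^{w}$ is a continuous bijection by exhibiting its explicit inverse $\overline{\lambda }_{\sigma ,T}(D_{\sigma })(\,\cdot \circ (T+T^{\sigma })^{-1})$, and then invokes the unitary equivalence of the irreducible Weyl system with the Schr\"{o}dinger representation to transfer the linear, continuous, bijective character of the standard Weyl calculus to $\mathrm{Op}_{\sigma ,T}$. Your added remark about passing from $\mathcal{S}^{\prime }(W)$ to the antidual $\mathcal{S}^{\ast }(W)$ is a harmless bookkeeping point that the paper glosses over in the same way.
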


From now on, we shall always assume that $\left( \mathcal{H},\mathcal{W}%
_{\sigma ,T},\omega _{\sigma ,T}\right) $ is an irreducible $\omega _{\sigma
,T}$-representation.

\begin{definition}
For each $A\in \mathcal{B}\left( \mathcal{S},\mathcal{S}^{\ast }\right) $
there is a unique distribution $a\in \mathcal{S}^{\ast }\left( W\right) $
such that $A=\mathrm{Op}_{\sigma ,T}(a)$. This distribution is called $T$%
-symbol of $A$. Likewise, for each $A\in \mathcal{B}\left( \mathcal{S},%
\mathcal{S}^{\ast }\right) $ there is a unique distribution $b\in \mathcal{S}%
^{\ast }\left( W\right) $ such that $A=\mathrm{Op}^{\widetilde{w}}\left(
b\right) $. This distribution is called the Weyl symbol of $A$.
\end{definition}

If $\mathrm{Op}_{\sigma ,T}(a)=A=\mathrm{Op}^{\widetilde{w}}\left( b\right) $%
, then we have 
\begin{equation*}
b=a_{{\sigma ,T}}^{w}=\left( \mathrm{Op}^{\widetilde{w}}\right) ^{-1}\mathrm{%
Op}_{\sigma ,T}(a)=\lambda _{{\sigma ,T}}\left( D_{\sigma }\right) \left(
a\right) \circ \left( T+T^{\sigma }\right) ,
\end{equation*}%
and%
\begin{equation*}
a=\left( \mathrm{Op}_{\sigma ,T}\right) ^{-1}\mathrm{Op}^{\widetilde{w}}(b)=%
\overline{\lambda }_{{\sigma ,T}}\left( D_{\sigma }\right) \left( b\circ
\left( T+T^{\sigma }\right) ^{-1}\right) .
\end{equation*}

\section{Modulation spaces and Schatten-class properties of operators in the 
$T$-Weyl calculus}

The importance of Theorem \ref{n4} lies in the fact that it establishes both
the connection between the $T$-Weyl calculus and the standard Weyl calculus,
as well as the connection that exists between the symbols used in them. The
maps,%
\begin{equation*}
\Lambda _{{\sigma ,T}}^{w}=\left( \mathrm{Op}^{\widetilde{w}}\right) ^{-1}%
\mathrm{Op}_{\sigma ,T}:\mathcal{S}^{\ast }\left( W\right) \rightarrow 
\mathcal{S}^{\ast }\left( W\right) ,\text{ }\Lambda _{{\sigma ,T}}^{w}\left(
a\right) =\lambda _{{\sigma ,T}}\left( D_{\sigma }\right) \left( a\right)
\circ \left( T+T^{\sigma }\right) ,
\end{equation*}%
\begin{equation*}
\left( \Lambda _{{\sigma ,T}}^{w}\right) ^{-1}:\mathcal{S}^{\ast }\left(
W\right) \rightarrow \mathcal{S}^{\ast }\left( W\right) ,\text{ }\left( \Phi
_{{\sigma ,T}}^{w}\right) ^{-1}(b)=\overline{\lambda }_{{\sigma ,T}}\left(
D_{\sigma }\right) \left( b\circ \left( T+T^{\sigma }\right) ^{-1}\right) ,
\end{equation*}%
are continuous linear isomorphisms. Clearly $\mathcal{S}\left( W\right) $ is
an invariant subspace for both maps. Other invariant subspaces for these
maps are particular cases of of modulation spaces.

Now we shall recall the definition of the classical modulation space $%
M^{p,q}\left( \mathbb{R}^{n}\right) $ with parameters $1\leq p,q\leq \infty $%
.

\begin{definition}
Let $1\leq p,q\leq \infty $. We say that a distribution $u\in \mathcal{D}%
^{\prime }\left( \mathbb{R}^{n}\right) $ belongs to $M^{p,q}\left( \mathbb{R}%
^{n}\right) $ if there is $\chi \in \mathcal{C}_{0}^{\infty }\left( \mathbb{R%
}^{n}\right) \smallsetminus 0$ such that the measurable function%
\begin{gather*}
U_{\chi ,p}:\mathbb{R}^{n}\rightarrow \left[ 0,+\infty \right] , \\
U_{\chi ,p}\left( \xi \right) =\left\{ 
\begin{array}{ccc}
\sup_{y\in \mathbb{R}^{n}}\left\vert \widehat{u\tau _{y}\chi }\left( \xi
\right) \right\vert & \text{\textit{if}} & p=\infty \\ 
\left( \int \left\vert \widehat{u\tau _{y}\chi }\left( \xi \right)
\right\vert ^{p}\mathrm{d}y\right) ^{1/p} & \text{\textit{if}} & 1\leq
p<\infty%
\end{array}%
\right. , \\
\widehat{u\tau _{y}\chi }\left( \xi \right) =\left\langle u,\mathrm{e}^{-%
\mathrm{i}\left\langle \cdot ,\xi \right\rangle }\chi \left( \cdot -y\right)
\right\rangle .
\end{gather*}%
belongs to $L^{q}\left( \mathbb{R}^{n}\right) $.
\end{definition}

These spaces are special cases of modulation spaces which were introduced by
Hans Georg Feichtinger \cite{Feichtinger 1} in 1983 (see also \cite%
{Feichtinger 3}). They were used by many authors (Boulkhemair, Gr\"{o}%
chenig, Heil, Sj\"{o}strand, Toft ...) in the analysis of
pseudo-differential operators defined by symbols more general than usual.

Now we give some properties of these spaces.

\begin{proposition}
$(\mathrm{a})$ Let $u\in M^{p,q}\left( \mathbb{R}^{n}\right) $ and let $\chi
\in \mathcal{C}_{0}^{\infty }\left( \mathbb{R}^{n}\right) $. Then the
measurable function%
\begin{gather*}
U_{\chi ,p}:\mathbb{R}^{n}\rightarrow \left[ 0,+\infty \right] , \\
U_{\chi ,p}\left( \xi \right) =\left\{ 
\begin{array}{ccc}
\sup_{y\in \mathbb{R}^{n}}\left\vert \widehat{u\tau _{y}\chi }\left( \xi
\right) \right\vert & \text{\textit{if}} & p=\infty \\ 
\left( \int \left\vert \widehat{u\tau _{y}\chi }\left( \xi \right)
\right\vert ^{p}\mathrm{d}y\right) ^{1/p} & \text{\textit{if}} & 1\leq
p<\infty%
\end{array}%
\right. , \\
\widehat{u\tau _{y}\chi }\left( \xi \right) =\left\langle u,\mathrm{e}^{-%
\mathrm{i}\left\langle \cdot ,\xi \right\rangle }\chi \left( \cdot -y\right)
\right\rangle .
\end{gather*}%
belongs to $L^{q}\left( \mathbb{R}^{n}\right) $.

$(\mathrm{b})$ If we fix $\chi \in \mathcal{C}_{0}^{\infty }\left( \mathbb{R}%
^{n}\right) \smallsetminus 0$ and if we put%
\begin{equation*}
\left\Vert u\right\Vert _{M^{p,q},\chi }=\left\Vert U_{\chi ,p}\right\Vert
_{L^{q}},\quad u\in M^{p,q}\left( \mathbb{R}^{n}\right) ,
\end{equation*}%
then $\left\Vert \cdot \right\Vert _{M^{p,q},\chi }$ is a norm on $%
M^{p,q}\left( \mathbb{R}^{n}\right) $ and the topology that defines does not
depend on the choice of the function $\chi \in \mathcal{C}_{0}^{\infty
}\left( \mathbb{R}^{n}\right) \smallsetminus 0$.

$(\mathrm{c})$ $M^{p,q}\left( \mathbb{R}^{n}\right) $ is a Banach space.

$\left( \mathrm{d}\right) $ If $1\leq p_{0}\leq p_{1}\leq \infty $ and $%
1\leq q_{0}\leq q_{1}\leq \infty $, then%
\begin{equation*}
\mathcal{S}\left( \mathbb{R}^{n}\right) \subset M^{1,1}\left( \mathbb{R}%
^{n}\right) \subset M^{p_{0},q_{0}}\left( \mathbb{R}^{n}\right) \subset
M^{p_{1},q_{1}}\left( \mathbb{R}^{n}\right) \subset M^{\infty ,\infty
}\left( \mathbb{R}^{n}\right) \subset \mathcal{S}^{\prime }\left( \mathbb{R}%
^{n}\right) .
\end{equation*}
\end{proposition}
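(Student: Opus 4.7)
All four statements are naturally packaged in terms of the short-time Fourier transform (STFT). Writing $V_\chi u(y,\xi) = \widehat{u\,\tau_y\chi}(\xi)$, we have $U_{\chi,p}(\xi) = \|V_\chi u(\cdot,\xi)\|_{L^p_y}$ and $\|u\|_{M^{p,q},\chi} = \|V_\chi u\|_{L^{p,q}_{y,\xi}}$. The key algebraic identity, proved first for $u\in\mathcal S(\mathbb R^n)$ by a direct Fourier computation and then extended to $u\in\mathcal S'(\mathbb R^n)$ by continuity, is the reproducing (change-of-window) formula
\[
\langle\chi_2,\chi_0\rangle\,V_{\chi_1}u(y,\xi)
=\iint V_{\chi_0}u(z,\eta)\,V_{\chi_1}\chi_2(y-z,\xi-\eta)\,\mathrm e^{-\mathrm i\langle y-z,\eta\rangle}\,\mathrm dz\,\mathrm d\eta,
\]
valid for any $\chi_0,\chi_1,\chi_2\in C_0^\infty$ with $\langle\chi_2,\chi_0\rangle\neq 0$. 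Because $\chi_1,\chi_2\in\mathcal S$, the factor $V_{\chi_1}\chi_2$ lies in $\mathcal S(\mathbb R^{2n})$, so $|V_{\chi_1}u|$ is pointwise bounded by the convolution of $|V_{\chi_0}u|$ with a Schwartz function on $\mathbb R^{2n}$.

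For parts (a) and (b), take $\chi_0$ to be the window witnessing $u\in M^{p,q}$. Iterated Young's inequality applied first in $y$ at each fixed $\xi$ and then in $\xi$ yields $\|U_{\chi_1,p}\|_{L^q}\le C(\chi_0,\chi_1)\,\|U_{\chi_0,p}\|_{L^q}$ for every $\chi_1\in C_0^\infty$; swapping the roles of $\chi_0$ and $\chi_1$ (both nonzero) gives the reverse inequality, hence equivalence of the norms. The sublinearity and positive homogeneity of $\|\cdot\|_{M^{p,q},\chi}$ are immediate, and definiteness follows because $V_\chi u\equiv 0$ forces $u=0$ via the STFT inversion formula.

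For (c), one verifies the continuous embeddings $\mathcal S(\mathbb R^n)\hookrightarrow M^{p,q}(\mathbb R^n)\hookrightarrow \mathcal S'(\mathbb R^n)$; the left one is direct because $V_\chi u$ is Schwartz when $u$ is, and the right one follows by testing $u$ against bumps and using that $U_{\chi,p}\in L^q$ controls the action of $u$ on shifted-modulated copies of $\chi$. A Cauchy sequence in $M^{p,q}$ is then Cauchy in $\mathcal S'$, hence converges to some $u\in\mathcal S'$, and a Fatou argument applied to $\|V_\chi u_n\|_{L^{p,q}}$ shows $u\in M^{p,q}$ and $u_n\to u$ in $M^{p,q}$. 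For (d), apply the reproducing formula once more: $|V_\chi u|\le C\,|V_\chi u|*|F|$ with $F\in\mathcal S(\mathbb R^{2n})$, so Young's inequality in the appropriate variable upgrades $L^{p_0,q_0}$-integrability to $L^{p_1,q_1}$-integrability whenever $p_0\le p_1$, $q_0\le q_1$, since a Schwartz function lies in every $L^r$.

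\textbf{Main obstacle.} The serious technical point is establishing the reproducing identity for arbitrary $u\in\mathcal S'$ with merely compactly supported windows $\chi_j\in C_0^\infty$ (rather than Schwartz): one proves it for $u\in\mathcal S$ by Fubini and Fourier inversion, then extends by a continuity/density argument, using that $y\mapsto\tau_y\chi$ is continuous $\mathbb R^n\to\mathcal S$ with values in a bounded subset on compact sets. All remaining steps are routine applications of Young's inequality, Fatou's lemma, and the fundamental embeddings $\mathcal S\hookrightarrow M^{p,q}\hookrightarrow\mathcal S'$.
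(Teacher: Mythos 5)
The paper itself offers no proof of this proposition: it is stated as a summary of classical facts from Feichtinger's theory of modulation spaces, with the reader implicitly referred to \cite{Feichtinger 1} and \cite{Feichtinger 3}. Your outline is the standard textbook route (essentially Gr\"ochenig's treatment): the change-of-window/reproducing identity gives the pointwise bound $\left\vert V_{\chi _{1}}u\right\vert \leq C\left\vert V_{\chi _{0}}u\right\vert \ast \left\vert V_{\chi _{1}}\chi _{2}\right\vert$ with a Schwartz kernel, and then mixed-norm Young inequalities deliver (a), (b) and (d), while completeness follows from the embedding into $\mathcal{S}^{\prime }$ plus Fatou. This is correct in structure and is exactly how these facts are usually established. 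Two points deserve more care than your sketch gives them. First, the paper's definition takes $u\in \mathcal{D}^{\prime }\left( \mathbb{R}^{n}\right) $ with a compactly supported window, so before you may use Schwartz windows, the inversion formula, or the embedding $M^{\infty ,\infty }\subset \mathcal{S}^{\prime }$, you must first show that the finiteness of $\left\Vert U_{\chi ,p}\right\Vert _{L^{q}}$ for one $\chi \in \mathcal{C}_{0}^{\infty }\smallsetminus 0$ forces $u$ to be tempered; you flag the extension of the reproducing identity to $\mathcal{S}^{\prime }$ as the main obstacle, but the logically prior step is this temperedness, which is proved by synthesizing $u$ from the time-frequency shifts of $\chi $ against $V_{\chi }u$ and estimating the resulting weak integral on Schwartz test functions. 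Second, the exact phase factor in your reproducing formula depends on the Fourier normalization; what is actually used downstream is only the modulus inequality, so this is harmless, but the identity as displayed should be checked against the convention $\widehat{u\tau _{y}\chi }\left( \xi \right) =\left\langle u,\mathrm{e}^{-\mathrm{i}\left\langle \cdot ,\xi \right\rangle }\chi \left( \cdot -y\right) \right\rangle $ used in the paper. With these two repairs your argument is complete and proves everything the proposition asserts.
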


To go further, we need a more convenient way to describe $M^{p,q}\left( 
\mathbb{R}^{n}\right) $'s topology.

\begin{lemma}
$(\mathrm{a})$ Let $\chi \in \mathcal{S}\left( \mathbb{R}^{n}\right) $ and $%
u\in \mathcal{S}^{\prime }\left( \mathbb{R}^{n}\right) $. Then $\chi \left(
D\right) u\in \mathcal{S}^{\prime }\left( \mathbb{R}^{n}\right) \cap 
\mathcal{C}_{\mathrm{pol}}^{\infty }\left( \mathbb{R}^{n}\right) $. In fact
we have 
\begin{equation*}
\chi \left( D\right) u\left( x\right) =\left( 2\pi \right) ^{-n}\left\langle 
\widehat{u},\mathrm{e}^{\mathrm{i}\left\langle x,\cdot \right\rangle }\chi
\right\rangle ,\quad x\in \mathbb{R}^{n}.
\end{equation*}

$(\mathrm{b})$ Let $u\in \mathcal{D}^{\prime }\left( \mathbb{R}^{n}\right) $ 
$($or $u\in \mathcal{S}^{\prime }\left( \mathbb{R}^{n}\right) )$ and $\chi
\in \mathcal{C}_{0}^{\infty }\left( \mathbb{R}^{n}\right) $ $($or $\chi \in 
\mathcal{S}\left( \mathbb{R}^{n}\right) )$. Then 
\begin{equation*}
\mathbb{R}^{n}\times \mathbb{R}^{n}\ni \left( x,\xi \right) \rightarrow \chi
\left( D-\xi \right) u\left( x\right) =\left( 2\pi \right) ^{-n}\left\langle 
\widehat{u},\mathrm{e}^{\mathrm{i}\left\langle x,\cdot \right\rangle }\chi
\left( \cdot -\xi \right) \right\rangle \in 
\mathbb{C}%
\end{equation*}%
is a $\mathcal{C}^{\infty }$-function.
\end{lemma}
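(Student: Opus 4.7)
Plan: The strategy is to reduce everything to a seminorm estimate for $\mathcal{S}'$ together with differentiation under a distributional pairing, after noting that $y\mapsto e^{\mathrm{i}\langle x,y\rangle}\chi(y)$ belongs to $\mathcal{S}(\mathbb{R}^n)$ whenever $\chi\in\mathcal{S}(\mathbb{R}^n)$.

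For part (a), I would first define $F(x) = (2\pi)^{-n}\langle \widehat{u},e^{\mathrm{i}\langle x,\cdot\rangle}\chi\rangle$ and establish directly that $F\in \mathcal{C}^\infty_{\mathrm{pol}}(\mathbb{R}^n)$. Smoothness follows by differentiation under the distributional pairing: one checks that $x\mapsto e^{\mathrm{i}\langle x,\cdot\rangle}\chi(\cdot)$ is a $\mathcal{C}^\infty$-map from $\mathbb{R}^n$ into $\mathcal{S}(\mathbb{R}^n_y)$, with $\partial_x^\alpha\bigl(e^{\mathrm{i}\langle x,y\rangle}\chi(y)\bigr) = (\mathrm{i}y)^\alpha e^{\mathrm{i}\langle x,y\rangle}\chi(y)$, so $\partial^\alpha F(x) = (2\pi)^{-n}\langle\widehat{u},(\mathrm{i}\cdot)^\alpha e^{\mathrm{i}\langle x,\cdot\rangle}\chi\rangle$. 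Polynomial growth follows from the standard bound $|\langle\widehat{u},\varphi\rangle|\leq C\sum_{|\alpha|,|\beta|\leq N}\|y^\alpha\partial_y^\beta\varphi\|_\infty$ applied to $\varphi_x(y)=e^{\mathrm{i}\langle x,y\rangle}\chi(y)$: by Leibniz, $\partial_y^\beta\varphi_x(y)$ is a finite sum of terms $c_{\gamma}\,x^{\beta-\gamma}e^{\mathrm{i}\langle x,y\rangle}\partial_y^\gamma\chi(y)$, yielding $|F(x)|\leq C'(1+|x|)^N$, and similarly for every derivative.

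Next I would identify $F$ with $\chi(D)u$ by testing against $\phi\in\mathcal{S}$. Since $\chi\hat u\in\mathcal{S}'$, the definition $\chi(D)u=\mathcal{F}^{-1}(\chi\hat u)$ gives
\[
\langle \chi(D)u,\phi\rangle = \langle \widehat{u},\chi\,\mathcal{F}^{-1}\phi\rangle.
\]
Writing $\chi(y)\mathcal{F}^{-1}\phi(y)=(2\pi)^{-n}\int e^{\mathrm{i}\langle x,y\rangle}\chi(y)\phi(x)\,\mathrm{d}x$ and interpreting the right side as a Bochner integral with values in $\mathcal{S}(\mathbb{R}^n_y)$, I would exchange the integral with the distributional pairing against $\widehat{u}$ to obtain $\langle \chi(D)u,\phi\rangle=\int F(x)\phi(x)\,\mathrm{d}x$, proving the identification.

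For part (b) the same argument is carried out with $\varphi_{x,\xi}(y)=e^{\mathrm{i}\langle x,y\rangle}\chi(y-\xi)$ in place of $\varphi_x(y)$. The key is that $(x,\xi)\mapsto\varphi_{x,\xi}$ is a $\mathcal{C}^\infty$-map into $\mathcal{S}(\mathbb{R}^n_y)$ (Schwartz case) or into $\mathcal{D}(\mathbb{R}^n_y)$ with supports in a compact set that depends continuously on $\xi$ (compactly supported case); in the latter case $\widehat{u}$ is only paired with $\mathcal{D}$-functions, which is legitimate either when $u\in\mathcal{S}'$ so that $\widehat{u}\in\mathcal{S}'$, or via the intrinsic definition of $\chi(D-\xi)u=\check\chi_{\xi}\ast u$. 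Differentiation under the pairing in both $x$ and $\xi$ gives joint $\mathcal{C}^\infty$-smoothness.

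The main obstacle I expect is the Fubini-type interchange in part (a): the integral $\int e^{\mathrm{i}\langle x,\cdot\rangle}\chi(\cdot)\phi(x)\,\mathrm{d}x$ must be shown to converge absolutely in $\mathcal{S}(\mathbb{R}^n_y)$. This follows because every Schwartz seminorm of $\varphi_x$ in $y$ is polynomially bounded in $x$ (by the Leibniz computation above) while $\phi\in\mathcal{S}$ decays faster than any polynomial; hence the integral is a genuine $\mathcal{S}$-valued Bochner integral, and the exchange with $\langle\widehat u,\cdot\rangle$ is valid by continuity.
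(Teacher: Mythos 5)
Your proposal is correct and follows essentially the same route as the paper: both identify $\chi\left( D\right) u$ with the function $x\mapsto \left( 2\pi \right) ^{-n}\left\langle \widehat{u},\mathrm{e}^{\mathrm{i}\left\langle x,\cdot \right\rangle }\chi \right\rangle $ by pairing against a test function and exchanging the resulting $x$-integral with the action of $\widehat{u}$ (the paper does this through the identity $\left\langle \mathcal{F}\left( \chi \widehat{u}\right) ,\check{\varphi}\right\rangle =\int \left\langle \widehat{u},\mathrm{e}^{-\mathrm{i}\left\langle x,\cdot \right\rangle }\chi \right\rangle \check{\varphi}\left( x\right) \mathrm{d}x$, you through an $\mathcal{S}$-valued Bochner integral, which is the same interchange). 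You additionally spell out the smoothness, polynomial-growth and part $(\mathrm{b})$ arguments that the paper's proof leaves implicit, which is a completion of detail rather than a different method.
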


\begin{proof}
Let $\varphi \in \mathcal{S}\left( \mathbb{R}_{x}^{n}\right) $ and $\check{%
\varphi}\left( x\right) =\varphi \left( -x\right) $. We have 
\begin{eqnarray*}
\left\langle \chi \left( D\right) u,\varphi \right\rangle &=&\left\langle 
\mathcal{F}^{-1}\left( \chi \widehat{u}\right) ,\varphi \right\rangle
=\left\langle \chi \widehat{u},\mathcal{F}^{-1}\varphi \right\rangle =\left(
2\pi \right) ^{-n}\left\langle \chi \widehat{u},\mathcal{F}\check{\varphi}%
\right\rangle \\
&=&\left( 2\pi \right) ^{-n}\left\langle \mathcal{F}\left( \chi \widehat{u}%
\right) ,\check{\varphi}\right\rangle =\left( 2\pi \right) ^{-n}\int
\left\langle \widehat{u},\mathrm{e}^{-\mathrm{i}\left\langle x,\cdot
\right\rangle }\chi \right\rangle \check{\varphi}\left( x\right) \mathrm{d}x
\\
&=&\left( 2\pi \right) ^{-n}\int \left\langle \widehat{u},\mathrm{e}^{%
\mathrm{i}\left\langle x,\cdot \right\rangle }\chi \right\rangle \varphi
\left( x\right) \mathrm{d}x
\end{eqnarray*}%
Hence 
\begin{equation*}
\chi \left( D\right) u\left( x\right) =\left( 2\pi \right) ^{-n}\left\langle 
\widehat{u},\mathrm{e}^{\mathrm{i}\left\langle x,\cdot \right\rangle }\chi
\right\rangle ,\quad x\in \mathbb{R}^{n}.
\end{equation*}
\end{proof}

\begin{lemma}
Let $\chi \in \mathcal{S}\left( \mathbb{R}^{n}\right) $ and $u\in \mathcal{S}%
^{\prime }\left( \mathbb{R}^{n}\right) $. Then%
\begin{equation*}
\chi \left( D-\xi \right) u\left( x\right) =\left( 2\pi \right) ^{-n}\mathrm{%
e}^{\mathrm{i}\left\langle x,\xi \right\rangle }\widehat{u\tau _{x}\widehat{%
\chi }}\left( \xi \right) ,\quad x,\xi \in \mathbb{R}^{n}.
\end{equation*}
\end{lemma}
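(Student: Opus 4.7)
The plan is to reduce the claim to the previous lemma, which already expresses $\chi(D-\xi)u(x)$ as a distributional pairing of $\widehat{u}$ against the Schwartz function $\eta\mapsto \mathrm{e}^{\mathrm{i}\langle x,\eta\rangle}\chi(\eta-\xi)$. Once this is done, the remaining task is purely a Fourier-transform computation on the test-function side, with no analytical subtlety.

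First I would rewrite the pairing using the duality identity $\langle \widehat{u},\psi\rangle=\langle u,\widehat{\psi}\rangle$ valid for $u\in\mathcal{S}^{\prime}$ and $\psi\in\mathcal{S}$, applied to $\psi(\eta)=\mathrm{e}^{\mathrm{i}\langle x,\eta\rangle}\chi(\eta-\xi)$. This moves the Fourier transform off the distribution and onto a function where it can be computed explicitly. The translation by $\xi$ and the modulation by $\mathrm{e}^{\mathrm{i}\langle x,\cdot\rangle}$ interchange under the Fourier transform, producing a translation $\tau_{x}\widehat{\chi}$ in the output variable and a modulation $\mathrm{e}^{-\mathrm{i}\langle\cdot,\xi\rangle}$, together with a scalar phase $\mathrm{e}^{\mathrm{i}\langle x,\xi\rangle}$ that collects the mixed terms.

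After this step, the pairing of $u$ against the transformed test function is exactly $\langle u,\mathrm{e}^{-\mathrm{i}\langle\cdot,\xi\rangle}\tau_{x}\widehat{\chi}\rangle$, which by the definition recalled in the Feichtinger-space discussion is precisely $\widehat{u\,\tau_{x}\widehat{\chi}}(\xi)$. Combining this with the scalar phase and the factor $(2\pi)^{-n}$ from the preceding lemma yields the asserted identity.

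I do not expect a genuine obstacle: the only thing to watch is sign bookkeeping in the Fourier transform of a modulated, translated Schwartz function, and the verification that $\mathrm{e}^{\mathrm{i}\langle x,\cdot\rangle}\chi(\cdot-\xi)\in\mathcal{S}(\mathbb{R}^{n})$ so that the distributional pairing is legitimate. Both are routine; the content of the statement is essentially the \emph{covariance} of the Gabor-type coefficients under translation-modulation, disguised as an identity relating a pseudo-differential cut-off $\chi(D-\xi)$ with the short-time Fourier transform built from the window $\widehat{\chi}$.
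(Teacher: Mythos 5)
Your proof is correct and follows essentially the same route as the paper: both start from the representation $\chi(D-\xi)u(x)=(2\pi)^{-n}\langle\widehat{u},\mathrm{e}^{\mathrm{i}\langle x,\cdot\rangle}\chi(\cdot-\xi)\rangle$ of the previous lemma, move the Fourier transform onto the test function, and compute $\mathcal{F}\bigl(\mathrm{e}^{\mathrm{i}\langle x,\cdot\rangle}\chi(\cdot-\xi)\bigr)(y)=\mathrm{e}^{\mathrm{i}\langle x-y,\xi\rangle}\widehat{\chi}(y-x)$ to recognize $\mathrm{e}^{\mathrm{i}\langle x,\xi\rangle}\widehat{u\tau_{x}\widehat{\chi}}(\xi)$. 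The only (harmless) difference is that you invoke the duality $\langle\widehat{u},\psi\rangle=\langle u,\widehat{\psi}\rangle$ directly for $u\in\mathcal{S}^{\prime}$, which lets you skip the paper's final density-and-continuity passage from $u\in\mathcal{S}$ to general tempered $u$.
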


\begin{proof}
Suppose that $u,\chi \in \mathcal{S}\left( \mathbb{R}^{n}\right) $. Then
using 
\begin{eqnarray*}
\mathcal{F}\left( \mathrm{e}^{\mathrm{i}\left\langle x,\cdot \right\rangle
}\chi \left( \cdot -\xi \right) \right) \left( y\right) &=&\int \mathrm{e}^{-%
\mathrm{i}\left\langle y,\eta \right\rangle }\mathrm{e}^{\mathrm{i}%
\left\langle x,\eta \right\rangle }\chi \left( \eta -\xi \right) \mathrm{d}%
\eta =\int \mathrm{e}^{\mathrm{i}\left\langle x-y,\zeta +\xi \right\rangle
}\chi \left( \zeta \right) \mathrm{d}\zeta \\
&=&\mathrm{e}^{\mathrm{i}\left\langle x-y,\xi \right\rangle }\widehat{\chi }%
\left( y-x\right)
\end{eqnarray*}%
we obtain%
\begin{multline*}
\chi \left( D-\xi \right) u\left( x\right) =\left( 2\pi \right) ^{-n}\int 
\mathrm{e}^{\mathrm{i}\left\langle x,\eta \right\rangle }\chi \left( \eta
-\xi \right) \widehat{u}\left( \eta \right) \mathrm{d}\eta \\
=\left( 2\pi \right) ^{-n}\int \mathcal{F}\left( \mathrm{e}^{\mathrm{i}%
\left\langle x,\cdot \right\rangle }\chi \left( \cdot -\xi \right) \right)
\left( y\right) u\left( y\right) \mathrm{d}y=\left( 2\pi \right) ^{-n}\int 
\mathrm{e}^{\mathrm{i}\left\langle x-y,\xi \right\rangle }\widehat{\chi }%
\left( y-x\right) u\left( y\right) \mathrm{d}y \\
=\left( 2\pi \right) ^{-n}\mathrm{e}^{\mathrm{i}\left\langle x,\xi
\right\rangle }\int \mathrm{e}^{-\mathrm{i}\left\langle y,\xi \right\rangle }%
\widehat{\chi }\left( y-x\right) u\left( y\right) \mathrm{d}y=\left( 2\pi
\right) ^{-n}\mathrm{e}^{\mathrm{i}\left\langle x,\xi \right\rangle }%
\widehat{u\tau _{x}\widehat{\chi }}\left( \xi \right) .
\end{multline*}%
The general case is obtained from the density of $\mathcal{S}\left( \mathbb{R%
}^{n}\right) $ in $\mathcal{S}^{\prime }\left( \mathbb{R}^{n}\right) $
noticing that both, 
\begin{equation*}
\chi \left( D-\xi \right) u\left( x\right) =\left( 2\pi \right)
^{-n}\left\langle \widehat{u},\mathrm{e}^{\mathrm{i}\left\langle x,\cdot
\right\rangle }\chi \left( \cdot -\xi \right) \right\rangle
\end{equation*}%
and%
\begin{equation*}
\widehat{u\tau _{x}\widehat{\chi }}\left( \xi \right) =\left\langle u,%
\mathrm{e}^{-\mathrm{i}\left\langle \cdot ,\xi \right\rangle }\widehat{\chi }%
\left( \cdot -x\right) \right\rangle ,
\end{equation*}%
depend continuously on $u$.
\end{proof}

\begin{corollary}
Let $1\leq p\leq \infty $, $\chi \in \mathcal{S}\left( \mathbb{R}^{n}\right) 
$ and $u\in \mathcal{S}^{\prime }\left( \mathbb{R}^{n}\right) $. Then 
\begin{equation*}
\left\Vert \chi \left( D-\xi \right) u\right\Vert _{L^{p}}=\left( 2\pi
\right) ^{-n}U_{\widehat{\chi },p}\left( \xi \right) ,\quad \xi \in \mathbb{R%
}^{n}.
\end{equation*}
\end{corollary}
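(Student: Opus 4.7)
The plan is to derive the equality directly from the pointwise identity established in the preceding lemma, which states
\[
\chi(D-\xi)u(x) = (2\pi)^{-n}\,\mathrm{e}^{\mathrm{i}\langle x,\xi\rangle}\,\widehat{u\tau_x\widehat{\chi}}(\xi),\quad x,\xi\in\mathbb{R}^n.
\]
The strategy is to take the $L^p_x$-norm of both sides, exploit that the exponential factor $\mathrm{e}^{\mathrm{i}\langle x,\xi\rangle}$ has modulus one, and then compare the result with the definition of $U_{\widehat{\chi},p}(\xi)$.

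More precisely, first I would pass to absolute values, yielding
\[
|\chi(D-\xi)u(x)| = (2\pi)^{-n}\,|\widehat{u\tau_x\widehat{\chi}}(\xi)|,
\]
a pointwise equality valid for all $x\in\mathbb{R}^n$. Then I would split into two cases. For $1\le p<\infty$, raising to the $p$-th power and integrating over $x\in\mathbb{R}^n$ gives
\[
\|\chi(D-\xi)u\|_{L^p}^{p} = (2\pi)^{-np}\int_{\mathbb{R}^n}|\widehat{u\tau_x\widehat{\chi}}(\xi)|^{p}\,\mathrm{d}x,
\]
and the right-hand integral is, by the definition of $U_{\widehat{\chi},p}$ (with $\widehat{\chi}$ in place of the test function $\chi$), exactly $\bigl[(2\pi)^{-n}U_{\widehat{\chi},p}(\xi)\bigr]^{p}$ after taking $p$-th roots. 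For $p=\infty$ the same reasoning applies with supremum over $x$ instead of integration, again matching the definition of $U_{\widehat{\chi},\infty}(\xi)$.

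There is essentially no obstacle here; the corollary is a direct reinterpretation of the preceding lemma once one recognizes that in the definition of $U_{\chi,p}$ one may substitute any Schwartz function, and the relevant substitution is $\chi\rightsquigarrow\widehat{\chi}$. The only mild care needed is bookkeeping: verifying that the variable of integration in the definition of $U_{\widehat{\chi},p}$ (which is named $y$ there) corresponds, after the substitution $\chi\rightsquigarrow\widehat{\chi}$, to the variable $x$ in the $L^p$-norm of $\chi(D-\xi)u$, and that the factor $(2\pi)^{-n}$ from the preceding lemma is pulled outside intact.
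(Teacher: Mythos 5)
Your proof is correct and is exactly the (implicit) argument the paper intends: the corollary follows immediately from the preceding lemma by taking moduli, discarding the unimodular phase $\mathrm{e}^{\mathrm{i}\langle x,\xi\rangle}$, and applying the $L^{p}$-norm in $x$, which reproduces the definition of $U_{\widehat{\chi},p}(\xi)$ with $\widehat{\chi}$ in place of the window. Your bookkeeping of the constant $(2\pi)^{-n}$ and the identification of the integration variable are both accurate, and the $p=\infty$ case is handled correctly by the same substitution with a supremum.
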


\begin{corollary}
Let $1\leq p,q\leq \infty $ and $u\in \mathcal{S}^{\prime }\left( \mathbb{R}%
^{n}\right) $. Then the following statements are equivalent:

$\left( \mathrm{a}\right) $ $u\in M^{p,q}\left( \mathbb{R}^{n}\right) $;

$\left( \mathrm{b}\right) $ There is $\chi \in \mathcal{S}\left( \mathbb{R}%
^{n}\right) \smallsetminus 0$ so that $\xi \rightarrow \left\Vert \chi
\left( D-\xi \right) u\right\Vert _{L^{p}}$ is in $L^{q}\left( \mathbb{R}%
^{n}\right) $.
\end{corollary}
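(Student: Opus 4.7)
The preceding corollary does most of the work, since for $\chi\in\mathcal{S}(\mathbb{R}^n)$ it equates $\|\chi(D-\xi)u\|_{L^p}$ with $(2\pi)^{-n}U_{\widehat\chi,p}(\xi)$. The implication (a)$\Rightarrow$(b) is thus immediate: I would start with any $\psi\in\mathcal{C}_0^\infty(\mathbb{R}^n)\setminus 0$ certifying $u\in M^{p,q}$, set $\chi:=\mathcal{F}^{-1}\psi\in\mathcal{S}\setminus 0$ so that $\widehat\chi=\psi$, and simply read off $\|\chi(D-\xi)u\|_{L^p}=(2\pi)^{-n}U_{\psi,p}(\xi)\in L^q$.

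For the substantive direction (b)$\Rightarrow$(a), the same corollary converts the hypothesis into $U_{\widehat\chi,p}\in L^q$ for the \emph{Schwartz} window $\widehat\chi\neq 0$, whereas the definition of $M^{p,q}$ demands a \emph{compactly supported} window. The main obstacle is therefore to extend the window-independence statement of the previous proposition, which is proved only inside $\mathcal{C}_0^\infty\setminus 0$, so as to manufacture a compactly supported window $\varphi$ with $U_{\varphi,p}\in L^q$. I would do this by localization: since $\widehat\chi$ is continuous and not identically zero, there is an open ball $B$ on which $\widehat\chi$ nowhere vanishes; picking any $\rho\in\mathcal{C}_0^\infty(B)\setminus 0$, the product $\varphi:=\widehat\chi\cdot\rho$ lies in $\mathcal{C}_0^\infty(\mathbb{R}^n)\setminus 0$.

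It then remains to transfer the $L^q$-bound from the window $\widehat\chi$ to $\varphi$, and I would do this by a direct convolution-in-frequency estimate. Starting from the factorization $(u\tau_y\varphi)(z)=(u\tau_y\widehat\chi)(z)\cdot(\tau_y\rho)(z)$, the product-to-convolution rule together with $\widehat{\tau_y\rho}(\eta)=e^{-i\langle y,\eta\rangle}\hat\rho(\eta)$ yields the pointwise majorization
\[
|\widehat{u\tau_y\varphi}(\xi)|\le(2\pi)^{-n}\int|\widehat{u\tau_y\widehat\chi}(\xi-\eta)|\,|\hat\rho(\eta)|\,\mathrm{d}\eta.
\]
Taking the $L^p$-norm in $y$ via Minkowski's integral inequality gives the clean pointwise bound $U_{\varphi,p}(\xi)\le(2\pi)^{-n}(U_{\widehat\chi,p}*|\hat\rho|)(\xi)$, and Young's inequality applied to this (with $\hat\rho\in\mathcal{S}\subset L^1$) produces $\|U_{\varphi,p}\|_{L^q}\le(2\pi)^{-n}\|U_{\widehat\chi,p}\|_{L^q}\|\hat\rho\|_{L^1}<\infty$. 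Hence $u\in M^{p,q}(\mathbb{R}^n)$, completing the equivalence.
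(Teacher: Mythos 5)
Your proof is correct. The paper states this corollary without proof, leaving it to follow from the identity $\left\Vert \chi \left( D-\xi \right) u\right\Vert _{L^{p}}=\left( 2\pi \right) ^{-n}U_{\widehat{\chi },p}\left( \xi \right) $ of the preceding corollary together with window-independence; you have correctly identified that the only substantive point is the direction $(\mathrm{b})\Rightarrow (\mathrm{a})$, where the Schwartz window $\widehat{\chi }$ must be traded for a compactly supported one. Your localization $\varphi =\widehat{\chi }\cdot \rho $ followed by the majorization $U_{\varphi ,p}\leq \left( 2\pi \right) ^{-n}U_{\widehat{\chi },p}\ast \left\vert \widehat{\rho }\right\vert $ (via Minkowski's integral inequality in $y$) and Young's inequality is exactly the kind of $L^{1}$-convolution estimate on the functions $U_{\cdot ,p}$ that the paper itself deploys in the proofs of its Theorems on $T_{A}$ and on dilations $u_{\lambda }$, so the argument is both complete and entirely in the spirit of the text. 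The one point worth stating explicitly in a write-up is that $\widehat{u\tau _{y}\widehat{\chi }}$ is a continuous function of polynomial growth (since $u$ is tempered), so the product-to-convolution formula and the absolute convergence of the convolution integral against the Schwartz function $\widehat{\rho }$ are legitimate.
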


\begin{corollary}
Let $1\leq p,q\leq \infty $ and $\chi \in \mathcal{S}\left( \mathbb{R}%
^{n}\right) \smallsetminus 0$. Then 
\begin{gather*}
M^{p,q}\left( \mathbb{R}^{n}\right) \ni u\rightarrow \left( \int \left\Vert
\chi \left( D-\xi \right) u\right\Vert _{L^{p}}^{q}\mathrm{d}\xi \right)
^{1/q}\equiv \left\vert \left\vert \left\vert u\right\vert \right\vert
\right\vert _{M^{p,q},\chi } \\
u\rightarrow \left\Vert \left\Vert \chi \left( D-\cdot \right) u\right\Vert
_{L^{p}}\right\Vert _{L^{q}}=\left\vert \left\vert \left\vert u\right\vert
\right\vert \right\vert _{M^{p,q},\chi }
\end{gather*}%
is a norm on $M^{p,q}\left( \mathbb{R}^{n}\right) $. The topology defined by
this norm coincides with the topology of $M^{p,q}\left( \mathbb{R}%
^{n}\right) $.
\end{corollary}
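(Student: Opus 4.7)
My plan is to leverage the immediately preceding Corollary, which establishes the pointwise identity $\|\chi(D-\xi)u\|_{L^p} = (2\pi)^{-n}\, U_{\hat\chi,p}(\xi)$. Taking the $L^q$-norm in $\xi$ on both sides gives
\begin{equation*}
|||u|||_{M^{p,q},\chi} \;=\; (2\pi)^{-n}\, \|u\|_{M^{p,q},\hat\chi}.
\end{equation*}
Since $\chi \in \mathcal{S}(\mathbb{R}^n)\setminus 0$ implies $\hat\chi \in \mathcal{S}(\mathbb{R}^n)\setminus 0$, the claim reduces to showing that for \emph{every} Schwartz window $\psi \in \mathcal{S}(\mathbb{R}^n)\setminus 0$ the functional $\|\cdot\|_{M^{p,q},\psi}$ (defined by the same recipe as in the original definition, but with $\psi$ in place of $\chi \in \mathcal{C}_0^\infty\setminus 0$) is a well-defined norm on $M^{p,q}(\mathbb{R}^n)$ equivalent to any norm coming from a $\mathcal{C}_0^\infty$-window.

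The norm properties are straightforward once well-definedness is secured. Non-negativity, positive homogeneity and subadditivity follow from the linearity of $u \mapsto \widehat{u\tau_y\psi}$ together with Minkowski's inequality in $L^p$ and $L^q$. Separation of points is the implication $|||u|||_{M^{p,q},\chi} = 0 \Rightarrow u=0$: one sees that $\widehat{u\tau_y\hat\chi}(\xi)=0$ for almost every $(y,\xi)$, and then varying $y$ and appealing to continuity of $y \mapsto u\tau_y\hat\chi$ in $\mathcal{S}'(\mathbb{R}^n)$ together with $\hat\chi \neq 0$ forces $u=0$.

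The main obstacle, and the real work, is extending the window-independence of the topology from $\mathcal{C}_0^\infty\setminus 0$-windows (which is Proposition $(\mathrm{b})$) to $\mathcal{S}\setminus 0$-windows. I would do this by fixing a reference $\chi_0 \in \mathcal{C}_0^\infty(\mathbb{R}^n)\setminus 0$ and deriving a change-of-window inequality of the form $\|u\|_{M^{p,q},\psi} \lesssim C_{\psi,\chi_0}\, \|u\|_{M^{p,q},\chi_0}$ and its reverse. The standard device is a reproducing formula: normalising $\chi_0$ so that $\int \chi_0(\cdot - y)\overline{\chi_0(\cdot-y)}\,\mathrm{d}y$ is a nonzero constant, one expresses $u\tau_y\psi$ as a superposition of translates of $u\tau_{y'}\chi_0$ weighted against the cross term $\langle \psi\tau_{y-y'}\chi_0, \cdot\rangle$, which, because $\psi,\chi_0 \in \mathcal{S}$, decays rapidly in $(y-y',\xi-\xi')$. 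Minkowski's integral inequality in the $y$-variable and Young's inequality in the $\xi$-variable then convert this rapid decay into a constant $C_{\psi,\chi_0}<\infty$ controlling $\|\cdot\|_{M^{p,q},\psi}$ by $\|\cdot\|_{M^{p,q},\chi_0}$; swapping the roles of $\psi$ and $\chi_0$ gives the opposite bound. Once this two-sided equivalence is in hand, applying it with $\psi = \hat\chi$ and combining with the displayed identity $|||u|||_{M^{p,q},\chi} = (2\pi)^{-n}\|u\|_{M^{p,q},\hat\chi}$ simultaneously delivers that $|||\cdot|||_{M^{p,q},\chi}$ is a norm and that it defines the topology of $M^{p,q}(\mathbb{R}^n)$.
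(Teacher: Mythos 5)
Your proposal is correct and follows the same reduction the paper intends: the preceding corollary's identity $\left\Vert \chi \left( D-\xi \right) u\right\Vert _{L^{p}}=\left( 2\pi \right) ^{-n}U_{\widehat{\chi },p}\left( \xi \right) $ turns $\left\vert \left\vert \left\vert u\right\vert \right\vert \right\vert _{M^{p,q},\chi }$ into $\left( 2\pi \right) ^{-n}\left\Vert u\right\Vert _{M^{p,q},\widehat{\chi }}$, after which everything rests on window-independence of the $M^{p,q}$-norm. The paper states the corollary without proof, implicitly invoking its earlier Proposition on independence of the window; you correctly observe that that Proposition is only formulated for windows in $\mathcal{C}_{0}^{\infty }\left( \mathbb{R}^{n}\right) \smallsetminus 0$, whereas $\widehat{\chi }$ is merely in $\mathcal{S}\left( \mathbb{R}^{n}\right) \smallsetminus 0$, so the extension to Schwartz windows is a genuine step. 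Your sketch of that step --- a reproducing formula expressing $u\tau _{y}\psi $ through translates of $u\tau _{y^{\prime }}\chi _{0}$, followed by Minkowski's integral inequality and Young's inequality, exploiting the rapid decay of the cross term for Schwartz windows --- is the standard change-of-window argument and closes the gap the paper leaves implicit; your treatment of definiteness via continuity of $\left( y,\xi \right) \rightarrow \widehat{u\tau _{y}\widehat{\chi }}\left( \xi \right) $ and non-vanishing of $\widehat{\chi }$ is also sound. In short, you prove somewhat more than the paper writes down, and what you add is exactly what is needed.
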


Let $A$ be a real, symmetric and non-singular matrix, $\Phi _{A}$ the
quadratic form in $\mathbb{R}^{n}$ defined by $\Phi _{A}\left( x\right)
=-\left\langle Ax,x\right\rangle /2$, $x\in \mathbb{R}^{n}$. Then 
\begin{equation*}
\widehat{\mathrm{e}^{\mathrm{i}\Phi _{A}}}\left( \xi \right) =\left( 2\pi
\right) ^{n/2}\left\vert \det A\right\vert ^{-\frac{1}{2}}\mathrm{e}^{\frac{%
\pi \mathrm{isgn}A}{4}}\mathrm{e}^{-\mathrm{i}\left\langle A^{-1}\xi ,\xi
\right\rangle /2}
\end{equation*}%
This formula suggests the introduction of the operator 
\begin{equation*}
T_{A}:\mathcal{S}\left( \mathbb{R}^{m}\times \mathbb{R}^{n}\right)
\rightarrow \mathcal{S}^{\prime }\left( \mathbb{R}^{m}\times \mathbb{R}%
^{n}\right)
\end{equation*}%
defined by%
\begin{equation*}
T_{A}u=\left( 2\pi \right) ^{-n/2}\left\vert \det A\right\vert ^{\frac{1}{2}}%
\mathrm{e}^{-\frac{\pi \mathrm{isgn}A}{4}}\left( \delta \otimes \mathrm{e}^{%
\mathrm{i}\Phi _{A}}\right) \ast u.
\end{equation*}%
Then%
\begin{equation*}
\widehat{T_{A}u}=\left( 2\pi \right) ^{-n/2}\left\vert \det A\right\vert ^{%
\frac{1}{2}}\mathrm{e}^{-\frac{\pi \mathrm{isgn}A}{4}}\left( 1\otimes 
\widehat{\mathrm{e}^{\mathrm{i}\Phi _{A}}}\right) \cdot \widehat{u}=\left(
1\otimes \mathrm{e}^{\mathrm{i}\Phi _{A^{-1}}}\right) \cdot \widehat{u},
\end{equation*}%
\begin{equation*}
\widehat{T_{-A}u}=\left( 1\otimes \mathrm{e}^{\mathrm{i}\Phi
_{-A^{-1}}}\right) \cdot \widehat{u},
\end{equation*}%
so $T_{A}:\mathcal{S}\left( \mathbb{R}^{m}\times \mathbb{R}^{n}\right)
\rightarrow \mathcal{S}\left( \mathbb{R}^{m}\times \mathbb{R}^{n}\right) $
is invertible and $T_{A}^{-1}=T_{-A}$.

\begin{remark}
$\mathrm{supp}\left( \widehat{T_{A}u}\right) =\mathrm{supp}\left( \widehat{u}%
\right) $.
\end{remark}

\begin{theorem}
\label{n5}Let $1\leq p,q\leq \infty $ and $u\in \mathcal{S}^{\prime }\left( 
\mathbb{R}^{m}\times \mathbb{R}^{n}\right) $. Then

$\left( \mathrm{a}\right) $ $u\in M^{p,q}\left( \mathbb{R}^{m}\times \mathbb{%
R}^{n}\right) $ if and only if $T_{A}u\in M^{p,q}\left( \mathbb{R}^{m}\times 
\mathbb{R}^{n}\right) $.

$\left( \mathrm{b}\right) $ The operator 
\begin{equation*}
T_{A}:M^{p,q}\left( \mathbb{R}^{m}\times \mathbb{R}^{n}\right) \rightarrow
M^{p,q}\left( \mathbb{R}^{m}\times \mathbb{R}^{n}\right)
\end{equation*}
is a bounded isomorphism.

$\left( \mathrm{c}\right) $ If $1\leq q<\infty $, then the map%
\begin{equation*}
\mathrm{GL}\left( n,\mathbb{R}\right) \cap \mathrm{S}\left( n\right) \ni
A\rightarrow T_{A}u\in M^{p,q}\left( \mathbb{R}^{n}\right)
\end{equation*}%
is continuous. Here $\mathrm{S}\left( n\right) $ is the vector space of $%
n\times n$ real symmetric matrices.
\end{theorem}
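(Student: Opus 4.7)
The plan is to reduce the theorem to a single pointwise-in-$\xi$ identity that rewrites the window-localization $\chi(D-\xi)T_A u$ in terms of $u$ with a slightly modified Schwartz window. Fix $\chi\in\mathcal{S}(\mathbb{R}^{m+n})\setminus 0$ and define
\begin{equation*}
\tilde{\chi}_A(\eta_1,\eta_2)=\chi(\eta_1,\eta_2)\,\mathrm{e}^{\mathrm{i}\Phi_{A^{-1}}(\eta_2)}\in\mathcal{S}\setminus 0,
\end{equation*}
using that multiplication by the smooth polynomially-bounded function $\mathrm{e}^{\mathrm{i}\Phi_{A^{-1}}(\cdot)}$ preserves $\mathcal{S}$. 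Starting from $\widehat{T_A u}(\eta)=\mathrm{e}^{\mathrm{i}\Phi_{A^{-1}}(\eta_2)}\widehat{u}(\eta)$ and polarizing (using the symmetry of $A^{-1}$)
\begin{equation*}
\Phi_{A^{-1}}(\eta_2)=\Phi_{A^{-1}}(\xi_2)+\Phi_{A^{-1}}(\eta_2-\xi_2)-\langle A^{-1}\xi_2,\eta_2-\xi_2\rangle,
\end{equation*}
one rewrites, with $b=(0,A^{-1}\xi_2)\in\mathbb{R}^{m+n}$,
\begin{equation*}
\chi(\eta-\xi)\widehat{T_A u}(\eta)=\mathrm{e}^{\mathrm{i}\Phi_{A^{-1}}(\xi_2)}\mathrm{e}^{\mathrm{i}\langle b,\xi\rangle}\,\tilde{\chi}_A(\eta-\xi)\,\mathrm{e}^{-\mathrm{i}\langle b,\eta\rangle}\widehat{u}(\eta).
\end{equation*}
The factor $\mathrm{e}^{-\mathrm{i}\langle b,\eta\rangle}\widehat{u}(\eta)$ is $\widehat{\tau_{b}u}(\eta)$, so after inverse Fourier transforming and using that the Fourier multiplier $\tilde{\chi}_A(D-\xi)$ commutes with translations, $\chi(D-\xi)T_A u$ equals a unimodular scalar times $\tau_{b}(\tilde{\chi}_A(D-\xi)u)$. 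Since translations and unimodular multiplications are $L^p$-isometries, this yields the key identity
\begin{equation*}
\|\chi(D-\xi)T_A u\|_{L^p}=\|\tilde{\chi}_A(D-\xi)u\|_{L^p}\quad\text{for every }\xi\in\mathbb{R}^{m+n}.
\end{equation*}

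Parts $(\mathrm{a})$ and $(\mathrm{b})$ follow at once. Integrating in $\xi$ gives $\|T_A u\|_{M^{p,q},\chi}=\|u\|_{M^{p,q},\tilde{\chi}_A}$; since $\tilde{\chi}_A\in\mathcal{S}\setminus 0$, the corollary just stated identifies both sides (up to equivalence) with $\|u\|_{M^{p,q}}$, which proves $(\mathrm{a})$ and the boundedness in $(\mathrm{b})$. For invertibility, the Fourier multipliers $\mathrm{e}^{\pm\mathrm{i}\Phi_{A^{-1}}(\eta_2)}$ are mutually inverse, so $T_A\circ T_{-A}=T_{-A}\circ T_A=\mathrm{id}$ on $\mathcal{S}'$; applying the same bound to $T_{-A}$ shows $T_{-A}$ is bounded on $M^{p,q}$, so $T_A$ is an isomorphism.

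For part $(\mathrm{c})$ I combine local uniform boundedness with density. The norm-equivalence constant between the windows $\chi$ and $\tilde{\chi}_A$ depends continuously on $\tilde{\chi}_A$ in the $\mathcal{S}$-topology, and $A\mapsto\tilde{\chi}_A$ is continuous (in fact smooth) into $\mathcal{S}$; hence $\|T_A\|_{M^{p,q}\to M^{p,q}}$ is locally bounded on $\mathrm{GL}(n,\mathbb{R})\cap\mathrm{S}(n)$. For $u\in\mathcal{S}$ the map $A\mapsto T_A u$ is continuous into $\mathcal{S}$ (hence into $M^{p,q}$) by direct inspection of the Fourier-multiplier formula. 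When $\max(p,q)<\infty$, $\mathcal{S}$ is dense in $M^{p,q}$ and a three-$\varepsilon$ argument closes the proof. The remaining range $p=\infty$, $q<\infty$ is handled by applying Vitali's convergence theorem to $\int\|\chi(D-\xi)(T_A u-T_{A_0}u)\|_{L^\infty}^q\mathrm{d}\xi$: the key identity yields pointwise-in-$\xi$ convergence of the integrand to zero as $A\to A_0$, and the local $L^q(\mathrm{d}\xi)$-bound on $\|\chi(D-\xi)T_A u\|_{L^\infty}$ provides the uniform integrability that Vitali requires.

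The main obstacle is part $(\mathrm{c})$ in the range $p=\infty$, $q<\infty$: the Schwartz class is not dense in $M^{\infty,q}$, so a straightforward density argument is unavailable. Although the key identity exactly controls $\|\chi(D-\xi)T_A u\|_{L^p}$ in terms of $u$, the comparison of $T_A u$ with $T_{A_0}u$ drags along a translation $\tau_{(0,A^{-1}\xi_2)}$ and a quadratic chirp $\mathrm{e}^{\mathrm{i}\langle A^{-1}\xi_2,\xi_2\rangle/2}$ whose continuity in $A$ is not uniform in $\xi$, so establishing uniform integrability for Vitali requires careful bookkeeping of the norms produced by parts $(\mathrm{a})$ and $(\mathrm{b})$.
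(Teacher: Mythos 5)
Your key identity is correct, and it gives a genuinely different --- and for parts $(\mathrm{a})$ and $(\mathrm{b})$ arguably cleaner --- argument than the paper's. The paper takes $\chi=h\otimes\psi$, writes $\chi^{2}\left( D-\xi\right) T_{A}u$ as the convolution of an explicit $\xi$-dependent $L^{1}$ kernel with $\chi\left( D-\xi\right) u$, and applies Young's inequality to obtain the one-sided bound $\left\Vert \chi^{2}\left( D-\xi\right) T_{A}u\right\Vert _{L^{p}}\leq C_{A}\left\Vert \chi\left( D-\xi\right) u\right\Vert _{L^{p}}$ with $C_{A}$ continuous in $A$. You instead complete the square in the chirp, absorb the cross term into a modulation and a translation (both $L^{p}$-isometries), and get the exact equality $\left\Vert \chi\left( D-\xi\right) T_{A}u\right\Vert _{L^{p}}=\left\Vert \tilde{\chi}_{A}\left( D-\xi\right) u\right\Vert _{L^{p}}$ for the modified Schwartz window $\tilde{\chi}_{A}=\chi\,\mathrm{e}^{\mathrm{i}\Phi_{A^{-1}}}$; together with the paper's window-independence corollary and $T_{A}^{-1}=T_{-A}$ this does settle $(\mathrm{a})$ and $(\mathrm{b})$. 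The price of your route is that the window on the right-hand side moves with $A$, whereas the paper's inequality compares $T_{A}u$ and $u$ through the same fixed pair of windows for every $A$ --- which is precisely what its one-line dominated-convergence proof of $(\mathrm{c})$ exploits.

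For part $(\mathrm{c})$ your argument has a genuine gap in the range $p=\infty$, $q<\infty$, which you yourself flag but do not close. Two concrete problems. First, the claim that the key identity "yields pointwise-in-$\xi$ convergence of the integrand to zero" is not automatic: comparing $\chi\left( D-\xi\right) T_{A}u=c_{A}\left( \xi\right) \tau_{b_{A}\left( \xi\right) }\bigl(\tilde{\chi}_{A}\left( D-\xi\right) u\bigr)$ with the corresponding expression at $A_{0}$ requires $\left\Vert \tau_{b_{A}}v-\tau_{b_{A_{0}}}v\right\Vert _{L^{\infty}}\rightarrow 0$, and translation is \emph{not} continuous on $L^{\infty}$; it can be rescued here because $\tilde{\chi}_{A}\left( D-\xi\right) u$ is Lipschitz (its gradient is again of the form $\rho\left( D-\xi\right) u$ with $\rho\in\mathcal{S}$), but that step must be stated and quantified. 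Second, and more seriously, a locally uniform bound on $\int\left\Vert \chi\left( D-\xi\right) T_{A}u\right\Vert _{L^{\infty}}^{q}\mathrm{d}\xi$ is merely boundedness of the family in $L^{1}\left( \mathrm{d}\xi\right) $; on the infinite-measure space $\mathbb{R}^{m}\times\mathbb{R}^{n}$ this gives neither the equi-integrability nor the tightness that Vitali's theorem requires. To close the argument you would need a pointwise domination of the form $\left\Vert \tilde{\chi}_{A}\left( D-\xi\right) u\right\Vert _{L^{\infty}}\leq\bigl(\left\vert \Theta_{A}\right\vert \ast\left\Vert \chi_{0}\left( D-\cdot\right) u\right\Vert _{L^{\infty}}\bigr)\left( \xi\right) $ with $\Theta_{A}\rightarrow\Theta_{A_{0}}$ in $L^{1}$, followed by a generalized dominated convergence argument --- at which point you have essentially rebuilt the paper's proof of $(\mathrm{c})$, which applies its Young-type estimate directly to the difference $T_{A}-T_{A_{0}}$ and concludes by ordinary dominated convergence, uniformly in $p$ and with no density argument or case split.
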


\begin{proof}
We shall write $\xi =\left( \xi ^{\prime },\xi ^{\prime \prime }\right) $
for an element in $\mathbb{R}^{m}\times \mathbb{R}^{n}$ and accordingly $%
D=\left( D^{\prime },D^{\prime \prime }\right) $. We shall also write $%
\mathcal{F}_{m}$ for $\mathcal{F}_{\mathbb{R}^{m}}$ and $\mathcal{F}_{n}$
for $\mathcal{F}_{\mathbb{R}^{n}}$.

Let $h\in \mathcal{S}\left( \mathbb{R}^{m}\right) \smallsetminus 0$, $\psi
\in \mathcal{S}\left( \mathbb{R}^{n}\right) \smallsetminus 0$, $H=\mathcal{F}%
_{m}^{-1}\left( h\right) $, $\Psi =\mathcal{F}_{n}^{-1}\left( \psi \right) $
and $\chi =h\otimes \psi $. For $u\in M^{p,q}\left( \mathbb{R}^{m}\times 
\mathbb{R}^{n}\right) $ we shall evaluate%
\begin{equation*}
\left\Vert \chi ^{2}\left( D-\cdot \right) T_{A}u\right\Vert _{L^{p}}.
\end{equation*}%
Using the equality $\mathcal{F}^{-1}\circ \tau _{\zeta }=\mathrm{e}^{\mathrm{%
i}\left\langle \cdot ,\zeta \right\rangle }\mathcal{F}^{-1}$, if we set 
\begin{equation*}
C_{A,n}=\left( 2\pi \right) ^{-n/2}\left\vert \det A\right\vert ^{\frac{1}{2}%
}\mathrm{e}^{-\frac{\pi \mathrm{isgn}A}{4}},
\end{equation*}%
then%
\begin{align*}
& \chi ^{2}\left( D-\xi \right) T_{A}u \\
& =C_{A,n}\left[ \mathrm{e}^{\mathrm{i}\left\langle \cdot ,\xi ^{\prime
}\right\rangle }\mathcal{F}_{m}^{-1}\left( h\right) \otimes \left( \left( 
\mathrm{e}^{\mathrm{i}\left\langle \cdot ,\xi ^{\prime \prime }\right\rangle
}\mathcal{F}_{n}^{-1}\left( \psi \right) \right) \ast \mathrm{e}^{\mathrm{i}%
\Phi _{A}}\right) \right] \ast \chi \left( D-\xi \right) u \\
& =C_{A,n}\left[ \left( \mathrm{e}^{\mathrm{i}\left\langle \cdot ,\xi
^{\prime }\right\rangle }H\right) \otimes \left( \left( \mathrm{e}^{\mathrm{i%
}\left\langle \cdot ,\xi ^{\prime \prime }\right\rangle }\Psi \right) \ast
^{\prime \prime }\mathrm{e}^{\mathrm{i}\Phi _{A}}\right) \right] \ast \chi
\left( D-\xi \right) u.
\end{align*}%
Since 
\begin{eqnarray*}
\left( \mathrm{e}^{\mathrm{i}\left\langle \cdot ,\xi ^{\prime \prime
}\right\rangle }\Psi \right) \ast ^{\prime \prime }\mathrm{e}^{\mathrm{i}%
\Phi _{A}}\left( x^{\prime \prime }\right) &=&\int \mathrm{e}^{\mathrm{i}%
\left[ -\left\langle A\left( x^{\prime \prime }-y^{\prime \prime }\right)
,x^{\prime \prime }-y^{\prime \prime }\right\rangle /2+\left\langle
y^{\prime \prime },\xi ^{\prime \prime }\right\rangle \right] }\Psi \left(
y^{\prime \prime }\right) \mathrm{d}y^{\prime \prime } \\
&=&\mathrm{e}^{\mathrm{i}\Phi _{A}\left( x^{\prime \prime }\right) }\int 
\mathrm{e}^{\mathrm{i}\left\langle y^{\prime \prime },Ax^{\prime \prime
}+\xi ^{\prime \prime }\right\rangle }\mathrm{e}^{\mathrm{i}\Phi _{A}\left(
y^{\prime \prime }\right) }\Psi \left( y^{\prime \prime }\right) \mathrm{d}y
\\
&=&\mathrm{e}^{\mathrm{i}\Phi _{A}\left( x^{\prime \prime }\right) }\Psi
_{A}\left( Ax^{\prime \prime }+\xi ^{\prime \prime }\right)
\end{eqnarray*}%
where $\Psi _{A}=\mathcal{F}\left( \mathrm{e}^{\mathrm{i}\Phi _{A}}\check{%
\Psi}\right) $ it follows that%
\begin{align*}
& \left\Vert \chi ^{2}\left( D-\xi \right) T_{A}u\right\Vert _{L^{p}} \\
& \leq \left( 2\pi \right) ^{-n/2}\left\vert \det A\right\vert ^{\frac{1}{2}%
}\left\Vert \left( \mathrm{e}^{\mathrm{i}\left\langle \cdot ,\xi ^{\prime
}\right\rangle }H\right) \otimes \left( \left( \mathrm{e}^{\mathrm{i}%
\left\langle \cdot ,\xi ^{\prime \prime }\right\rangle }\Psi \right) \ast
^{\prime \prime }\mathrm{e}^{\mathrm{i}\Phi _{A}}\right) \right\Vert
_{L^{1}}\left\Vert \chi \left( D-\xi \right) u\right\Vert _{L^{p}} \\
& =\left( 2\pi \right) ^{-n/2}\left\vert \det A\right\vert ^{-\frac{1}{2}%
}\left\Vert H\right\Vert _{L^{1}}\left\Vert \Psi _{A}\right\Vert
_{L^{1}}\left\Vert \chi \left( D-\xi \right) u\right\Vert _{L^{p}},\quad \xi
\in \mathbb{R}^{m}\times \mathbb{R}^{n}.
\end{align*}%
This estimate implies $\left( \mathrm{a}\right) $ and $\left( \mathrm{b}%
\right) $. For part $\left( \mathrm{c}\right) $, we use this estimate and
Lebesgue's dominated convergence theorem.
\end{proof}

For $\lambda \in \mathrm{End}_{\mathbb{R}}\left( \mathbb{R}^{n}\right) $ and 
$u\in \mathcal{S}^{\prime }\left( \mathbb{R}^{n}\right) $ put $u_{\lambda
}=u\circ \lambda $ whenever it makes sense.

\begin{theorem}
\label{n6}If $\lambda \in \mathrm{End}_{\mathbb{R}}\left( \mathbb{R}%
^{n}\right) $ is invertible and $u\in M^{p,q}\left( \mathbb{R}^{n}\right) $,
then $u_{\lambda }\in M^{p,q}\left( \mathbb{R}^{n}\right) $ and there is $%
\mathrm{C}\in \left( 0,+\infty \right) $ independent of $u$ and $\lambda $
such that%
\begin{equation*}
\left\Vert u_{_{\lambda }}\right\Vert _{M^{p,q}}\leq \mathrm{C}\left\vert
\det \lambda \right\vert ^{-1/p-1/q^{\prime }}\left( 1+\left\Vert \lambda
\right\Vert \right) ^{n}\left\Vert u\right\Vert _{M^{p,q}}
\end{equation*}%
where $1/q+1/q^{\prime }=1$.
\end{theorem}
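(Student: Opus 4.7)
The plan is to perform two reductions. First, via a direct change of variables in the defining integrals, I would relate $\|u_\lambda\|_{M^{p,q},\chi}$ to the $M^{p,q}$-norm of $u$ computed with a \emph{dilated} window $\chi\circ\lambda^{-1}$. Second, I would quantify the cost of switching from $\chi\circ\lambda^{-1}$ to a fixed reference window as $(1+\|\lambda\|)^n$.

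For the first step, fix $\chi\in\mathcal{S}(\mathbb{R}^n)\setminus\{0\}$ and substitute $w=\lambda z$ in the definition of $\widehat{u_{\lambda}\,\tau_{y}\chi}(\xi)$ to obtain
\[
\widehat{u_{\lambda}\,\tau_{y}\chi}(\xi)=|\det\lambda|^{-1}\,\widehat{u\,\tau_{\lambda y}(\chi\circ\lambda^{-1})}\bigl((\lambda^{\ast})^{-1}\xi\bigr).
\]
Taking the $L^{p}$-norm in $y$ with substitution $z=\lambda y$ (contributing a factor $|\det\lambda|^{-1-1/p}$) and then the $L^{q}$-norm in $\xi$ with substitution $\eta=(\lambda^{\ast})^{-1}\xi$ (contributing $|\det\lambda|^{1/q}$), and using $1/q-1=-1/q'$, one arrives at the key identity
\[
\|u_{\lambda}\|_{M^{p,q},\chi}=|\det\lambda|^{-1/p-1/q'}\,\|u\|_{M^{p,q},\chi\circ\lambda^{-1}}.
\]

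For the second step, I would take both $\chi$ and the reference window to be the Gaussian $c_{n}e^{-|\cdot|^{2}/2}$ and apply the classical change-of-window inequality $\|u\|_{M^{p,q},\psi}\lesssim\|V_{\chi}\psi\|_{L^{1}(\mathbb{R}^{2n})}\|u\|_{M^{p,q},\chi}$, where $V_{\chi}$ denotes the short-time Fourier transform. A direct Gaussian computation shows that $V_{\chi}(\chi\circ\lambda^{-1})$ is itself a Gaussian on $\mathbb{R}^{2n}$ whose $L^{1}$-mass works out to a constant multiple of $\sqrt{\det(I+\lambda\lambda^{\ast})}$, which is $\leq C(1+\|\lambda\|)^{n}$; combining with Step~1 gives the claim. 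A more elementary alternative for Step~2 is to choose $\chi\in\mathcal{C}_{0}^{\infty}$ together with a partition of unity $\sum_{k\in\mathbb{Z}^{n}}\phi(\cdot-k)=1$ with $\phi\in\mathcal{C}_{0}^{\infty}$, decompose $\chi\circ\lambda^{-1}=\sum_{k}\phi(\cdot-k)(\chi\circ\lambda^{-1})$, and note that only $N\lesssim(1+\|\lambda\|)^{n}$ terms are nonzero, since $\mathrm{supp}(\chi\circ\lambda^{-1})\subset\lambda\,\mathrm{supp}\chi$ has diameter $\lesssim\|\lambda\|$; sub-additivity $U^{u}_{\chi\circ\lambda^{-1},p}\leq\sum_{k}U^{u}_{g_{k},p}$ and the translation invariance $U^{u}_{\tau_{k}g,p}=U^{u}_{g,p}$ then reduce each piece to the norm of $u$ relative to a compactly supported, uniformly bounded window.

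The main obstacle is the uniform window-change estimate in Step~2. In the partition-of-unity route, the delicate point is that although each piece $\eta_{k}=\phi\cdot(\chi\circ\lambda^{-1})(\cdot+k)$ has fixed compact support and uniformly bounded $L^{\infty}$-norm, its $\mathcal{C}^{m}$-seminorms may grow like $\|\lambda^{-1}\|^{m}$, so naive Schwartz-seminorm-based window-change bounds are not uniform in $\lambda$. The Gaussian-window route bypasses this entirely, because the $L^{1}$-integral $\|V_{\chi}(\chi\circ\lambda^{-1})\|_{L^{1}}$ evaluates in closed form and depends only on $\lambda\lambda^{\ast}$, hence only on $\|\lambda\|$; this yields the clean bound $\sqrt{\det(I+\lambda\lambda^{\ast})}\leq(1+\|\lambda\|)^{n}$, and combining with Step~1 completes the argument.
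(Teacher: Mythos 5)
Your proposal is correct, but it follows a genuinely different route from the paper. The paper works directly with a fixed window $\chi\in\mathcal{C}_{0}^{\infty}$ normalized by $\int\chi=1$: it inserts the identity $\int\chi(\lambda x-z)\,\mathrm{d}z=1$ into $\widehat{u_{\lambda}\tau_{y}\chi}(\xi)$ so as to replace $u(\lambda x)$ by the localized pieces $(u\tau_{z}\chi)(\lambda x)$, observes that the support conditions force $|z-\lambda y|\leq r\left(1+\left\Vert\lambda\right\Vert\right)$ (the volume of this ball is the source of the factor $\left(1+\left\Vert\lambda\right\Vert\right)^{n}$), and then concludes with Minkowski's integral inequality in $y$ and H\"older's inequality in $\eta$ against the kernel $|\widehat{\chi}(\xi-{}^{t}\lambda\eta)|$, the two determinant powers $\left\vert\det\lambda\right\vert^{-1/p}$ and $\left\vert\det\lambda\right\vert^{-1/q^{\prime}}$ arising from separate Jacobian computations. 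Your argument instead isolates an exact dilation covariance in Step 1 (which I checked: the exponent $-1-1/p+1/q=-1/p-1/q^{\prime}$ comes out correctly, including the endpoint cases $p=\infty$ or $q=\infty$), so that the entire content of the theorem is reduced to a quantitative window change, which you then resolve with the standard STFT convolution relation $|V_{\psi}u|\leq\left\Vert\chi\right\Vert_{2}^{-2}\,|V_{\chi}u|\ast|V_{\psi}\chi|$ and a closed-form Gaussian computation giving $\left\Vert V_{\chi}(\chi\circ\lambda^{-1})\right\Vert_{L^{1}}=C_{n}\sqrt{\det(I+\lambda\lambda^{\ast})}\leq C_{n}\left(1+\left\Vert\lambda\right\Vert\right)^{n}$. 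What your route buys is conceptual clarity and a slightly sharper constant ($\sqrt{\det(I+\lambda\lambda^{\ast})}=\prod_{i}\sqrt{1+s_{i}^{2}}$ in the singular values, versus the paper's genuine $\left(1+\left\Vert\lambda\right\Vert\right)^{n}$), and it makes visible that the determinant exponent is forced by an exact identity; what it costs is self-containedness, since the quantitative change-of-window inequality is not proved in the paper (the paper only establishes the qualitative independence of the window) and must be imported from the time-frequency literature, and one must also invoke the paper's corollary that Schwartz windows such as the Gaussian define the same $M^{p,q}$ norm up to fixed constants. Your diagnosis of why the partition-of-unity alternative fails (the $\mathcal{C}^{m}$-seminorms of the rescaled window grow like $\left\Vert\lambda^{-1}\right\Vert^{m}$) is accurate, and the Gaussian route does avoid it.
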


\begin{proof}
Let $\chi \in \mathcal{C}_{0}^{\infty }\left( \mathbb{R}^{n}\right) $ be
such that $\int \chi \left( x\right) \mathrm{d}x=1$. We shall use the
notation $\left\Vert \cdot \right\Vert _{M^{p,q}}$ for $\left\Vert \cdot
\right\Vert _{M^{p,q},\chi }$. Let $r>0$ be such that $\mathrm{supp}\chi
\subset \left\{ x:\left\vert x\right\vert \leq r\right\} $. We denote by $%
\chi _{1}$ the characteristic function of the unit ball in $\mathbb{R}^{n}$.
We evaluate%
\begin{eqnarray*}
\widehat{u_{\lambda }\tau _{y}\chi }\left( \xi \right) &=&\int \mathrm{e}^{-%
\mathrm{i}\left\langle x,\xi \right\rangle }u\left( \lambda x\right) \chi
\left( x-y\right) \mathrm{d}x \\
&=&\int \int \mathrm{e}^{-\mathrm{i}\left\langle x,\xi \right\rangle
}u\left( \lambda x\right) \chi \left( x-y\right) \chi \left( \lambda
x-z\right) \mathrm{d}x\mathrm{d}z.
\end{eqnarray*}%
Since $\chi \left( x-y\right) \chi \left( \lambda x-z\right) \neq 0$ implies 
$\left\vert x-y\right\vert \leq r$ and $\left\vert \lambda x-z\right\vert
\leq r$, we get that $\left\vert z-\lambda y\right\vert \leq r\left(
1+\left\Vert \lambda \right\Vert \right) $ on the support of the integrand.
We can write%
\begin{equation*}
\chi \left( x-y\right) \chi \left( \lambda x-z\right) =\chi \left(
x-y\right) \chi \left( \lambda x-z\right) \chi _{1}\left( \frac{z-\lambda y}{%
r\left( 1+\left\Vert \lambda \right\Vert \right) }\right) .
\end{equation*}%
It follows that 
\begin{multline*}
\widehat{u_{\lambda }\tau _{y}\chi }\left( \xi \right) =\int \int \mathrm{e}%
^{-\mathrm{i}\left\langle x,\xi \right\rangle }\left( u\tau _{z}\chi \right)
\left( \lambda x\right) \chi \left( x-y\right) \chi _{1}\left( \frac{%
z-\lambda y}{r\left( 1+\left\Vert \lambda \right\Vert \right) }\right) 
\mathrm{d}x\mathrm{d}z \\
=\left( 2\pi \right) ^{-n}\int \chi _{1}\left( \frac{z-\lambda y}{r\left(
1+\left\Vert \lambda \right\Vert \right) }\right) \left( \iint \mathrm{e}^{-%
\mathrm{i}\left\langle x,\xi \right\rangle }\mathrm{e}^{\mathrm{i}%
\left\langle x-y,\eta \right\rangle }\left( u\tau _{z}\chi \right) \left(
\lambda x\right) \widehat{\chi }\left( \eta \right) \mathrm{d}x\mathrm{d}%
\eta \right) \mathrm{d}z
\end{multline*}%
and%
\begin{multline*}
\iint \mathrm{e}^{-\mathrm{i}\left\langle x,\xi \right\rangle }\mathrm{e}^{%
\mathrm{i}\left\langle x-y,\eta \right\rangle }\left( u\tau _{z}\chi \right)
\left( \lambda x\right) \widehat{\chi }\left( \eta \right) \mathrm{d}x%
\mathrm{d}\eta \\
=\iint \mathrm{e}^{-\mathrm{i}\left\langle \lambda ^{-1}x,\xi \right\rangle +%
\mathrm{i}\left\langle x-\lambda y,\eta \right\rangle }\left( u\tau _{z}\chi
\right) \left( x\right) \widehat{\chi }\left( ^{t}\lambda \eta \right) 
\mathrm{d}x\mathrm{d}\eta \\
=\iint \mathrm{e}^{-\mathrm{i}\left\langle x,^{t}\lambda ^{-1}\xi -\eta
\right\rangle -\mathrm{i}\left\langle y,^{t}\lambda \eta \right\rangle
}\left( u\tau _{z}\chi \right) \left( x\right) \widehat{\chi }\left(
^{t}\lambda \eta \right) \mathrm{d}x\mathrm{d}\eta \\
=\int \mathrm{e}^{-\mathrm{i}\left\langle y,^{t}\lambda \eta \right\rangle }%
\widehat{u\tau _{z}\chi }\left( ^{t}\lambda ^{-1}\xi -\eta \right) \widehat{%
\chi }\left( ^{t}\lambda \eta \right) \mathrm{d}\eta \\
=\int \mathrm{e}^{-\mathrm{i}\left\langle y,\xi -^{t}\lambda \eta
\right\rangle }\widehat{u\tau _{z}\chi }\left( \eta \right) \widehat{\chi }%
\left( \xi -^{t}\lambda \eta \right) \mathrm{d}\eta .
\end{multline*}%
We obtain that%
\begin{equation*}
\widehat{u_{\lambda }\tau _{y}\chi }\left( \xi \right) =\left( 2\pi \right)
^{-n}\int \int \mathrm{e}^{-\mathrm{i}\left\langle y,\xi -^{t}\lambda \eta
\right\rangle }\chi _{1}\left( \frac{z-\lambda y}{r\left( 1+\left\Vert
\lambda \right\Vert \right) }\right) \widehat{u\tau _{z}\chi }\left( \eta
\right) \widehat{\chi }\left( \xi -^{t}\lambda \eta \right) \mathrm{d}\eta 
\mathrm{d}z
\end{equation*}%
\begin{equation*}
=\left( 2\pi \right) ^{-n}\int \int \mathrm{e}^{-\mathrm{i}\left\langle
y,\xi -^{t}\lambda \eta \right\rangle }\chi _{1}\left( \frac{x}{r\left(
1+\left\Vert \lambda \right\Vert \right) }\right) \widehat{u\tau _{x+\lambda
y}\chi }\left( \eta \right) \widehat{\chi }\left( \xi -^{t}\lambda \eta
\right) \mathrm{d}\eta \mathrm{d}x,
\end{equation*}%
and%
\begin{equation*}
\left\vert \widehat{u_{_{\lambda }}\tau _{y}\chi }\left( \xi \right)
\right\vert \leq \left( 2\pi \right) ^{-n}\int \int \chi _{1}\left( \frac{x}{%
r\left( 1+\left\Vert \lambda \right\Vert \right) }\right) \left\vert 
\widehat{u\tau _{x+\lambda y}\chi }\left( \eta \right) \right\vert
\left\vert \widehat{\chi }\left( \xi -^{t}\lambda \eta \right) \right\vert 
\mathrm{d}\eta \mathrm{d}x
\end{equation*}%
This estimate implies%
\begin{multline*}
U_{\lambda ,\chi ,p}\left( \xi \right) =\left( \int \left\vert \widehat{%
u_{_{\lambda }}\tau _{y}\chi }\left( \xi \right) \right\vert ^{p}\mathrm{d}%
y\right) ^{1/p} \\
\leq \left( 2\pi \right) ^{-n}\int \int \chi _{1}\left( \frac{x}{r\left(
1+\left\Vert \lambda \right\Vert \right) }\right) \left\vert \widehat{\chi }%
\left( \xi -^{t}\lambda \eta \right) \right\vert \left( \int \left\vert 
\widehat{u\tau _{x+\lambda y}\chi }\left( \eta \right) \right\vert ^{p}%
\mathrm{d}y\right) ^{1/p}\mathrm{d}\eta \mathrm{d}x \\
=\left( 2\pi \right) ^{-n}\left\vert \det \lambda \right\vert ^{-1/p}\int
\int \chi _{1}\left( \frac{x}{r\left( 1+\left\Vert \lambda \right\Vert
\right) }\right) \left\vert \widehat{\chi }\left( \xi -^{t}\lambda \eta
\right) \right\vert U_{\chi ,p}\left( \eta \right) \mathrm{d}\eta \mathrm{d}x
\\
=\left( 2\pi \right) ^{-n}r^{n}\left\vert \det \lambda \right\vert
^{-1/p}\left( 1+\left\Vert \lambda \right\Vert \right) ^{n}\mathrm{vol}%
\left( \left\{ \left\vert x\right\vert \leq 1\right\} \right) \int U_{\chi
,p}\left( \eta \right) \left\vert \widehat{\chi }\left( \xi -^{t}\lambda
\eta \right) \right\vert \mathrm{d}\eta
\end{multline*}%
The integral in the last row can be estimated using H\"{o}lder's inequality:%
\begin{align*}
& \int U_{\chi ,p}\left( \eta \right) \left\vert \widehat{\chi }\left( \xi
-^{t}\lambda \eta \right) \right\vert \mathrm{d}\eta \\
& \leq \left( \int U_{\chi ,p}^{q}\left( \eta \right) \left\vert \widehat{%
\chi }\left( \xi -^{t}\lambda \eta \right) \right\vert \mathrm{d}\eta
\right) ^{1/q}\left( \int \left\vert \widehat{\chi }\left( \xi -^{t}\lambda
\eta \right) \right\vert \mathrm{d}\eta \right) ^{1/q^{\prime }} \\
& =\left\vert \det \lambda \right\vert ^{-1/q^{\prime }}\left\Vert \widehat{%
\chi }\right\Vert _{_{L^{1}}}^{1/q^{\prime }}\left( \int U_{\chi
,p}^{q}\left( \eta \right) \left\vert \widehat{\chi }\left( \xi -^{t}\lambda
\eta \right) \right\vert \mathrm{d}\eta \right) ^{1/q}
\end{align*}%
If $\mathrm{c}=\left( 2\pi \right) ^{-n}r^{n}$\textrm{vol}$\left( \left\{
\left\vert x\right\vert \leq 1\right\} \right) $, then 
\begin{eqnarray*}
U_{\lambda ,\chi ,p}^{q}\left( \xi \right) &\leq &\mathrm{c}^{q}\left\vert
\det \lambda \right\vert ^{-q/p}\left( 1+\left\Vert \lambda \right\Vert
\right) ^{qn}\left\vert \det \lambda \right\vert ^{-q/q^{\prime }}\left\Vert 
\widehat{\chi }\right\Vert _{_{L^{1}}}^{q/q^{\prime }} \\
&&\cdot \left( \int U_{\chi ,p}^{q}\left( \eta \right) \left\vert \widehat{%
\chi }\left( \xi -^{t}\lambda \eta \right) \right\vert \mathrm{d}\eta \right)
\end{eqnarray*}%
which by integration with respect to $\xi $ gives us%
\begin{equation*}
\left\Vert U_{\lambda ,\chi ,p}\right\Vert _{_{L^{q}}}^{q}\leq \mathrm{c}%
^{q}\left\vert \det \lambda \right\vert ^{-q/p}\left( 1+\left\Vert \lambda
\right\Vert \right) ^{qn}\left\vert \det \lambda \right\vert ^{-q/q^{\prime
}}\left\Vert \widehat{\chi }\right\Vert _{_{L^{1}}}^{q/q^{\prime
}+1}\left\Vert U_{\chi ,p}\right\Vert _{_{L^{q}}}^{q}
\end{equation*}%
\begin{equation*}
\left\Vert U_{\lambda ,\chi ,p}\right\Vert _{_{L^{q}}}\leq \mathrm{c}%
\left\vert \det \lambda \right\vert ^{-1/p-1/q^{\prime }}\left( 1+\left\Vert
\lambda \right\Vert \right) ^{n}\left\Vert \widehat{\chi }\right\Vert
_{_{L^{1}}}\left\Vert U_{\chi ,p}\right\Vert _{_{L^{q}}}
\end{equation*}%
\begin{equation*}
\left\Vert u_{_{\lambda }}\right\Vert _{M^{p,q}}\leq \mathrm{C}\left\vert
\det \lambda \right\vert ^{-1/p-1/q^{\prime }}\left( 1+\left\Vert \lambda
\right\Vert \right) ^{n}\left\Vert u\right\Vert _{M^{p,q}}
\end{equation*}%
with $\mathrm{C}=\mathrm{c}\left\Vert \widehat{\chi }\right\Vert
_{_{L^{1}}}=\left( 2\pi \right) ^{-n}r^{n}$\textrm{vol}$\left( \left\{
\left\vert x\right\vert \leq 1\right\} \right) \left\Vert \widehat{\chi }%
\right\Vert _{_{L^{1}}}$.
\end{proof}

From Teorems \ref{n5} and \ref{n6} we deduce another important result of the
present paper.

\begin{theorem}
Let $1\leq p,q\leq \infty $ and $u\in \mathcal{S}^{\prime }\left( W\right) $%
. Then

$\left( \mathrm{a}\right) $ $u\in M^{p,q}\left( W\right) $ if and only if $%
\Lambda _{{\sigma ,T}}^{w}u\in M^{p,q}\left( W\right) $.

$\left( \mathrm{b}\right) $ The operator 
\begin{equation*}
\Lambda _{{\sigma ,T}}^{w}:M^{p,q}\left( W\right) \rightarrow M^{p,q}\left(
W\right)
\end{equation*}%
is a bounded isomorphism.
\end{theorem}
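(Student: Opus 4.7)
My plan is to factor $\Lambda_{\sigma,T}^w$ into two pieces, each of which is covered by one of the invariance theorems of the previous section. The factorization, visible from the definition, is
\begin{equation*}
\Lambda_{\sigma,T}^w \;=\; C_{T+T^\sigma}\circ \lambda_{\sigma,T}(D_\sigma),\qquad C_\phi(a):=a\circ\phi.
\end{equation*}
Since $T+T^\sigma$ is a linear automorphism of $W$, Theorem \ref{n6} immediately identifies $C_{T+T^\sigma}$ as a bounded isomorphism of $M^{p,q}(W)$ with inverse $C_{(T+T^\sigma)^{-1}}$. The whole statement will therefore reduce to proving that the Fourier multiplier $\lambda_{\sigma,T}(D_\sigma)$ is a bounded isomorphism of $M^{p,q}(W)$, whose inverse the formulas preceding the theorem already predict to be $\overline{\lambda}_{\sigma,T}(D_\sigma)$.

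To handle this multiplier, I will reduce it to the ordinary Fourier picture. After fixing a basis of $W$, the symplectic Fourier transform $\mathcal{F}_\sigma$ differs from the usual $\mathcal{F}$ by a linear substitution induced by $\sigma^\flat$, and such a substitution is itself a bounded isomorphism of $M^{p,q}(W)$ by Theorem \ref{n6}. Consequently, up to conjugation by bounded substitutions, $\lambda_{\sigma,T}(D_\sigma)$ becomes the standard Fourier multiplier with symbol $\mathrm{e}^{-\mathrm{i}Q(\xi)/2}$, where $Q(\xi)=\theta_{\sigma,T}(\xi)=\tfrac12\sigma(\xi,(T-T^\sigma)\xi)$ in the chosen coordinates.

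The form $Q$ may well be degenerate (for example $Q\equiv 0$ when $T=T^\sigma$), so one further linear change of variables is used to split $W=W_0\oplus W_1$ with $W_0$ the radical of $Q$ and $W_1$ any complement. In the coordinates $\xi=(\xi',\xi'')\in W_0\times W_1$ one has $Q(\xi)=-\langle A\xi'',\xi''\rangle$ for a real, symmetric, non-singular matrix $A$ on $W_1$; hence the multiplier acts on the Fourier side as $\hat u\mapsto (1\otimes \mathrm{e}^{\mathrm{i}\Phi_A})\hat u$, which is exactly a constant multiple of the operator $T_{A^{-1}}$ of Theorem \ref{n5} with $m=\dim W_0$ and $n=\dim W_1$. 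Theorem \ref{n5}(b) then provides the required bounded isomorphism on $M^{p,q}(W_0\times W_1)=M^{p,q}(W)$, and identifies its inverse with (a constant multiple of) $T_{-A^{-1}}$, i.e.\ with $\overline{\lambda}_{\sigma,T}(D_\sigma)$. Combining this with the first paragraph yields both $(\mathrm{a})$ and $(\mathrm{b})$, together with the explicit inverse of $\Lambda_{\sigma,T}^w$ announced in the paper.

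The main technical obstacle lies in the middle step: one must simultaneously track the substitution that turns $\mathcal{F}_\sigma$ into $\mathcal{F}$, the multiplicative constants it produces, and the splitting of $W$ adapted to the radical of $Q$, so as to recognize the resulting operator as precisely the Fourier-transform realization of a $T_A$ covered by Theorem \ref{n5}. Without first isolating the non-degenerate part of $Q$ in this way, Theorem \ref{n5} cannot be applied directly, since it requires $A\in\mathrm{GL}(n,\mathbb{R})\cap\mathrm{S}(n)$; once this identification is organized, the remainder of the argument is a routine assembly of Theorems \ref{n5} and \ref{n6}.
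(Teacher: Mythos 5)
Your proposal is correct and is essentially the paper's own argument: the paper gives no further detail beyond "From Theorems \ref{n5} and \ref{n6} we deduce...", and the intended deduction is exactly your factorization $\Lambda_{\sigma,T}^{w}=C_{T+T^{\sigma}}\circ\lambda_{\sigma,T}(D_{\sigma})$, with Theorem \ref{n6} handling the substitution and Theorem \ref{n5} handling the chirp multiplier after passing to ordinary Fourier coordinates and splitting off the radical of $\theta_{\sigma,T}$ (which is why Theorem \ref{n5} is stated on $\mathbb{R}^{m}\times\mathbb{R}^{n}$ with $A$ nonsingular only on the second factor). Your write-up in fact supplies the bookkeeping the paper omits.
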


This theorem together with Theorem \ref{n4} and the corresponding results
from the standard Weyl calculus implies the first results on Schatten-class
properties for operators in the $T$-Weyl calculus.

\begin{theorem}
\label{n7}Let $\left( \mathcal{H},\mathcal{W}_{\sigma ,T},\omega _{\sigma
,T}\right) $ be an irreducible $\omega _{\sigma ,T}$-representation of $W$
and $1\leq p<\infty $.

$\left( \mathrm{a}\right) $ If $a\in M^{p,1}\left( W\right) $, then 
\begin{equation*}
\mathrm{Op}_{\sigma ,T}\left( a\right) \in \mathcal{B}_{p}\left( \mathcal{H}%
\right) ,
\end{equation*}%
where $\mathcal{B}_{p}\left( \mathcal{H}\right) $ denote the Schatten ideal
of compact operators whose singular values lie in $l^{p}$. We have 
\begin{equation*}
\left\Vert \mathrm{Op}_{\sigma ,T}\left( a\right) \right\Vert _{\mathcal{B}%
_{p}\left( \mathcal{H}\right) }\leq Cst\left\Vert a\right\Vert
_{M^{p,1}\left( W\right) }.
\end{equation*}

$\left( \mathrm{b}\right) $ If $a$ is in the Sj\"{o}strand algebra $%
M^{\infty ,1}\left( W\right) $, then 
\begin{equation*}
\mathrm{Op}_{\sigma ,T}\left( a\right) \in \mathcal{B}\left( \mathcal{H}%
\right) ,
\end{equation*}%
and 
\begin{equation*}
\left\Vert \mathrm{Op}_{\sigma ,T}\left( a\right) \right\Vert _{\mathcal{B}%
\left( \mathcal{H}\right) }\leq Cst\left\Vert a\right\Vert _{M^{\infty
,1}\left( W\right) }.
\end{equation*}
\end{theorem}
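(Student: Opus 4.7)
The plan is to reduce both statements to the corresponding, well-known facts about the standard Weyl calculus $\mathrm{Op}^{\widetilde{w}}$ via the identity
\begin{equation*}
\mathrm{Op}_{\sigma,T}(a) = \mathrm{Op}^{\widetilde{w}}\bigl(a_{\sigma,T}^{w}\bigr), \qquad a_{\sigma,T}^{w} = \Lambda_{\sigma,T}^{w}(a),
\end{equation*}
from Theorem \ref{n4}, combined with the fact, just established, that
\begin{equation*}
\Lambda_{\sigma,T}^{w}:M^{p,q}(W)\longrightarrow M^{p,q}(W)
\end{equation*}
is a bounded linear isomorphism for all $1\le p,q\le\infty$.

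For (a), I would argue as follows. Fix $a\in M^{p,1}(W)$. By the preceding theorem, $a_{\sigma,T}^{w}\in M^{p,1}(W)$ and there is a constant $C_{1}$, depending only on $T$ and $\sigma$, with $\|a_{\sigma,T}^{w}\|_{M^{p,1}(W)}\le C_{1}\|a\|_{M^{p,1}(W)}$. Since the irreducible Weyl system $(\mathcal{H},\widetilde{\mathcal{W}}_{\sigma,T},\widetilde{\omega}_{\sigma,T})$ for the symplectic space $(W,\sigma_{T+T^{\sigma}})$ is, up to a unitary transformation, the Schr\"odinger representation, the standard Weyl calculus $\mathrm{Op}^{\widetilde{w}}$ enjoys the classical Schatten-class bound: for $1\le p<\infty$ and any $b\in M^{p,1}(W)$ one has $\mathrm{Op}^{\widetilde{w}}(b)\in\mathcal{B}_{p}(\mathcal{H})$ with
\begin{equation*}
\bigl\|\mathrm{Op}^{\widetilde{w}}(b)\bigr\|_{\mathcal{B}_{p}(\mathcal{H})}\le C_{2}\,\|b\|_{M^{p,1}(W)}.
\end{equation*}
This is exactly the result of Gr\"ochenig--Heil (and Heil's earlier work), applied on $(W,\sigma_{T+T^{\sigma}})$; the constant $C_{2}$ depends only on the symplectic structure. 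Combining these two estimates with Theorem \ref{n4} gives
\begin{equation*}
\bigl\|\mathrm{Op}_{\sigma,T}(a)\bigr\|_{\mathcal{B}_{p}(\mathcal{H})}=\bigl\|\mathrm{Op}^{\widetilde{w}}\bigl(a_{\sigma,T}^{w}\bigr)\bigr\|_{\mathcal{B}_{p}(\mathcal{H})}\le C_{2}\,\|a_{\sigma,T}^{w}\|_{M^{p,1}(W)}\le C_{1}C_{2}\,\|a\|_{M^{p,1}(W)},
\end{equation*}
which is the desired inequality.

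For (b), the argument is identical with $p=\infty$: Sj\"ostrand's theorem (and its modulation-space formulation by Gr\"ochenig) states that for $b\in M^{\infty,1}(W)$ the operator $\mathrm{Op}^{\widetilde{w}}(b)$ extends to a bounded operator on $\mathcal{H}$ with $\|\mathrm{Op}^{\widetilde{w}}(b)\|_{\mathcal{B}(\mathcal{H})}\le C_{3}\|b\|_{M^{\infty,1}(W)}$. Since $\Lambda_{\sigma,T}^{w}$ is also bounded on $M^{\infty,1}(W)$, the same chain of inequalities yields the stated bound on $\mathrm{Op}_{\sigma,T}(a)$. The only subtlety I expect is that all the standard Weyl-calculus inputs must be formulated on the symplectic space $(W,\sigma_{T+T^{\sigma}})$ rather than $(W,\sigma)$; but since $M^{p,q}(W)$ is defined purely in terms of the underlying vector-space structure (the Lebesgue measure and the usual Fourier transform), this change of symplectic form only affects the auxiliary constants, not the modulation-space norms appearing on the right-hand side. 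The main conceptual obstacle is thus already resolved by the previous theorem, which decouples the algebra of $\Lambda_{\sigma,T}^{w}$ from the analysis of modulation norms; everything else is assembly.
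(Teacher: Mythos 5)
Your proposal is correct and follows essentially the same route as the paper: the paper's proof is precisely the combination of Theorem \ref{n4}, the boundedness of $\Lambda_{\sigma,T}^{w}$ on $M^{p,1}\left( W\right) $ from the preceding theorem, and the known Schatten-class results for the standard Weyl calculus with symbols in $M^{p,1}$ (the paper cites Arsu/Toft for $1\leq p<\infty $ and Boulkhemair for $p=\infty $, where you cite the equivalent Gr\"{o}chenig--Heil and Sj\"{o}strand results). Your remark that the change of symplectic form only affects constants and not the modulation-space norms is the right observation and is implicit in the paper's argument.
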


\begin{proof}
This theorem is a consequence of the previous theorem and the fact that it
is true for pseudo-differential operators with symbols in $M^{p,1}\left(
W\right) $ (see for instance \cite{Arsu 3} or \cite{Toft1} for $1\leq
p<\infty $ and \cite{Boulkhemair} for $p=\infty $).
\end{proof}

\section{An embedding theorem}

Results such as those on pseudo-differential operators from \cite{Arsu} and 
\cite{Arsu 2}, can be obtained using Theorem \ref{n7} and an embedding
theorem. To formulate the result we define some Sobolev type spaces $\left(
L^{p}\text{-style}\right) $. These spaces are defined by means of weight
functions.

\begin{definition}
$\left( \mathrm{a}\right) $ A positive measurable function $k$ defined in $%
\mathbb{R}^{n}$ will be called a weight function of polynomial growth if
there are positive constants $C$ and $N$ such that 
\begin{equation}
k\left( \xi +\eta \right) \leq C\left\langle \xi \right\rangle ^{N}k\left(
\eta \right) ,\quad \xi ,\eta \in \mathbb{R}^{n}.  \label{n8}
\end{equation}%
The set of all such functions $k$ will be denoted by $\mathcal{K}%
_{pol}\left( \mathbb{R}^{n}\right) $.

$\left( \mathrm{b}\right) $ For a weight function of polynomial growth $k$,
we shall write $M_{k}\left( \xi \right) =C\left\langle \xi \right\rangle
^{N} $, where $C$, $N$ are the positive constants that define $k$.
\end{definition}

\begin{remark}
$\left( \mathrm{a}\right) $ An immediate consequence of Peetre's inequality
is that $M_{k}$ is weakly submultiplicative, i.e.%
\begin{equation*}
M_{k}\left( \xi +\eta \right) \leq C_{k}M_{k}\left( \xi \right) M_{k}\left(
\eta \right) ,\quad \xi ,\eta \in \mathbb{R}^{n},
\end{equation*}%
where $C_{k}=2^{N/2}C^{-1}$ and that $k$ is moderate with respect to the
function $M_{k}$ or simply $M_{k}$-moderate, i.e.%
\begin{equation*}
k\left( \xi +\eta \right) \leq M_{k}\left( \xi \right) k\left( \eta \right)
,\quad \xi ,\eta \in \mathbb{R}^{n}.
\end{equation*}

$\left( \mathrm{b}\right) $ Let $k\in \mathcal{K}_{pol}\left( \mathbb{R}%
^{n}\right) $. From definition we deduce that 
\begin{equation*}
\frac{1}{M_{k}\left( \xi \right) }=C^{-1}\left\langle \xi \right\rangle
^{-N}\leq \frac{k\left( \xi +\eta \right) }{k\left( \eta \right) }\leq
C\left\langle \xi \right\rangle ^{N}=M_{k}\left( \xi \right) ,\quad \xi
,\eta \in \mathbb{R}^{n}.
\end{equation*}%
In fact, the left-hand inequality is obtained if $\xi $ is replaced by $-\xi 
$ and $\eta $ is replaced by $\xi +\eta $ in (\ref{n8}). If we take $\eta =0$
we obtain the useful estimates 
\begin{equation*}
C^{-1}k\left( 0\right) \left\langle \xi \right\rangle ^{-N}\leq k\left( \xi
\right) \leq Ck\left( 0\right) \left\langle \xi \right\rangle ^{N},\quad \xi
\in \mathbb{R}^{n}.
\end{equation*}
\end{remark}

The following lemma is an easy consequence of the definition and the above
estimates.

\begin{lemma}
Let $k$, $k_{1}$, $k_{2}\in \mathcal{K}_{pol}\left( \mathbb{R}^{n}\right) $.
Then:

$\left( \mathrm{a}\right) $ $k_{1}+k_{2}$, $k_{1}\cdot k_{2}$, $\sup \left(
k_{1},k_{2}\right) $, $\inf \left( k_{1},k_{2}\right) \in \mathcal{K}%
_{pol}\left( \mathbb{R}^{n}\right) $.

$\left( \mathrm{b}\right) $ $k^{s}\in \mathcal{K}_{pol}\left( \mathbb{R}%
^{n}\right) $ for every real $s$.

$\left( \mathtt{c}\right) $ If $\check{k}\left( \xi \right) =k\left( -\xi
\right) $, $\xi \in \mathbb{R}^{n}$, then $\check{k}$ is $M_{k}$-moderate
hence $\check{k}\in \mathcal{K}_{pol}\left( \mathbb{R}^{n}\right) $.

$\left( \mathtt{d}\right) $ $0<\inf_{\xi \in K}k\left( \xi \right) \leq
\sup_{\xi \in K}k\left( \xi \right) <\infty $ for every compact subset $%
K\subset \mathbb{R}^{n}$.
\end{lemma}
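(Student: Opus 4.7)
The plan is to verify each item directly from the defining inequality $k(\xi+\eta)\le C\langle\xi\rangle^{N}k(\eta)$, applied with the constants $C_i,N_i$ attached to each $k_i$, and from the two-sided estimates collected in the preceding remark.

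For part $(\mathrm{a})$ the strategy is uniform: I fix $\xi,\eta\in\mathbb{R}^{n}$, apply the defining inequality to $k_{1}$ and $k_{2}$ separately at $(\xi,\eta)$, and then combine the two bounds by taking the maximum of the constants $C_{1},C_{2}$ and of the exponents $N_{1},N_{2}$. For $k_{1}+k_{2}$ this is immediate after bounding each $C_{i}\langle\xi\rangle^{N_{i}}$ by the common factor. For the product $k_{1}\cdot k_{2}$ I simply multiply the two inequalities, obtaining constant $C_{1}C_{2}$ and exponent $N_{1}+N_{2}$. For $\sup(k_{1},k_{2})$ and $\inf(k_{1},k_{2})$ I use the monotonicity of $\sup$ and $\inf$ to pass the inequality through, again with the common constant $\max(C_{1},C_{2})\langle\xi\rangle^{\max(N_{1},N_{2})}$.

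For part $(\mathrm{b})$ the case $s\ge 0$ follows from the defining inequality by raising it to the $s$-th power, which yields the constants $C^{s},sN$. The case $s<0$ is the delicate one, and this is the step where I expect to need the most care: I must first deduce the reverse estimate $k(\xi+\eta)\ge C^{-1}\langle\xi\rangle^{-N}k(\eta)$ by substituting $-\xi$ in place of $\xi$ and $\xi+\eta$ in place of $\eta$ in the defining inequality, exactly as indicated in the remark. Raising this lower bound to the power $|s|$ and inverting then yields $k^{s}(\xi+\eta)\le C^{|s|}\langle\xi\rangle^{|s|N}k^{s}(\eta)$.

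Part $(\mathrm{c})$ is a direct substitution: $\check k(\xi+\eta)=k(-\xi-\eta)\le C\langle-\xi\rangle^{N}k(-\eta)=C\langle\xi\rangle^{N}\check k(\eta)$, which shows that $\check k$ is $M_{k}$-moderate and in particular lies in $\mathcal{K}_{pol}(\mathbb{R}^{n})$ with the same pair $(C,N)$. Part $(\mathrm{d})$ follows from the two-sided estimate $C^{-1}k(0)\langle\xi\rangle^{-N}\le k(\xi)\le Ck(0)\langle\xi\rangle^{N}$ recalled in the remark: on a compact subset $K$ the function $\xi\mapsto\langle\xi\rangle$ attains a positive minimum and a finite maximum, so both bounds are finite and strictly positive; positivity of $k$ itself was built into the definition. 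The only real obstacle in the whole argument is the sign juggling for negative exponents in $(\mathrm{b})$; everything else is a mechanical combination of the submultiplicative-type inequality with elementary properties of $\sup$, $\inf$, products and translations.
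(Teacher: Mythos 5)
Your proposal is correct and is exactly the argument the paper intends: the paper states this lemma without proof, calling it ``an easy consequence of the definition and the above estimates,'' and your verification of each item from the defining inequality $k(\xi+\eta)\le C\langle\xi\rangle^{N}k(\eta)$ together with the two-sided bound from the remark supplies precisely those details (including the correct handling of the sign reversal for $k^{s}$ with $s<0$ via the reverse estimate).
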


\begin{definition}
If $k\in \mathcal{K}_{pol}\left( \mathbb{R}^{n}\right) $ and $1\leq p\leq
\infty $, we denote by $H_{p}^{k}\left( \mathbb{R}^{n}\right) $ the set of
all distributions $u\in \mathcal{S}^{\prime }$ such that $k\left( D\right)
u\in L^{p}$. For $u\in H_{p}^{k}\left( \mathbb{R}^{n}\right) $ we define 
\begin{equation*}
\left\Vert u\right\Vert _{k,p}=\left\Vert k\left( D\right) u\right\Vert _{{L}%
^{p}}<\infty .
\end{equation*}
\end{definition}

$H_{p}^{k}\left( \mathbb{R}^{n}\right) $ is a Banach space with the norm $%
\left\Vert \cdot \right\Vert _{k,p}$. We have 
\begin{equation*}
\mathcal{S}\left( \mathbb{R}^{n}\right) \subset H_{p}^{k}\left( \mathbb{R}%
^{n}\right) \subset \mathcal{S}^{\prime }\left( \mathbb{R}^{n}\right)
\end{equation*}%
continuously and densely.

\begin{lemma}
Let $g:\left( 0,+\infty \right) \rightarrow \mathbb{C}$ and $k:\mathbb{R}%
^{n}\rightarrow \left( 0,+\infty \right) $ be two differentiable maps of
class $\geq r$ satisfying the following conditions:

$\left( \mathrm{a}\right) $ For any $j\leq r$, there is $C_{g,j}>0$ such that%
\begin{equation*}
t^{j}\left\vert g^{\left( j\right) }\left( t\right) \right\vert \leq
C_{g,j}\left\vert g\left( t\right) \right\vert ,\quad t>0.
\end{equation*}

$\left( \mathrm{b}\right) $ For any multi-index $\alpha =\left( \alpha
_{1},...,\alpha _{n}\right) $ of length $\left\vert \alpha \right\vert \leq
r $, there is $C_{k,\alpha }>0$ such that%
\begin{equation*}
\left\vert \partial ^{\alpha }k\left( x\right) \right\vert \leq C_{k,\alpha
}k\left( x\right) ,\quad x\in \mathbb{R}^{n}.
\end{equation*}

Then, for any multi-index $\alpha =\left( \alpha _{1},...,\alpha _{n}\right) 
$, $\left\vert \alpha \right\vert \leq r$, there is $C_{g,k,\alpha }>0$ such
that%
\begin{equation*}
\left\vert \partial ^{\alpha }\left( g\circ k\right) \left( x\right)
\right\vert \leq C_{g,k,\alpha }\left\vert g\circ k\left( x\right)
\right\vert ,\quad x\in \mathbb{R}^{n}.
\end{equation*}
\end{lemma}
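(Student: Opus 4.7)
The plan is to combine the multivariate higher chain rule (Fa\`a di Bruno) with the two hypotheses, exploiting a perfect matching between the power of $k(x)$ in the denominator produced by (a) and the power $k(x)^{j}$ produced in the numerator by the $j$ factors from (b).

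First, I would establish, by induction on $|\alpha|$, the structural identity that for every multi-index with $|\alpha|\leq r$, the derivative $\partial^{\alpha}(g\circ k)(x)$ is a finite linear combination of expressions of the form
\[
g^{(j)}\bigl(k(x)\bigr)\,\partial^{\beta_{1}}k(x)\cdots \partial^{\beta_{j}}k(x),
\]
with $1\leq j\leq|\alpha|$ and multi-indices $\beta_{1},\dots,\beta_{j}$ satisfying $|\beta_{i}|\geq 1$ and $\beta_{1}+\cdots+\beta_{j}=\alpha$. The base case $|\alpha|=0$ is the tautology $g\circ k(x)=g(k(x))$. The induction step is just the product rule: applying $\partial_{i}$ to a typical term either raises the outer derivative from $g^{(j)}$ to $g^{(j+1)}$ and appends a new factor $\partial_{i}k$, or it increments $|\beta_{\ell}|$ by $1$ for some $\ell$. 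In both cases the structural form is preserved, and both the order of the outer derivative and the orders of the inner ones stay $\leq r$, so that hypotheses (a) and (b) will be applicable at every step.

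Second, I would estimate each term in the resulting expansion. Hypothesis (a), applied at $t=k(x)>0$, yields
\[
\bigl|g^{(j)}(k(x))\bigr|\leq C_{g,j}\,\frac{|g(k(x))|}{k(x)^{j}},
\]
while hypothesis (b) gives $|\partial^{\beta_{i}}k(x)|\leq C_{k,\beta_{i}}\,k(x)$, hence
\[
\bigl|\partial^{\beta_{1}}k(x)\cdots \partial^{\beta_{j}}k(x)\bigr|\leq \Bigl(\prod_{i=1}^{j}C_{k,\beta_{i}}\Bigr)\,k(x)^{j}.
\]
Multiplying the two estimates, the factors $k(x)^{j}$ cancel exactly, and each term is bounded by a constant multiple of $|g(k(x))|$. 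Summing over the finitely many terms of the Fa\`a di Bruno expansion produces a constant $C_{g,k,\alpha}$ with the required property.

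The only genuine difficulty is the combinatorial bookkeeping of the Fa\`a di Bruno expansion, which is why I prefer to package it as an inductive structural statement rather than invoke a closed-form version. Everything else is automatic: the scaling hypothesis (a), formulated exactly as $t^{j}|g^{(j)}(t)|\leq C_{g,j}|g(t)|$, is designed to compensate, after evaluation at $t=k(x)$, for the $j$ factors of $k(x)$ arising from (b), so the cancellation is built into the formulation.
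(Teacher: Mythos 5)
Your proposal is correct and follows essentially the same route as the paper: an inductive Fa\`a di Bruno--type expansion of $\partial^{\alpha}(g\circ k)$ into terms $g^{(j)}(k(x))\,\partial^{\beta_{1}}k(x)\cdots\partial^{\beta_{j}}k(x)$, followed by the exact cancellation of $k(x)^{j}$ between hypotheses (a) and (b). The paper writes the expansion with total (Fr\'echet) derivatives and leaves the final estimate implicit, whereas you spell out the cancellation explicitly; the content is the same.
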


\begin{proof}
By using the formula 
\begin{equation*}
\left( h^{\left( k\right) }\left( \cdot \right) \left(
v_{1},...,v_{k}\right) \right) ^{\prime }\left( x\right) \left( v_{0}\right)
=h^{\left( k+1\right) }\left( v\right) \left( v_{0},v_{1},...,v_{k}\right)
\end{equation*}%
to recursively construct the coefficients $\left\{ a_{n,j,\alpha ,\sigma
}\right\} $, it is shown by induction that%
\begin{equation*}
\left( g\circ k\right) ^{\left( m\right) }\left( x\right)
=\sum_{j=1}^{m}\sum_{\ell \in 
\mathbb{N}
^{{\footnotesize \ast j}},\left\vert \ell \right\vert =m}\sum_{\sigma \in
S_{n}}a_{n,j,\alpha ,\sigma }g^{\left( j\right) }\left( k\left( x\right)
\right) \sigma \cdot (k^{\left( \ell _{1}\right) }\left( x\right) \otimes
...\otimes k^{\left( \ell _{j}\right) }\left( x\right) ).
\end{equation*}%
This formula together with the assumptions on $g$ and $k$ imply the
conclusion of the lemma.
\end{proof}

\begin{lemma}
\label{n9}Let $1\leq p\leq \infty $, $\chi \in \mathcal{S}\left( \mathbb{R}%
^{n}\right) $ and $v\in L^{p}\left( \mathbb{R}^{n}\right) $. Then $\chi
\left( D\right) v\in L^{p}\left( \mathbb{R}^{n}\right) $ and 
\begin{equation*}
\left\Vert \chi \left( D\right) v\right\Vert _{L^{p}}\leq \left\Vert 
\mathcal{F}^{-1}\chi \right\Vert _{L^{1}}\left\Vert v\right\Vert _{L^{p}}.
\end{equation*}
\end{lemma}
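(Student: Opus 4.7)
The plan is to rewrite the Fourier multiplier $\chi(D)$ as convolution with the inverse Fourier transform of the symbol and then invoke Young's inequality. First, I would observe that if $v\in\mathcal{S}(\mathbb{R}^n)$, the multiplication theorem together with the identity $\mathcal{F}(f\ast g)=\widehat{f}\,\widehat{g}$ gives
\begin{equation*}
\chi(D)v=\mathcal{F}^{-1}(\chi\widehat{v})=\mathcal{F}^{-1}(\chi)\ast v,
\end{equation*}
where $\mathcal{F}^{-1}\chi\in\mathcal{S}(\mathbb{R}^n)\subset L^1(\mathbb{R}^n)$. Young's inequality $\|f\ast g\|_{L^p}\le \|f\|_{L^1}\|g\|_{L^p}$ for $1\le p\le\infty$ then yields the stated estimate on the dense subspace $\mathcal{S}\subset L^p$ (for $p<\infty$).

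Next, I would extend the identity and the estimate to arbitrary $v\in L^p(\mathbb{R}^n)$. For $1\le p<\infty$ this follows from the density of $\mathcal{S}(\mathbb{R}^n)$ in $L^p(\mathbb{R}^n)$ and the continuity of the convolution operator $w\mapsto \mathcal{F}^{-1}\chi\ast w$ from $L^p$ to $L^p$ which Young provides. For $p=\infty$ I would argue directly: given $v\in L^\infty$, the function
\begin{equation*}
(\mathcal{F}^{-1}\chi\ast v)(x)=\int \mathcal{F}^{-1}\chi(x-y)\,v(y)\,\mathrm{d}y
\end{equation*}
is well defined because $\mathcal{F}^{-1}\chi\in L^1$, and it satisfies $\|\mathcal{F}^{-1}\chi\ast v\|_{L^\infty}\le \|\mathcal{F}^{-1}\chi\|_{L^1}\|v\|_{L^\infty}$. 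One then checks that this $L^\infty$-function coincides with the tempered distribution $\chi(D)v=\mathcal{F}^{-1}(\chi\widehat{v})$ by testing against $\varphi\in\mathcal{S}$ and using Fubini together with the Schwartz-case identity applied to $\mathcal{F}^{-1}\varphi$.

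The argument is essentially routine; the only mildly delicate point is justifying the convolution identity at the distributional level when $v\notin\mathcal{S}$. I expect this to be the main (minor) obstacle, but it is handled by the standard density/testing argument outlined above. No additional structure from the symplectic framework of the paper is needed, since the statement is purely a Fourier-analytic fact on $\mathbb{R}^n$.
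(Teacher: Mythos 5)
Your proposal is correct and follows essentially the same route as the paper: both rewrite $\chi\left( D\right) v$ as the convolution $\mathcal{F}^{-1}\chi \ast v$ and then apply the $L^{1}$--$L^{p}$ convolution bound (the paper invokes Schur's lemma, you invoke Young's inequality, which coincide for a convolution kernel with $L^{1}$ profile). Your additional care with the density and $p=\infty$ extension merely fills in details the paper leaves implicit.
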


\begin{proof}
We have 
\begin{eqnarray*}
\chi \left( D\right) v &=&\mathcal{F}^{-1}\left( \chi \cdot \widehat{v}%
\right) =\left( 2\pi \right) ^{-n}\mathcal{F}\left( \check{\chi}\cdot 
\widehat{\check{v}}\right) \\
&=&\left( 2\pi \right) ^{-n}\left( 2\pi \right) ^{-n}\left( \mathcal{F}%
\check{\chi}\right) \ast \left( \mathcal{F}\widehat{\check{v}}\right) \\
&=&\mathcal{F}^{-1}\chi \ast v.
\end{eqnarray*}%
Now Schur's lemma implies the result.
\end{proof}

\begin{theorem}
\label{n10}Let $1\leq p,q\leq \infty $, $r=\left[ \frac{n}{2}\right] +1$ and 
$k\in \mathcal{K}_{pol}\left( \mathbb{R}^{n}\right) \cap \mathcal{C}^{\infty
}\left( \mathbb{R}^{n}\right) $. Assume that $k$ satisfies the conditions:

$\left( \mathrm{a}\right) $ For any multi-index $\alpha =\left( \alpha
_{1},...,\alpha _{n}\right) $, $\left\vert \alpha \right\vert \leq 2r$,
there is $C_{\alpha }>0$ such that%
\begin{equation*}
\left\vert \partial ^{\alpha }k\left( \xi \right) \right\vert \leq C_{\alpha
}k\left( \xi \right) ,\quad \xi \in \mathbb{R}^{n}.
\end{equation*}

$\left( \mathrm{b}\right) $ $1/k\in L^{q}\left( \mathbb{R}^{n}\right) $.

\noindent Then 
\begin{equation*}
H_{p}^{k}\left( \mathbb{R}^{n}\right) \hookrightarrow M^{p,q}\left( \mathbb{R%
}^{n}\right) .
\end{equation*}
\end{theorem}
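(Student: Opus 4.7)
The plan is to use the characterization of the modulation space norm from the preceding corollary, namely that $u\in M^{p,q}(\mathbb{R}^{n})$ iff $\xi\mapsto \|\chi(D-\xi)u\|_{L^{p}}$ belongs to $L^{q}(\mathbb{R}^{n})$ for any (equivalently, some) $\chi\in\mathcal{S}(\mathbb{R}^{n})\smallsetminus 0$. Fix such a $\chi$ and write, for $u\in H_{p}^{k}(\mathbb{R}^{n})$,
\begin{equation*}
\chi(D-\xi)u \;=\; m_{\xi}(D)\bigl[k(D)u\bigr],\qquad m_{\xi}(\eta):=\frac{\chi(\eta-\xi)}{k(\eta)},
\end{equation*}
which is legitimate since $k$ is $\mathcal{C}^{\infty}$ and positive. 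By Lemma \ref{n9} applied to $m_{\xi}\in\mathcal{S}(\mathbb{R}^{n})$ (this will follow from the estimates below) and $v=k(D)u\in L^{p}$,
\begin{equation*}
\|\chi(D-\xi)u\|_{L^{p}} \;\le\; \|\mathcal{F}^{-1}m_{\xi}\|_{L^{1}}\,\|k(D)u\|_{L^{p}}.
\end{equation*}
So the problem reduces to the pointwise bound
\begin{equation*}
\|\mathcal{F}^{-1}m_{\xi}\|_{L^{1}} \;\le\; \frac{C}{k(\xi)},\qquad \xi\in\mathbb{R}^{n},\tag{$\ast$}
\end{equation*}
after which integrating the $q$-th power in $\xi$ and using hypothesis (b) gives $\|u\|_{M^{p,q},\chi}\le C\|1/k\|_{L^{q}}\|u\|_{k,p}$, which is the desired embedding.

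To establish $(\ast)$ I would use the standard Cauchy--Schwarz/Bessel potential trick. Since $2r>n$, the function $x\mapsto(1+|x|^{2})^{-r}$ belongs to $L^{2}(\mathbb{R}^{n})$; hence, using $\mathcal{F}^{-1}[(1-\Delta_{\eta})^{r}f](x)=(1+|x|^{2})^{r}\mathcal{F}^{-1}f(x)$ and the Plancherel identity,
\begin{equation*}
\|\mathcal{F}^{-1}m_{\xi}\|_{L^{1}} \;\le\; \bigl\|(1+|x|^{2})^{-r}\bigr\|_{L^{2}}\,\bigl\|(1+|x|^{2})^{r}\mathcal{F}^{-1}m_{\xi}\bigr\|_{L^{2}} \;\le\; C_{n}\,\bigl\|(1-\Delta)^{r}m_{\xi}\bigr\|_{L^{2}}.
\end{equation*}
Expanding $(1-\Delta)^{r}m_{\xi}$ by Leibniz,
\begin{equation*}
\bigl|(1-\Delta)^{r}m_{\xi}(\eta)\bigr| \;\le\; C\sum_{|\alpha|+|\beta|\le 2r}\bigl|\partial^{\alpha}\chi(\eta-\xi)\bigr|\,\bigl|\partial^{\beta}(1/k)(\eta)\bigr|.
\end{equation*}
The lemma just above, applied to $g(t)=1/t$ (which satisfies $t^{j}|g^{(j)}(t)|=j!\,|g(t)|$) and $k$ (hypothesis (a)), yields $|\partial^{\beta}(1/k)(\eta)|\le C_{\beta}/k(\eta)$ for $|\beta|\le 2r$. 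Therefore
\begin{equation*}
\bigl|(1-\Delta)^{r}m_{\xi}(\eta)\bigr| \;\le\; \frac{C}{k(\eta)}\sum_{|\alpha|\le 2r}\bigl|\partial^{\alpha}\chi(\eta-\xi)\bigr|.
\end{equation*}

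The main (and only nontrivial) step is to exchange the weight $1/k(\eta)$ for $1/k(\xi)$ inside the $L^{2}_{\eta}$-integral, which is where the $M_{k}$-moderateness and the Schwartz decay of $\chi$ interact. Since $k$ is $M_{k}$-moderate, $k(\eta)\ge k(\xi)/M_{k}(\xi-\eta)$, with $M_{k}(\zeta)\le C(1+|\zeta|)^{N}$. Thus for each $|\alpha|\le 2r$,
\begin{equation*}
\int\frac{|\partial^{\alpha}\chi(\eta-\xi)|^{2}}{k(\eta)^{2}}\,\mathrm{d}\eta \;\le\; \frac{C^{2}}{k(\xi)^{2}}\int(1+|\xi-\eta|)^{2N}\bigl|\partial^{\alpha}\chi(\eta-\xi)\bigr|^{2}\mathrm{d}\eta \;=\; \frac{C_{\alpha,N}}{k(\xi)^{2}},
\end{equation*}
the last integral being finite because $\partial^{\alpha}\chi\in\mathcal{S}(\mathbb{R}^{n})$. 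Summing over $|\alpha|\le 2r$ and taking square roots gives $\|(1-\Delta)^{r}m_{\xi}\|_{L^{2}}\le C/k(\xi)$, hence $(\ast)$. Combining with the earlier bound and raising to the $q$-th power,
\begin{equation*}
\|u\|_{M^{p,q},\chi} \;\le\; \Bigl\|\|\chi(D-\xi)u\|_{L^{p}}\Bigr\|_{L^{q}_{\xi}} \;\le\; C\bigl\|1/k\bigr\|_{L^{q}}\bigl\|k(D)u\bigr\|_{L^{p}} \;=\; C\bigl\|1/k\bigr\|_{L^{q}}\|u\|_{k,p},
\end{equation*}
which is the claimed continuous embedding $H_{p}^{k}(\mathbb{R}^{n})\hookrightarrow M^{p,q}(\mathbb{R}^{n})$.
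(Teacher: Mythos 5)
Your proof is correct and follows essentially the same route as the paper: factor $\chi(D-\xi)=m_{\xi}(D)\circ k(D)$, invoke Lemma \ref{n9} to reduce everything to the pointwise bound $\|\mathcal{F}^{-1}m_{\xi}\|_{L^{1}}\leq C/k(\xi)$, and obtain that bound via Leibniz, the chain-rule lemma applied to $g(t)=1/t$, and the $M_{k}$-moderateness of $k$. The only (immaterial) difference is that you control the $L^{1}$ norm of $\mathcal{F}^{-1}m_{\xi}$ by Cauchy--Schwarz and Plancherel through $\|(1-\Delta)^{r}m_{\xi}\|_{L^{2}}$, whereas the paper uses the $\langle x\rangle^{-2r}\in L^{1}$ versus $\|\langle 1-\Delta\rangle^{r}m_{\xi}\|_{L^{1}}$ splitting.
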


\begin{proof}
Let $\chi \in \mathcal{C}_{0}^{\infty }\left( \mathbb{R}^{n}\right)
\smallsetminus 0$. For $u\in H_{p}^{k}\left( \mathbb{R}^{n}\right) $, we
shall show that 
\begin{equation*}
\xi \rightarrow \left\Vert \chi \left( D-\xi \right) u\right\Vert _{L^{p}}
\end{equation*}%
is in $L^{q}\left( \mathbb{R}^{n}\right) $. Using previous lemma we get%
\begin{eqnarray*}
\left\Vert 1/k\left( D\right) \chi \left( D-\xi \right) k\left( D\right)
u\right\Vert _{L^{p}} &\leq &\left\Vert \mathcal{F}^{-1}\left( \frac{\chi
\left( \cdot -\xi \right) }{k\left( \cdot \right) }\right) \right\Vert
_{L^{1}}\left\Vert k\left( D\right) u\right\Vert _{L^{p}} \\
&=&\left\Vert \mathcal{F}^{-1}\left( \frac{\chi \left( \cdot -\xi \right) }{%
k\left( \cdot \right) }\right) \right\Vert _{L^{1}}\left\Vert u\right\Vert
_{k,p}.
\end{eqnarray*}%
It remains to evaluate $\left\Vert \mathcal{F}^{-1}\left( \frac{\chi \left(
\cdot -\xi \right) }{k\left( \cdot \right) }\right) \right\Vert _{L^{1}}$.
We have%
\begin{eqnarray*}
\mathcal{F}^{-1}\left( \frac{\chi \left( \cdot -\xi \right) }{k\left( \cdot
\right) }\right) \left( x\right) &=&\left( 2\pi \right) ^{-n}\int \mathrm{e}%
^{\mathrm{i}\left\langle x,\eta \right\rangle }\frac{\chi \left( \eta -\xi
\right) }{k\left( \eta \right) }\mathrm{d}\eta \\
&=&\left( 2\pi \right) ^{-n}\left\langle x\right\rangle ^{-2r}\int
\left\langle 1-\triangle \right\rangle ^{r}\left( \mathrm{e}^{\mathrm{i}%
\left\langle x,\cdot \right\rangle }\right) \left( \eta \right) \frac{\chi
\left( \eta -\xi \right) }{k\left( \eta \right) }\mathrm{d}\eta \\
&=&\left( 2\pi \right) ^{-n}\left\langle x\right\rangle ^{-2r}\int \mathrm{e}%
^{\mathrm{i}\left\langle x,\eta \right\rangle }\left\langle 1-\triangle
\right\rangle ^{r}\left( \frac{\chi \left( \cdot -\xi \right) }{k\left(
\cdot \right) }\right) \left( \eta \right) \mathrm{d}\eta .
\end{eqnarray*}%
Consequently,%
\begin{equation*}
\left\Vert \mathcal{F}^{-1}\left( \frac{\chi \left( \cdot -\xi \right) }{%
k\left( \cdot \right) }\right) \right\Vert _{L^{1}}\leq \left( 2\pi \right)
^{-n}\left\Vert \left\langle \cdot \right\rangle ^{-2r}\right\Vert
_{L^{1}}\left\Vert \left\langle 1-\triangle \right\rangle ^{r}\left( \frac{%
\chi \left( \cdot -\xi \right) }{k\left( \cdot \right) }\right) \right\Vert
_{L^{1}}
\end{equation*}%
with%
\begin{equation*}
\left\Vert \left\langle 1-\triangle \right\rangle ^{r}\left( \frac{\chi
\left( \cdot -\xi \right) }{k\left( \cdot \right) }\right) \right\Vert
_{L^{1}}\leq \sum_{\left\vert \alpha \right\vert \leq 2r}C_{\alpha
}\left\Vert \frac{1}{k\left( \cdot \right) }\partial ^{\alpha }\chi \left(
\cdot -\xi \right) \right\Vert _{L^{1}}
\end{equation*}%
by Lemma \ref{n9} above. Further we use the inequality $\left( \ref{n8}%
\right) $ to obtain 
\begin{eqnarray*}
k\left( \xi \right) \left\Vert \frac{1}{k\left( \cdot \right) }\partial
^{\alpha }\chi \left( \cdot -\xi \right) \right\Vert _{L^{1}} &=&\int \frac{%
k\left( \xi \right) }{k\left( \eta \right) }\left\vert \partial ^{\alpha
}\chi \left( \eta -\xi \right) \right\vert \mathrm{d}\eta \\
&\leq &\int C\left\langle \eta -\xi \right\rangle ^{N}\left\vert \partial
^{\alpha }\chi \left( \eta -\xi \right) \right\vert \mathrm{d}\eta \\
&=&\left\Vert M_{k}\left( \cdot \right) \partial ^{\alpha }\chi \left( \cdot
\right) \right\Vert _{L^{1}}
\end{eqnarray*}%
Hence 
\begin{equation*}
\left\Vert \left\langle 1-\triangle \right\rangle ^{r}\left( \frac{\chi
\left( \cdot -\xi \right) }{k\left( \cdot \right) }\right) \right\Vert
_{L^{1}}\leq \left( \sum_{\left\vert \alpha \right\vert \leq 2r}C_{\alpha
}\left\Vert M_{k}\left( \cdot \right) \partial ^{\alpha }\chi \left( \cdot
\right) \right\Vert _{L^{1}}\right) \frac{1}{k\left( \xi \right) }.
\end{equation*}%
Consequently we have%
\begin{multline*}
\left\vert \left\vert \left\vert u\right\vert \right\vert \right\vert
_{M^{p,q},\chi }=\left\Vert \left\Vert \chi \left( D-\cdot \right)
u\right\Vert _{L^{p}}\right\Vert _{L^{q}} \\
\leq \left( 2\pi \right) ^{-n}\left\Vert \left\langle \cdot \right\rangle
^{-2r}\right\Vert _{L^{1}}\left( \sum_{\left\vert \alpha \right\vert \leq
2r}C_{\alpha }\left\Vert M_{k}\left( \cdot \right) \partial ^{\alpha }\chi
\left( \cdot \right) \right\Vert _{L^{1}}\right) \left\Vert \frac{1}{k\left(
\cdot \right) }\right\Vert _{L^{q}}\left\Vert u\right\Vert _{k,p}.
\end{multline*}%
This completes the proof.
\end{proof}

Let $\ell \in \left\{ 1,...,n\right\} $, $1\leq p\leq \infty $ and $\mathbf{t%
}=\left( t_{1},...,t_{\ell }\right) \in \mathbb{R}^{\ell }$. Let $\mathbb{V}%
=\left( V_{1},...,V_{\ell }\right) $ denote an orthogonal decomposition, i.e.%
\begin{equation*}
\mathbb{R}^{n}=V_{1}\oplus ...\oplus V_{\ell }.
\end{equation*}%
We introduce the Banach space $H_{p,\mathbb{V}}^{\mathbf{t}}\left( \mathbb{R}%
^{n}\right) =H_{p,V_{1},...,V_{\ell }}^{t_{1},...,t_{\ell }}\left( \mathbb{R}%
^{n}\right) $ defined by%
\begin{equation*}
H_{p,\mathbb{V}}^{\mathbf{t}}\left( \mathbb{R}^{n}\right) =\left\{ u\in 
\mathcal{S}^{\prime }\left( \mathbb{R}^{n}\right) :\left( 1-\triangle
_{V_{1}}\right) ^{t_{1}/2}\otimes ...\otimes \left( 1-\triangle _{V_{\ell
}}\right) ^{t_{\ell }/2}u\in L^{p}\left( \mathbb{R}^{n}\right) \right\} ,
\end{equation*}%
\begin{equation*}
\left\Vert u\right\Vert _{H_{p,\mathbb{V}}^{\mathbf{t}}}=\left\Vert \left(
1-\triangle _{V_{1}}\right) ^{t_{1}/2}\otimes ...\otimes \left( 1-\triangle
_{V_{\ell }}\right) ^{t_{\ell }/2}u\right\Vert _{L^{p}},\quad u\in H_{p,%
\mathbb{V}}^{\mathbf{t}}.
\end{equation*}%
Then 
\begin{equation*}
H_{p,\mathbb{V}}^{\mathbf{t}}\left( \mathbb{R}^{n}\right) =H_{p}^{k}\left( 
\mathbb{R}^{n}\right)
\end{equation*}%
where%
\begin{equation*}
k\left( \cdot \right) =\left\langle \cdot \right\rangle
_{1}^{t_{1}}...\left\langle \cdot \right\rangle _{\ell }^{t_{\ell }},
\end{equation*}%
\begin{equation*}
\left\langle \xi \right\rangle _{j}=\left\langle \Pr\nolimits_{{V}%
_{j}}\left( \xi \right) \right\rangle =\left( 1+\left\vert \Pr\nolimits_{{V}%
_{j}}\left( \xi \right) \right\vert ^{2}\right) ^{11/2},\quad \xi \in 
\mathbb{R}^{n},1\leq j\leq \ell .
\end{equation*}%
For $1\leq q<\infty $, it is clear that if $qt_{1}>\dim V_{1},...,qt_{\ell
}>\dim V_{\ell }$, then the weight function $k$ satisfies the hypotheses of
the previous theorem.

\begin{corollary}
Suppose that $\mathbb{R}^{n}=V_{1}\oplus ...\oplus V_{\ell }$. If $1\leq
p\leq \infty $, $1\leq q<\infty $, and $qt_{1}>\dim V_{1},...,qt_{\ell
}>\dim V_{\ell }$, then $H_{p,V_{1},...,V_{\ell }}^{t_{1},...,t_{\ell
}}\left( \mathbb{R}^{n}\right) \hookrightarrow M^{p,q}\left( \mathbb{R}%
^{n}\right) $.
\end{corollary}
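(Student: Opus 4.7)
The proof is essentially a verification that the specific weight function
\begin{equation*}
k(\xi)=\langle\xi\rangle_{1}^{t_{1}}\cdots\langle\xi\rangle_{\ell}^{t_{\ell}},\qquad \langle\xi\rangle_{j}=\bigl(1+|\Pr_{V_{j}}(\xi)|^{2}\bigr)^{1/2},
\end{equation*}
meets the two hypotheses of Theorem \ref{n10}, after which the embedding is immediate since $H^{\mathbf{t}}_{p,\mathbb{V}}(\mathbb{R}^{n})=H_{p}^{k}(\mathbb{R}^{n})$ by the paragraph preceding the corollary. So the plan has two parts: a smoothness/derivative estimate on $k$, and an integrability computation for $1/k$.

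For part (a) of Theorem \ref{n10}, I would first observe that each scalar factor $\langle\cdot\rangle_{j}^{t_{j}}$, viewed as a function on $\mathbb{R}^{n}$ through the orthogonal projection $\Pr_{V_{j}}$, is smooth and satisfies $|\partial^{\alpha}\langle\xi\rangle_{j}^{t_{j}}|\le C_{\alpha,j}\langle\xi\rangle_{j}^{t_{j}}$ for every multi-index $\alpha$. This is standard for $\langle\cdot\rangle^{t}$ on a single Euclidean space, and the composition with the linear projection $\Pr_{V_{j}}$ preserves the estimate. Applying Leibniz's rule to the product $k=\prod_{j=1}^{\ell}\langle\cdot\rangle_{j}^{t_{j}}$ then yields $|\partial^{\alpha}k(\xi)|\le C_{\alpha}k(\xi)$ for all $\alpha$, and in particular for $|\alpha|\le 2r$. (Alternatively, one can invoke the lemma in the preceding subsection with $g(t)=t^{t_{j}}$ applied to each $\langle\cdot\rangle_{j}^{2}$; but Leibniz suffices.) Along the way one also sees that $k\in\mathcal{K}_{\mathrm{pol}}(\mathbb{R}^{n})\cap\mathcal{C}^{\infty}(\mathbb{R}^{n})$, from Peetre's inequality applied coordinatewise in each $V_{j}$.

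For part (b), since $\mathbb{R}^{n}=V_{1}\oplus\cdots\oplus V_{\ell}$ is an orthogonal decomposition, writing $\xi=\xi_{1}+\cdots+\xi_{\ell}$ with $\xi_{j}\in V_{j}$ and applying Fubini gives
\begin{equation*}
\int_{\mathbb{R}^{n}}\frac{\mathrm{d}\xi}{k(\xi)^{q}}=\prod_{j=1}^{\ell}\int_{V_{j}}\frac{\mathrm{d}\xi_{j}}{\langle\xi_{j}\rangle^{qt_{j}}}.
\end{equation*}
Each factor is finite precisely when $qt_{j}>\dim V_{j}$, which is the assumption, so $1/k\in L^{q}(\mathbb{R}^{n})$.

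The two hypotheses of Theorem \ref{n10} being verified, that theorem gives $H_{p}^{k}(\mathbb{R}^{n})\hookrightarrow M^{p,q}(\mathbb{R}^{n})$, which is the desired embedding. There is really no hard step: the derivative estimate on $k$ is a routine Leibniz computation from a one-dimensional model, and the integrability is just Fubini together with the elementary fact that $\int_{\mathbb{R}^{d}}\langle x\rangle^{-s}\,\mathrm{d}x<\infty$ iff $s>d$. The only thing to be mildly careful about is making sure the Leibniz expansion for $\partial^{\alpha}k$ is controlled uniformly in $\xi$ by $k(\xi)$, which it is because each derivative of a factor is bounded by a constant times that same factor.
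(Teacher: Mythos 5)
Your proof is correct and follows exactly the route the paper takes: identify $H_{p,\mathbb{V}}^{\mathbf{t}}(\mathbb{R}^{n})$ with $H_{p}^{k}(\mathbb{R}^{n})$ for $k=\langle\cdot\rangle_{1}^{t_{1}}\cdots\langle\cdot\rangle_{\ell}^{t_{\ell}}$ and check that $k$ satisfies the hypotheses of Theorem \ref{n10}, a verification the paper dismisses as ``clear'' and which you carry out correctly via Leibniz and Fubini.
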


\begin{theorem}
Let $\left( \mathcal{H},\mathcal{W}_{\sigma ,T},\omega _{\sigma ,T}\right) $
be an irreducible $\omega _{\sigma ,T}$-representation of $W$ and $k\in 
\mathcal{K}_{pol}\left( W\right) \cap \mathcal{C}^{\infty }\left( W\right) $%
. Assume that $k$ satisfies the conditions:

$\left( \mathrm{a}\right) $ For any $j\leq 2\left( n+1\right) $, there is $%
C_{j}>0$ such that%
\begin{equation*}
\left\vert k^{\left( j\right) }\left( \xi \right) \right\vert \leq
C_{j}k\left( \xi \right) ,\quad \xi \in W.
\end{equation*}

$\left( \mathrm{b}\right) $ $1/k\in L^{1}\left( \mathbb{R}^{n}\right) $.

\noindent Then

$\left( \mathrm{i}\right) $ $1\leq p<\infty $ and $a\in H_{p}^{k}\left(
W\right) $ imply $\mathrm{Op}_{\sigma ,T}\left( a\right) \in \mathcal{B}%
_{p}\left( \mathcal{H}\right) $ and 
\begin{equation*}
\left\Vert \mathrm{Op}_{\sigma ,T}\left( a\right) \right\Vert _{\mathcal{B}%
_{p}\left( \mathcal{H}\right) }\leq Cst\left\Vert a\right\Vert
_{H_{p}^{k}\left( W\right) }.
\end{equation*}

$\left( \mathrm{ii}\right) $ $a\in H_{\infty }^{k}\left( W\right) $ implies $%
\mathrm{Op}_{\sigma ,T}\left( a\right) \in \mathcal{B}\left( \mathcal{H}%
\right) $ and 
\begin{equation*}
\left\Vert \mathrm{Op}_{\sigma ,T}\left( a\right) \right\Vert _{\mathcal{B}%
\left( \mathcal{H}\right) }\leq Cst\left\Vert a\right\Vert _{H_{\infty
}^{k}\left( W\right) }.
\end{equation*}%
Here $\mathcal{B}_{p}\left( \mathcal{H}\right) $ denote the Schatten ideal
of compact operators whose singular values lie in $l^{p}$.
\end{theorem}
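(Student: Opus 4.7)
The statement is a direct synthesis of Theorem \ref{n7} with Theorem \ref{n10}, so the strategy is to chain these two results via the modulation spaces $M^{p,1}(W)$ (for part $(\mathrm{i})$) and $M^{\infty,1}(W)$ (for part $(\mathrm{ii})$). First I will fix notation: since $(W,\sigma)$ is symplectic, $\dim W = 2n$ (with $n = \frac{1}{2}\dim W$ identified with the integer in the statement), so in the embedding Theorem \ref{n10} the dimension parameter equals $2n$, and the corresponding order is $r = [\frac{2n}{2}]+1 = n+1$. The derivative bound thus required in Theorem \ref{n10} is up to order $2r = 2(n+1)$, which is exactly hypothesis $(\mathrm{a})$ of the present theorem.

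Next I will choose $q = 1$ in Theorem \ref{n10}. Its hypothesis $1/k \in L^{q}$ then becomes $1/k \in L^{1}(W)$, which is exactly hypothesis $(\mathrm{b})$ here. Theorem \ref{n10} therefore delivers, for any $1 \leq p \leq \infty$, the continuous embedding
\begin{equation*}
H_{p}^{k}(W) \hookrightarrow M^{p,1}(W),
\end{equation*}
together with an estimate $\|a\|_{M^{p,1}(W)} \leq Cst\, \|a\|_{H_{p}^{k}(W)}$ whose constant depends only on $k$ and the test function $\chi$ used to define the modulation norm.

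Now I invoke Theorem \ref{n7}. For $1 \leq p < \infty$, part $(\mathrm{a})$ of that theorem gives $\mathrm{Op}_{\sigma,T}(a) \in \mathcal{B}_{p}(\mathcal{H})$ with $\|\mathrm{Op}_{\sigma,T}(a)\|_{\mathcal{B}_{p}(\mathcal{H})} \leq Cst\, \|a\|_{M^{p,1}(W)}$, and combining with the embedding produces the estimate in $(\mathrm{i})$. For the boundedness statement $(\mathrm{ii})$, I use part $(\mathrm{b})$ of Theorem \ref{n7} (the Sj\"ostrand-algebra boundedness result): $a \in M^{\infty,1}(W)$ gives $\mathrm{Op}_{\sigma,T}(a) \in \mathcal{B}(\mathcal{H})$ with $\|\mathrm{Op}_{\sigma,T}(a)\|_{\mathcal{B}(\mathcal{H})} \leq Cst\,\|a\|_{M^{\infty,1}(W)}$, and composing again with the $p=\infty$ instance of the embedding finishes the proof.

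Since both invoked theorems are already established in the paper, there is no real analytic obstacle here; the only non-obvious point is the bookkeeping of dimensions --- verifying that the derivative order $2(n+1)$ in hypothesis $(\mathrm{a})$ matches the value $2r$ for $W$ of dimension $2n$ --- and the observation that $q=1$ is the correct exponent to select in Theorem \ref{n10} so that both the integrability hypothesis on $1/k$ and the target modulation space match those needed by Theorem \ref{n7}.
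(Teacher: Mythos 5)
Your proposal is correct and follows exactly the route the paper intends: the theorem is stated as a consequence of the embedding Theorem \ref{n10} (applied with $q=1$ and dimension $\dim W = 2n$, so that $2r = 2(n+1)$ matches hypothesis $(\mathrm{a})$ and $1/k \in L^{1}$ matches hypothesis $(\mathrm{b})$) composed with the Schatten-class Theorem \ref{n7}. The dimension bookkeeping and the choice $q=1$ are the only points requiring care, and you have handled both correctly.
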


\section{Cordes' lemma}

Recall the definition of generalized Sobolev spaces. Let $s\in \mathbb{R}$, $%
1\leq p\leq \infty $. Define 
\begin{gather*}
H_{p}^{s}\left( \mathbb{R}^{n}\right) =\left\{ u\in \mathcal{S}^{\prime
}\left( \mathbb{R}^{n}\right) :\left\langle D\right\rangle ^{s}u\in
L^{p}\left( \mathbb{R}^{n}\right) \right\} , \\
\left\Vert u\right\Vert _{H_{p}^{s}}=\left\Vert \left\langle D\right\rangle
^{s}u\right\Vert _{L^{p}},\quad u\in H_{p}^{s}.
\end{gather*}

\begin{lemma}
If $s>n$ and $1\leq p\leq \infty $, then $H_{p}^{s}\left( \mathbb{R}%
^{n}\right) \hookrightarrow M^{p,1}\left( \mathbb{R}^{n}\right) $.
\end{lemma}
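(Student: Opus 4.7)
The plan is to apply Theorem \ref{n10} directly with the weight function $k(\xi)=\langle \xi \rangle^{s}$ and with $q=1$. Once we verify that this particular $k$ satisfies all the hypotheses of Theorem \ref{n10}, the conclusion $H^k_p(\mathbb{R}^n)=H^s_p(\mathbb{R}^n)\hookrightarrow M^{p,1}(\mathbb{R}^n)$ is immediate.

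First I would note that $\langle \cdot \rangle^{s}\in \mathcal{C}^{\infty}(\mathbb{R}^n)$ and, by Peetre's inequality $\langle \xi+\eta\rangle^{s}\leq 2^{|s|/2}\langle \xi\rangle^{|s|}\langle \eta\rangle^{s}$, we have $\langle \cdot \rangle^{s}\in \mathcal{K}_{\mathrm{pol}}(\mathbb{R}^n)$. Next I would check the derivative estimate required in hypothesis $(\mathrm{a})$ of Theorem \ref{n10}: a direct induction on $|\alpha|$ (or the Fa\`a di Bruno formula applied to $g(t)=t^{s/2}$ composed with $k_0(\xi)=1+|\xi|^2$) shows that for every multi-index $\alpha$ there exists $C_\alpha>0$ with
\begin{equation*}
|\partial^{\alpha}\langle \xi\rangle^{s}|\leq C_{\alpha}\langle\xi\rangle^{s-|\alpha|}\leq C_{\alpha}\langle\xi\rangle^{s},\quad \xi\in \mathbb{R}^n,
\end{equation*}
where the last inequality uses $\langle \xi\rangle\geq 1$. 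In particular this holds for $|\alpha|\leq 2r=2([n/2]+1)$.

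It remains to verify hypothesis $(\mathrm{b})$ with $q=1$, namely $1/k=\langle \cdot \rangle^{-s}\in L^{1}(\mathbb{R}^n)$. Passing to polar coordinates,
\begin{equation*}
\int_{\mathbb{R}^n}\langle \xi\rangle^{-s}\mathrm{d}\xi=\mathrm{c}_{n}\int_{0}^{\infty}(1+\rho^{2})^{-s/2}\rho^{n-1}\mathrm{d}\rho,
\end{equation*}
which is finite precisely when $s>n$. This is exactly the hypothesis of the lemma.

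With both conditions verified, Theorem \ref{n10} applied to $k(\xi)=\langle \xi\rangle^{s}$, $q=1$ yields the continuous embedding $H^{k}_{p}(\mathbb{R}^n)\hookrightarrow M^{p,1}(\mathbb{R}^n)$. Since by definition $H^{s}_{p}(\mathbb{R}^n)=H^{k}_{p}(\mathbb{R}^n)$ with this choice of $k$, the lemma follows. There is no genuine obstacle: the lemma is essentially a specialization of Theorem \ref{n10}, and the only mild work is the routine verification that $\langle \cdot \rangle^{s}$ is a smooth weight of polynomial growth whose reciprocal is integrable iff $s>n$.
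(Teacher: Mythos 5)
Your proposal is correct and follows exactly the route the paper takes: the paper's proof is the single line that the lemma is a particular case of Theorem \ref{n10}, applied with $k(\xi)=\langle \xi\rangle^{s}$ and $q=1$. Your verification of the hypotheses (smoothness, Peetre's inequality for polynomial growth, the derivative bounds $|\partial^{\alpha}\langle\xi\rangle^{s}|\leq C_{\alpha}\langle\xi\rangle^{s}$, and integrability of $\langle\cdot\rangle^{-s}$ precisely when $s>n$) is the routine work the paper leaves implicit, and it is carried out correctly.
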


\begin{proof}
This is a particular case of theorem \ref{n10}.
\end{proof}

To state and prove Cordes' lemma we shall work with a very restrictive class
of symbols. We shall say that $a:\mathbb{R}^{n}\rightarrow 
\mathbb{C}
$ is a symbol of order $m$ ($m$ any real number) if $a\in \mathcal{C}%
^{\infty }\left( \mathbb{R}^{n}\right) $ and for any $\alpha \in 
\mathbb{N}
^{n}$, there is $C_{\alpha }>0$ such that 
\begin{equation*}
\left\vert \partial ^{\alpha }a\left( x\right) \right\vert \leq C_{\alpha
}\left\langle x\right\rangle ^{m-\left\vert \alpha \right\vert },\quad x\in 
\mathbb{R}^{n}.
\end{equation*}%
We denote by $\mathcal{S}^{m}\left( \mathbb{R}^{n}\right) $ the vector space
of all symbols of order $m$ and observe that 
\begin{equation*}
m_{1}\leq m_{2}\Rightarrow \mathcal{S}^{m_{1}}\left( \mathbb{R}^{n}\right)
\subset \mathcal{S}^{m_{2}}\left( \mathbb{R}^{n}\right) ,\quad \mathcal{S}%
^{m_{1}}\left( \mathbb{R}^{n}\right) \cdot \mathcal{S}^{m_{2}}\left( \mathbb{%
R}^{n}\right) \subset \mathcal{S}^{m_{1}+m_{2}}\left( \mathbb{R}^{n}\right) .
\end{equation*}%
Observe also that $a\in \mathcal{S}^{m}\left( \mathbb{R}^{n}\right)
\Rightarrow \partial ^{\alpha }a\in \mathcal{S}^{m-\left\vert \alpha
\right\vert }\left( \mathbb{R}^{n}\right) $ for each $\alpha \in 
\mathbb{N}
^{n}$. The function $\left\langle x\right\rangle ^{m}$ clearly belongs to $%
\mathcal{S}^{m}\left( \mathbb{R}^{n}\right) $ for any $m\in \mathbb{R}$. We
denote by $\mathcal{S}^{\infty }\left( \mathbb{R}^{n}\right) $ the union of
all the spaces $\mathcal{S}^{m}\left( \mathbb{R}^{n}\right) $ and we note
that $\mathcal{S}\left( \mathbb{R}^{n}\right) =\bigcap_{m\in \mathbb{R}}%
\mathcal{S}^{m}\left( \mathbb{R}^{n}\right) $ the space of tempered test
functions. It is clear that $\mathcal{S}^{m}\left( \mathbb{R}^{n}\right) $
is a Fr\'{e}chet space with the seni-norms given by 
\begin{equation*}
\left\vert a\right\vert _{m,\alpha }=\sup_{x\in \mathbb{R}^{n}}\left\langle
x\right\rangle ^{-m+\left\vert \alpha \right\vert }\left\vert \partial
^{\alpha }a\left( x\right) \right\vert ,\quad a\in \mathcal{S}^{m}\left( 
\mathbb{R}^{n}\right) ,\alpha \in 
\mathbb{N}
^{n}.
\end{equation*}%
In addition to the previous lemma, we use part $\left( \mathrm{iii}\right) $
of Proposition 2.4 from \cite{Arsu 2} which state that%
\begin{equation*}
\mathcal{F}^{-1}\left( \bigcup_{m<0}\mathcal{S}^{m}\left( \mathbb{R}%
^{n}\right) \right) \subset L^{1}\left( \mathbb{R}^{n}\right) .
\end{equation*}

\begin{corollary}
$\mathcal{F}^{-1}\left( \bigcup_{m>n}\mathcal{S}^{-m}\left( \mathbb{R}%
^{n}\right) \right) \cup \mathcal{F}\left( \bigcup_{m>n}\mathcal{S}%
^{-m}\left( \mathbb{R}^{n}\right) \right) \subset M^{1,1}\left( \mathbb{R}%
^{n}\right) $.
\end{corollary}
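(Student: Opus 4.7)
The plan is to sandwich the claim between two earlier results: the embedding $H^s_1 \hookrightarrow M^{1,1}$ (valid for $s>n$ by the lemma immediately preceding the statement) and the inclusion $\mathcal{F}^{-1}\bigl(\bigcup_{m<0}\mathcal{S}^m(\mathbb{R}^n)\bigr) \subset L^1(\mathbb{R}^n)$ cited from Proposition 2.4(iii) of \cite{Arsu 2}. So the entire argument is: if $a\in \mathcal{S}^{-m}(\mathbb{R}^n)$ with $m>n$, I will exhibit an exponent $s$ with $n<s<m$ such that $\mathcal{F}^{-1}(a)\in H^s_1(\mathbb{R}^n)$, and then invoke the embedding.

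The key step is the computation $\langle D\rangle^s \mathcal{F}^{-1}(a) = \mathcal{F}^{-1}\bigl(\langle\cdot\rangle^s\, a\bigr)$. Since $\langle\cdot\rangle^s \in \mathcal{S}^s(\mathbb{R}^n)$ and the symbol classes are closed under products with the shift $\mathcal{S}^{s}\cdot\mathcal{S}^{-m}\subset \mathcal{S}^{s-m}$, I get $\langle\cdot\rangle^s a \in \mathcal{S}^{s-m}(\mathbb{R}^n)$. Choosing any $s$ with $n<s<m$ makes $s-m<0$, hence $\langle\cdot\rangle^s a$ lies in $\bigcup_{\mu<0}\mathcal{S}^\mu(\mathbb{R}^n)$. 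The Arsu--proposition then yields $\mathcal{F}^{-1}(\langle\cdot\rangle^s a)\in L^1(\mathbb{R}^n)$, i.e.\ $\mathcal{F}^{-1}(a)\in H^s_1(\mathbb{R}^n)$. Applying the preceding lemma (with $p=1$, $q=1$, exponent $s>n$) gives $\mathcal{F}^{-1}(a)\in M^{1,1}(\mathbb{R}^n)$.

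For the second piece $\mathcal{F}\bigl(\bigcup_{m>n}\mathcal{S}^{-m}\bigr)\subset M^{1,1}$, I would reduce to the first by symmetry. The identity
\begin{equation*}
\mathcal{F}(a)(x)=(2\pi)^{n}\,\mathcal{F}^{-1}(a)(-x)
\end{equation*}
together with the fact that $M^{1,1}(\mathbb{R}^n)$ is invariant under the reflection $u\mapsto u(-\cdot)$ (obvious from the definition of $U_{\chi,1}$ after a change of variable $y\mapsto -y$) transports membership through $\mathcal{F}$. Alternatively, one writes $\mathcal{F}(a)=\mathcal{F}^{-1}(\check a)$ up to a constant, observes that $\check a \in \mathcal{S}^{-m}$ whenever $a\in\mathcal{S}^{-m}$, and reapplies the first half verbatim.

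The only delicate point is really bookkeeping: one needs to confirm that the constant $n$ in $s>n$ from the embedding lemma matches the constant $n$ in the hypothesis $m>n$ on the symbol order, so that a legal choice of intermediate $s$ exists. The strict inequality $m>n$ leaves the open interval $(n,m)$ nonempty, so such an $s$ is always available and no boundary case arises. No routine calculation is expected to be hard, and no new estimate is needed beyond the two quoted results.
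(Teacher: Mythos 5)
Your proposal is correct and follows essentially the same route as the paper: pick $s\in(n,m)$, observe $\left\langle D\right\rangle ^{s}\mathcal{F}^{-1}a=\mathcal{F}^{-1}\left( \left\langle \cdot \right\rangle ^{s}a\right) \in L^{1}$ since $\left\langle \cdot \right\rangle ^{s}a\in \mathcal{S}^{s-m}$ with $s-m<0$, and conclude via the embedding $H_{1}^{s}\hookrightarrow M^{1,1}$. The only difference is that you spell out the reduction of the $\mathcal{F}$ case to the $\mathcal{F}^{-1}$ case by reflection, which the paper leaves implicit.
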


\begin{proof}
It is sufficient to prove the inclusion 
\begin{equation*}
\mathcal{F}^{-1}\left( \bigcup_{m>n}\mathcal{S}^{-m}\left( \mathbb{R}%
^{n}\right) \right) \subset M^{1,1}\left( \mathbb{R}^{n}\right) .
\end{equation*}

Let $m>n$ and $s\in \left( n,m\right) $. We shall show that if $a\in 
\mathcal{S}^{-m}\left( \mathbb{R}^{n}\right) $, then $\mathcal{F}^{-1}a\in
H_{p}^{s}$. Indeed, if $u=\mathcal{F}^{-1}a$, then%
\begin{equation*}
\left\langle D\right\rangle ^{s}u=\mathcal{F}^{-1}\left( \left\langle \cdot
\right\rangle ^{s}a\right) \in L^{1}\left( \mathbb{R}^{n}\right) ,
\end{equation*}%
since $\left\langle \cdot \right\rangle ^{s}a\in \mathcal{S}^{s-m}\left( 
\mathbb{R}^{n}\right) $ and $s-m<0$. Using previous lemma we obtain that $%
\mathcal{F}^{-1}a$ is in the Feichtinger algebra $M^{1,1}\left( \mathbb{R}%
^{n}\right) $.
\end{proof}

Using the above lemma, the fact that the map 
\begin{equation*}
M^{p,1}\left( \mathbb{R}^{n}\right) \times M^{p,1}\left( \mathbb{R}%
^{m}\right) \ni \left( u,v\right) \rightarrow u\otimes v\in M^{p,1}\left( 
\mathbb{R}^{n}\times \mathbb{R}^{m}\right)
\end{equation*}%
is well defined and continuous and Theorem \ref{n7}, we get the following
extension of Cordes' lemma.

\begin{corollary}[Cordes' lemma]
\label{n13}Suppose that $W=V_{1}\oplus ...\oplus V_{\ell }$ is an orthogonal
decomposition with respect to a $\sigma $-compatible inner product on $%
\left( W,\sigma \right) $. Let $t_{1}>n_{1}=\dim V_{1},...,$ $t_{\ell
}>n_{\ell }=\dim V_{\ell }$. Let $a_{1}\in \mathcal{S}^{-t_{1}}\left(
V_{1}\right) ,...,a_{k}\in \mathcal{S}^{-t_{\ell }}\left( V_{\ell }\right) $
and $g:W\rightarrow 
\mathbb{C}
$,%
\begin{equation*}
g=\mathcal{F}_{{V}_{{1}}}^{\dag }\left( a_{1}\right) \otimes ...\otimes 
\mathcal{F}_{{V}_{{\ell }}}^{\dag }\left( a_{\ell }\right) ,
\end{equation*}%
where $\mathcal{F}_{{V}}^{\dag }$ is either $\mathcal{F}_{V}$ or $\mathcal{F}%
_{V}^{-1}$. If the representation $\left( \mathcal{H},\mathcal{W}_{\sigma
,T},\omega _{\sigma ,T}\right) $ is irreducible, then $\mathrm{Op}_{\sigma
,T}\left( g\right) \in \mathcal{B}_{1}\left( \mathcal{H}\right) \cdot $
\end{corollary}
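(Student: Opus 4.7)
The plan is to chain together three ingredients already assembled in the paper: the preceding corollary on Fourier images of negative-order symbol classes, the continuity of the tensor product on the Feichtinger algebra $M^{1,1}$, and Theorem \ref{n7} specialized to $p=1$.

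First I would verify that each factor $\mathcal{F}^{\dag}_{V_j}(a_j)$ lies in $M^{1,1}(V_j)$. Since $a_j\in\mathcal{S}^{-t_j}(V_j)$ with $t_j>n_j=\dim V_j$, this is exactly the content of Corollary \ref{n13}'s immediate predecessor (the corollary stating that $\mathcal{F}^{-1}(\bigcup_{m>n}\mathcal{S}^{-m})\cup \mathcal{F}(\bigcup_{m>n}\mathcal{S}^{-m})\subset M^{1,1}$), which crucially covers both choices $\mathcal{F}^{\dag}=\mathcal{F}$ and $\mathcal{F}^{\dag}=\mathcal{F}^{-1}$ needed here.

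Second, I would iterate $(\ell-1)$ times the continuous bilinear tensor-product map
\[
M^{1,1}(\mathbb{R}^{n_j})\times M^{1,1}(\mathbb{R}^{n_{j+1}})\longrightarrow M^{1,1}(\mathbb{R}^{n_j+n_{j+1}}),
\]
cited just before the statement, to obtain
\[
g=\mathcal{F}^{\dag}_{V_1}(a_1)\otimes\cdots\otimes \mathcal{F}^{\dag}_{V_\ell}(a_\ell)\in M^{1,1}(W),
\]
where $W=V_1\oplus\cdots\oplus V_\ell$ is identified with $V_1\times\cdots\times V_\ell$ via the chosen orthogonal decomposition. Finally, since $g\in M^{1,1}(W)\subset M^{1,1}(W)$, Theorem \ref{n7}(a) for $p=1$ applied to the irreducible $\omega_{\sigma,T}$-representation $(\mathcal{H},\mathcal{W}_{\sigma,T},\omega_{\sigma,T})$ gives $\mathrm{Op}_{\sigma,T}(g)\in\mathcal{B}_1(\mathcal{H})$ with the associated norm bound.

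The only non-routine point — and it is really a bookkeeping matter rather than a genuine obstacle — is to ensure that the modulation space on the orthogonal direct sum $W=V_1\oplus\cdots\oplus V_\ell$ is naturally identified with that on $V_1\times\cdots\times V_\ell$, so that the iterated tensor-product map lands in $M^{1,1}(W)$ as claimed. This is straightforward once one chooses a product window $\chi=\chi_1\otimes\cdots\otimes\chi_\ell$ in the norm $\left\Vert\cdot\right\Vert_{M^{1,1},\chi}$; the $\sigma$-compatibility of the inner product ensures that the Lebesgue measures on the $V_j$ fit together consistently with the symplectic Fourier measure $\mathrm{d}^{\sigma}\xi$ on $W$. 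With this identification in place, the three-step chain above produces the conclusion without further work.
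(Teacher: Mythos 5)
Your proposal is correct and follows exactly the route the paper itself takes: the preceding corollary places each $\mathcal{F}^{\dag}_{V_j}(a_j)$ in $M^{1,1}(V_j)$, the continuity of the tensor-product map on $M^{p,1}$ spaces puts $g$ in $M^{1,1}(W)$, and Theorem \ref{n7} with $p=1$ yields $\mathrm{Op}_{\sigma,T}(g)\in\mathcal{B}_{1}(\mathcal{H})$. The bookkeeping remark about identifying $M^{1,1}$ on the orthogonal direct sum with that on the product is a reasonable extra precaution but does not change the argument.
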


\section{Kato's identity}

In this section we shall describe an extension of a formula due to T. Kato 
\cite{Kato}.

On the symplectic vector space $\left( W,\sigma \right) $ both, duality $%
\left\langle \cdot ,\cdot \right\rangle _{{S,S}^{\prime }}^{\sigma }$ and
antiduality $\left\langle \cdot ,\cdot \right\rangle _{{S,S}^{\ast
}}^{\sigma }$ are defined taking into account the symplectic structure of W,
i.e. if $\varphi ,u\in \mathcal{S}\left( W\right) $, then 
\begin{equation*}
\left\langle \varphi ,u\right\rangle _{\mathcal{S},\mathcal{S}^{\prime
}}^{\sigma }=\int_{W}\varphi \left( \eta \right) u\left( \eta \right) 
\mathrm{d}^{\sigma }\eta ,\quad \left\langle \varphi ,u\right\rangle _{%
\mathcal{S},\mathcal{S}^{\ast }}^{\sigma }=\int_{W}\overline{\varphi \left(
\eta \right) }u\left( \eta \right) \mathrm{d}^{\sigma }\eta .
\end{equation*}%
Likewise, the convolution, $\ast _{\sigma }$, is defined by means of the
Fourier measure, $\mathrm{d}^{\sigma }\xi $, by 
\begin{equation*}
\left( \varphi \ast _{\sigma }u\right) \left( \xi \right) =\int_{W}\varphi
\left( \xi -\eta \right) u\left( \eta \right) \mathrm{d}^{\sigma }\eta
,\quad \varphi ,u\in \mathcal{S}\left( W\right) .
\end{equation*}%
Let $\varphi \in \mathcal{S}\left( W\right) $ and $u\in \mathcal{S}^{\prime
}\left( W\right) $ (or $u\in \mathcal{S}^{\ast }\left( W\right) $). Then the
formal integral%
\begin{equation*}
\left( \varphi \ast _{\sigma }u\right) \left( \xi \right) =\int_{W}\varphi
\left( \xi -\eta \right) u\left( \eta \right) \mathrm{d}^{\sigma }\eta
\end{equation*}%
has a rigorous meaning when interpreted as the action of the distribution $u$
on the test function $\eta \rightarrow \varphi \left( \xi -\eta \right) $
(or $\eta \rightarrow \overline{\varphi }\left( \xi -\eta \right) $), i.e.%
\begin{equation*}
\left( \varphi \ast _{\sigma }u\right) \left( \xi \right) =\left\langle
\varphi \left( \xi -\cdot \right) ,u\right\rangle _{\mathcal{S},\mathcal{S}%
^{\prime }}^{\sigma },\quad \left( \varphi \ast _{\sigma }u\right) \left(
\xi \right) =\left\langle \overline{\varphi }\left( \xi -\cdot \right)
,u\right\rangle _{\mathcal{S},\mathcal{S}^{\ast }}^{\sigma }.
\end{equation*}%
If $S:W\rightarrow W$ is a linear isomorphism such that $S^{\sigma }=S$ then 
$\mathrm{d}^{\sigma _{S}}\eta =\left( \det S\right) ^{\frac{1}{2}}\mathrm{d}%
^{\sigma }\eta $,%
\begin{equation*}
\left\langle \varphi ,u\right\rangle _{\mathcal{S}{,}\mathcal{S}^{\prime
}}^{\sigma _{S}}=\left( \det S\right) ^{\frac{1}{2}}\left\langle \varphi
,u\right\rangle _{\mathcal{S},\mathcal{S}^{\prime }}^{\sigma },\quad
\left\langle \varphi ,u\right\rangle _{\mathcal{S},\mathcal{S}^{\ast
}}^{\sigma _{S}}=\left( \det S\right) ^{\frac{1}{2}}\left\langle \varphi
,u\right\rangle ,
\end{equation*}%
and 
\begin{equation*}
\varphi \ast _{\sigma _{S}}u=\left( \det S\right) ^{\frac{1}{2}}\left(
\varphi \ast _{\sigma }u\right) .
\end{equation*}%
For $b$, $c\in \mathcal{S}\left( W\right) $,%
\begin{eqnarray*}
\left( \left( b\ast _{\sigma }c\right) \circ S\right) \left( \xi \right)
&=&\int_{W}b\left( S\left( \xi \right) -\eta \right) c\left( \eta \right) 
\mathrm{d}^{\sigma }\eta \\
&=&\int_{W}\left( b\circ S\right) \left( \xi -S^{-1}\eta \right) \left(
c\circ S\right) \left( S^{-1}\eta \right) \mathrm{d}^{\sigma }\eta \\
&=&\det S\int_{W}\left( b\circ S\right) \left( \xi -\zeta \right) \left(
c\circ S\right) \left( \zeta \right) \mathrm{d}^{\sigma }\eta \\
&=&\left( \det S\right) ^{\frac{1}{2}}\int_{W}\left( b\circ S\right) \left(
\xi -\zeta \right) \left( c\circ S\right) \left( \zeta \right) \mathrm{d}%
^{\sigma _{S}}\eta \\
&=&\left( \det S\right) ^{\frac{1}{2}}\left( \left( b\circ S\right) \ast
_{\sigma _{S}}\left( c\circ S\right) \right) \left( \xi \right) ,
\end{eqnarray*}%
hence%
\begin{equation*}
\left( b\ast _{\sigma }c\right) \circ S=\left( \det S\right) ^{\frac{1}{2}%
}\left( \left( b\circ S\right) \ast _{\sigma _{S}}\left( c\circ S\right)
\right) ,
\end{equation*}%
and this formula remains true in all cases where the operations of
convolution and composition with a linear bijection make sense.

If $b\in \mathcal{S}\left( W\right) $, $c\in \mathcal{S}^{\ast }\left(
W\right) $, then $b\ast _{\sigma }c\in \mathcal{S}^{\ast }\left( W\right) $
and from Theorem \ref{n4} one obtains that%
\begin{equation*}
\mathrm{Op}_{\sigma ,T}\left( b\ast _{\sigma }c\right) =\mathrm{Op}^{%
\widetilde{w}}\left( \left( b\ast _{\sigma }c\right) _{\sigma ,T}^{w}\right)
,
\end{equation*}%
where%
\begin{equation*}
\left( b\ast _{\sigma }c\right) _{\sigma ,T}^{w}=\lambda _{{\sigma ,T}%
}\left( D_{\sigma _{T+T^{{\sigma }}}}\right) \left( \left( b\ast _{\sigma
}c\right) \circ \left( T+T^{\sigma }\right) \right) \qquad \quad \qquad
\quad \qquad \quad \qquad \quad \qquad \quad \qquad \quad
\end{equation*}%
\begin{align*}
\qquad & =\left( \det \left( T+T^{\sigma }\right) \right) ^{\frac{1}{2}%
}\lambda _{{\sigma ,T}}\left( D_{\sigma _{T+T^{{\sigma }}}}\right) \left(
\left( b\circ \left( T+T^{\sigma }\right) \right) \ast _{\sigma _{T+T^{{%
\sigma }}}}\left( c\circ \left( T+T^{\sigma }\right) \right) \right) \\
& =\left( \det \left( T+T^{\sigma }\right) \right) ^{\frac{1}{2}}\left(
\lambda _{{\sigma ,T}}\left( D_{\sigma _{T+T^{{\sigma }}}}\right) \left(
b\circ \left( T+T^{\sigma }\right) \right) \ast _{\sigma _{T+T^{{\sigma }%
}}}\left( c\circ \left( T+T^{\sigma }\right) \right) \right) \\
& =\left( \det \left( T+T^{\sigma }\right) \right) ^{\frac{1}{2}}\left(
\left( b\circ \left( T+T^{\sigma }\right) \right) \ast _{\sigma _{T+T^{{%
\sigma }}}}\lambda _{{\sigma ,T}}\left( D_{\sigma _{T+T^{{\sigma }}}}\right)
\left( c\circ \left( T+T^{\sigma }\right) \right) \right) ,
\end{align*}%
hence%
\begin{equation*}
\left( b\ast _{\sigma }c\right) _{\sigma ,T}^{w}\qquad \quad \qquad \quad
\qquad \quad \qquad \quad \qquad \quad \qquad \quad \qquad \quad \qquad
\quad \qquad \quad \qquad \quad \qquad \quad \qquad \quad
\end{equation*}%
\begin{equation*}
=\left( \det \left( T+T^{\sigma }\right) \right) ^{\frac{1}{2}}\left( \left(
b\circ \left( T+T^{\sigma }\right) \right) \ast _{\sigma _{T+T^{{\sigma }%
}}}\lambda _{{\sigma ,T}}\left( D_{\sigma _{T+T^{{\sigma }}}}\right) \left(
c\circ \left( T+T^{\sigma }\right) \right) \right)
\end{equation*}%
\begin{equation*}
=\left( \det \left( T+T^{\sigma }\right) \right) ^{\frac{1}{2}}\left(
\lambda _{{\sigma ,T}}\left( D_{\sigma _{T+T^{{\sigma }}}}\right) \left(
b\circ \left( T+T^{\sigma }\right) \right) \ast _{\sigma _{T+T^{{\sigma }%
}}}\left( c\circ \left( T+T^{\sigma }\right) \right) \right) ,
\end{equation*}%
i.e.%
\begin{align*}
\left( b\ast _{\sigma }c\right) _{\sigma ,T}^{w}& =\left( \det \left(
T+T^{\sigma }\right) \right) ^{\frac{1}{2}}\left( \left( b\circ \left(
T+T^{\sigma }\right) \right) \ast _{\sigma _{T+T^{{\sigma }}}}c_{\sigma
,T}^{w}\right) \\
& =\left( \det \left( T+T^{\sigma }\right) \right) ^{\frac{1}{2}}\left(
b_{\sigma ,T}^{w}\ast _{\sigma _{T+T^{{\sigma }}}}\left( c\circ \left(
T+T^{\sigma }\right) \right) \right) .
\end{align*}%
Recalling that $\mathrm{Op}^{\widetilde{w}}$ is the standard Weyl calculus
corresponding to the Weyl system $\left( \mathcal{H},\widetilde{\mathcal{W}}%
_{\sigma ,T},\widetilde{\omega }_{\sigma ,T}\right) $, for the symplectic
space $\left( W,\sigma _{T+T^{{\sigma }}}\right) $, this can be used
together with Lemma 2.1 and Lemma 2.2 from \cite{Arsu} to represent $\mathrm{%
Op}_{\sigma ,T}\left( b\ast _{\sigma }c\right) $.%
\begin{equation*}
\mathrm{Op}_{\sigma ,T}\left( b\ast _{\sigma }c\right) \qquad \quad \qquad
\quad \qquad \quad \qquad \quad \qquad \quad \qquad \quad \qquad \quad
\qquad \quad \qquad \quad \qquad \quad \qquad \quad \qquad \quad
\end{equation*}%
\begin{align*}
& =\left( \det \left( T+T^{\sigma }\right) \right) ^{\frac{1}{2}%
}\int_{W}\left( b\circ \left( T+T^{\sigma }\right) \right) \left( \xi
\right) \widetilde{\mathcal{W}}_{\sigma ,T}\left( \xi \right) \mathrm{Op}^{%
\widetilde{w}}\left( c_{\sigma ,T}^{w}\right) \widetilde{\mathcal{W}}%
_{\sigma ,T}\left( -\xi \right) \mathrm{d}^{\sigma _{T+T^{{\sigma }}}}\xi \\
& =\det \left( T+T^{\sigma }\right) \int_{W}\left( b\circ \left( T+T^{\sigma
}\right) \right) \left( \xi \right) \widetilde{\mathcal{W}}_{\sigma
,T}\left( \xi \right) \mathrm{Op}_{\sigma ,T}\left( c\right) \widetilde{%
\mathcal{W}}_{\sigma ,T}\left( -\xi \right) \mathrm{d}^{\sigma }\xi \\
& =\det \left( T+T^{\sigma }\right) \int_{W}\left( c\circ \left( T+T^{\sigma
}\right) \right) \left( \xi \right) \widetilde{\mathcal{W}}_{\sigma
,T}\left( \xi \right) \mathrm{Op}_{\sigma ,T}\left( b\right) \widetilde{%
\mathcal{W}}_{\sigma ,T}\left( -\xi \right) \mathrm{d}^{\sigma }\xi
\end{align*}%
i.e.%
\begin{equation*}
\mathrm{Op}_{\sigma ,T}\left( b\ast _{\sigma }c\right) \qquad \quad \qquad
\quad \qquad \quad \qquad \quad \qquad \quad \qquad \quad \qquad \quad
\qquad \quad \qquad \quad \qquad \quad \qquad \quad \qquad \quad
\end{equation*}%
\begin{align*}
\quad \quad \qquad & =\det \left( T+T^{\sigma }\right) \int_{W}\left( b\circ
\left( T+T^{\sigma }\right) \right) \left( \xi \right) \widetilde{\mathcal{W}%
}_{\sigma ,T}\left( \xi \right) \mathrm{Op}_{\sigma ,T}\left( c\right) 
\widetilde{\mathcal{W}}_{\sigma ,T}\left( -\xi \right) \mathrm{d}^{\sigma
}\xi \\
& =\det \left( T+T^{\sigma }\right) \int_{W}\left( c\circ \left( T+T^{\sigma
}\right) \right) \left( \xi \right) \widetilde{\mathcal{W}}_{\sigma
,T}\left( \xi \right) \mathrm{Op}_{\sigma ,T}\left( b\right) \widetilde{%
\mathcal{W}}_{\sigma ,T}\left( -\xi \right) \mathrm{d}^{\sigma }\xi ,
\end{align*}%
where the first integral is weakly absolutely convergent while the second
one must be interpreted in the sese of distributions and represents the
operator defined by%
\begin{align*}
& \left\langle \varphi ,\left( \int_{W}c\circ \left( T+T^{\sigma }\right)
\left( \xi \right) \widetilde{\mathcal{W}}_{\sigma ,T}\left( \xi \right) 
\mathrm{Op}_{\sigma ,T}\left( b\right) \widetilde{\mathcal{W}}_{\sigma
,T}\left( -\xi \right) \mathrm{d}^{\sigma }\xi \right) \psi \right\rangle _{%
\mathcal{S},\mathcal{S}^{\ast }} \\
& =\left\langle \overline{\left\langle \varphi ,\widetilde{\mathcal{W}}%
_{\sigma ,T}\left( \cdot \right) \mathrm{Op}_{\sigma ,T}\left( b\right) 
\widetilde{\mathcal{W}}_{\sigma ,T}\left( -\cdot \right) \psi \right\rangle }%
_{\mathcal{S},\mathcal{S}^{\ast }},c\circ \left( T+T^{\sigma }\right)
\right\rangle _{\mathcal{S}\left( W\right) ,\mathcal{S}^{\ast }\left(
W\right) }^{\sigma },
\end{align*}%
for all $\varphi $, $\psi \in \mathcal{S}$.

It may be appropriate here to introduce $\left\{ \mathcal{U}_{\sigma
,T}\left( \xi \right) \right\} _{\xi \in W}$, defined by 
\begin{eqnarray*}
\mathcal{U}_{\sigma ,T}\left( \xi \right) &=&\widetilde{\mathcal{W}}_{\sigma
,T}\left( \left( T+T^{\sigma }\right) ^{-1}\xi \right) \\
&=&\mathrm{e}^{\frac{\mathrm{i}}{2}\sigma \left( \left( T+T^{\sigma }\right)
^{-1}\xi ,T\left( T+T^{\sigma }\right) ^{-1}\xi \right) }\mathcal{W}_{\sigma
,T}\left( \left( T+T^{\sigma }\right) ^{-1}\xi \right) \\
&=&\mathrm{e}^{\frac{\mathrm{i}}{2}\sigma \left( \xi ,\left( T+T^{\sigma
}\right) ^{-1}T\left( T+T^{\sigma }\right) ^{-1}\xi \right) }\mathcal{W}%
_{\sigma ,T}\left( \left( T+T^{\sigma }\right) ^{-1}\xi \right) ,\quad \xi
\in W.
\end{eqnarray*}%
Then a change of variables gives 
\begin{equation*}
\mathrm{Op}_{\sigma ,T}\left( b\ast _{\sigma }c\right) \qquad \quad \qquad
\quad \qquad \quad \qquad \quad \qquad \quad \qquad \quad \qquad \quad
\qquad \quad \qquad \quad \qquad \quad \qquad \quad \qquad \quad
\end{equation*}%
\begin{align*}
& =\int_{W}b\left( \xi \right) \widetilde{\mathcal{W}}_{\sigma ,T}\left(
\left( T+T^{\sigma }\right) ^{-1}\xi \right) \mathrm{Op}_{\sigma ,T}\left(
c\right) \widetilde{\mathcal{W}}_{\sigma ,T}\left( -\left( T+T^{\sigma
}\right) ^{-1}\xi \right) \mathrm{d}^{\sigma }\xi \\
& =\int_{W}b\left( \xi \right) \mathcal{U}_{\sigma ,T}\left( \xi \right) 
\mathrm{Op}_{\sigma ,T}\left( c\right) \mathcal{U}_{\sigma ,T}\left( -\xi
\right) \mathrm{d}^{\sigma }\xi \\
& =\int_{W}c\left( \xi \right) \mathcal{U}_{\sigma ,T}\left( \xi \right) 
\mathrm{Op}_{\sigma ,T}\left( b\right) \mathcal{U}_{\sigma ,T}\left( -\xi
\right) \mathrm{d}^{\sigma }\xi ,
\end{align*}%
again with the first and second integrals weakly absolutely convergent and
the third interpreted in the sese of distributions.

The family $\left\{ \mathcal{U}_{\sigma ,T}\left( \xi \right) \right\} _{\xi
\in W}$ has very nice properties that can be deduced from Lemma 2.1 in \cite%
{Arsu} by means of Theorem \ref{n4}.

\begin{lemma}
Let $\left( \mathcal{H},\mathcal{W}_{\sigma ,T},\omega _{\sigma ,T}\right) $
be a $\omega _{\sigma ,T}$-representation of the symplectic space $\left(
W,\sigma \right) $, $\mathcal{S}=\mathcal{S}(\mathcal{H},\mathcal{W}_{\sigma
,T})=\mathcal{S}(\mathcal{H},\widetilde{\mathcal{W}}_{\sigma ,T})$ the space
of $\mathcal{C}^{\infty }$ vectors and $\mathrm{Op}_{\sigma ,T}$ the $T$%
-Weyl calculus. Let $\varphi ,\psi \in \mathcal{S}$.

$\left( \mathrm{a}\right) $ If $a\in \mathcal{S}^{\ast }\left( W\right) $
and $\xi \in W$, then 
\begin{equation*}
\mathcal{U}_{\sigma ,T}\left( \xi \right) \mathrm{Op}_{\sigma ,T}\left(
a\right) \mathcal{U}_{\sigma ,T}\left( -\xi \right) =\mathrm{Op}_{\sigma
,T}\left( \tau _{\xi }a\right) ,
\end{equation*}%
where $\tau _{\xi }a$ denote the translate by $\xi $ of the distribution $a$%
, i.e. $\left( \tau _{\xi }a\right) \left( \cdot \right) =a\left( \cdot -\xi
\right) $.

$\left( \mathrm{b}\right) $ If $a\in \mathcal{S}^{\ast }\left( W\right) $,
then the map 
\begin{equation*}
W\ni \xi \rightarrow \left\langle \varphi ,\mathcal{U}_{\sigma ,T}\left( \xi
\right) \mathrm{Op}_{\sigma ,T}\left( a\right) \mathcal{U}_{\sigma ,T}\left(
-\xi \right) \psi \right\rangle _{\mathcal{S},\mathcal{S}^{\ast }}\in 
\mathbb{C}
\end{equation*}%
belongs to $\mathcal{C}_{pol}^{\infty }\left( W\right) $.

$\left( \mathrm{c}\right) $ If $a\in \mathcal{S}\left( W\right) $, then the
map 
\begin{equation*}
W\ni \xi \rightarrow \left\langle \varphi ,\mathcal{U}_{\sigma ,T}\left( \xi
\right) \mathrm{Op}_{\sigma ,T}\left( a\right) \mathcal{U}_{\sigma ,T}\left(
-\xi \right) \psi \right\rangle _{\mathcal{S},\mathcal{S}^{\ast }}\in 
\mathbb{C}
\end{equation*}%
belongs to $\mathcal{S}(W)$.
\end{lemma}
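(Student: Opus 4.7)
The plan is to reduce each assertion to the analogous statement for the standard Weyl calculus $\mathrm{Op}^{\widetilde{w}}$ (i.e.\ Lemma 2.1 of \cite{Arsu} applied to the Weyl system $(\mathcal{H},\widetilde{\mathcal{W}}_{\sigma,T},\widetilde{\omega}_{\sigma,T})$ on $(W,\sigma_{T+T^\sigma})$) via Theorem \ref{n4}. The bridge is the definition $\mathcal{U}_{\sigma,T}(\xi)=\widetilde{\mathcal{W}}_{\sigma,T}((T+T^\sigma)^{-1}\xi)$ together with $\mathrm{Op}_{\sigma,T}(a)=\mathrm{Op}^{\widetilde{w}}(a_{\sigma,T}^{w})$.

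For part $(\mathrm{a})$, I would substitute these two identifications in the left-hand side. Lemma 2.1 of \cite{Arsu} gives, for any $\eta\in W$ and $b\in \mathcal{S}^{\ast}(W)$,
\[
\widetilde{\mathcal{W}}_{\sigma,T}(\eta)\,\mathrm{Op}^{\widetilde{w}}(b)\,\widetilde{\mathcal{W}}_{\sigma,T}(-\eta)=\mathrm{Op}^{\widetilde{w}}(\tau_{\eta}b).
\]
Choosing $\eta=(T+T^{\sigma})^{-1}\xi$ and $b=a_{\sigma,T}^{w}$, the left side becomes $\mathrm{Op}^{\widetilde{w}}(\tau_{(T+T^\sigma)^{-1}\xi}\,a_{\sigma,T}^{w})$. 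What remains is the symbol-level identity
\[
(\tau_{\xi}a)_{\sigma,T}^{w}=\tau_{(T+T^\sigma)^{-1}\xi}\,a_{\sigma,T}^{w},
\]
which together with Theorem \ref{n4} will yield the desired conjugation formula. To verify it, I would first observe directly that $(\tau_{\xi}a)\circ(T+T^{\sigma})=\tau_{(T+T^\sigma)^{-1}\xi}(a\circ(T+T^{\sigma}))$ (a change of variables in the inner argument). Then I would invoke the fact that the Fourier multiplier $\lambda_{\sigma,T}(D_{\sigma_{T+T^{\sigma}}})=\mathrm{e}^{-\frac{\mathrm{i}}{2}\theta_{\sigma,T}(D_{\sigma_{T+T^{\sigma}}})}$ commutes with translations (the translation remark just before Corollary \ref{n12}). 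Applying it to both sides of the previous equality yields the claim.

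For parts $(\mathrm{b})$ and $(\mathrm{c})$, once $(\mathrm{a})$ is in hand the matrix element becomes
\[
\langle\varphi,\mathcal{U}_{\sigma,T}(\xi)\,\mathrm{Op}_{\sigma,T}(a)\,\mathcal{U}_{\sigma,T}(-\xi)\psi\rangle=\langle\varphi,\mathrm{Op}^{\widetilde{w}}(\tau_{(T+T^\sigma)^{-1}\xi}\,a_{\sigma,T}^{w})\psi\rangle.
\]
By the corresponding parts of Lemma 2.1 in \cite{Arsu}, the map $\eta\mapsto\langle\varphi,\mathrm{Op}^{\widetilde{w}}(\tau_{\eta}b)\psi\rangle$ lies in $\mathcal{C}_{\mathrm{pol}}^{\infty}(W)$ when $b\in\mathcal{S}^{\ast}(W)$ and in $\mathcal{S}(W)$ when $b\in\mathcal{S}(W)$. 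Since $a\in\mathcal{S}(W)$ implies $a_{\sigma,T}^{w}\in\mathcal{S}(W)$ (the symbol correspondence $a\mapsto a_{\sigma,T}^{w}$ is a continuous automorphism of $\mathcal{S}(W)$, as noted after Theorem \ref{n4}) and the linear change of variable $\eta=(T+T^{\sigma})^{-1}\xi$ preserves both $\mathcal{C}_{\mathrm{pol}}^{\infty}(W)$ and $\mathcal{S}(W)$, the two claims follow.

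The only non-routine step is the intertwining identity $(\tau_{\xi}a)_{\sigma,T}^{w}=\tau_{(T+T^\sigma)^{-1}\xi}\,a_{\sigma,T}^{w}$; every other ingredient is either a direct change of variable or an application of the already-proved Theorem \ref{n4} combined with the standard Weyl-calculus lemma. I expect the main bookkeeping hazard to be consistently tracking whether the translation lives on the $W$-side or on the $(T+T^{\sigma})W$-side, but the translation-invariance of Fourier multipliers makes this transparent once written out.
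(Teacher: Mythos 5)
Your proposal is correct, and for part (a) it is essentially the paper's own argument: the paper also conjugates $\mathrm{Op}^{\widetilde{w}}(a_{\sigma,T}^{w})$ by $\widetilde{\mathcal{W}}_{\sigma,T}((T+T^{\sigma})^{-1}\xi)$ via Lemma 2.1 of \cite{Arsu} and then reduces everything to the symbol-level identity $(\tau_{\xi}a)_{\sigma,T}^{w}=\tau_{(T+T^{\sigma})^{-1}\xi}a_{\sigma,T}^{w}$, which it obtains by citing Corollary \ref{n12} with $S=T+T^{\sigma}$ and $\lambda=\lambda_{\sigma,T}$; your re-derivation of that identity from the change of variables $(\tau_{\xi}a)\circ(T+T^{\sigma})=\tau_{(T+T^{\sigma})^{-1}\xi}(a\circ(T+T^{\sigma}))$ plus translation-invariance of the Fourier multiplier $\lambda_{\sigma,T}(D_{\sigma_{T+T^{\sigma}}})$ uses exactly the ingredients from which Corollary \ref{n12} is itself proved, so this is the same proof in slightly unpackaged form. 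Where you diverge is in parts (b) and (c): you transport the regularity statements from the corresponding parts of Lemma 2.1 in \cite{Arsu} through the linear substitution $\eta=(T+T^{\sigma})^{-1}\xi$, which is legitimate (and is the route the paper itself advertises in the sentence preceding the lemma), whereas the written proof instead uses Lemma \ref{n2} to show $w=\langle\varphi,\mathcal{W}_{\sigma,T}(\cdot)\psi\rangle\in\mathcal{S}(W)$ and derives the explicit convolution representation $\langle\varphi,\mathcal{U}_{\sigma,T}(\xi)\mathrm{Op}_{\sigma,T}(a)\mathcal{U}_{\sigma,T}(-\xi)\psi\rangle=(a\ast_{\sigma}\widehat{w})(-\xi)$, i.e.\ formula $(\ref{K1})$, from which (b) and (c) are immediate by the standard mapping properties of convolution. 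The paper's detour buys something your route does not: the identity $(\ref{K1})$ is reused quantitatively in the proof of Theorem \ref{n14}(b) (the H\"older--Young estimates), so if you follow your shorter argument you would still need to establish $(\ref{K1})$ separately later.
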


\begin{proof}
Part $\left( \mathrm{a}\right) $ of the lemma follows immediately from
Theorem \ref{n4}, Lemma 2.1 in \cite{Arsu} and Corollary \ref{n12}.%
\begin{equation*}
\mathcal{U}_{\sigma ,T}\left( \xi \right) \mathrm{Op}_{\sigma ,T}\left(
a\right) \mathcal{U}_{\sigma ,T}\left( -\xi \right) \qquad \quad \qquad
\quad \qquad \quad \qquad \quad \qquad \quad \qquad \quad \qquad \quad
\qquad \quad \qquad \quad \qquad \quad \qquad \quad \qquad \quad
\end{equation*}%
\begin{eqnarray*}
&=&\widetilde{\mathcal{W}}_{\sigma ,T}\left( \left( T+T^{\sigma }\right)
^{-1}\xi \right) \mathrm{Op}^{\widetilde{w}}\left( a_{\sigma ,T}^{w}\right) 
\widetilde{\mathcal{W}}_{\sigma ,T}\left( -\left( T+T^{\sigma }\right)
^{-1}\xi \right) \quad \text{by Theorem \ref{n4}} \\
&=&\mathrm{Op}^{\widetilde{w}}\left( \tau _{\left( T+T^{\sigma }\right)
^{-1}\xi }a_{\sigma ,T}^{w}\right) \quad \text{by Lemma 2.1 in \cite{Arsu}}
\\
&=&\mathrm{Op}^{\widetilde{w}}\left( \left( \tau _{\xi }a\right) _{\sigma
,T}^{w}\right) \quad \text{by Corollary \ref{n12}} \\
&=&\mathrm{Op}_{\sigma ,T}\left( \tau _{\xi }a\right) \quad \text{by Theorem %
\ref{n4}}.
\end{eqnarray*}%
Here we applied Corollary \ref{n12} for $S=T+T^{\sigma }$ and $\lambda
=\lambda _{{\sigma ,T}}\left( \cdot \right) =\mathrm{e}^{-\frac{\mathrm{i}}{2%
}\theta _{{\sigma ,T}}\left( \cdot \right) }$ as follows:%
\begin{eqnarray*}
a_{{\sigma ,T}}^{w} &=&\tau _{\left( T+T^{\sigma }\right) ^{-1}\xi }\left(
T+T^{\sigma }\right) ^{\ast }\left( \lambda _{{\sigma ,T}}\left( D_{\sigma
}\right) \left( a\right) \right) \\
&=&\left( T+T^{\sigma }\right) ^{\ast }\left( \lambda _{{\sigma ,T}}\left(
D_{\sigma }\right) \left( \tau _{\xi }a\right) \right) \\
&=&\left( \tau _{\xi }a\right) _{\sigma ,T}^{w}.
\end{eqnarray*}

For $\left( \mathrm{b}\right) $ and $\left( \mathrm{c}\right) $ we use $%
\left( \mathrm{a}\right) $ and Lemma \ref{n2}. By this lemma we know that $%
w=w_{\varphi ,\psi }=\left\langle \varphi ,\mathcal{W}_{\sigma ,T}(\cdot
)\psi \right\rangle _{\mathcal{S},\mathcal{S}^{\ast }}\in \mathcal{S}\left(
W\right) $. Assume that $a\in \mathcal{S}^{\ast }\left( W\right) $. Then,
writing $\widehat{a}$ for $\mathcal{F}_{\sigma }a$, we have 
\begin{equation*}
\left\langle \varphi ,\mathcal{U}_{\sigma ,T}\left( \xi \right) \mathrm{Op}%
_{\sigma ,T}\left( a\right) \mathcal{U}_{\sigma ,T}\left( -\xi \right) \psi
\right\rangle _{\mathcal{S},\mathcal{S}^{\ast }}=\left\langle \varphi ,%
\mathrm{Op}_{\sigma ,T}\left( \tau _{\xi }a\right) \psi \right\rangle _{%
\mathcal{S},\mathcal{S}^{\ast }}
\end{equation*}%
and%
\begin{eqnarray*}
\left\langle \varphi ,\mathrm{Op}_{\sigma ,T}\left( \tau _{\xi }a\right)
\psi \right\rangle _{\mathcal{S},\mathcal{S}^{\ast }} &=&\left\langle 
\overline{\left\langle \varphi ,\mathcal{W}_{\sigma ,T}\left( \cdot \right)
\psi \right\rangle }_{\mathcal{S},\mathcal{S}^{\ast }},\widehat{\tau _{\xi }a%
}\right\rangle _{\mathcal{S}\left( W\right) ,\mathcal{S}^{\ast }\left(
W\right) }^{\sigma } \\
&=&\left\langle \overline{w},\widehat{\tau _{\xi }a}\right\rangle _{\mathcal{%
S}\left( W\right) ,\mathcal{S}^{\ast }\left( W\right) }^{\sigma } \\
&=&\left\langle \tau _{-\xi }\widehat{\overline{w}},a\right\rangle _{%
\mathcal{S}\left( W\right) ,\mathcal{S}^{\ast }\left( W\right) }^{\sigma
}=\left\langle \tau _{-\xi }\overline{\widehat{w}}\left( -\cdot -\xi \right)
,a\right\rangle _{\mathcal{S}\left( W\right) ,\mathcal{S}^{\ast }\left(
W\right) }^{\sigma } \\
&=&.
\end{eqnarray*}%
Hence 
\begin{equation}
\left\langle \varphi ,\mathcal{U}_{\sigma ,T}\left( \xi \right) \mathrm{Op}%
_{\sigma ,T}\left( a\right) \mathcal{U}_{\sigma ,T}\left( -\xi \right) \psi
\right\rangle _{\mathcal{S},\mathcal{S}^{\ast }}=a\ast _{\sigma }\widehat{w}%
\left( -\xi \right) ,\quad \xi \in W,  \label{K1}
\end{equation}%
and $\left( \mathrm{b}\right) $ and $\left( \mathrm{c}\right) $ follows at
once from this equality.
\end{proof}

\begin{theorem}[Kato's identity]
\label{n14}Let $\left( \mathcal{H},\mathcal{W}_{\sigma ,T},\omega _{\sigma
,T}\right) $ be a $\omega _{\sigma ,T}$-representation of the symplectic
space $\left( W,\sigma \right) $, $\mathcal{S}=\mathcal{S}(\mathcal{H},%
\mathcal{W}_{\sigma ,T})=\mathcal{S}(\mathcal{H},\widetilde{\mathcal{W}}%
_{\sigma ,T})$ the space of $\mathcal{C}^{\infty }$ vectors and $\mathrm{Op}%
_{\sigma ,T}$ the $T$-Weyl calculus.

$\left( \mathrm{a}\right) $ If $b\in \mathcal{S}\left( W\right) $, $c\in 
\mathcal{S}^{\ast }\left( W\right) $, then $b\ast _{\sigma }c\in \mathcal{S}%
^{\ast }\left( W\right) $ and 
\begin{align*}
\mathrm{Op}_{\sigma ,T}\left( b\ast _{\sigma }c\right) & =\int_{W}b\left(
\xi \right) \mathcal{U}_{\sigma ,T}\left( \xi \right) \mathrm{Op}_{\sigma
,T}\left( c\right) \mathcal{U}_{\sigma ,T}\left( -\xi \right) \mathrm{d}%
^{\sigma }\xi \\
& =\int_{W}c\left( \xi \right) \mathcal{U}_{\sigma ,T}\left( \xi \right) 
\mathrm{Op}_{\sigma ,T}\left( b\right) \mathcal{U}_{\sigma ,T}\left( -\xi
\right) \mathrm{d}^{\sigma }\xi ,
\end{align*}%
where the first integral is weakly absolutely convergent while the second
one must be interpreted in the sese of distributions and represents the
operator defined by%
\begin{align*}
& \left\langle \varphi ,\left( \int_{W}c\left( \xi \right) \mathcal{U}%
_{\sigma ,T}\left( \xi \right) \mathrm{Op}_{\sigma ,T}\left( b\right) 
\mathcal{U}_{\sigma ,T}\left( -\xi \right) \mathrm{d}^{\sigma }\xi \right)
\psi \right\rangle _{\mathcal{S},\mathcal{S}^{\ast }} \\
& =\left\langle \overline{\left\langle \varphi ,\mathcal{U}_{\sigma
,T}\left( \cdot \right) \mathrm{Op}_{\sigma ,T}\left( b\right) \mathcal{U}%
_{\sigma ,T}\left( -\cdot \right) \psi \right\rangle }_{\mathcal{S},\mathcal{%
S}^{\ast }},c\right\rangle _{\mathcal{S}\left( W\right) ,\mathcal{S}^{\ast
}\left( W\right) }^{\sigma },
\end{align*}%
for all $\varphi $, $\psi \in \mathcal{S}$.

$\left( \mathrm{b}\right) $ Let $h\in \mathcal{C}_{\mathrm{pol}}^{\infty
}\left( W\right) $. If $b\in L^{p}\left( W\right) $ and $c\in L^{q}\left(
W\right) $, where $1\leq p,q\leq \infty $ and $p^{-1}+q^{-1}\geq 1$, then $%
b\ast _{\sigma }c\in L^{r}\left( W\right) $, $r^{-1}=p^{-1}+q^{-1}-1$ and 
\begin{equation}
\mathrm{Op}_{\sigma ,T}\left( h\left( D_{\sigma }\right) \left( b\ast
_{\sigma }c\right) \right) =\int_{W}b\left( \xi \right) \mathcal{U}_{\sigma
,T}\left( \xi \right) \mathrm{Op}_{\sigma ,T}\left( h\left( D_{\sigma
}\right) c\right) \mathcal{U}_{\sigma ,T}\left( -\xi \right) \mathrm{d}%
^{\sigma }\xi ,  \label{K2}
\end{equation}%
where the integral is weakly absolutely convergent.
\end{theorem}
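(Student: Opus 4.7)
For part (a), the computation preceding the theorem already derives the first equality at the operator level: apply Theorem \ref{n4} to convert $\mathrm{Op}_{\sigma,T}$ to the standard Weyl calculus on $\left(W,\sigma_{T+T^{\sigma}}\right)$, invoke the analogous identity for the standard Weyl calculus (Lemma 2.1 in \cite{Arsu}), and change variables $\xi \leadsto (T+T^{\sigma})^{-1}\xi$ to turn $\widetilde{\mathcal{W}}_{\sigma,T}((T+T^{\sigma})^{-1}\cdot)$ into $\mathcal{U}_{\sigma,T}(\cdot)$. To make this rigorous I would test both sides against $\varphi,\psi \in \mathcal{S}$ and use equation (K1) from the preceding lemma, which gives
\begin{equation*}
\langle \varphi, \mathcal{U}_{\sigma,T}(\xi)\mathrm{Op}_{\sigma,T}(c)\mathcal{U}_{\sigma,T}(-\xi)\psi\rangle_{\mathcal{S},\mathcal{S}^{\ast}} = (c \ast_{\sigma}\widehat{w})(-\xi),
\end{equation*}
where $w = \langle\varphi, \mathcal{W}_{\sigma,T}(\cdot)\psi\rangle \in \mathcal{S}(W)$ by Lemma \ref{n2}. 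Then the convolution--multiplication identity $\mathcal{F}_{\sigma}(b \ast_{\sigma} c) = \mathcal{F}_{\sigma}b \cdot \mathcal{F}_{\sigma}c$ (a direct Fubini calculation) reconciles the resulting scalar with $\langle\varphi, \mathrm{Op}_{\sigma,T}(b \ast_{\sigma} c)\psi\rangle$ as prescribed by the definition of the $T$-Weyl calculus. The membership $b \ast_{\sigma} c \in \mathcal{S}^{\ast}(W)$ follows from the standard argument that Schwartz convolution with a distribution defines a continuous antilinear functional on $\mathcal{S}(W)$.

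For the second equality of part (a), use commutativity of the symplectic convolution, $b \ast_{\sigma} c = c \ast_{\sigma} b$, and apply the same scheme with the roles of $b$ and $c$ interchanged. Because $c$ is only a distribution, the integral must be interpreted weakly; part (c) of the preceding lemma guarantees that the map $\xi \mapsto \langle\varphi, \mathcal{U}_{\sigma,T}(\xi)\mathrm{Op}_{\sigma,T}(b)\mathcal{U}_{\sigma,T}(-\xi)\psi\rangle$ belongs to $\mathcal{S}(W)$, so the pairing with $c \in \mathcal{S}^{\ast}(W)$ is well defined and matches the prescription given in the statement.

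For part (b), the starting observation is the algebraic identity
\begin{equation*}
h(D_{\sigma})(b \ast_{\sigma} c) = b \ast_{\sigma} h(D_{\sigma})c,
\end{equation*}
which follows from $h(D_{\sigma}) = \mathcal{F}_{\sigma} \circ \mathrm{M}_{h} \circ \mathcal{F}_{\sigma}$ and the convolution theorem; it extends from $\mathcal{S}(W)$ to $b \in L^{p}$, $c \in L^{q}$ with $p^{-1}+q^{-1}\geq 1$ by the facts that $b \ast_{\sigma} c \in L^{r}\subset\mathcal{S}'(W)$ by Young's inequality and that $h(D_{\sigma})$ preserves $\mathcal{S}^{\ast}(W)$. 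The plan is to approximate $b$ in $L^{p}$ by $b_{n} \in \mathcal{S}(W)$, apply part (a) to the pair $(b_{n}, h(D_{\sigma})c)$, and pass to the limit on both sides in the weak topology of $\mathcal{B}(\mathcal{S}, \mathcal{S}^{\ast})$. The left side converges by continuity of $\mathrm{Op}_{\sigma,T}$.

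The main obstacle is the weak absolute convergence of the right-hand integral for $b$ merely in $L^{p}$, uniformly in $n$. For fixed $\varphi,\psi \in \mathcal{S}$, equation (K1) rewrites the integrand as $b(\xi)\cdot[(h(D_{\sigma})c) \ast_{\sigma} \widehat{w}](-\xi)$; using the intertwining $(h(D_{\sigma})c) \ast_{\sigma} \widehat{w} = c \ast_{\sigma} h(D_{\sigma})\widehat{w}$ together with $h(D_{\sigma})\widehat{w} \in \mathcal{S}(W)$, Young's inequality with any exponent $s$ satisfying $q^{-1}+s^{-1} = 1+(p')^{-1}$ places the second factor in $L^{p'}(W)$ precisely when $p^{-1}+q^{-1}\geq 1$ (equivalently $p' \geq q$). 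Hölder's inequality then bounds the modulus of the integral by $\|b\|_{L^{p}}\|c\|_{L^{q}}$ times a continuous seminorm of $\widehat{w}$, hence of $\varphi$ and $\psi$. This both justifies weak absolute convergence and supplies a dominated-convergence argument to pass $b_{n}\to b$ under the integral, yielding (K2).
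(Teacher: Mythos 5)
Your proposal is correct and follows essentially the same route as the paper: part (a) rests on the covariance formula $\mathrm{Op}_{\sigma ,T}\left( \tau _{\xi }c\right) =\mathcal{U}_{\sigma ,T}\left( \xi \right) \mathrm{Op}_{\sigma ,T}\left( c\right) \mathcal{U}_{\sigma ,T}\left( -\xi \right)$ together with the symplectic Fourier/convolution calculus and an approximation argument for distributional $c$, and part (b) reduces to (a) via $h\left( D_{\sigma }\right) \left( b\ast _{\sigma }c\right) =b\ast _{\sigma }h\left( D_{\sigma }\right) c$ with the key uniform bound coming from (\ref{K1}) and the H\"{o}lder--Young estimate $\left\Vert c\ast _{\sigma }\widehat{hw}\right\Vert _{L^{p^{\prime }}}\leq \left\Vert c\right\Vert _{L^{q}}\left\Vert \widehat{hw}\right\Vert _{L^{r^{\prime }}}$, exactly as in the paper. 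The only cosmetic difference is that you organize the first equality of (a) around the convolution--multiplication theorem applied directly to $c\in \mathcal{S}^{\ast }\left( W\right)$, whereas the paper does the explicit Fubini computation for $b,c\in \mathcal{S}\left( W\right)$ and then passes to the limit with a uniform-boundedness argument; both are sound.
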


\begin{proof}
$\left( \mathrm{a}\right) $ Let $\varphi $, $\psi \in \mathcal{S}$. Then $%
w=w_{\varphi ,\psi }=\left\langle \varphi ,\mathcal{W}_{\sigma ,T}(\cdot
)\psi \right\rangle _{\mathcal{S},\mathcal{S}^{\ast }}\in \mathcal{S}\left(
W\right) $.

First we consider the case when $b$, $c\in \mathcal{S}\left( W\right) $.
Then, writing $\widehat{a}$ for $\mathcal{F}_{\sigma }a$, we have 
\begin{eqnarray*}
\left\langle \varphi ,\mathrm{Op}_{\sigma ,T}\left( b\ast _{\sigma }c\right)
\psi \right\rangle _{\mathcal{S},\mathcal{S}^{\ast }} &=&\int_{W}\widehat{%
b\ast _{\sigma }c}\left( \eta \right) \left\langle \varphi ,\mathcal{W}%
_{\sigma ,T}\left( \eta \right) \psi \right\rangle _{\mathcal{S},\mathcal{S}%
^{\ast }}\mathrm{d}^{\sigma }\eta \\
&=&\int_{W}\widehat{b\ast _{\sigma }c}\left( \eta \right) w\left( \eta
\right) \mathrm{d}^{\sigma }\eta =\int_{W}\left( b\ast _{\sigma }c\right)
\left( \eta \right) \widehat{w}\left( -\eta \right) \mathrm{d}^{\sigma }\eta
\\
&=&\int_{W}\left( \int_{W}b\left( \xi \right) c\left( \eta -\xi \right) 
\mathrm{d}^{\sigma }\xi \right) \widehat{w}\left( -\eta \right) \mathrm{d}%
^{\sigma }\eta \\
&=&\int_{W}b\left( \xi \right) \left( \int_{W}c\left( \eta -\xi \right) 
\widehat{w}\left( -\eta \right) \mathrm{d}^{\sigma }\eta \right) \mathrm{d}%
^{\sigma }\xi \\
&=&\int_{W}b\left( \xi \right) \left( \int_{W}\left( \tau _{\xi }c\right)
\left( \eta \right) \widehat{w}\left( -\eta \right) \mathrm{d}^{\sigma }\eta
\right) \mathrm{d}^{\sigma }\xi \\
&=&\int_{W}b\left( \xi \right) \left( \int_{W}\widehat{\tau _{\xi }c}\left(
\eta \right) w\left( \eta \right) \mathrm{d}^{\sigma }\eta \right) \mathrm{d}%
^{\sigma }\xi \\
&=&\int_{W}b\left( \xi \right) \left( \int_{W}\widehat{\tau _{\xi }c}\left(
\eta \right) \left\langle \varphi ,\mathcal{W}_{\sigma ,T}\left( \eta
\right) \psi \right\rangle _{\mathcal{S},\mathcal{S}^{\ast }}\mathrm{d}%
^{\sigma }\eta \right) \mathrm{d}^{\sigma }\xi \\
&=&\int_{W}b\left( \xi \right) \left\langle \varphi ,\mathrm{Op}_{\sigma
,T}\left( \tau _{\xi }c\right) \psi \right\rangle _{\mathcal{S},\mathcal{S}%
^{\ast }}\mathrm{d}^{\sigma }\xi \\
&=&\int_{W}b\left( \xi \right) \left\langle \varphi ,\mathcal{U}_{\sigma
,T}\left( \xi \right) \mathrm{Op}_{\sigma ,T}\left( c\right) \mathcal{U}%
_{\sigma ,T}\left( -\xi \right) \psi \right\rangle _{\mathcal{S},\mathcal{S}%
^{\ast }}\mathrm{d}^{\sigma }\xi ,
\end{eqnarray*}%
where in the last equality we used the formula 
\begin{equation*}
\mathrm{Op}_{\sigma ,T}\left( \tau _{\xi }c\right) =\mathcal{U}_{\sigma
,T}\left( \xi \right) \mathrm{Op}_{\sigma ,T}\left( c\right) \mathcal{U}%
_{\sigma ,T}\left( -\xi \right) .
\end{equation*}

Let $c\in \mathcal{S}^{\ast }\left( W\right) $. Then 
\begin{eqnarray*}
\left\langle \varphi ,\mathcal{U}_{\sigma ,T}\left( \xi \right) \mathrm{Op}%
_{\sigma ,T}\left( c\right) \mathcal{U}_{\sigma ,T}\left( -\xi \right) \psi
\right\rangle _{\mathcal{S},\mathcal{S}^{\ast }} &=&\left\langle \varphi ,%
\mathrm{Op}_{\sigma ,T}\left( \tau _{\xi }c\right) \psi \right\rangle _{%
\mathcal{S},\mathcal{S}^{\ast }} \\
&=&\left\langle \overline{w},\widehat{\tau _{\xi }c}\right\rangle _{\mathcal{%
S}\left( W\right) ,\mathcal{S}^{\ast }\left( W\right) }^{\sigma } \\
&=&\left\langle \overline{w},\mathrm{e}^{-\mathrm{i}\sigma \left( \cdot ,\xi
\right) }\widehat{c}\right\rangle _{\mathcal{S}\left( W\right) ,\mathcal{S}%
^{\ast }\left( W\right) }^{\sigma } \\
&=&\left\langle \overline{\mathrm{e}^{\mathrm{i}\sigma \left( \cdot ,\xi
\right) }w},\widehat{c}\right\rangle _{\mathcal{S}\left( W\right) ,\mathcal{S%
}^{\ast }\left( W\right) }^{\sigma }.
\end{eqnarray*}%
If $\left\{ c_{j}\right\} \subset \mathcal{S}\left( W\right) $ is such that $%
c_{j}\rightarrow c$ weakly in $\mathcal{S}^{\ast }\left( W\right) $, then%
\begin{equation*}
\left\langle \varphi ,\mathcal{U}_{\sigma ,T}\left( \xi \right) \mathrm{Op}%
_{\sigma ,T}\left( c_{j}\right) \mathcal{U}_{\sigma ,T}\left( -\xi \right)
\psi \right\rangle _{\mathcal{S},\mathcal{S}^{\ast }}\rightarrow
\left\langle \varphi ,\mathcal{U}_{\sigma ,T}\left( \xi \right) \mathrm{Op}%
_{\sigma ,T}\left( c\right) \mathcal{U}_{\sigma ,T}\left( -\xi \right) \psi
\right\rangle _{\mathcal{S},\mathcal{S}^{\ast }},
\end{equation*}%
for every $\xi \in W$, and the uniform boundedness principle implies that
there are $M\in 
\mathbb{N}
$\textit{, }$C=C\left( M,w\right) >0$\textit{\ }such that\textit{\ }%
\begin{equation*}
\left\vert \left\langle \varphi ,\mathcal{U}_{\sigma ,T}\left( \xi \right) 
\mathrm{Op}_{\sigma ,T}\left( c_{j}\right) \mathcal{U}_{\sigma ,T}\left(
-\xi \right) \psi \right\rangle _{\mathcal{S},\mathcal{S}^{\ast
}}\right\vert \leq C\left\langle \xi \right\rangle ^{M},\quad \xi \in W,
\end{equation*}%
where $\left\langle \xi \right\rangle =\left( 1+\left\vert \xi \right\vert
^{2}\right) ^{\frac{1}{2}}$ and $\left\vert \cdot \right\vert $ is an
euclidean norm on $W$.

The general case can be deduced from the above case if we observe that%
\begin{equation*}
\left\langle \varphi ,\mathrm{Op}_{\sigma ,T}\left( b\ast _{\sigma
}c_{j}\right) \psi \right\rangle _{\mathcal{S},\mathcal{S}^{\ast
}}\rightarrow \left\langle \varphi ,\mathrm{Op}_{\sigma ,T}\left( b\ast
_{\sigma }c\right) \psi \right\rangle _{\mathcal{S},\mathcal{S}^{\ast }},
\end{equation*}%
\begin{multline*}
\left\langle \overline{\left\langle \varphi ,\mathcal{U}_{\sigma ,T}\left(
\cdot \right) \mathrm{Op}_{\sigma ,T}\left( b\right) \mathcal{U}_{\sigma
,T}\left( -\cdot \right) \psi \right\rangle }_{\mathcal{S},\mathcal{S}^{\ast
}},c_{j}\right\rangle _{\mathcal{S}\left( W\right) ,\mathcal{S}^{\ast
}\left( W\right) }^{\sigma } \\
\rightarrow \left\langle \overline{\left\langle \varphi ,\mathcal{U}_{\sigma
,T}\left( \cdot \right) \mathrm{Op}_{\sigma ,T}\left( b\right) \mathcal{U}%
_{\sigma ,T}\left( -\cdot \right) \psi \right\rangle }_{\mathcal{S},\mathcal{%
S}^{\ast }},c\right\rangle _{\mathcal{S}\left( W\right) ,\mathcal{S}^{\ast
}\left( W\right) }^{\sigma }
\end{multline*}%
and that the sequence $\left\{ b\left( \cdot \right) \left\langle \varphi ,%
\mathcal{U}_{\sigma ,T}\left( \cdot \right) \mathrm{Op}_{\sigma ,T}\left(
c_{j}\right) \mathcal{U}_{\sigma ,T}\left( -\cdot \right) \psi \right\rangle
_{\mathcal{S},\mathcal{S}^{\ast }}\right\} $ converge dominated to $b\left(
\cdot \right) \left\langle \varphi ,\mathcal{U}_{\sigma ,T}\left( \cdot
\right) \mathrm{Op}_{\sigma ,T}\left( c\right) \mathcal{U}_{\sigma ,T}\left(
-\cdot \right) \psi \right\rangle _{\mathcal{S},\mathcal{S}^{\ast }}$.

$\left( \mathrm{b}\right) $ We recall the Young inequality. If $1\leq
p,q\leq \infty $, $p^{-1}+q^{-1}\geq 1$, $r^{-1}=p^{-1}+q^{-1}-1$, $b\in
L^{p}\left( W\right) $ and $c\in L^{q}\left( W\right) $, then $b\ast
_{\sigma }c\in L^{r}\left( W\right) $ and 
\begin{equation*}
\left\Vert b\ast _{\sigma }c\right\Vert _{L^{r}\left( W\right) }\leq
\left\Vert b\right\Vert _{L^{p}\left( W\right) }\left\Vert c\right\Vert
_{L^{q}\left( W\right) }.
\end{equation*}%
Let $p^{\prime },r^{\prime }\geq 1$ such that $p^{-1}+p^{\prime
-1}=r^{-1}+r^{\prime -1}=1$. Then $r^{\prime -1}+q^{-1}\geq 1$ and $%
p^{\prime -1}=r^{\prime -1}+q^{-1}-1$.

If $b\in \mathcal{S}\left( W\right) $ and $c\in L^{q}\left( W\right) $, then 
$g=h\left( D_{\sigma }\right) c\in \mathcal{S}^{\ast }\left( W\right) $ and $%
h\left( D_{\sigma }\right) \left( b\ast _{\sigma }c\right) =b\ast _{\sigma
}h\left( D_{\sigma }\right) c=b\ast _{\sigma }g$. Using $\left( \mathrm{a}%
\right) $ it follows that%
\begin{equation*}
\left\langle \varphi ,\mathrm{Op}_{\sigma ,T}\left( b\ast _{\sigma }g\right)
\psi \right\rangle _{\mathcal{S},\mathcal{S}^{\ast }}=\int_{W}b\left( \xi
\right) \left\langle \varphi ,\mathcal{U}_{\sigma ,T}\left( \xi \right) 
\mathrm{Op}_{\sigma ,T}\left( g\right) \mathcal{U}_{\sigma ,T}\left( -\xi
\right) \psi \right\rangle _{\mathcal{S},\mathcal{S}^{\ast }}\mathrm{d}%
^{\sigma }\xi .
\end{equation*}%
Similarly, if $b\in L^{p}\left( W\right) \subset \mathcal{S}^{\ast }\left(
W\right) $ and $c\in \mathcal{S}\left( W\right) $, then $g=h\left( D_{\sigma
}\right) c\in \mathcal{S}\left( W\right) $ and $h\left( D_{\sigma }\right)
\left( b\ast _{\sigma }c\right) =b\ast _{\sigma }h\left( D_{\sigma }\right)
c=b\ast _{\sigma }g$. Using $\left( \mathrm{a}\right) $ again, it follows
that 
\begin{align*}
\left\langle \varphi ,\mathrm{Op}_{\sigma ,T}\left( b\ast _{\sigma }g\right)
\psi \right\rangle _{\mathcal{S},\mathcal{S}^{\ast }}& =\left\langle 
\overline{\left\langle \varphi ,\mathcal{U}_{\sigma ,T}\left( \cdot \right) 
\mathrm{Op}_{\sigma ,T}\left( g\right) \mathcal{U}_{\sigma ,T}\left( -\cdot
\right) \psi \right\rangle }_{\mathcal{S},\mathcal{S}^{\ast
}},b\right\rangle _{\mathcal{S}\left( W\right) ,\mathcal{S}^{\ast }\left(
W\right) }^{\sigma } \\
& =\int_{W}b\left( \xi \right) \left\langle \varphi ,\mathcal{U}_{\sigma
,T}\left( \xi \right) \mathrm{Op}_{\sigma ,T}\left( g\right) \mathcal{U}%
_{\sigma ,T}\left( -\xi \right) \psi \right\rangle _{\mathcal{S},\mathcal{S}%
^{\ast }}\mathrm{d}^{\sigma }\xi .
\end{align*}%
Hence we proved $\left( \mathrm{b}\right) $ in the case when either $b\in 
\mathcal{S}\left( W\right) $ or $c\in \mathcal{S}\left( W\right) $.

The general case can be obtained from these particular cases by an
approximation argument. Observe that condition $p^{-1}+q^{-1}\geq 1$ implies
that $p<\infty $ or $q<\infty $. For the convergence of the left-hand side
of (\ref{K2}) we use the Young inequality and the continuity of the map $%
\mathrm{Op}_{\sigma ,T}$. To estimate right-hand side of (\ref{K2}) we use (%
\ref{K1}) and the H\"{o}lder and Young inequalities. We have%
\begin{eqnarray*}
\left\langle \varphi ,\mathcal{U}_{\sigma ,T}\left( \xi \right) \mathrm{Op}%
_{\sigma ,T}\left( h\left( D_{\sigma }\right) c\right) \mathcal{U}_{\sigma
,T}\left( -\xi \right) \psi \right\rangle _{\mathcal{S},\mathcal{S}^{\ast }}
&=&\left( h\left( D_{\sigma }\right) c\ast \widehat{w}\right) \left( -\xi
\right) \\
&=&\left( c\ast \widehat{hw}\right) \left( -\xi \right)
\end{eqnarray*}%
and%
\begin{multline*}
\left\vert \left\langle \varphi ,\left( \int_{W}b\left( \xi \right) \mathcal{%
U}_{\sigma ,T}\left( \xi \right) \mathrm{Op}_{\sigma ,T}\left( h\left(
D_{\sigma }\right) c\right) \mathcal{U}_{\sigma ,T}\left( -\xi \right) 
\mathrm{d}^{\sigma }\xi \right) \psi \right\rangle _{\mathcal{S},\mathcal{S}%
^{\ast }}\right\vert \\
=\left\vert \left( \int_{W}b\left( \xi \right) \left( c\ast _{\sigma }%
\widehat{hw}\right) \left( -\xi \right) \mathrm{d}^{\sigma }\xi \right)
\right\vert \leq \left\Vert b\right\Vert _{L^{p}\left( W\right) }\left\Vert
c\ast _{\sigma }\widehat{hw}\right\Vert _{L^{p^{\prime }}\left( W\right) } \\
\leq \left\Vert b\right\Vert _{L^{p}\left( W\right) }\left\Vert c\right\Vert
_{L^{q}\left( W\right) }\left\Vert \widehat{hw}\right\Vert _{L^{r^{\prime
}}\left( W\right) },
\end{multline*}%
where $w=w_{\varphi ,\psi }=\left\langle \varphi ,\mathcal{W}_{\sigma
,T}(\cdot )\psi \right\rangle _{\mathcal{S},\mathcal{S}^{\ast }}\in \mathcal{%
S}\left( W\right) $.
\end{proof}

\begin{remark}
The last two results are true for $\omega _{\sigma ,T}$-representations that
are not necessarily irreducible.
\end{remark}

\section{Kato's operator calculus}

In \cite{Cordes}, H.O. Cordes noticed that the $L^{2}$-boundedness of an
operator $a\left( x,D\right) $ in $OPS_{0,0}^{0}$ could be deduced by a
synthesis of $a\left( x,D\right) $ from trace-class operators. In \cite{Kato}%
, T. Kato extended this argument to the general case $OPS_{\rho ,\rho }^{0}$%
, $0<\rho <1$, and abstracted the functional analysis involved in Cordes'
argument. This operator calculus can be extended further to investigate the
Schatten-class properties of operators in the $T$-Weyl calculus for an
irreducible $\omega _{\sigma ,T}$-representation $\left( \mathcal{H},%
\mathcal{W}_{\sigma ,T},\omega _{\sigma ,T}\right) $ of $W$.

Let $\mathcal{H}$ be a separable Hilbert space. For $1\leq p<\infty $, we
denote by $\mathcal{B}_{p}\left( \mathcal{H}\right) $ the Scatten ideal of
compact operators on $\mathcal{H}$ whose singular values lie in $l^{p}$ with
the associated norm $\left\Vert \cdot \right\Vert _{p}$. For $p=\infty $, $%
\mathcal{B}_{\infty }\left( \mathcal{H}\right) $ is the ideal of compact
operators on $\mathcal{H}$ with $\left\Vert \cdot \right\Vert _{\infty
}=\left\Vert \cdot \right\Vert $.

\begin{definition}
Let $T,A,B\in \mathcal{B}\left( \mathcal{H}\right) $, $A\geq 0$, $B\geq 0$.
We write 
\begin{equation*}
T\ll \left( A;B\right) \overset{def}{\Longleftrightarrow }\left\vert \left(
u,Tv\right) \right\vert ^{2}\leq \left( u,Au\right) \left( v,Bv\right)
,\quad \text{for }u,v\in \mathcal{H}.
\end{equation*}
\end{definition}

\begin{lemma}
Let $S,T,A,B\in \mathcal{B}\left( \mathcal{H}\right) $, $A\geq 0$, $B\geq 0$%
. Then

$\left( \mathrm{i}\right) $ $T\ll \left( \left\vert T^{\ast }\right\vert
;\left\vert T\right\vert \right) $.

$\left( \mathrm{ii}\right) $ $T\ll \left( A;B\right) \Rightarrow T^{\ast
}\ll \left( B;A\right) $.

$\left( \mathrm{iii}\right) $ $T\ll \left( A;B\right) \Rightarrow S^{\ast
}TS\ll \left( S^{\ast }AS;S^{\ast }BS\right) .$
\end{lemma}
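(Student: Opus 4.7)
All three assertions reduce to direct manipulations of the defining inequality $|(u,Tv)|^{2}\leq (u,Au)(v,Bv)$, so the task is essentially bookkeeping; the only real content lies in $(\mathrm{i})$, where the polar decomposition has to be invoked. I sketch the three parts in order.

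For $(\mathrm{i})$, the plan is to write $T=U|T|$ in polar form, with $U$ the associated partial isometry, and to use the standard identity $|T^{\ast}|=U|T|U^{\ast}$. Then
\begin{equation*}
(u,Tv)=(u,U|T|^{1/2}\cdot |T|^{1/2}v)=(|T|^{1/2}U^{\ast}u,\,|T|^{1/2}v),
\end{equation*}
and the Cauchy--Schwarz inequality in $\mathcal{H}$ gives
\begin{equation*}
|(u,Tv)|^{2}\leq \||T|^{1/2}U^{\ast}u\|^{2}\,\||T|^{1/2}v\|^{2}=(U^{\ast}u,|T|U^{\ast}u)(v,|T|v)=(u,|T^{\ast}|u)(v,|T|v),
\end{equation*}
which is precisely $T\ll(|T^{\ast}|;|T|)$. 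The only nonroutine step is recognizing $U|T|U^{\ast}=|T^{\ast}|$, which is standard from the uniqueness of the polar decomposition.

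For $(\mathrm{ii})$, I would just compute
\begin{equation*}
|(u,T^{\ast}v)|^{2}=|\overline{(v,Tu)}|^{2}=|(v,Tu)|^{2}\leq (v,Av)(u,Bu),
\end{equation*}
which is $T^{\ast}\ll(B;A)$. For $(\mathrm{iii})$, one writes
\begin{equation*}
|(u,S^{\ast}TSv)|^{2}=|(Su,T(Sv))|^{2}\leq (Su,A\,Su)(Sv,B\,Sv)=(u,S^{\ast}AS\,u)(v,S^{\ast}BS\,v),
\end{equation*}
so $S^{\ast}TS\ll(S^{\ast}AS;S^{\ast}BS)$. There is no genuine obstacle: $(\mathrm{ii})$ and $(\mathrm{iii})$ are immediate from the definition, and the only delicate ingredient anywhere is the polar-decomposition identity used in $(\mathrm{i})$.
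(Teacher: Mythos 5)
Your proposal is correct: parts (ii) and (iii) follow immediately from the definition as you compute, and for (i) the polar decomposition $T=U|T|$ together with the identity $|T^{\ast}|=U|T|U^{\ast}$ and the Cauchy--Schwarz inequality gives exactly $|(u,Tv)|^{2}\leq (u,|T^{\ast}|u)(v,|T|v)$. The paper itself does not reproduce an argument but simply cites Lemma 3.2 of \cite{Arsu}, where the proof is the same standard one you give, so there is nothing to add.
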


\begin{proof}
For a proof see Lemma 3.2 in \cite{Arsu}.
\end{proof}

\begin{lemma}
Let $Y$ be a measure space and $Y\ni y\rightarrow U\left( y\right) \in 
\mathcal{B}\left( \mathcal{H}\right) $ a weakly measurable map.

$\left( \mathrm{a}\right) $ Assume that there is $C>0$ such that 
\begin{equation*}
\int_{Y}\left\vert \left( \varphi ,U\left( y\right) \psi \right) \right\vert
^{2}\mathrm{d}y\leq C\left\Vert \varphi \right\Vert ^{2}\left\Vert \psi
\right\Vert ^{2},\quad \varphi ,\psi \in \mathcal{H}.
\end{equation*}%
If $b\in L^{\infty }\left( Y\right) $ and $G\in \mathcal{B}_{1}\left( 
\mathcal{H}\right) $, then the integral%
\begin{equation*}
b\left\{ G\right\} =\int_{Y}b\left( y\right) U\left( y\right) ^{\ast
}GU\left( y\right) \mathrm{d}y
\end{equation*}%
is weakly absolutely convergent and defines a bounded operator such that%
\begin{equation*}
\left\Vert b\left\{ G\right\} \right\Vert \leq C\left\Vert b\right\Vert
_{L^{\infty }}\left\Vert G\right\Vert _{1}.
\end{equation*}

$\left( \mathrm{b}\right) $ Assume that there is $C>0$ such that 
\begin{equation*}
\left\Vert U\left( y\right) \right\Vert \leq C^{\frac{1}{2}}\quad a.e.\ y\in
Y.
\end{equation*}%
If $b\in L^{1}\left( Y\right) $ and $G\in \mathcal{B}_{1}\left( \mathcal{H}%
\right) $, then the integral%
\begin{equation*}
b\left\{ G\right\} =\int_{Y}b\left( y\right) U\left( y\right) ^{\ast
}GU\left( y\right) \mathrm{d}y
\end{equation*}%
is absolutely convergent and defines a trace class operator such that%
\begin{equation*}
\left\Vert b\left\{ G\right\} \right\Vert _{1}\leq C\left\Vert b\right\Vert
_{L^{1}}\left\Vert G\right\Vert _{1}.
\end{equation*}

$\left( \mathrm{c}\right) $ Assume that there is $C>0$ such that 
\begin{equation*}
\left\Vert U\left( y\right) \right\Vert \leq C_{1}^{\frac{1}{2}}\quad a.e.\
y\in Y,\quad \text{and}
\end{equation*}%
\begin{equation*}
\int_{Y}\left\vert \left( \varphi ,U\left( y\right) \psi \right) \right\vert
^{2}\mathrm{d}y\leq C_{\infty }\left\Vert \varphi \right\Vert ^{2}\left\Vert
\psi \right\Vert ^{2},\quad \varphi ,\psi \in \mathcal{H}.
\end{equation*}%
If $b\in L^{p}\left( Y\right) $ with $1\leq p<\infty $ and $G\in \mathcal{B}%
_{1}\left( \mathcal{H}\right) $, then the integral%
\begin{equation*}
b\left\{ G\right\} =\int_{Y}b\left( y\right) U\left( y\right) ^{\ast
}GU\left( y\right) \mathrm{d}y
\end{equation*}%
is weakly absolutely convergent and defines an operator $b\left\{ G\right\} $
in $\mathcal{B}_{p}\left( \mathcal{H}\right) $ and%
\begin{equation*}
\left\Vert b\left\{ G\right\} \right\Vert _{p}\leq C_{1}^{\frac{1}{p}%
}C_{\infty }^{1-\frac{1}{p}}\left\Vert b\right\Vert _{L^{p}}\left\Vert
G\right\Vert _{1}.
\end{equation*}
\end{lemma}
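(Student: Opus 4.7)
The plan is to handle parts $(\mathrm{a})$ and $(\mathrm{b})$ by direct estimates on the sesquilinear form
\begin{equation*}
(\varphi,\psi)\mapsto \int_{Y}b(y)\left( \varphi,U(y)^{\ast}GU(y)\psi\right)\mathrm{d}y,
\end{equation*}
and then to obtain $(\mathrm{c})$ by interpolating between the resulting operator bounds. Weak measurability of $U$, together with $G\in\mathcal{B}_{1}(\mathcal{H})$, makes the integrand measurable, and each part's hypothesis will control its absolute value.

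For $(\mathrm{a})$, the natural first step is to reduce to the rank-one case via the singular value decomposition $G=\sum_{k}\lambda_{k}(e_{k},\cdot)f_{k}$ with $\sum_{k}\lambda_{k}=\left\Vert G\right\Vert _{1}$. For a single rank-one piece $G_{k}=\lambda_{k}(e_{k},\cdot)f_{k}$ the matrix element factors as $\lambda_{k}(e_{k},U(y)\psi)(U(y)\varphi,f_{k})$, so one Cauchy--Schwarz in $y$ combined with the $L^{2}$-type hypothesis yields
\begin{equation*}
\left\vert \left( \varphi,b\{G_{k}\}\psi\right) \right\vert \leq C\left\Vert b\right\Vert _{L^{\infty}}\lambda_{k}\left\Vert \varphi\right\Vert \left\Vert \psi\right\Vert,
\end{equation*}
and summing the rank-one estimates gives $\left\vert (\varphi,b\{G\}\psi)\right\vert \leq C\left\Vert b\right\Vert _{L^{\infty}}\left\Vert G\right\Vert _{1}\left\Vert \varphi\right\Vert \left\Vert \psi\right\Vert$. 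The Riesz representation theorem then produces $b\{G\}\in\mathcal{B}(\mathcal{H})$ with the required norm. Part $(\mathrm{b})$ is more direct: since $\left\Vert U(y)^{\ast}GU(y)\right\Vert _{1}\leq\left\Vert U(y)\right\Vert ^{2}\left\Vert G\right\Vert _{1}\leq C\left\Vert G\right\Vert _{1}$ a.e., the integrand is Bochner-integrable into $\mathcal{B}_{1}(\mathcal{H})$ with norm bounded by $C\left\vert b(y)\right\vert \left\Vert G\right\Vert _{1}$, and integration yields the conclusion.

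For $(\mathrm{c})$, I would fix $G\in\mathcal{B}_{1}(\mathcal{H})$ and apply complex interpolation to the linear map $L_{G}:b\mapsto b\{G\}$. The hypotheses together with $(\mathrm{a})$ and $(\mathrm{b})$ yield $\left\Vert L_{G}\right\Vert _{L^{\infty}\to\mathcal{B}(\mathcal{H})}\leq C_{\infty}\left\Vert G\right\Vert _{1}$ and $\left\Vert L_{G}\right\Vert _{L^{1}\to\mathcal{B}_{1}}\leq C_{1}\left\Vert G\right\Vert _{1}$. Since the Schatten scale satisfies $(\mathcal{B}_{1},\mathcal{B}_{\infty})_{[1/p]}=\mathcal{B}_{p}$ and the Lebesgue scale satisfies $(L^{1},L^{\infty})_{[1/p]}=L^{p}$ (both are Calder\'{o}n interpolation scales), the Riesz--Thorin/Stein interpolation theorem produces
\begin{equation*}
\left\Vert L_{G}\right\Vert _{L^{p}\to\mathcal{B}_{p}}\leq C_{1}^{1/p}C_{\infty}^{1-1/p}\left\Vert G\right\Vert _{1},
\end{equation*}
which is the statement of $(\mathrm{c})$. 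The main obstacle is justifying that $L_{G}$ is defined consistently on $L^{1}\cap L^{\infty}$, which is transparent from the common weak-form definition given above; a standard density argument (first finite rank $G$, bounded and compactly supported $b$, and the already established $\mathcal{B}_{1}$-bound from $(\mathrm{b})$) then makes the interpolation step rigorous.
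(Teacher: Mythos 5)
The paper does not actually prove this lemma: its ``proof'' is the single line ``For a proof see Lemma 3.3 in \cite{Arsu}.'' So there is no internal argument to compare against, and your proposal should be judged as a self-contained reconstruction --- which it essentially is, and it is correct. Part $(\mathrm{a})$ via the canonical expansion $G=\sum_k\lambda_k(e_k,\cdot)f_k$, factoring the matrix element as $\lambda_k(e_k,U(y)\psi)(U(y)\varphi,f_k)$ and applying Cauchy--Schwarz in $y$ at each rank-one piece, is exactly the standard mechanism and gives the stated bound after summing $\sum_k\lambda_k=\Vert G\Vert_1$. Part $(\mathrm{b})$ from $\Vert U(y)^*GU(y)\Vert_1\le C\Vert G\Vert_1$ is fine; note only that Bochner integrability in $\mathcal{B}_1(\mathcal{H})$ uses the separability of $\mathcal{H}$ (assumed in this section of the paper) via Pettis, or can be bypassed entirely by estimating $\sum_j\vert(u_j,b\{G\}v_j)\vert$ over orthonormal systems. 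Part $(\mathrm{c})$ by complex interpolation of $b\mapsto b\{G\}$ between $L^1\to\mathcal{B}_1$ and $L^\infty\to\mathcal{B}(\mathcal{H})$ is legitimate and yields precisely $C_1^{1/p}C_\infty^{1-1/p}\Vert G\Vert_1$ (your interpolation index should be $\theta=1-1/p$ rather than $1/p$ in the convention $(X_0,X_1)_{[\theta]}$ with $X_0=\mathcal{B}_1$, but the resulting constant is the one you wrote, so this is only a labelling slip). Two small points worth making explicit: the conclusion of $(\mathrm{c})$ also asserts weak absolute convergence of the integral for $b\in L^p$, which interpolation does not give directly but which follows from your rank-one estimates, since $y\mapsto\vert(e_k,U(y)\psi)\vert\,\vert(f_k,U(y)\varphi)\vert$ lies in $L^1\cap L^\infty\subset L^{p'}$ under the two hypotheses of $(\mathrm{c})$, so H\"older applies; and the compactness statement in $(\mathrm{a})$ for $b$ vanishing at infinity (approximate $b$ in a suitable sense by compactly supported $b$, for which $b\{G\}$ is a norm limit of finite-rank-generated pieces) should be recorded if you want the full statement. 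None of these affects the validity of the argument.
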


\begin{proof}
For a proof see Lemma 3.3 in \cite{Arsu}.
\end{proof}

\begin{lemma}
Let $\left( \mathcal{H},\mathcal{W}_{\sigma ,T},\omega _{\sigma ,T}\right) $
be an irreducible $\omega _{\sigma ,T}$-representation of $W$ and let $%
\left\{ \mathcal{U}_{\sigma ,T}\left( \xi \right) \right\} _{\xi \in W}$ be
the family of unitary operators 
\begin{equation*}
\mathcal{U}_{\sigma ,T}\left( \xi \right) =\widetilde{\mathcal{W}}_{\sigma
,T}\left( \left( T+T^{\sigma }\right) ^{-1}\xi \right) ,\quad \xi \in W.
\end{equation*}%
If $\varphi ,\psi \in \mathcal{H}$, then the map $\xi \rightarrow \left(
\varphi ,\mathcal{U}_{\sigma ,T}\left( \xi \right) \psi \right) _{\mathcal{H}%
}$ belongs to $L^{2}\left( W\right) \cap \mathcal{C}_{\mathcal{\infty }}(W)$
and 
\begin{equation*}
\int_{W}\left\vert \left( \varphi ,\mathcal{U}_{\sigma ,T}\left( \xi \right)
\psi \right) \right\vert ^{2}\mathrm{d}^{\sigma }\xi =\left( \det \left(
T+T^{\sigma }\right) \right) ^{\frac{1}{2}}\left\Vert \varphi \right\Vert
^{2}\left\Vert \psi \right\Vert ^{2},
\end{equation*}%
and%
\begin{equation*}
\left\Vert \left( \varphi ,\mathcal{U}_{\sigma ,T}\left( \cdot \right) \psi
\right) \right\Vert _{\infty }\leq \left\Vert \varphi \right\Vert \left\Vert
\psi \right\Vert .
\end{equation*}%
If $\varphi ,\psi ,\varphi ^{\prime },\psi ^{\prime }\in \mathcal{H}$, then%
\begin{equation*}
\int_{W}\overline{\left( \varphi ^{\prime },\mathcal{U}_{\sigma ,T}\left(
\xi \right) \psi ^{\prime }\right) }\left( \varphi ,\mathcal{U}_{\sigma
,T}\left( \xi \right) \psi \right) \mathrm{d}^{\sigma }\xi =\left( \det
\left( T+T^{\sigma }\right) \right) ^{\frac{1}{2}}\left( \varphi ,\varphi
^{\prime }\right) \left( \psi ^{\prime },\psi \right) .
\end{equation*}
\end{lemma}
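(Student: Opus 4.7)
The strategy is to transfer the orthogonality relation from the standard Weyl system $(\mathcal{H}, \widetilde{\mathcal{W}}_{\sigma,T})$ on $(W, \sigma_{T+T^{\sigma}})$, via the substitution $\eta = (T+T^{\sigma})^{-1}\xi$ that is built into the definition of $\mathcal{U}_{\sigma,T}$.

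First I would invoke the preceding corollary: irreducibility of $(\mathcal{H}, \mathcal{W}_{\sigma,T}, \omega_{\sigma,T})$ is equivalent to irreducibility of the Weyl system $(\mathcal{H}, \widetilde{\mathcal{W}}_{\sigma,T}, \widetilde{\omega}_{\sigma,T})$ for the symplectic space $(W, \sigma_{T+T^{\sigma}})$. Being an irreducible Weyl system, it is unitarily equivalent to the Schrödinger representation on $L^2$ of a Lagrangian subspace of $(W, \sigma_{T+T^{\sigma}})$, and therefore satisfies the standard orthogonality relation
\begin{equation*}
\int_W \overline{(\varphi', \widetilde{\mathcal{W}}_{\sigma,T}(\eta)\psi')}(\varphi, \widetilde{\mathcal{W}}_{\sigma,T}(\eta)\psi)\, \mathrm{d}^{\sigma_{T+T^{\sigma}}}\eta = (\varphi, \varphi')(\psi', \psi),
\end{equation*}
the normalization being clean because the Fourier measure $\mathrm{d}^{\sigma_{T+T^{\sigma}}}$ is by construction the one for which $\mathcal{F}_{\sigma_{T+T^{\sigma}}}$ is unitary on $L^2(W)$.

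Next I would change variables via $\eta = (T+T^{\sigma})^{-1}\xi$, so that $\widetilde{\mathcal{W}}_{\sigma,T}(\eta) = \mathcal{U}_{\sigma,T}(\xi)$. Combining the Jacobian of this linear change with the identity $\mathrm{d}^{\sigma_{T+T^{\sigma}}}\xi = (\det(T+T^{\sigma}))^{1/2}\, \mathrm{d}^{\sigma}\xi$ (proved just before Lemma \ref{n3}, and using that $\det(T+T^{\sigma}) > 0$ since $(T+T^{\sigma})^{\sigma} = T+T^{\sigma}$, by the corollary to that lemma) gives $\mathrm{d}^{\sigma_{T+T^{\sigma}}}\eta = (\det(T+T^{\sigma}))^{-1/2}\, \mathrm{d}^{\sigma}\xi$. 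Substituting and rearranging yields both the bilinear identity and, specializing to $\varphi'=\varphi$, $\psi'=\psi$, the $L^2$-norm identity. The $L^\infty$ bound $|(\varphi, \mathcal{U}_{\sigma,T}(\xi)\psi)| \leq \|\varphi\|\|\psi\|$ is immediate from the unitarity of $\mathcal{U}_{\sigma,T}(\xi)$ and Cauchy--Schwarz.

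For the vanishing-at-infinity (\(\mathcal{C}_\infty\)) assertion, I would first treat the case $\varphi, \psi \in \mathcal{S} = \mathcal{S}(\mathcal{H}, \mathcal{W}_{\sigma,T})$: by Lemma \ref{n2} the map $\eta \mapsto (\varphi, \widetilde{\mathcal{W}}_{\sigma,T}(\eta)\psi)$ belongs to $\mathcal{S}(W)$, and composition with the linear isomorphism $\xi \mapsto (T+T^{\sigma})^{-1}\xi$ preserves $\mathcal{S}(W) \subset \mathcal{C}_\infty(W)$. For arbitrary $\varphi, \psi \in \mathcal{H}$, approximate by sequences from $\mathcal{S}$; the uniform bound $\|(\cdot,\mathcal{U}_{\sigma,T}(\cdot)\cdot)\|_\infty \leq \|\varphi\|\|\psi\|$ shows that the corresponding matrix coefficients converge uniformly, so the limit lies in the closure of $\mathcal{S}(W)$ in the sup norm, which is $\mathcal{C}_\infty(W)$.

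The main point requiring care is the normalization in the orthogonality relation for $(\mathcal{H}, \widetilde{\mathcal{W}}_{\sigma,T})$ with the symplectic Fourier measure $\mathrm{d}^{\sigma_{T+T^{\sigma}}}$; matching the measure conventions is what makes the constant $(\det(T+T^{\sigma}))^{1/2}$ appear (and only that constant). Once that bookkeeping is done, the remainder of the argument reduces to an application of polarization, the Jacobian computation, and the density argument sketched above.
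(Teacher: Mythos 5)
Your proposal is correct and follows essentially the same route as the paper: both reduce the claim to the orthogonality relations for the irreducible Weyl system $(\mathcal{H},\widetilde{\mathcal{W}}_{\sigma ,T},\widetilde{\omega }_{\sigma ,T})$ on $(W,\sigma _{T+T^{\sigma }})$ (Lemma 3.4 in the cited reference, which you re-justify via unitary equivalence with the Schr\"odinger representation), and then perform the substitution $\eta =(T+T^{\sigma })^{-1}\xi $ together with the measure conversion $\mathrm{d}^{\sigma _{T+T^{\sigma }}}\eta =(\det (T+T^{\sigma }))^{1/2}\mathrm{d}^{\sigma }\eta $ to produce the constant $(\det (T+T^{\sigma }))^{1/2}$. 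The only cosmetic difference is that you derive the bilinear identity first and specialize, whereas the paper proves the quadratic identity and polarizes; your added detail on the $\mathcal{C}_{\infty }$ statement via density is consistent with what the paper delegates to the cited lemma.
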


\begin{proof}
Since $\left( \mathcal{H},\mathcal{W}_{\sigma ,T},\omega _{\sigma ,T}\right) 
$ is an irreducible $\omega _{\sigma ,T}$-representation if and only if $%
\left( \mathcal{H},\widetilde{\mathcal{W}}_{\sigma ,T},\widetilde{\omega }%
_{\sigma ,T},\left( W,\sigma _{T+T^{{\sigma }}}\right) \right) $ is an
irreducible Weyl system, the lemma can be deduced from Lemma 3.4 in \cite%
{Arsu} by noticing that 
\begin{eqnarray*}
\int_{W}\left\vert \left( \varphi ,\mathcal{U}_{\sigma ,T}\left( \xi \right)
\psi \right) \right\vert ^{2}\mathrm{d}^{\sigma }\xi &=&\int_{W}\left\vert
\left( \varphi ,\widetilde{\mathcal{W}}_{\sigma ,T}\left( \left( T+T^{\sigma
}\right) ^{-1}\xi \right) \psi \right) \right\vert ^{2}\mathrm{d}^{\sigma
}\xi \\
&=&\det \left( T+T^{\sigma }\right) \int_{W}\left\vert \left( \varphi ,%
\widetilde{\mathcal{W}}_{\sigma ,T}\left( \eta \right) \psi \right)
\right\vert ^{2}\mathrm{d}^{\sigma }\eta \\
&=&\left( \det \left( T+T^{\sigma }\right) \right) ^{\frac{1}{2}%
}\int_{W}\left\vert \left( \varphi ,\widetilde{\mathcal{W}}_{\sigma
,T}\left( \eta \right) \psi \right) \right\vert ^{2}\mathrm{d}^{\sigma
_{T+T^{\sigma }}}\eta \\
&=&\left( \det \left( T+T^{\sigma }\right) \right) ^{\frac{1}{2}}\left\Vert
\varphi \right\Vert ^{2}\left\Vert \psi \right\Vert ^{2}\quad \text{by Lemma
3.4 in \cite{Arsu}.}
\end{eqnarray*}%
The last formula is a consequence of polarization identity.
\end{proof}

\begin{theorem}[Kato's operator calculus]
\label{n15}Let $\left( \mathcal{H},\mathcal{W}_{\sigma ,T},\omega _{\sigma
,T}\right) $ be an irreducible $\omega _{\sigma ,T}$-representation of $W$.

$\left( \mathrm{a}\right) $ If $b\in L^{\infty }\left( W\right) $ and $G\in 
\mathcal{B}_{1}\left( \mathcal{H}\right) $, then the integral%
\begin{equation*}
b\left\{ G\right\} =\int_{W}b\left( \xi \right) \mathcal{U}_{\sigma
,T}\left( \xi \right) G\mathcal{U}_{\sigma ,T}\left( -\xi \right) \mathrm{d}%
^{\sigma }\xi
\end{equation*}%
is weakly absolutely convergent and defines a bounded operator such that%
\begin{equation*}
\left\Vert b\left\{ G\right\} \right\Vert \leq \left( \det \left(
T+T^{\sigma }\right) \right) ^{\frac{1}{2}}\left\Vert b\right\Vert
_{L^{\infty }}\left\Vert G\right\Vert _{1}.
\end{equation*}%
Moreover, if $b$ vanishes at $\infty $ in the sense that for any $%
\varepsilon >0$ there is a compact subset $K$ of $W$ such that 
\begin{equation*}
\left\Vert b\right\Vert _{L^{\infty }\left( W\smallsetminus K\right) }\leq
\varepsilon ,
\end{equation*}%
then $b\left\{ G\right\} $ is a compact operator.

The mapping $\left( b,G\right) \rightarrow b\left\{ G\right\} $ has the
following properties.

$\left( \mathrm{i}\right) $ $b\geq 0,G\geq 0\Rightarrow b\left\{ G\right\}
\geq 0.$

$\left( \mathrm{ii}\right) $ $1\left\{ G\right\} =\left( \det \left(
T+T^{\sigma }\right) \right) ^{\frac{1}{2}}\mathrm{Tr}\left( G\right) 
\mathrm{id}_{\mathcal{H}}$.

$\left( \mathrm{iii}\right) $ $\left( b_{1}b_{2}\right) \left\{ G\right\}
\ll \left( \left\vert b_{1}\right\vert ^{2}\left\{ \left\vert G^{\ast
}\right\vert \right\} ;\left\vert b_{2}\right\vert ^{2}\left\{ \left\vert
G\right\vert \right\} \right) $.

$\left( \mathrm{b}\right) $ If $b\in L^{p}\left( W\right) $ with $1\leq
p<\infty $ and $G\in \mathcal{B}_{1}\left( \mathcal{H}\right) $, then the
integral%
\begin{equation*}
b\left\{ G\right\} =\int_{W}b\left( \xi \right) \mathcal{U}_{\sigma
,T}\left( \xi \right) G\mathcal{U}_{\sigma ,T}\left( -\xi \right) \mathrm{d}%
^{\sigma }\xi
\end{equation*}%
is weakly absolutely convergent and defines an operator $b\left\{ G\right\} $
in $\mathcal{B}_{p}\left( \mathcal{H}\right) $ and%
\begin{equation*}
\left\Vert b_{{\sigma ,T}}\left\{ G\right\} \right\Vert _{p}\leq \left( \det
\left( T+T^{\sigma }\right) \right) ^{\frac{1}{2}\left( 1-\frac{1}{p}\right)
}\left\Vert b\right\Vert _{L^{p}}\left\Vert G\right\Vert _{1}.
\end{equation*}
\end{theorem}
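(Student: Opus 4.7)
The plan is to reduce Theorem~\ref{n15} to the abstract integration lemma stated just before it, applied with $Y = W$, $\mathrm{d}y = \mathrm{d}^{\sigma}\xi$, and $U(\xi) = \mathcal{U}_{\sigma,T}(\xi)^{\ast}$. Because $\widetilde{\omega}_{\sigma,T}$ is normalized one has $\mathcal{U}_{\sigma,T}(-\xi) = \mathcal{U}_{\sigma,T}(\xi)^{\ast}$, so $U(\xi)^{\ast}GU(\xi) = \mathcal{U}_{\sigma,T}(\xi)G\mathcal{U}_{\sigma,T}(-\xi)$ and the abstract $b\{G\}$ coincides with the one appearing in the theorem. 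Since each $\mathcal{U}_{\sigma,T}(\xi)$ is unitary we may take $C_{1} = 1$, while the orthogonality identity for matrix coefficients (which is invariant under $\xi \mapsto -\xi$) supplies $C_{\infty} = (\det(T+T^{\sigma}))^{1/2}$. Parts (a) and (c) of the abstract lemma then deliver Theorem~\ref{n15}(a) and (b) with exactly the stated constants, namely $(\det(T+T^{\sigma}))^{1/2}\|b\|_{L^{\infty}}\|G\|_{1}$ and $C_{1}^{1/p}C_{\infty}^{1-1/p}\|b\|_{L^{p}}\|G\|_{1} = (\det(T+T^{\sigma}))^{\frac{1}{2}(1-1/p)}\|b\|_{L^{p}}\|G\|_{1}$.

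For the compactness assertion in (a), given $\varepsilon > 0$ I would choose a compact $K \subset W$ with $\|b\|_{L^{\infty}(W\smallsetminus K)} \leq \varepsilon$ and split $b = b\chi_{K} + b(1-\chi_{K})$. The first piece is in $L^{1}(W)$, so by part (b) at $p = 1$ the operator $(b\chi_{K})\{G\}$ is trace-class and hence compact; the second piece produces an operator of norm at most $(\det(T+T^{\sigma}))^{1/2}\varepsilon\|G\|_{1}$ by the bound just established. Thus $b\{G\}$ lies in the operator-norm closure of the compact operators.

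The algebraic properties go as follows. Item (i) is immediate from the formula $(\varphi, b\{G\}\varphi) = \int_{W} b(\xi)(\mathcal{U}_{\sigma,T}(-\xi)\varphi, G\,\mathcal{U}_{\sigma,T}(-\xi)\varphi)\,\mathrm{d}^{\sigma}\xi$. For (ii) the key step is to show that $\mathcal{U}_{\sigma,T}(\eta)\mathcal{U}_{\sigma,T}(\xi)G\mathcal{U}_{\sigma,T}(-\xi)\mathcal{U}_{\sigma,T}(-\eta) = \mathcal{U}_{\sigma,T}(\xi+\eta)G\mathcal{U}_{\sigma,T}(-\xi-\eta)$; translating $\xi \mapsto \xi - \eta$ then gives $\mathcal{U}_{\sigma,T}(\eta)\,1\{G\}\,\mathcal{U}_{\sigma,T}(-\eta) = 1\{G\}$ for every $\eta$, so Schur's lemma, applicable by irreducibility of $(\mathcal{H},\mathcal{W}_{\sigma,T},\omega_{\sigma,T})$, yields $1\{G\} = c\cdot\mathrm{id}_{\mathcal{H}}$; testing against $G = |\eta\rangle\langle\zeta|$ and using the polarized orthogonality identity identifies $c = (\det(T+T^{\sigma}))^{1/2}\operatorname{Tr}(G)$, the general case following by a trace-norm convergent singular-value decomposition. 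For (iii), the polar decomposition $G = V|G|$ and the factorization $G = AB$ with $A = V|G|^{1/2}$, $B = |G|^{1/2}$ (so that $AA^{\ast} = |G^{\ast}|$ and $B^{\ast}B = |G|$) allow one to write
\[
(\varphi, (b_{1}b_{2})\{G\}\psi) = \int_{W} b_{1}(\xi)b_{2}(\xi)(A^{\ast}\mathcal{U}_{\sigma,T}(-\xi)\varphi, B\,\mathcal{U}_{\sigma,T}(-\xi)\psi)\,\mathrm{d}^{\sigma}\xi,
\]
and the Cauchy--Schwarz inequality followed by identification of the resulting quadratic expressions with the forms of $|b_{1}|^{2}\{|G^{\ast}|\}$ and $|b_{2}|^{2}\{|G|\}$ yields the $\ll$ estimate.

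The main obstacle I anticipate is verifying the phase cancellation in (ii): the two scalar factors produced by the multiplication law $\widetilde{\mathcal{W}}_{\sigma,T}(\alpha)\widetilde{\mathcal{W}}_{\sigma,T}(\beta) = \mathrm{e}^{\frac{\mathrm{i}}{2}\sigma_{T+T^{\sigma}}(\alpha,\beta)}\widetilde{\mathcal{W}}_{\sigma,T}(\alpha+\beta)$ on the left- and right-hand conjugations must be shown to be reciprocal, which requires the symplectic self-adjointness $(T+T^{\sigma})^{\sigma} = T + T^{\sigma}$ to rewrite $\sigma((T+T^{\sigma})^{-1}\eta, \xi) = \sigma(\eta, (T+T^{\sigma})^{-1}\xi)$; once this bookkeeping is in place, the rest is routine.
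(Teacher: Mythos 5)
Your proposal is correct and follows essentially the same route as the paper: parts (a) and (b) come from feeding $U(\xi)=\mathcal{U}_{\sigma ,T}(\xi)^{\ast}=\mathcal{U}_{\sigma ,T}(-\xi)$, $C_{1}=1$ and $C_{\infty}=(\det (T+T^{\sigma}))^{1/2}$ into the preceding abstract integration lemma, (ii) is the rank-one computation via the orthogonality relation followed by linearity and trace-norm continuity, and (iii) is Cauchy--Schwarz applied to the factorization $G=\left(V|G|^{1/2}\right)\left(|G|^{1/2}\right)$, which is exactly the content of the paper's appeal to $T\ll (|T^{\ast}|;|T|)$ and $S^{\ast}TS\ll (S^{\ast}AS;S^{\ast}BS)$. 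Your Schur's-lemma detour in (ii) is redundant (the rank-one computation already yields the full identity, and the phase cancellation you worry about is immediate from the antisymmetry of $\sigma _{T+T^{\sigma}}$), while your explicit compactness and positivity arguments correctly supply steps the paper leaves implicit.
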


\begin{proof}
$\left( \mathrm{ii}\right) $ Let $G=|\varphi )(\psi |=\left( \psi ,\cdot
\right) \varphi ,$ $\varphi ,\psi \in \mathcal{H}$. Then 
\begin{equation*}
\mathcal{U}_{\sigma ,T}\left( \xi \right) G\mathcal{U}_{\sigma ,T}\left(
-\xi \right) =|\mathcal{U}_{\sigma ,T}\left( \xi \right) \varphi )(\mathcal{U%
}_{\sigma ,T}\left( \xi \right) \psi |=\left( \mathcal{U}_{\sigma ,T}\left(
\xi \right) \psi ,\cdot \right) \mathcal{U}_{\sigma ,T}\left( \xi \right)
\varphi
\end{equation*}%
and 
\begin{eqnarray*}
\left( u,1\left\{ \varphi )(\psi |\right\} v\right) &=&\int_{W}\left( u,%
\mathcal{U}_{\sigma ,T}\left( \xi \right) \varphi \right) \left( \mathcal{U}%
_{\sigma ,T}\left( \xi \right) \psi ,v\right) \mathrm{d}^{\sigma }\xi \\
&=&\left( \det \left( T+T^{\sigma }\right) \right) ^{\frac{1}{2}}\left( \psi
,\varphi \right) \left( u,v\right) \\
&=&\left( u,\left( \det \left( T+T^{\sigma }\right) \right) ^{\frac{1}{2}}%
\mathrm{Tr}\left( |\varphi )(\psi |\right) v\right) .
\end{eqnarray*}%
So the equality holds for operators of rank $1$. Next we extend this
equality by linearty and continuity.

$\left( \mathrm{iii}\right) $ We have 
\begin{equation*}
\mathcal{U}_{\sigma ,T}\left( \xi \right) G\mathcal{U}_{\sigma ,T}\left(
-\xi \right) \ll \left( \mathcal{U}_{\sigma ,T}\left( \xi \right) \left\vert
G^{\ast }\right\vert \mathcal{U}_{\sigma ,T}\left( -\xi \right) ;\mathcal{U}%
_{\sigma ,T}\left( \xi \right) \left\vert G\right\vert \mathcal{U}_{\sigma
,T}\left( -\xi \right) \right)
\end{equation*}%
which gives 
\begin{align*}
\left\vert b_{1}\left( \xi \right) b_{2}\left( \xi \right) \left( \varphi ,%
\mathcal{U}_{\sigma ,T}\left( \xi \right) G\mathcal{U}_{\sigma ,T}\left(
-\xi \right) \psi \right) \right\vert & \leq \left( \left\vert b_{1}\left(
\xi \right) \right\vert ^{2}\left( \varphi ,\mathcal{U}_{\sigma ,T}\left(
\xi \right) \left\vert G^{\ast }\right\vert \mathcal{U}_{\sigma ,T}\left(
-\xi \right) \varphi \right) \right) ^{\frac{1}{2}} \\
& \cdot \left( \left\vert b_{2}\left( \xi \right) \right\vert ^{2}\left(
\psi ,\mathcal{U}_{\sigma ,T}\left( \xi \right) \left\vert G\right\vert 
\mathcal{U}_{\sigma ,T}\left( -\xi \right) \psi \right) \right) ^{\frac{1}{2}%
}.
\end{align*}%
Next, all that remains is to use Schwarz inequality to conclude that $\left( 
\mathrm{iii}\right) $ is true.
\end{proof}

\section{Schatten-class properties of operators in the $T$-Weyl calculus.
The Cordes-Kato method.}

Now, if $\left( \mathcal{H},\mathcal{W}_{\sigma ,T},\omega _{\sigma
,T}\right) $ is an irreducible $\omega _{\sigma ,T}$-representation of $W$,
then we are able to study boundedness and Schatten-class properties of
certain operators in the $T$-Weyl calculus using the Cordes-Kato method.

\begin{theorem}
Let $\left( \mathcal{H},\mathcal{W}_{\sigma ,T},\omega _{\sigma ,T}\right) $
be an irreducible $\omega _{\sigma ,T}$-representation of $W$, and assume
that $W=V_{1}\oplus ...\oplus V_{\ell }$ is an orthogonal decomposition with
respect to a $\sigma $-compatible inner product on $\left( W,\sigma \right) $%
. Let $a\in \mathcal{S}^{\prime }(W)$.

$\left( \mathrm{a}\right) $ If there are $t_{1}>n_{1}=\dim V_{1},...,$ $%
t_{\ell }>n_{\ell }=\dim V_{\ell }$ such that 
\begin{equation*}
b=\left( 1-\triangle _{V_{1}}\right) ^{t_{1}/2}\otimes ...\otimes \left(
1-\triangle _{V_{\ell }}\right) ^{t_{\ell }/2}a\in L^{\infty }\left(
W\right) ,
\end{equation*}%
then $\mathrm{Op}_{\sigma ,T}\left( a\right) \in \mathcal{B}\left( \mathcal{H%
}\right) $ and 
\begin{equation*}
\left\Vert \mathrm{Op}_{\sigma ,T}\left( a\right) \right\Vert _{\mathcal{B}%
\left( \mathcal{H}\right) }\leq Cst\left\Vert \left( 1-\triangle
_{V_{1}}\right) ^{t_{1}/2}\otimes ...\otimes \left( 1-\triangle _{V_{\ell
}}\right) ^{t_{\ell }/2}a\right\Vert _{L^{\infty }\left( W\right) }.
\end{equation*}

$\left( \mathrm{b}\right) $ Let $1\leq p<\infty $. If there are $%
t_{1}>n_{1}=\dim V_{1},...,$ $t_{\ell }>n_{\ell }=\dim V_{\ell }$ such that 
\begin{equation*}
b=\left( 1-\triangle _{V_{1}}\right) ^{t_{1}/2}\otimes ...\otimes \left(
1-\triangle _{V_{\ell }}\right) ^{t_{\ell }/2}a\in L^{p}\left( W\right) ,
\end{equation*}%
then $\mathrm{Op}_{\sigma ,T}\left( a\right) \in \mathcal{B}_{p}\left( 
\mathcal{H}\right) $ and 
\begin{equation*}
\left\Vert \mathrm{Op}_{\sigma ,T}\left( a\right) \right\Vert _{\mathcal{B}%
_{p}\left( \mathcal{H}\right) }\leq Cst\left\Vert \left( 1-\triangle
_{V_{1}}\right) ^{t_{1}/2}\otimes ...\otimes \left( 1-\triangle _{V_{\ell
}}\right) ^{t_{\ell }/2}a\right\Vert _{L^{p}\left( W\right) }.
\end{equation*}
\end{theorem}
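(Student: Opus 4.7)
The plan is to reduce both statements to Kato's operator calculus (Theorem \ref{n15}) by exhibiting a convolution decomposition of the symbol whose ``trace-class factor'' falls into the scope of the extended Cordes lemma (Corollary \ref{n13}). Write $k(\xi)=\prod_{j=1}^{\ell}\langle \pi_{j}\xi\rangle^{t_{j}}$, where $\pi_{j}:W\to V_{j}$ is the orthogonal projection associated with the $\sigma$-compatible inner product. Set $b=k(D_{\sigma})a$, so that, up to a constant, $a=k^{-1}(D_{\sigma})b=g\ast_{\sigma}b$, with $g=\mathcal{F}_{\sigma}(k^{-1})$. The key observation is that on the orthogonal decomposition $W=V_{1}\oplus\ldots\oplus V_{\ell}$ the function $k^{-1}$ factors as a tensor product $k^{-1}=\prod_{j}\langle\cdot\rangle^{-t_{j}}$, hence $g$ itself factors, up to constants and possibly an affine change of variables coming from $\sigma$, as $g=\bigotimes_{j=1}^{\ell}\mathcal{F}_{V_{j}}^{\dag}(a_{j})$ with $a_{j}(\xi_{j})=\langle\xi_{j}\rangle^{-t_{j}}\in\mathcal{S}^{-t_{j}}(V_{j})$. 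Since $t_{j}>n_{j}=\dim V_{j}$, Corollary \ref{n13} applies and yields $\mathrm{Op}_{\sigma,T}(g)\in\mathcal{B}_{1}(\mathcal{H})$.

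Next I invoke Kato's identity, Theorem \ref{n14}(a)--(b). In case (a), $b\in L^{\infty}(W)$ and $g\in L^{1}(W)$ (both are routine from the Sobolev/Bessel description of $k^{-1}$), hence $g\ast_{\sigma}b\in L^{\infty}(W)\subset\mathcal{S}^{\ast}(W)$ and
\begin{equation*}
\mathrm{Op}_{\sigma,T}(a)=\mathrm{Op}_{\sigma,T}(g\ast_{\sigma}b)=\int_{W}b(\xi)\,\mathcal{U}_{\sigma,T}(\xi)\,\mathrm{Op}_{\sigma,T}(g)\,\mathcal{U}_{\sigma,T}(-\xi)\,\mathrm{d}^{\sigma}\xi,
\end{equation*}
where I have used the second form of Kato's formula (the roles of $b$ and $g$ interchanged), with the integral interpreted in the distributional sense. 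In case (b) one applies instead the $L^{p}$-version with $h=1$ (Young's inequality justifying $g\ast_{\sigma}b\in L^{p}$).

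Step three is to feed this representation into Theorem \ref{n15}. In part (a) of the theorem at hand, $b\in L^{\infty}$ and $G=\mathrm{Op}_{\sigma,T}(g)\in\mathcal{B}_{1}$, so Theorem \ref{n15}(a) gives $\mathrm{Op}_{\sigma,T}(a)\in\mathcal{B}(\mathcal{H})$ together with
\begin{equation*}
\|\mathrm{Op}_{\sigma,T}(a)\|\leq C\,\|b\|_{L^{\infty}}\,\|\mathrm{Op}_{\sigma,T}(g)\|_{1},
\end{equation*}
the last factor being an absolute constant depending only on $T,\sigma,t_{1},\ldots,t_{\ell}$. In part (b), with $b\in L^{p}$ and the same trace-class $G$, Theorem \ref{n15}(b) yields $\mathrm{Op}_{\sigma,T}(a)\in\mathcal{B}_{p}(\mathcal{H})$ with the bound $\|\mathrm{Op}_{\sigma,T}(a)\|_{p}\leq C\,\|b\|_{L^{p}}\,\|\mathrm{Op}_{\sigma,T}(g)\|_{1}$, which is the claimed estimate.

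The main obstacle I anticipate is bookkeeping rather than conceptual: I must make sure that the symplectic Fourier transform $\mathcal{F}_{\sigma}$ on $W$, restricted to a function of the form $\prod_{j}f_{j}(\pi_{j}\xi)$, indeed splits as a (possibly pre-composed with a linear isomorphism) tensor product of the individual Fourier transforms $\mathcal{F}_{V_{j}}^{\dag}$. This uses that the decomposition $W=V_{1}\oplus\cdots\oplus V_{\ell}$ is orthogonal with respect to a $\sigma$-compatible inner product, which ties the symplectic and Euclidean structures tightly enough to ensure the factorisation (up to constants absorbed in $C$) needed to invoke Corollary \ref{n13}. Once this bookkeeping is done, the remaining assertions are direct applications of Theorems \ref{n14} and \ref{n15}.
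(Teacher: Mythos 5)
Your proposal is correct and follows essentially the same route as the paper: write $a=b\ast_{\sigma}g$ with $g$ a tensor product of Bessel kernels, obtain $\mathrm{Op}_{\sigma ,T}\left( g\right) \in \mathcal{B}_{1}\left( \mathcal{H}\right) $ from Corollary \ref{n13}, and conclude via Kato's identity (Theorem \ref{n14}) and Kato's operator calculus (Theorem \ref{n15}). The only difference is that the paper defines $g=\psi _{t_{1}}\otimes ...\otimes \psi _{t_{\ell }}$ directly as the tensor product of the fundamental solutions of the operators $\left( 1-\triangle _{V_{j}}\right) ^{t_{j}/2}$, so that $a=b\ast _{\sigma }g$ follows without invoking any Fourier transform at all; this sidesteps the $\mathcal{F}_{\sigma }$-factorisation bookkeeping you flag at the end, which, as you phrase it with $k\left( D_{\sigma }\right) $ in place of the Euclidean multiplier $k\left( D\right) $, would otherwise require the decomposition to be compatible with the complex structure $J$ and is best avoided by working with ordinary convolution from the start.
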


\begin{proof}
For $j\in \left\{ 1,...,\ell \right\} $, let $\psi _{t_{j}}=\psi
_{t_{j}}^{V_{j}}$ be the unique solution within $\mathcal{S}^{\prime }\left(
V_{j}\right) $ for 
\begin{equation*}
\left( 1-\triangle _{V_{1}}\right) ^{t_{1}/2}\psi _{t_{j}}=\delta .
\end{equation*}%
where $\triangle _{V_{j}}$ is the Laplacian in $V_{j}$, and $\delta \in 
\mathcal{S}^{\prime }\left( V_{j}\right) $ is the delta function in $V_{j}$.
We consider 
\begin{equation*}
g=\psi _{t_{1}}\otimes ...\otimes \mathcal{\psi }_{{t}_{{\ell }}}\in 
\mathcal{S}^{\prime }(W).
\end{equation*}%
Then $g\in L^{1}\left( W\right) $ and $a=b\ast _{\sigma }g$ because%
\begin{equation*}
\left( 1-\triangle _{V_{1}}\right) ^{t_{1}/2}\otimes ...\otimes \left(
1-\triangle _{V_{\ell }}\right) ^{t_{\ell }/2}g=\delta \in \mathcal{S}%
^{\prime }(W).
\end{equation*}%
By Cordes' lemma, Corollary \ref{n13}, 
\begin{equation*}
G=\mathrm{Op}_{\sigma ,T}\left( g\right) \in \mathcal{B}_{1}\left( \mathcal{H%
}\right) ,
\end{equation*}%
and by Kato's identity, Theorem \ref{n14}, 
\begin{equation*}
\mathrm{Op}_{\sigma ,T}\left( a\right) =\mathrm{Op}_{\sigma ,T}\left( b\ast
_{\sigma }g\right) =\int_{W}b\left( \xi \right) \mathcal{U}_{\sigma
,T}\left( \xi \right) \mathrm{Op}_{\sigma ,T}\left( g\right) \mathcal{U}%
_{\sigma ,T}\left( -\xi \right) \mathrm{d}^{\sigma }\xi =b\left\{ G\right\} .
\end{equation*}%
The proof is established upon employing Theorem \ref{n15}.
\end{proof}

Recall that the Sobolev space $H_{p}^{s}\left( W\right) $, $s\in \mathbb{R}$%
, $1\leq p\leq \infty $, consists of all $a\in \mathcal{S}^{\ast }(W)$ such
that $\left( 1-\triangle _{W}\right) ^{s/2}a\in L^{p}\left( W\right) $, and
we set 
\begin{equation*}
\left\Vert a\right\Vert _{H_{p}^{s}\left( W\right) }\equiv \left\Vert \left(
1-\triangle _{W}\right) ^{s/2}a\right\Vert _{L^{p}\left( W\right) }.
\end{equation*}

\begin{theorem}
Let $\left( \mathcal{H},\mathcal{W}_{\sigma ,T},\omega _{\sigma ,T}\right) $
be an irreducible $\omega _{\sigma ,T}$-representation of $W$.

$\left( \mathrm{a}\right) $ If $s>2n$ and $a\in H_{\infty }^{s}\left(
W\right) $, then $\mathrm{Op}_{\sigma ,T}\left( a\right) \in \mathcal{B}%
\left( \mathcal{H}\right) $ and%
\begin{equation*}
\left\Vert \mathrm{Op}_{\sigma ,T}\left( a\right) \right\Vert _{\mathcal{B}%
\left( \mathcal{H}\right) }\leq Cst\left\Vert a\right\Vert _{H_{\infty
}^{s}\left( W\right) }.
\end{equation*}

$\left( \mathrm{b}\right) $ If $1\leq p<\infty $, $s>2n$ and $a\in
H_{p}^{s}\left( W\right) $, then $\mathrm{Op}_{\sigma ,T}\left( a\right) \in 
\mathcal{B}_{p}\left( \mathcal{H}\right) $ and%
\begin{equation*}
\left\Vert \mathrm{Op}_{\sigma ,T}\left( a\right) \right\Vert _{\mathcal{B}%
_{p}\left( \mathcal{H}\right) }\leq Cst\left\Vert a\right\Vert
_{H_{p}^{s}\left( W\right) }.
\end{equation*}
\end{theorem}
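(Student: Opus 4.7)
The plan is to recognize this theorem as the special case of the preceding theorem obtained by taking the trivial orthogonal decomposition $\ell=1$, $V_1=W$. Indeed, $(W,\sigma)$ always admits a $\sigma$-compatible inner product (via the canonical retraction $\mathcal{G}(W)\to\mathcal{G}(W,\sigma)$ mentioned in Section 2), so we may regard $W=V_1$ as a single orthogonal summand, with $n_1=\dim V_1=2n$. Under this identification, the distinguished operator from the preceding theorem reduces to
\[
\left(1-\triangle_{V_1}\right)^{t_1/2}=\left(1-\triangle_W\right)^{s/2},
\]
with $t_1=s$. Thus the hypothesis $t_1>n_1$ becomes precisely $s>2n$, and the $L^p$-norm of $b=(1-\triangle_W)^{s/2}a$ is, by definition, $\|a\|_{H^s_p(W)}$.

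With this translation in place, part (a) is immediate: if $a\in H^s_\infty(W)$ with $s>2n$, then $b=(1-\triangle_W)^{s/2}a\in L^\infty(W)$, and the preceding theorem yields $\mathrm{Op}_{\sigma,T}(a)\in\mathcal{B}(\mathcal{H})$ with
\[
\left\|\mathrm{Op}_{\sigma,T}(a)\right\|_{\mathcal{B}(\mathcal{H})}\leq Cst\left\|b\right\|_{L^\infty(W)}=Cst\left\|a\right\|_{H^s_\infty(W)}.
\]
Part (b) follows in exactly the same way: for $1\leq p<\infty$ and $s>2n$, $a\in H^s_p(W)$ gives $b\in L^p(W)$, and the preceding theorem provides $\mathrm{Op}_{\sigma,T}(a)\in\mathcal{B}_p(\mathcal{H})$ with $\|\mathrm{Op}_{\sigma,T}(a)\|_{\mathcal{B}_p(\mathcal{H})}\leq Cst\,\|a\|_{H^s_p(W)}$.

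There is no serious obstacle; the only point worth noting is that we are using the trivial decomposition, which is permitted because any $\sigma$-compatible inner product on $W$ makes $W$ itself an orthogonal sum with a single summand. If one preferred, one could alternatively re-derive the statement directly from the chain \textbf{Cordes' lemma} $\to$ \textbf{Kato's identity} $\to$ \textbf{Kato's operator calculus} by writing $a=b\ast_\sigma g$ with $g=\psi_s^W$ the fundamental solution of $(1-\triangle_W)^{s/2}$, observing that $g\in L^1(W)\cap\mathcal{F}^{-1}(\mathcal{S}^{-s}(W))\subset M^{1,1}(W)$ whenever $s>2n$, and concluding via $\mathrm{Op}_{\sigma,T}(a)=b\{G\}$ with $G=\mathrm{Op}_{\sigma,T}(g)\in\mathcal{B}_1(\mathcal{H})$; but invoking the preceding theorem is the cleanest route and avoids repeating this argument.
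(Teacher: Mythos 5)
Your proposal is correct and is exactly the argument the paper intends: the theorem is stated immediately after the decomposition theorem precisely as its special case $\ell=1$, $V_{1}=W$, $t_{1}=s$, $n_{1}=\dim W=2n$, so that $b=\left( 1-\triangle _{W}\right) ^{s/2}a$ and $\left\Vert b\right\Vert _{L^{p}}=\left\Vert a\right\Vert _{H_{p}^{s}\left( W\right) }$. Your identification of the underlying chain (Cordes' lemma, Kato's identity, Kato's operator calculus) as the alternative direct route also matches how the paper proves the more general decomposition theorem.
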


If we note that $\mathrm{Op}_{\sigma ,T}\left( a\right) \in \mathcal{B}%
_{2}\left( \mathcal{H}\right) $ whenever $a\in L^{2}\left( W\right)
=H_{2}^{0}\left( W\right) $ for any irreducible $\omega _{\sigma ,T}$%
-representation $\left( \mathcal{H},\mathcal{W}_{\sigma ,T},\omega _{\sigma
,T}\right) $ of $W$, then standard interpolation results in Sobolev spaces
give us the following result.

\begin{theorem}
Let $\left( \mathcal{H},\mathcal{W}_{\sigma ,T},\omega _{\sigma ,T}\right) $
be an irreducible $\omega _{\sigma ,T}$-representation of $W$. Let $\mu >1$
and $1\leq p<\infty $. If $a\in H_{p}^{2\mu n\left\vert 1-2/p\right\vert
}\left( W\right) $, then $\mathrm{Op}_{\sigma ,T}\left( a\right) \in 
\mathcal{B}_{p}\left( \mathcal{H}\right) $ and 
\begin{equation*}
\left\Vert \mathrm{Op}_{\sigma ,T}\left( a\right) \right\Vert _{\mathcal{B}%
_{p}\left( \mathcal{H}\right) }\leq Cst\left\Vert a\right\Vert _{H_{p}^{2\mu
n\left\vert 1-2/p\right\vert }\left( W\right) }.
\end{equation*}
\end{theorem}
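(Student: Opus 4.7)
The plan is a direct complex interpolation argument, using as endpoints the previous theorem (which handles $p=1$ and $p=\infty$ at the cost of $s>2n$ derivatives) and the Hilbert–Schmidt identity at $p=2$ (which needs no derivatives). Throughout, I use that the map $a\mapsto \mathrm{Op}_{\sigma ,T}(a)$ is linear and continuous from $\mathcal{S}^{\ast }(W)$ to $\mathcal{B}(\mathcal{S},\mathcal{S}^{\ast })$, so it makes sense on each Sobolev space we meet.

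First I record the three endpoint bounds. Fix $\mu>1$ and set $s_{0}=2\mu n>2n$. By the preceding theorem,
\[
\mathrm{Op}_{\sigma ,T}:H_{1}^{s_{0}}(W)\longrightarrow \mathcal{B}_{1}(\mathcal{H}),\qquad \mathrm{Op}_{\sigma ,T}:H_{\infty }^{s_{0}}(W)\longrightarrow \mathcal{B}(\mathcal{H})=\mathcal{B}_{\infty }(\mathcal{H})
\]
are continuous. For the middle endpoint, the hypothesis that $\bigl(\mathcal{H},\mathcal{W}_{\sigma ,T},\omega _{\sigma ,T}\bigr)$ is irreducible together with Theorem \ref{n4} reduces the claim $\mathrm{Op}_{\sigma ,T}:L^{2}(W)\to \mathcal{B}_{2}(\mathcal{H})$ to the corresponding well-known fact for the standard Weyl calculus on the symplectic space $(W,\sigma _{T+T^{\sigma }})$: the Weyl symbol is $a_{\sigma ,T}^{w}=\lambda _{\sigma ,T}(D_{\sigma _{T+T^{\sigma }}})(a\circ (T+T^{\sigma }))$, where $\lambda _{\sigma ,T}$ has modulus one and so the map $a\mapsto a_{\sigma ,T}^{w}$ is an isomorphism of $L^{2}(W)$. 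Thus
\[
\mathrm{Op}_{\sigma ,T}:H_{2}^{0}(W)=L^{2}(W)\longrightarrow \mathcal{B}_{2}(\mathcal{H})
\]
is bounded.

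Next I interpolate. The Bessel-potential Sobolev spaces $H_{p}^{s}(W)$ form a complex interpolation scale with
\[
\bigl[H_{p_{0}}^{s_{0}}(W),H_{p_{1}}^{s_{1}}(W)\bigr]_{\theta }=H_{p}^{s}(W),\quad \tfrac{1}{p}=\tfrac{1-\theta }{p_{0}}+\tfrac{\theta }{p_{1}},\ s=(1-\theta )s_{0}+\theta s_{1},
\]
and the Schatten classes satisfy $[\mathcal{B}_{p_{0}},\mathcal{B}_{p_{1}}]_{\theta }=\mathcal{B}_{p}$ with the same $1/p$ formula. For $1\le p\le 2$, I apply complex interpolation between the endpoints $(p_{0},s_{0})=(1,2\mu n)$ and $(p_{1},s_{1})=(2,0)$. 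Solving $1/p=1-\theta +\theta /2$ yields $\theta =2-2/p$, hence $1-\theta =2/p-1$ and
\[
s=(1-\theta )\cdot 2\mu n=2\mu n\,\bigl(\tfrac{2}{p}-1\bigr)=2\mu n\,\bigl|1-\tfrac{2}{p}\bigr|.
\]
For $2\le p<\infty $, I interpolate between $(2,0)$ and $(\infty ,2\mu n)$; now $1/p=(1-\theta )/2$ gives $\theta =1-2/p$, so $s=\theta \cdot 2\mu n=2\mu n\,(1-2/p)=2\mu n\,|1-2/p|$. In both cases the interpolated Schatten index is exactly $p$, so the linear map $\mathrm{Op}_{\sigma ,T}:H_{p}^{2\mu n|1-2/p|}(W)\to \mathcal{B}_{p}(\mathcal{H})$ is bounded with a constant controlled by the two endpoint constants, which gives the stated estimate.

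The main issue is to ensure that the abstract interpolation machinery applies here, i.e. that we really have a single continuous linear operator between the endpoint spaces and not merely a family of incompatible bounds. This is fine because $\mathrm{Op}_{\sigma ,T}$ is defined on all of $\mathcal{S}^{\ast }(W)$, and $\mathcal{S}(W)$ is a common dense subspace of every $H_{p}^{s}(W)$; moreover $\mathcal{B}_{1}(\mathcal{H})\hookrightarrow \mathcal{B}(\mathcal{H})$ and the inclusions $H_{1}^{s_{0}}\subset H_{\infty }^{s_{0}}$ fail in general, so one should set up the interpolation using the ambient space $\mathcal{S}^{\ast }(W)$ (respectively $\mathcal{B}(\mathcal{S},\mathcal{S}^{\ast })$) as the common Hausdorff container for the Calderón couple. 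Once that is done, Stein's interpolation theorem (or directly Calderón's complex method) yields the conclusion.
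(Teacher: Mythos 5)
Your argument is exactly the one the paper intends: the text preceding the theorem notes the Hilbert--Schmidt endpoint $L^{2}(W)=H_{2}^{0}(W)\rightarrow \mathcal{B}_{2}(\mathcal{H})$ and then simply invokes ``standard interpolation results in Sobolev spaces,'' which is precisely your complex interpolation between $(1,2\mu n)$, $(2,0)$ and $(2,0)$, $(\infty ,2\mu n)$ using the preceding theorem for the outer endpoints. Your exponent bookkeeping and the attention to a common ambient space are correct, so the proposal matches the paper's proof while supplying more detail than the paper itself gives.
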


\end{document}